\newcommand{\trace}[1]{\mathrm{tr}\left(#1\right)}
\renewcommand{\tilde}{\widetilde}
\newcommand{\E}{\mathbb{E}}
\renewcommand{\algocf@captiontext}[2]{#1\algocf@typo. \AlCapFnt{}#2} 
\def\@algocf@capt@plain{top}
\renewcommand{\algocf@makecaption}[2]{%
  \addtolength{\hsize}{\algomargin}%
  \sbox\@tempboxa{\algocf@captiontext{#1}{#2}}%
  \ifdim\wd\@tempboxa >\hsize
    \hskip .5\algomargin%
    \parbox[t]{\hsize}{\algocf@captiontext{#1}{#2}}
  \else%
    \global\@minipagefalse%
    \hbox to\hsize{\box\@tempboxa}
  \fi%
  \addtolength{\hsize}{-\algomargin}%
}
\newcommand{\vertiii}[1]{{\vert\kern-0.25ex\vert\kern-0.25ex\vert #1 
    \vert\kern-0.25ex\vert\kern-0.25ex\vert}}
\DeclareMathOperator*{\argmax}{arg\,max}
\DeclareMathOperator*{\argmin}{arg\,min}
\newtheorem{theorem}{Theorem}
\newtheorem{lemma}{Lemma}
\newtheorem{corollary}{Corollary}
\newtheorem{proposition}{Proposition}
\theoremstyle{definition}
\newtheorem{definition}{Definition}
\newtheorem{remark}{Remark}
\numberwithin{equation}{section}
\newcommand{\pr}{\mathbb{P}}
\renewcommand{\hat}[1]{\widehat{#1}}
\renewcommand{\tilde}[1]{\widetilde{#1}}
\begin{document}

    \title{Eigenvector fluctuations and limit results for random graphs with infinite rank kernels}

    \author{Minh Tang \\
      {Department of Statistics, North Carolina State University} \and
      Joshua Cape \\
      {Department of Statistics, University of Wisconsin-Madison}
    }
    \maketitle

\begin{abstract}
    This paper systematically studies the behavior of the leading eigenvectors for independent edge undirected random graphs generated from a general latent position model whose link function is possibly infinite rank and also possibly indefinite. We first derive uniform error bounds in the two-to-infinity norm as well as row-wise normal approximations for the leading sample eigenvectors. We then build on these results to tackle two graph inference problems, namely (i) entrywise bounds for graphon estimation and (ii) testing for the equality of latent positions, the latter of which is achieved by proposing a rank-adaptive test statistic that converges in distribution to a weighted sum of independent chi-square random variables under the null hypothesis. Our fine-grained theoretical guarantees and applications differ from the existing literature which primarily considers first order upper bounds and more restrictive low rank or positive semidefinite model assumptions. Further, our results collectively quantify the statistical properties of eigenvector-based spectral embeddings with growing dimensionality for large graphs.
\end{abstract}

\section{Introduction}
Statistical network analysis, encompassing the study of statistical methodology, theory, and applications for networks or graph data, has witnessed a surge of attention and progress in recent decades. Driven in large part by developments in data collection capabilities and pressing scientific questions, the overarching objective of this domain is to develop the statistical foundations of network or graph data analysis. A principal challenge here is that such data are complex, inter-related, structured, and hence fall outside the scope of traditional data settings in classical Statistics.

Historically, considerable attention has been devoted to exploratory graph analysis, network modeling, and problems of estimation, areas which continue to witness ongoing developments. More recently, building on these achievements, there has been increasing interest in developing genuine inference procedures that quantify uncertainty in order to address network hypothesis testing problems for popular random graph models.

This paper undertakes a detailed study of spectral methods and their statistical inference capabilities for analyzing large, independent edge, undirected, inhomogeneous random graphs generated from a general latent position network model. Given a symmetric adjacency matrix $A \in \{0,1\}^{n \times n}$ generated from an underlying symmetric edge probability matrix $P \in [0,1]^{n \times n}$ determined by a kernel $\kappa$ (of possibly infinite rank), we study $r$-dimensional spectral embeddings of the form $\hat{U} |\hat{\Lambda}|^{\alpha}$, where $\alpha \in \{0, 1/2, 1\}$, where $\hat{\Lambda}$ is the $r \times r$ diagonal matrix of largest-in-magnitude eigenvalues of $A$, and where $\hat{U}$ is an $n \times r$ matrix containing corresponding orthonormal eigenvectors of $A$. We show that $\hat{U} |\hat{\Lambda}|^{\alpha}$ is entrywise and row-wise close (modulo a necessary orthogonal transformation $W$) to the corresponding population-level spectral embedding $U |\Lambda|^{\alpha}$, determined by $P$, in a wide variety of settings (see \cref{thm:psd,cor:main1,thm:general,thm:generalU_ULambda}). For $\alpha = 1/2$, we establish row-wise multivariate asymptotic normality in \cref{cor:normal}, with similar results holding for $\alpha = 0, 1$. Our results are enabled by new deterministic matrix perturbation analysis (see \cref{thm:deterministic}) and careful treatment of concentration inequalities including novel leave-one-out analysis (see \cref{sec:proofs}).

Numerous estimation and inference problems for both single and multiple networks can be formulated and tackled via the spectral embeddings given by $\alpha \in \{0, 1/2, 1\}$. Examples include community detection via spectral clustering, vertex nomination, two-sample hypothesis testing, and more; see \cref{sec:inference} for additional discussion and references. In this paper,  \cref{sec:entrywise_approximation} demonstrates how our results give entrywise bounds on both edge probability matrix estimation and graphon estimation. \cref{sec:two_sample} considers the problem of testing for the equality of latent positions, for which we propose a flexible plug-in test statistic that adapts to the embedding dimension (rank truncation value) $r$, even when $r \equiv r(n) \to \infty$ as $n \to \infty$ (see \cref{thm:quadratic_test,cor:test}). \cref{sec:numerical} provides simulation examples illustrating the finite-sample properties of our proposed test procedure. 

Our results are exceedingly general and hence broadly applicable, in the following aspects.
\begin{enumerate}
    \item We obtain results both for low (i.e., finite) rank models and for infinite rank models.
    \item We obtain results both for models with positive semidefinite kernels and for more general (i.e., possibly indefinite) kernels.
    \item We obtain results without requiring unnecessary or purely mathematically convenient assumptions such as population-level bounded coherence (i.e., eigenvector delocalization).
    \item We obtain results both for settings with and without repeated population eigenvalues.
    \item Our results hold for eigenvector-based embeddings with fixed or growing dimensionality for large networks.
\end{enumerate}
By addressing both the deviations and fluctuations of spectral embeddings derived from the adjacency-based (scaled) sample eigenvectors, this paper simultaneously recovers and improves upon numerous existing results in the literature in a unifying fashion \citep{abbe2020entrywise,athreya2013limit,cape2019two,chatterjee,du_tang,fan2019simple,lei2019unified,graph_root,xu_spectral}.

This paper is situated in the concrete setting of random graphs and demonstrates a wide range of results that can be obtained therein. At the same time, we emphasize that the considerations in this paper can, if desired, be extended beyond networks to signal-plus-noise type models more generally and other data settings, following for example the blueprint in \cite{ctp_biometrika} and elsewhere \citep{xie2021entrywise}. While conceptually straightforward in principle (see \cref{thm:deterministic}), doing so would require the interested researcher to adapt and customize current technical lemmas and proof arguments (see \cref{sec:technical_lemmas}) to their specified data setting of interest.

\subsection{Notations}
\label{sec:notation}
We summarize some notations frequently used in this paper. For a positive integer $p$, let $[p]$ denote the set $\{1,\dots,p\}$. For two non-negative sequences $\{a_{n}\}_{n \ge 1}$ and $\{b_{n}\}_{n \ge 1}$, we write $a_{n} \lesssim b_{n}$ ($a_{n} \gtrsim b_{n}$, resp.) if there exists some constant $C > 0$ such that $a_{n} \leq C b_{n}$ ($a_{n} \geq C b_{n}$, resp.) for all $n \geq 1$. We write $a_{n} \asymp b_{n}$ if simultaneously $a_{n} \lesssim b_{n}$ and $a_{n} \gtrsim b_{n}$. If $a_{n} / b_{n}$ stays bounded away from $+\infty$, then we write $a_{n} = O(b_{n})$ and $b_{n} = \Omega(a_{n})$. We write $a_{n} = \Theta(b_{n})$ to indicate that $a_{n} = O(b_{n})$ and $a_{n} = \Omega(b_{n})$. If $a_{n} / b_{n} \to 0$, then we write $a_{n} = o(b_{n})$ and $b_{n} = \omega(a_{n})$.

We say that a sequence of events $\{\mathcal{A}_{n}\}_{n \ge 1}$ holds with high probability if for any constant $c > 0$ there exists a finite constant $n_{0}$ depending only on $c$ such that $\mathbb{P}(\mathcal{A}_{n}) \geq 1 - n^{-c}$ for all $n \geq n_{0}$. 

In this paper, all vectors and matrices are real-valued. We let $\mathcal{O}_{d}$ denote the set of $d \times d$ orthogonal matrices. Given a matrix $M$, we denote its spectral norm by $\|M\|$, its Frobenius norm by $\|M\|_{F}$, its maximum absolute row sum norm by $\|M\|_{\infty}$, its nuclear norm by $\|M\|_{\ast}$, and its maximum absolute entrywise norm by $\|M\|_{\max}$.

We denote the two-to-infinity norm ($2 \to \infty$ norm) of the matrix $M$ by
\begin{equation*}
    \|M\|_{2 \to \infty}
    =
    \max_{\|x\| = 1} \|Mx\|_{\infty}
    \equiv
    \max_{i} \|m_{i}\|
    ,
\end{equation*}
where $\|x\|$ denotes the Euclidean norm of the vector $x$, and $m_{i}$ denotes the $i$-th row vector of $M$. In particular, $\|M\|_{2 \to \infty}$ is the maximum row-wise $\ell_{2}$ norm of $M$. We emphasize that the $2 \to \infty$ norm is not sub-multiplicative in general, though, for any matrices $M$ and $N$ of conformable dimensions, it holds that (e.g., see \cite[Proposition~6.5]{cape2019two})
\begin{equation}
    \label{eq:2toinf_submultiplicative}
    \|MN \|_{2 \to \infty}
    \leq
    \min\left\{
    \|M\|_{2 \to \infty} \times \|N\|,
    \|M\|_{\infty} \times \|N\|_{2 \to \infty}
    \right\}
    .
\end{equation}
Perturbation bounds using the $2 \to \infty$ norm for the eigenvectors or singular vectors of a noisily observed matrix have recently attracted widespread interest in the statistics community. For example, see \cite{spectral_chen,cape2019two,agterberg_2inf,fan2018eigenvector,abbe2020entrywise,inference_heteroskedastic,inference_pnas,cai_unbalanced,small_eigengaps} and the numerous references therein.

\section{Setup}
\label{sec:setup}
This paper considers latent position random graphs \cite{Hoff2002,bollobas2007phase} specified as follows.

\begin{definition}[Latent position graph]\label{def:lpg}
Given a positive integer $d \ge 1$, let $\mathcal{X} \subset \mathbb{R}^{d}$ be a nonempty compact set, and let $F$ be a probability distribution taking values in $\mathcal{X}$. Let $\kappa : \mathcal{X} \times \mathcal{X} \rightarrow [0,1]$ be a symmetric measurable function, namely $\kappa(x,x') = \kappa(x',x)$ for all $x, x' \in \mathcal{X}$. We say that $(A, X)$ is a {\em latent position graph} on $n$ vertices with distribution $F$, link function $\kappa$, and sparsity $\rho_{n}$ when $A$ is generated via the following procedure.
    \begin{enumerate}
      \item Sample i.i.d. latent positions $X_{1}, X_{2}, \dots, X_{n}$ according to $F$.
      
      \item Define $P$ as the $n \times n$ matrix with entries $P_{ij} = \rho_{n} \kappa(X_{i}, X_{j})$ for all $1 \le i, j \le n$.
      
      \item Given $P$, sample a symmetric binary matrix $A \in \{0,1\}^{n \times n}$ whose upper-triangular entries $A_{ij}$ are independent Bernoulli random variables with $\pr(A_{ij} = 1) = P_{ij}$ for $1 \le i \le j \le n$.
    \end{enumerate}
  For simplicity, we indicate this setting by writing $A \sim \mathrm{LPG}(\kappa, F; \rho_{n})$.
\end{definition}

\begin{remark}[Latent position graph versus graphon]
    \label{rem:graphon}
    If $\mathcal{X} = [0,1]$ and $F$ is associated with Lebesgue measure, then $\kappa$ in \cref{def:lpg} is widely known as a {\em graphon} \cite{lovasz12:_large}. While any latent position graph distribution is equivalent (with respect to the cut metric) to some graphon \cite[Proposition~10]{borgs} and vice versa, there are benefits to considering these models separately, due to the fact that additional structure is possible in higher dimensions. For example, suppose that $\mathcal{X} = [0,1]^{d}$ for $d \ge 2$ is equipped with Lebesgue measure and that $\kappa$ is $\alpha$-Hölder continuous. Then, any graphon $\tilde{\kappa}$ on $[0,1] \times [0,1]$ equivalent to $\kappa$ may only be $(\alpha/d)$-Hölder continuous \cite{janson_ohlede}.
\end{remark}

For ease of exposition, we shall first assume that $\kappa$ is a continuous, {\em positive semidefinite} kernel. Results for general (indefinite) kernels can be obtained similarly but are deferred to \cref{sec:indefinite} because they require substantially more involved notation and derivations.

Concretely, $\kappa$ being continuous and positive semidefinite implies the following useful facts.

\begin{proposition}[Properties of positive semidefinite kernels]
\label{prop:psd}
    Let $\kappa$ in \cref{def:lpg} be a continuous, positive semidefinite kernel, and let $\mathscr{K}$ denote the integral operator induced by $\kappa$, namely
    \begin{equation}
        \label{eq:integral_op}
        (\mathscr{K} h)(x)
        =
        \int_{\mathcal{X}} \kappa(x, y) h(y) \, \mathrm{d}F(y)
    \end{equation}
    for any square-integrable real-valued function $h \in L^{2}(\mathcal{X})$. Let $\mu_{1} \geq \mu_{2} \geq \cdots \geq 0$
    be the {\em eigenvalues} of $\mathscr{K}$, and let $\{\phi_{r}\}_{r \ge 1}$ be the corresponding {\em orthonormal eigenfunctions} satisfying
    \begin{equation*}
        \int_{\mathcal{X}} \phi_{r}(x) \phi_{s}(x) \mathrm{d} F(x)
        =
        \begin{cases}
            1 & \text{~if~~} r = s, \\
            0 & \text{~if~~} r \neq s,
        \end{cases}
    \end{equation*}
    for all integers $r, s \geq 1$.
\begin{enumerate}
  \item For any integer $r \geq 1$, let $U$ be the $n \times r$ matrix whose columns are orthonormal eigenvectors corresponding to the $r$ largest-in-magnitude eigenvalues of $P$. Let $\Lambda$ be the diagonal matrix containing the corresponding eigenvalues of $P$. Then, with probability one,
    \begin{equation}
      \label{eq:bounded_coherence}
        \|U |\Lambda|^{1/2}\|_{2 \to \infty}
        =
        \|U |\Lambda| U^{\top}\|_{\max}^{1/2}
        \leq
        \|P\|_{\max}^{1/2}
        \leq
        \rho_{n}^{1/2}.
    \end{equation}
    Note that the above bound does not depend on $r$. 
    Similarly, $\|U_{\perp} |\Lambda_{\perp}|^{1/2}\|_{2 \to \infty} \leq \rho_{n}^{1/2}$ holds with probability one. 
    
    \item By Mercer's theorem (e.g., see \cite[Theorem~4.49]{steinwart08:_suppor_vector_machin}) it holds that
    \begin{equation}
      \label{eq:mercer_rep}
        \kappa(x,y)
        =
        \sum_{r=1}^{\infty} \mu_{r} \phi_{r}(x) \phi_{r}(y)
        ,
    \end{equation}
    where the sum converges {\em absolutely} and {\em uniformly} on $\mathrm{supp}(F) \times \mathrm{supp}(F)$.
    
    \item Furthermore, $\mathscr{K}$ is a compact, positive operator and of {\em trace-class}, namely $\mathscr{K} = |\mathscr{K}|$, where $|\mathscr{K}| = \sqrt{\mathscr{K}^{\operatorname{H}}\mathscr{K}}$ denotes the positive semidefinite Hermitian square root, and
    \begin{equation}
        \label{eq:trace_class}
        \trace{|\mathscr{K}|}
        =
        \sum_{j=1}^{\infty} \mu_{j}
        =
        \int_{\mathcal{X}} \kappa(x,x) \mathrm{d}F(x)
        <
        \infty
        .
    \end{equation}
    The matrix $P$, properly normalized, is itself positive and of trace-class. Namely, with probability one,
    \begin{equation*}
        \frac{1}{n \rho_{n}} \,
        \trace{|P|}
        =
        \frac{1}{n \rho_{n}} \sum_{j=1}^{n} \lambda_{j}
        =
        \frac{1}{n \rho_{n}} \sum_{i=1}^{n} \rho_{n} \kappa(X_{i}, X_{i})
        \leq
        1
        .
    \end{equation*}
    
    \item The eigenvalues of $P$, written $\lambda_{1} \ge \dots \ge \lambda_{n}$, are {\em consistent} for those of $\mathscr{K}$. In particular, for any $c > 0$, by \cite[Theorem~7]{rosasco_1} it holds that
    \begin{equation}
        \label{eq:eigenvalue_convergence}
        \pr\left(
        \sup_{j \geq 1}\left|(n \rho_{n})^{-1} \lambda_{j} - \mu_{j}\right|
        \leq
        \frac{2 \sqrt{2c} \log^{1/2}{n}}{n^{1/2}}
        \right) 
        \geq
        1 - 2n^{-c}
        ,
    \end{equation}
    where, by definition, $\lambda_{j} = 0$ whenever $j > n$.
\end{enumerate}
\end{proposition}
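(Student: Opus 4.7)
The proposition assembles four essentially independent facts, and I would tackle them in the stated order; none is technically heavy once the right observations are identified.

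\textbf{Part 1.} Since $\kappa$ is positive semidefinite and continuous, the sampled matrix $P$ is itself positive semidefinite (the entries $\kappa(X_i,X_j)$ form a Gram matrix), so $|\Lambda|=\Lambda$ and the spectral decomposition splits as $P = U\Lambda U^{\top} + U_\perp \Lambda_\perp U_\perp^{\top}$, a sum of two PSD matrices. Unfolding the definition of the $2\to\infty$ norm, the $i$-th row squared norm of $U|\Lambda|^{1/2}$ equals $(U\Lambda U^{\top})_{ii}$, so the left equality in \eqref{eq:bounded_coherence} reduces to the standard PSD identity $\|M\|_{\max} = \max_i M_{ii}$, which itself is immediate from $|M_{ij}| \le \sqrt{M_{ii}M_{jj}}$. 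For the inequality, I would observe that $P - U\Lambda U^{\top} = U_\perp \Lambda_\perp U_\perp^{\top} \succeq 0$, so that diagonal entries obey $(U\Lambda U^{\top})_{ii} \le P_{ii} = \rho_n \kappa(X_i,X_i) \le \rho_n$. The complementary bound for $U_\perp|\Lambda_\perp|^{1/2}$ follows identically by swapping the roles of the two spectral blocks.

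\textbf{Parts 2 and 3.} Part 2 is a direct invocation of the cited Mercer theorem: the hypotheses (continuous PSD kernel on a compact set equipped with a finite Borel measure) are exactly what is assumed. For Part 3, I would specialize the Mercer expansion at $x=y$ to get the pointwise identity $\kappa(x,x) = \sum_r \mu_r \phi_r(x)^2$ with nonnegative summands, then apply monotone convergence to interchange sum and integral, yielding $\sum_r \mu_r = \int_{\mathcal{X}} \kappa(x,x)\,\mathrm{d}F(x)$. This integral is finite because $\kappa$ is continuous on a compact set, hence bounded. Compactness, positivity and trace-class of $\mathscr{K}$ then follow from general spectral theory for integral operators with summable eigenvalues. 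The matrix statement is immediate from $\tr(P) = \rho_n \sum_i \kappa(X_i, X_i) \le n\rho_n$, valid with probability one since $\kappa$ is $[0,1]$-valued.

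\textbf{Part 4.} This is a verbatim application of Theorem~7 of \cite{rosasco_1} to the normalized empirical kernel matrix $(n\rho_n)^{-1} P = n^{-1}(\kappa(X_i,X_j))_{ij}$, whose nonzero eigenvalues are precisely those of the empirical integral operator studied there. The padding convention $\lambda_j = 0$ for $j>n$ renders the supremum over $j\ge 1$ well-defined, after which the cited Hoeffding-type deviation inequality transfers without modification.

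The only step requiring genuine thought, as opposed to bookkeeping or citation, is Part 1: one must combine the PSD diagonal-dominance identity $\|M\|_{\max}=\max_i M_{ii}$ with the operator ordering $U_\perp \Lambda_\perp U_\perp^{\top} \succeq 0$ to extract the entrywise comparison on diagonals. Everything else is either a direct application of a cited result or an elementary monotone-convergence interchange.
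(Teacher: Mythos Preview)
Your proposal is correct and aligns with the paper's treatment: the proposition is presented there as a collection of cited or self-evident facts (Mercer's theorem, the Rosasco et al.\ eigenvalue concentration, and the elementary PSD diagonal comparison for Part~1), with no separate proof section. Your write-up simply makes explicit the short arguments the paper leaves implicit, and your handling of Part~1 via the Loewner ordering $P - U\Lambda U^\top \succeq 0$ on diagonals is exactly the intended mechanism.
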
 
We note that \cref{eq:bounded_coherence} and \cref{eq:eigenvalue_convergence} also hold when $\kappa$ is discontinuous but positive semidefinite. The uniform convergence in \cref{eq:mercer_rep} and the trace formula in \cref{eq:trace_class}, however, do require continuity of $\kappa$. 
\begin{remark}[Population eigenvectors and coherence]
    \label{rem:bdd_coherence1}
    \cref{eq:bounded_coherence,eq:2toinf_submultiplicative} together imply that $\|U\|_{2 \to \infty} \leq \rho_{n}^{1/2} \lambda_{r}^{-1/2}$ holds with probability one. Define $\mathfrak{c}(U) = \tfrac{n}{r} \|U\|_{2 \to \infty}^2$ be the {\em coherence} of $U$ (see \cite[Definition~1.2]{candes2012exact}). Now, if we choose $r$ such that $\mu_{r} \geq 4 \sqrt{2c} n^{-1/2} \log^{1/2}{n}$ then $\lambda_{r} \geq \tfrac{1}{2} n \rho_{n} \mu_{r}$ holds with probability at least $1 - 2n^{-c}$ (see \cref{eq:eigenvalue_convergence}), and hence $\mathfrak{c}(U) \leq 2(r \mu_{r})^{-1}$ holds with probability at least $1 - 2n^{-c}$. If we fix $r$, then the previous bound indicates that $U$ has {\em bounded coherence} with high probability. However, in this paper, we are interested in letting $r$ grow with $n$, in which case $\mathfrak{c}(U)$ can diverge with $n$ since $\mu_{r}$ typically decays to zero at a faster rate than $r^{-1}$. See \cref{rem:polynomial_decay,rem:exponential_decay} for additional discussion and specifics.
\end{remark}

\section{Main results}
\label{sec:main}
We begin by presenting our first main result for (scaled) eigenvector estimation in latent position graphs with positive semidefinite kernels.
\begin{theorem}[Fine-grained eigenvector perturbation analysis for LPGs with positive semidefinite kernels]
  \label{thm:psd}
  Let $A \sim \mathrm{LPG}(\kappa, F; \rho_{n})$ be a graph on $n$ vertices generated according to \cref{def:lpg}, where $\kappa$ is positive semidefinite. For a given $r \geq 1$, let $U$ and $\hat{U}$ be the $n \times r$ matrices whose orthonormal columns are the leading eigenvectors of $A$ and $P$, respectively, and let the diagonal matrices $\hat{\Lambda}$ and $\Lambda$ contain the corresponding eigenvalues of $A$ and $P$. Fix $\nu > 0$ and suppose $r \geq 1$ is chosen such that the conditions
  \begin{gather}
    \label{eq:r_select_psd1}
    \lambda_{r} - \lambda_{r+1}
    \geq
    \max\left\{4 \varsigma(\nu,n), \tfrac{16}{3} (\nu + 2) \log n \right\}, \\
    \label{eq:r_select_psd2}
    \lambda_{r}
    \geq
    \max\left\{16 (\nu + 2) \log n + \frac{64 \varsigma(\nu, n)^{2}}{\lambda_{r} - \lambda_{r+1}},
    \vartheta(\nu+1,r,n)\right\},
   \end{gather}
   are both satisfied, where
   \begin{gather}
    \label{eq:varsigma_def}
    \varsigma(\nu,n)
    =
    2 \sqrt{2 e n \rho_{n}} + (56 \sqrt{e} + \sqrt{2\nu}) \sqrt{\log n}, \\ \label{eq:vartheta_def}
    \vartheta(c,r,n)
    = c \log n + r \log 9,
    \qquad
    \text{for $c > 0$}. 
   \end{gather}
   Let $\delta_{r} = \lambda_{r} - \lambda_{r+1}$. 
   Then, there exists an $r \times r$ orthogonal matrix $W^{(n)}$ such that
   \begin{equation}
     \label{eq:expansion_definite}
      \hat{U} \hat{\Lambda}^{1/2} W^{(n)} - U \Lambda^{1/2}
      =
      E U \Lambda^{-1/2}
      +
      Q,
    \end{equation}
    where $E U \Lambda^{-1/2}$ satisfies
    \begin{gather}
      \label{eq:EUlambda}
      \| E U \Lambda^{-1/2} \|_{2 \to \infty}
      \leq
      \frac{11}{2} \rho_{n}^{1/2} \lambda_{r}^{-1/2} \sqrt{\vartheta(\nu+1,r,n)}
    \end{gather}
    with probability at least $1 - n^{-\nu}$, while $Q$ satisfies
     \begin{gather}
        \label{eq:Q2inf_bd}
        \|Q\|_{2 \to \infty}
        \lesssim
        \frac{n \rho_{n}^{3/2}}{\delta_{r}^{2}}
        +
        \frac{(\rho_{n} \log n)^{1/2}}{\lambda_{r}^{1/2}} \left( \frac{(r n \rho_{n})^{1/2} + \log n}{\delta_{r}}\right)
    \end{gather}
    with probability at least $1 - O(n^{-\nu})$. Above, the notation $\lesssim$ and $O(\cdot)$ hides universal constants that depend only on $\nu$ but not on $r$, $n$, $\rho_{n}$, or $\kappa$.
\end{theorem}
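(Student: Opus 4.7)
The approach follows a signal-plus-noise perturbation template: derive an algebraic identity that isolates $EU\Lambda^{-1/2}$ as the leading-order term of $\hat U\hat\Lambda^{1/2}W^{(n)}-U\Lambda^{1/2}$, and then bound each piece of the remainder $Q$ using spectral concentration of $E=A-P$, Davis--Kahan subspace control, a Bernstein-plus-net bound on the leading term, and a leave-one-out decoupling step for data-dependent factors of $E$. In effect, the plan is to verify the hypotheses of a deterministic perturbation identity with high probability and to convert each deterministic bound into the stated probabilistic one.

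I would first assemble the probabilistic ingredients. Matrix Bernstein applied to $E$ yields $\|E\|\leq\varsigma(\nu,n)$ with probability at least $1-n^{-(\nu+1)}$; combined with \cref{eq:r_select_psd1,eq:r_select_psd2}, Weyl gives $\hat\lambda_r-\hat\lambda_{r+1}\geq\delta_r/2$ and $\hat\lambda_r\geq\lambda_r/2$, so $\hat\Lambda^{-1/2}$ is well-defined and $\|\hat\Lambda^{-1/2}\|\lesssim\lambda_r^{-1/2}$. Taking $W^{(n)}$ to be the Procrustes aligner arising from the SVD of $U^{\top}\hat U$, Davis--Kahan supplies $\|\hat UW^{(n)}-U\|\lesssim\|E\|/\delta_r$. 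For the leading term, fixing a row $i\in[n]$ and a unit vector $w\in\mathbb{R}^{r}$, the $i$-th coordinate of $EU\Lambda^{-1/2}w$ equals $\sum_{j}E_{ij}(U\Lambda^{-1/2}w)_{j}$, a sum of independent mean-zero bounded variables with variance proxy $\leq\rho_{n}\|U\Lambda^{-1/2}w\|^{2}\leq\rho_{n}/\lambda_{r}$ and maximum step $\leq\rho_{n}^{1/2}\lambda_{r}^{-1/2}$ via \cref{eq:bounded_coherence}. A scalar Bernstein bound, union-bounded over $i\in[n]$ and a $1/3$-net of size $9^{r}$ on the unit sphere in $\mathbb{R}^{r}$, yields \cref{eq:EUlambda}; this is precisely why $\vartheta(\nu+1,r,n)=(\nu+1)\log n+r\log 9$ appears.

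For the decomposition, I would multiply $A\hat U=\hat U\hat\Lambda$ on the right by $\hat\Lambda^{-1/2}W^{(n)}$ and substitute $A=P+E$ together with $P=U\Lambda U^{\top}+U_{\perp}\Lambda_{\perp}U_{\perp}^{\top}$, obtaining $\hat U\hat\Lambda^{1/2}W^{(n)}-U\Lambda^{1/2}=EU\Lambda^{-1/2}+Q$ where $Q$ is a sum of remainder blocks: (i) a scalar-diagonal correction of the form $U\Lambda^{1/2}(W^{(n)\top}\hat\Lambda^{1/2}W^{(n)}-\Lambda^{1/2})\Lambda^{-1/2}$, controlled by $\|U\Lambda^{1/2}\|_{2\to\infty}\leq\rho_{n}^{1/2}$ and Weyl's inequality; (ii) a subspace-distance block involving $P(\hat UW^{(n)}-U)$, bounded via a quadratic improvement $\|(\hat UW^{(n)}-U)(\hat UW^{(n)}-U)^{\top}\|\lesssim\|E\|^{2}/\delta_{r}^{2}$ together with \cref{eq:bounded_coherence}, producing the $n\rho_{n}^{3/2}/\delta_{r}^{2}$ scaling; and (iii) cross terms $E(\hat UW^{(n)}-U)\hat\Lambda^{-1/2}$ and $EU(W^{(n)\top}\hat\Lambda^{-1/2}W^{(n)}-\Lambda^{-1/2})$. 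Sub-multiplicativity \cref{eq:2toinf_submultiplicative} is invoked on each block to pair the appropriate operator-norm factor with its $2\to\infty$ companion.

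The hardest step is the cross term $\|E(\hat UW^{(n)}-U)\|_{2\to\infty}$, because each row $E_{m\cdot}$ is correlated with $\hat U$. I would resolve this via a leave-one-out construction: for each $m\in[n]$, define $A^{(m)}$ by replacing row and column $m$ of $A$ with those of $P$, and let $\hat U^{(m)}$ be the top-$r$ eigenbasis of $A^{(m)}$ with its own Procrustes aligner $W^{(m)}$. Since $A-A^{(m)}$ is rank-two with operator norm $\lesssim(n\rho_{n}\log n)^{1/2}$ (vector Bernstein applied to the single row $E_{m\cdot}$), a Davis--Kahan argument upgraded to the $2\to\infty$ norm yields $\|\hat UW^{(n)}-\hat U^{(m)}W^{(m)}\|_{2\to\infty}$ of the same order divided by $\delta_{r}$. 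Replacing $\hat U$ by $\hat U^{(m)}$ in row $m$ decouples $E_{m\cdot}$ from the eigenvectors, permitting a scalar Bernstein step conditional on $\hat U^{(m)}$; combined with $\|\hat U^{(m)}W^{(m)}-U\|_{F}\lesssim\sqrt{r}\,\|E\|/\delta_{r}$, this produces the $(rn\rho_{n})^{1/2}/\delta_{r}$ factor in \cref{eq:Q2inf_bd}. A union bound over the $n$ leave-one-out events, each with failure probability $n^{-(\nu+1)}$, yields the stated $1-O(n^{-\nu})$ probability.
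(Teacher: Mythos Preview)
Your high-level ingredients (spectral concentration for $\|E\|$, Bernstein-plus-$\epsilon$-net for $\|EU\Lambda^{-1/2}\|_{2\to\infty}$, and leave-one-out decoupling for the cross term) match the paper's, but the algebraic decomposition you sketch has two concrete gaps that would prevent you from reaching the stated rate for $Q$.

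\textbf{Block (ii) does not give $n\rho_n^{3/2}/\delta_r^2$.} Writing the perturbation as $P(\hat{U}W^{(n)}-U)$ and invoking the quadratic improvement $\|U^{\top}\hat U-W^{(n)}\|\lesssim\|E\|^2/\delta_r^2$ does not yield the claimed rate. If you split $P=U\Lambda U^{\top}+U_{\perp}\Lambda_{\perp}U_{\perp}^{\top}$, the $U$-part is $U\Lambda(U^{\top}\hat U-W^{\top})W\hat\Lambda^{-1/2}$, and any bound that factors out $U\Lambda$ or $U\Lambda^{1/2}\cdot\Lambda^{1/2}$ picks up a condition number $(\lambda_1/\lambda_r)^{1/2}$ --- fatal in the growing-$r$ regime this theorem targets. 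The $U_{\perp}$-part is $U_{\perp}\Lambda_{\perp}U_{\perp}^{\top}\hat U W\hat\Lambda^{-1/2}$; a one-step Davis--Kahan bound gives only $\rho_n^{1/2}\psi_0\asymp n^{1/2}\rho_n/\delta_r$, which is \emph{larger} than $n\rho_n^{3/2}/\delta_r^2$ whenever $\delta_r\gg(n\rho_n)^{1/2}$ (always, under \cref{eq:r_select_psd1}). The paper sidesteps both issues by (a) using the specific splitting $\hat U\hat\Lambda^{1/2}-U\Lambda^{1/2}W^{(n)}=(I-UU^{\top})\hat U\hat\Lambda^{1/2}+U(U^{\top}\hat U\hat\Lambda^{1/2}-\Lambda^{1/2}U^{\top}\hat U)+U\Lambda^{1/2}(U^{\top}\hat U-W^{(n)})$, so the quadratic improvement is applied directly to $U\Lambda^{1/2}(\cdot)$ with no condition number; and (b) expanding $(I-UU^{\top})P\hat U\hat\Lambda^{-1/2}$ as a Neumann-like series $\sum_{k}\Pi_U^{\perp}P^kE\hat U\hat\Lambda^{-(k+1/2)}$, whose geometric decay $(\lambda_{r+1}/\hat\lambda_r)^k$ extracts the extra factor of $\|E\|/\delta_r$ needed to reach $n\rho_n^{3/2}/\delta_r^2$. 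Your proposal omits this iteration entirely.

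\textbf{Block (i) cannot be handled by Weyl alone.} The matrix $W^{(n)\top}\hat\Lambda^{1/2}W^{(n)}-\Lambda^{1/2}$ is not diagonal unless $W^{(n)}$ commutes with $\Lambda^{1/2}$, which fails whenever $\Lambda$ has repeated (or near-repeated) eigenvalues---precisely the regime the theorem is meant to cover. The paper instead bounds $\|\Lambda^{1/2}U^{\top}\hat U-U^{\top}\hat U\hat\Lambda^{1/2}\|$ via an approximate-commutation lemma that writes the entries as $u_i^{\top}E\hat u_j/(\lambda_i^{1/2}+\hat\lambda_j^{1/2})$ and controls the resulting Hadamard product with a Cauchy matrix; this is what supplies the $\lambda_r^{-1/2}\|U^{\top}E\hat U\|$ bound for $\psi_2$ and is not a consequence of Weyl.

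Your leave-one-out sketch is in the right spirit, but note that the paper does not bound $\|\hat U-\hat U^{(m)}\|_{2\to\infty}$ directly; it bounds $\|(I-\hat U^{[h]}\hat U^{[h]\top})\hat U\|$ in operator norm and then closes a self-bounding inequality in which $\|(I-UU^{\top})\hat U\|_{2\to\infty}$ appears on both sides, using $\lambda_r\geq 24C_2(\nu)\log n+64\delta_r^{-1}\|E\|^2$ (implied by \cref{eq:r_select_psd2}) to make the recursion contract.
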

As $n \rho_n \geq \lambda_1 \geq \lambda_r \geq \delta_r$, 
\cref{eq:r_select_psd1} through \cref{eq:vartheta_def} automatically imply a lower bound of $n \rho_n = \Omega(\log n)$ as typically seen in the literature on spectral inference for random graphs. 
Furthermore, it holds that $\|Q\|_{2 \to \infty} = o(\lambda_{r}^{-1/2} \rho_{n}^{1/2}(r^{1/2} + \log^{1/2}n))$ with high probability, so $E U \Lambda^{-1/2}$ is the dominant term in the expansion of \cref{eq:expansion_definite}, whenever the conditions 
\begin{gather}
    \label{eq:condition_rem4} 
    \delta_{r}^{4} (r + \log n)
    =
    \omega\left((n \rho_{n})^{2} \lambda_{r}\right)
    \qquad
    \text{and}
    \qquad
    \delta_{r}
    =
    \omega\left(\max\left\{ \sqrt{n \rho_{n} \log n}, \log n\right\}\right)
\end{gather}
are simultaneously satisfied.

\begin{remark}[Explicit constants and bounding the residual perturbation term]
    \label{rem:cor_presentation}
    Derivations of explicit but not-too-large universal constants in \cref{eq:r_select_psd1} through \cref{eq:vartheta_def} are crucial for finite-sample inference as they allow us to choose $r$, interpretable as the embedding dimension or dimensionality, as growing with $n$ in a data-dependent manner (see \cref{cor:main1} below) while {\em simultaneously} guaranteeing that the bound in \cref{eq:EUlambda} for the main order term holds for any finite $n$.
    Meanwhile, explicit albeit possibly sub-optimal values for the constants appearing in the bound for the residual term, $\|Q\|_{2 \to \infty}$, can be derived through careful but tedious book-keeping per the technical lemmas in \cref{sec:technical_lemmas}. For example, one can show
    \begin{equation*}
    \begin{split}
    \|Q\|_{2 \to \infty}
    &\leq
    \frac{74 \rho_{n}^{1/2} \varsigma(\nu,n)^2}{\delta_{r}^{2}}
    +
    \frac{181 \rho_{n} \sqrt{\vartheta(\nu+2,r,n)}}{\delta_{r}}
    +
    \frac{80 \varsigma(\nu,n) \sqrt{(\nu + 2) r \rho_{n} \log n}}{\delta_{r} \lambda_{r}^{1/2}} \\
    &\qquad+
    \frac{\sqrt{\rho_{n} \vartheta(\nu+2,r,n)}}{\lambda_{r}^{1/2}}\left(\frac{176 (\nu + 2) \log n + 357 \varsigma(\nu,n)}{\delta_{r}}\right)
    \end{split}
    \end{equation*}
    holds with probability at least $1 - 10n^{-\nu}$. See \cref{sec:proof_simple} for more details.
\end{remark}

\begin{remark}[On dimension selection and eigengaps in high rank models]
    \label{rem:dim_select}
    This paper is primarily concerned with the setting $\mathrm{rk}(\kappa) = \infty$, where $\mathrm{rk}(\kappa)$ denotes the number of non-zero eigenvalues associated with the integral operator $\mathscr{K}$ in \cref{eq:integral_op}. Notably, if $\mathrm{rk}(\kappa) = \infty$, then for any $n \ge 1$, the $n \times n$ matrix $P$ in \cref{def:lpg} is of rank $n$ with probability one. In this setting, it is often desirable to choose the embedding dimension, $r$, to change (i.e.,~grow) with $n$, such as in the inference problems considered in \cref{sec:entrywise_approximation,sec:two_sample}. Importantly, doing so is always possible when $A$ is generated according to the model in \cref{def:lpg} with $n \rho_n = \omega(\log n)$. More specifically, for any fixed but arbitrary $s \geq 1$, there always exists $r \geq s$ for which \cref{eq:r_select_psd1,eq:r_select_psd2} are satisfied as $n$ increases. Indeed, as $\mathscr{K}$ is compact, the only accumulation point of $\{\mu_{k}\}_{k \ge 1}$ is at $0$, hence if $\mu_{s} > 0$, then there always exists some $r \geq s$ such that $\mu_{r} - \mu_{r+1} > 0$. For this choice of $r$, by \cref{eq:eigenvalue_convergence}, we have for any $c > 0$ and sufficiently large $n$ that
\begin{equation*}
    \lambda_{r} - \lambda_{r+1}
    \geq
    n \rho_{n}(\mu_{r} - \mu_{r+1} - 4 \sqrt{2c} n^{-1/2} \log^{1/2}{n})
    \geq
    \tfrac{1}{2} n \rho_{n} (\mu_{r} - \mu_{r+1})
    \geq 
    4\varsigma(\nu,n)
\end{equation*}
with probability at least $1 - n^{-c}$, and \cref{eq:r_select_psd1} is satisfied. Given \cref{eq:r_select_psd1}, for any fixed $c$, $\nu$, and $r$, we also have
\begin{equation*}
    \begin{split}
    \lambda_{r}
    &\geq
    n \rho_{n} (\mu_{r} - 2 \sqrt{2c} n^{-1/2} \log^{1/2}{n}) \\
    &\geq
    \max\left\{16 (\nu + 2) \log n + \frac{64 \varsigma(\nu,n)^{2}}{\lambda_{r} - \lambda_{r+1}}, \vartheta(\nu+1,r,n)\right\}
    \end{split}
\end{equation*}
for sufficiently large $n$, and \cref{eq:r_select_psd2} is also satisfied. In summary, if $\mathrm{rk}(\kappa) = \infty$, then we can always select a sequence $\{r(n)\}_{n \ge 1}$, with $r(n) \rightarrow \infty$ as $n \rightarrow \infty$ for which \cref{thm:psd} applies. Furthermore, for any $r$ along this sequence, the bounds for $EU \Lambda^{-1/2}$ and $Q$ can be written in terms of $\{\mu_k\}_{k \ge 1}$ as
\begin{gather}
\label{eq:EULambda_pop1}
    \|E U \Lambda^{-1/2}\|_{2 \to \infty}
    \lesssim
    \frac{(r^{1/2} + \log^{1/2}{n})}{n^{1/2} \mu_{r}^{1/2}}, \\
    \label{eq:Q_pop1}
    \|Q\|_{2 \to \infty}
    \lesssim
    \frac{1}{n \rho_{n}^{1/2} (\mu_{r} - \mu_{r+1})^2}
    +
    \frac{\log^{1/2}{n}}{n \rho_{n}^{1/2} \mu_{r}^{1/2}(\mu_{r} - \mu_{r+1})}\left(r^{1/2} + \frac{\log n}{(n \rho_{n})^{1/2}}\right)
    ,
\end{gather} 
which hold with high probability. 
\end{remark}

For ease of exposition, the conditions for $r$ in \cref{thm:psd} are stated in terms of the eigenvalues $\lambda_{r}$ and $\lambda_{r+1}$ of the edge probability matrix $P$. As $P$ is unknown, the next result, \cref{cor:main1}, replaces these conditions with those based on the eigenvalues $\hat{\lambda}_{r}$ and $\hat{\lambda}_{r+1}$ of $A$. For simplicity, $\rho_{n}$ is assumed to be known, though if $\rho_{n}$ is unknown then we can replace $n \rho_{n}$ with $\Delta \log^{1/2}{n}$ in \cref{eq:r_select_cor2}, where $\Delta$ is the average degree of $A$. In particular, $\Delta \log^{1/2}{n} = \omega(n \rho_{n})$ holds asymptotically almost surely. Furthermore, since $\max_{i \ge 1} |\hat{\lambda}_{i} - \lambda_{i}| \leq \|E\|$ holds by Weyl's inequality, applying \cref{lem:bandeira_vanhandel} yields $\|E\| \leq \varsigma(\nu,n)$ with probability at least $1 - n^{-\nu}$, which yields the conditions for $r$ in \cref{eq:r_select_cor1,eq:r_select_cor2}.

\begin{corollary}[Data-driven dimension selection in \cref{thm:psd}]
  \label{cor:main1}
  Assume the setting and notations in \cref{thm:psd}. 
  Suppose $r \geq 1$ is chosen such that the conditions
  \begin{gather}
    \label{eq:r_select_cor1}
    \hat{\lambda}_{r} - \hat{\lambda}_{r+1}
    \geq 
    \max\left\{4 \varsigma(\nu,n), \tfrac{16}{3}(\nu + 2) \log n\right\}
    +
    2 \varsigma(\nu,n)
    , \\
    \label{eq:r_select_cor2}
    \hat{\lambda}_{r}
    \geq
    \max\left\{16 (\nu + 2) \log n + \frac{64 \varsigma(\nu,n)^{2}}{\hat{\lambda}_{r} - \hat{\lambda}_{r+1}}, \vartheta(\nu+1, r, n)\right\}
    +
    \varsigma(\nu,n)
    ,
  \end{gather}
   are both satisfied, where $\varsigma$ and $\vartheta$ are defined in \cref{eq:varsigma_def,eq:vartheta_def}. 
    Then, \cref{eq:expansion_definite} holds,
   where $E U \Lambda^{-1/2}$ and $Q$ therein satisfy the bounds in \cref{eq:EUlambda,eq:Q2inf_bd} with probability at least $1 - O(n^{-\nu})$.
\end{corollary}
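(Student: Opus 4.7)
The plan is to reduce \cref{cor:main1} to \cref{thm:psd} by translating the data-dependent hypotheses on $\hat{\lambda}_r, \hat{\lambda}_{r+1}$ into the population-level hypotheses on $\lambda_r, \lambda_{r+1}$, on a high-probability event. The only additional ingredient beyond \cref{thm:psd} is Weyl's inequality combined with the spectral-norm concentration bound $\|E\| \leq \varsigma(\nu, n)$ provided by \cref{lem:bandeira_vanhandel}, which is precisely the setup already indicated in the paragraph preceding the corollary statement.

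First, I would define the event $\mathcal{E}_{0} = \{\|E\| \leq \varsigma(\nu, n)\}$, for which $\pr(\mathcal{E}_{0}) \geq 1 - n^{-\nu}$ by \cref{lem:bandeira_vanhandel}. On $\mathcal{E}_{0}$, Weyl's inequality yields $\sup_{i} |\hat{\lambda}_{i} - \lambda_{i}| \leq \varsigma(\nu, n)$, and in particular
\begin{equation*}
    \lambda_{r} \geq \hat{\lambda}_{r} - \varsigma(\nu, n),
    \qquad
    \lambda_{r} - \lambda_{r+1} \geq (\hat{\lambda}_{r} - \hat{\lambda}_{r+1}) - 2\varsigma(\nu, n).
\end{equation*}
The $+2\varsigma(\nu,n)$ buffer in \cref{eq:r_select_cor1} is engineered precisely so that subtracting it transfers that condition into \cref{eq:r_select_psd1}. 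Likewise, the $+\varsigma(\nu,n)$ buffer in \cref{eq:r_select_cor2} delivers the $\vartheta(\nu+1,r,n)$ branch of \cref{eq:r_select_psd2} directly from $\lambda_{r} \geq \hat{\lambda}_{r} - \varsigma(\nu,n)$.

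The one remaining step is to replace $64\varsigma(\nu,n)^{2}/(\hat{\lambda}_{r} - \hat{\lambda}_{r+1})$ appearing in \cref{eq:r_select_cor2} with $64\varsigma(\nu,n)^{2}/(\lambda_{r} - \lambda_{r+1})$ as required by \cref{eq:r_select_psd2}. Note that \cref{eq:r_select_cor1} already forces $\hat{\lambda}_{r} - \hat{\lambda}_{r+1} \geq 6\varsigma(\nu,n)$, so that $\lambda_{r} - \lambda_{r+1} \geq \tfrac{2}{3}(\hat{\lambda}_{r} - \hat{\lambda}_{r+1})$; the two ratios therefore agree up to an absolute constant factor, which is absorbed into the universal constants inside \cref{eq:r_select_psd2}. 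With \cref{eq:r_select_psd1,eq:r_select_psd2} now verified on $\mathcal{E}_{0}$, I would invoke \cref{thm:psd} to conclude \cref{eq:expansion_definite} along with the bounds on $\|EU\Lambda^{-1/2}\|_{2 \to \infty}$ and $\|Q\|_{2 \to \infty}$, and finish with a union bound over $\mathcal{E}_{0}$ and the probability events from \cref{thm:psd} to obtain the stated probability $1 - O(n^{-\nu})$.

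The main (minor) obstacle is constant tracking through the $64 \varsigma(\nu,n)^{2}/(\cdot)$ term, which is pure bookkeeping rather than a conceptual difficulty; the rest of the argument is a short reduction to \cref{thm:psd} via Weyl's inequality and spectral-norm concentration.
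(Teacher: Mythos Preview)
Your proposal is correct and matches the paper's approach exactly: the paper's entire argument for \cref{cor:main1} is the one-line observation (in the paragraph preceding the corollary) that Weyl's inequality combined with $\|E\|\le\varsigma(\nu,n)$ from \cref{lem:bandeira_vanhandel} converts \cref{eq:r_select_cor1,eq:r_select_cor2} into \cref{eq:r_select_psd1,eq:r_select_psd2}, after which \cref{thm:psd} applies directly. You have in fact been slightly more careful than the paper in flagging the constant-factor slack in the $64\varsigma(\nu,n)^{2}/(\cdot)$ term; the paper simply absorbs this into the implicit constants without comment.
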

\begin{remark}[Eigenvalues with polynomial decay]
    \label{rem:polynomial_decay}
    Suppose that the eigenvalues of $\mathscr{K}$ exhibit polynomial decay, i.e., $\mu_{r} \asymp r^{-\alpha}$ and $\mu_{r} - \mu_{r+1} \asymp r^{-\beta}$ for some constant $\beta > \alpha > 1$. In this setting,
    \begin{gather*}
        \lambda_{r}
        \asymp
        n \rho_{n} r^{-\alpha},
        \quad
        \text{and}
        \quad
        \delta_{r}
        \asymp
        n \rho_{n} r^{-\beta}, \\
        \|E U \Lambda^{-1/2}\|_{2 \to \infty}
        \lesssim
        \frac{r^{(1 + \alpha)/2} \log^{1/2}{n}}{n^{1/2}}, \\
        \|Q\|_{2 \to \infty}
        \lesssim
        \frac{r^{2 \beta} + r^{\alpha + \beta} \log^{1/2}{n}}{n \rho_{n}^{1/2}} +
        \frac{r^{(1 + \alpha)/2 + \beta} \log^{3/2}{n}}{n^{3/2} \rho_{n}}.
    \end{gather*}
    If $r$ is chosen such that 
    \begin{equation}
        r
        \ll
        \min\left\{
        (n \rho_{n})^{\tfrac{1}{\alpha + 2\beta - 1}},
        (n \rho_{n} \log n)^{\tfrac{1}{4 \beta - \alpha - 1}},
        (n \rho_{n} \log^{-1}{n})^{\tfrac{1}{\beta}}
        \right\}
        ,
    \end{equation}
    then $\|Q\|_{2 \to \infty}$ is negligible compared to $\|E U \Lambda^{-1/2}\|_{2 \to \infty}$. If instead, $r$ is chosen such that
    \begin{equation*}
        r
        \ll
        \min\left\{
        (n \rho_{n} \log^{-1}{n})^{\tfrac{3}{1 + \alpha + 2 \beta}},
        (n \rho_{n} \log^{-1/2}{n})^{\tfrac{1}{\alpha + \beta}},
        (n \rho_{n})^{\tfrac{1}{2\beta}}\right\} ,
    \end{equation*}
    then $\|E U \Lambda^{-1/2} + Q\|_{2 \to \infty} = o(\rho_{n}^{1/2})$ with high probability. Ignoring logarithmic factors in $n$, the above condition corresponds to $r^{2 \beta} = o(n \rho_{n})$ which is slightly more restrictive than the condition $\lambda_{r} = \Omega(\sqrt{n \rho_{n}})$ in the matrix USVT literature \citep{xu_spectral,chatterjee}. Indeed, for the current setting, $\lambda_{r} = \Omega(\sqrt{n \rho_{n}})$ is equivalent to $r^{2 \alpha} = O(n \rho_{n})$. Given that subspace estimation is generally more difficult than low-rank approximation (e.g., see \cite{drineas_ipsen}), we conjecture that the condition $r^{2 \beta} = o(n \rho_{n})$ is not easily improvable.
\end{remark}

\begin{remark}[Eigenvalues with exponential decay]
    \label{rem:exponential_decay}
    Suppose now that the eigenvalues of $\mathscr{K}$ exhibit exponential decay, i.e., $\mu_{r} \asymp \exp(-c_{0} r^{\beta})$ and $\mu_{r} - \mu_{r+1} \asymp r^{\alpha} \exp(-c_{0} r^{\beta})$ for some constants $c_{0} > 0$, $\alpha \geq {0}$, and $\beta \in (0,1]$. This assumption on $\mu_{r}$ arises naturally whenever the kernel $\kappa$ is sufficiently smooth (e.g., infinitely divisible); see, e.g., \cite[Theorem~5]{belkin_approximation}. Consequently, with high probability,
    \begin{gather*}
      \|E U \Lambda^{-1/2}\|_{2 \to \infty}
      \lesssim
      \frac{r^{1/2} \exp(c_{0} r^{\beta})}{n^{1/2}}, \\
      \|Q\|_{2 \to \infty} 
      \lesssim
      \frac{r^{-2 \alpha} \exp(2c_{0} r^{\beta}) \log^{1/2}{n}}{n \rho_{n}^{1/2}} +
      \frac{r^{(1 - \alpha)} \exp(3 c_{0} r^{\beta}/2) \log^{3/2}{n}}{n^{3/2} \rho_{n}}.
    \end{gather*}
    In particular, choosing $r = o((\log n \rho_{n})^{1/\beta})$ yields $\|\hat{U} \hat{\Lambda}^{1/2} W^{(n)} - U \Lambda^{1/2} \|_{2 \to \infty} = o(\rho_{n}^{1/2})$ with high probability, where $E U \Lambda^{-1/2}$ is the dominant term. These findings are consistent with the results in \cite{udell} showing that $n \times n$ matrices whose entries are of the form $h(x_{i}, x_{j})$, where $h$ is piecewise analytic and $\{x_{i}\}_{i=1}^{n}$ are bounded latent variables, can be well-approximated entrywise by a matrix of rank $O(\log n)$.
\end{remark}

\begin{figure}[t!]
  \centering
  \subfloat[Normalized eigenvalues $\lambda_r/n$]{\includegraphics[width=.48\textwidth]{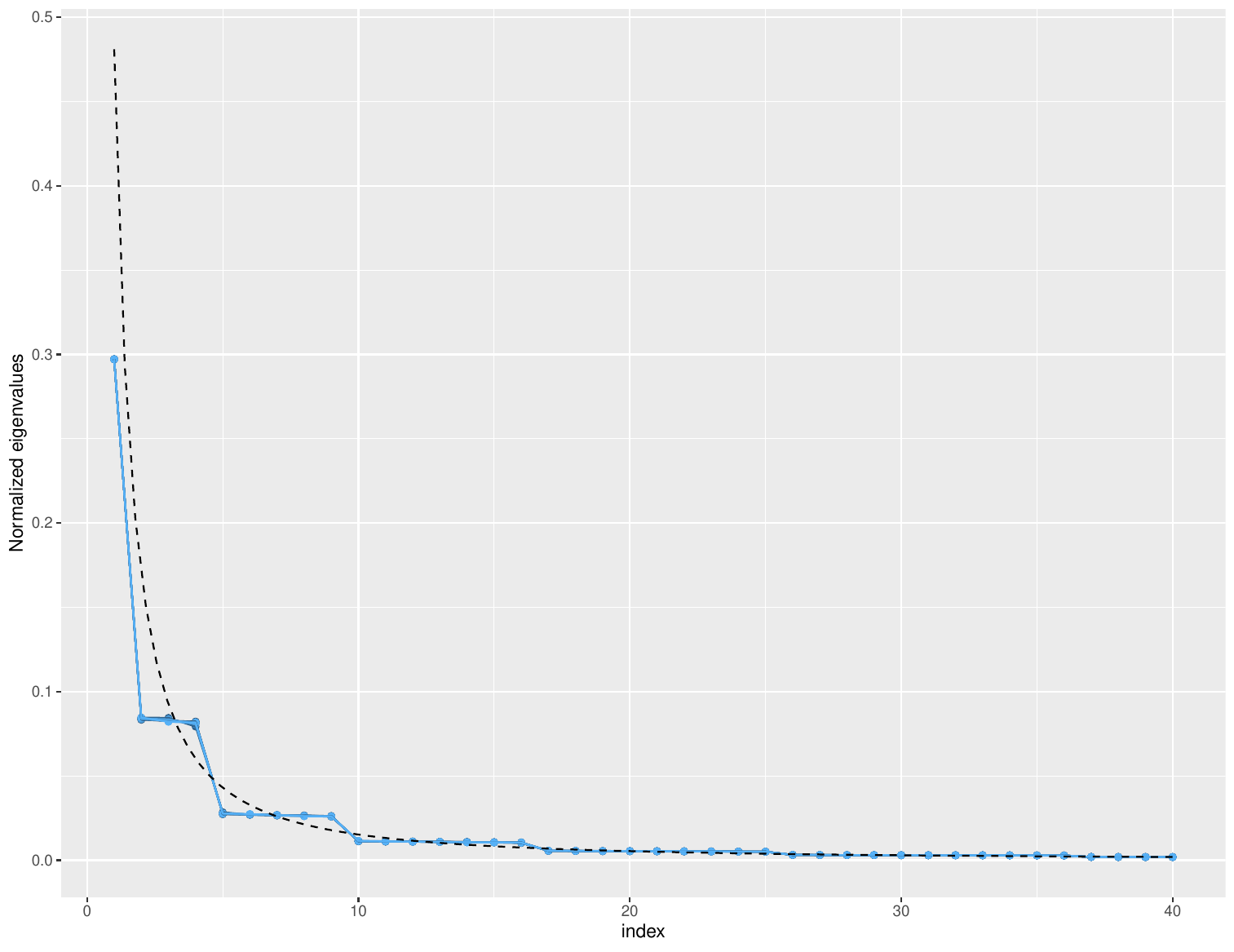}} 
  \subfloat[Normalized eigenvalue gaps $(\lambda_{r} - \lambda_{r+1})/n$]{\includegraphics[width=.48\textwidth]{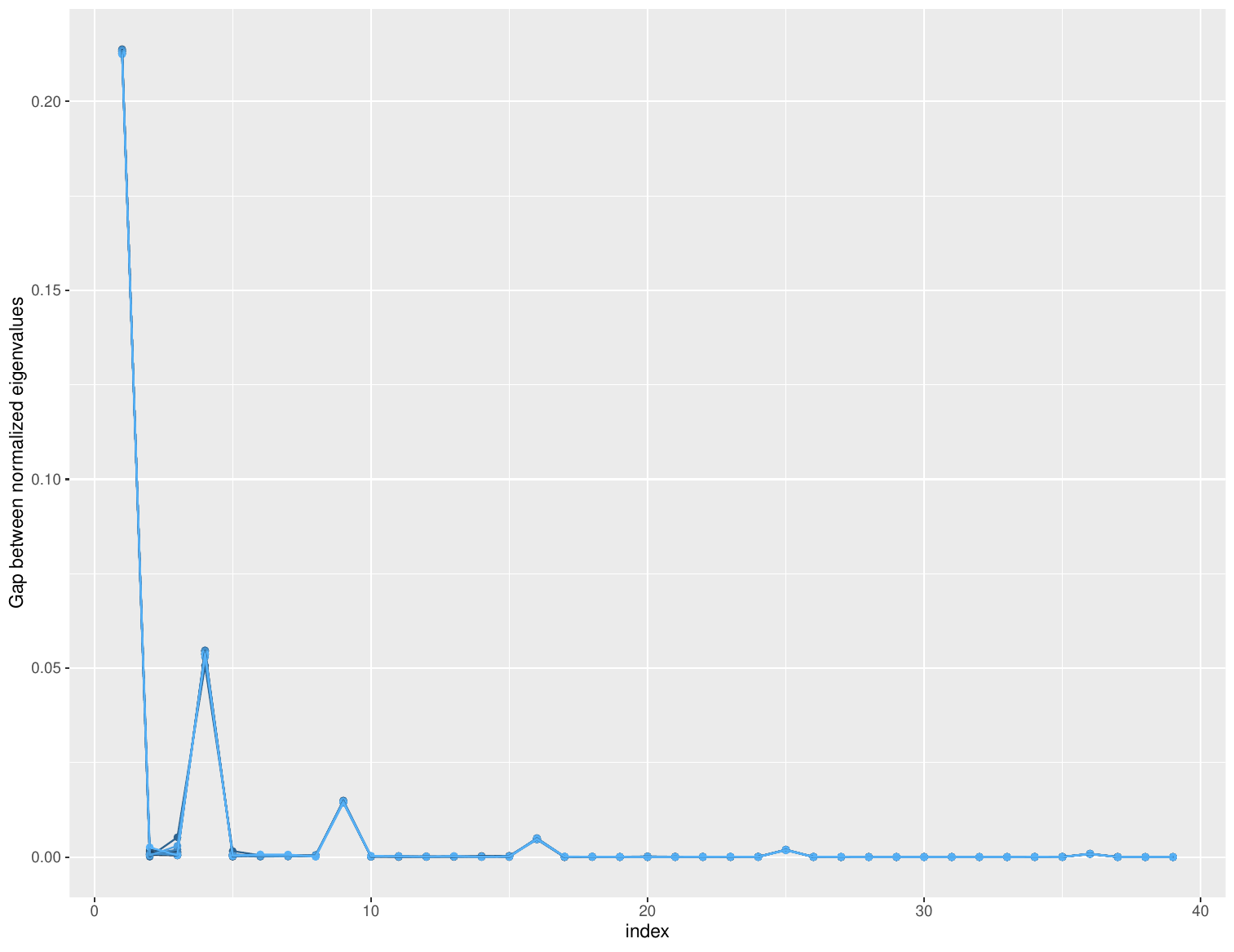}}
  \caption{Plots of the forty largest eigenvalues (left panel) and gap between consecutive eigenvalues (right panel) for $n \times n$ edge probability matrices $P$ when the link function is $\kappa(x,y) = \exp(-\|x - y\|)$, where $X_{1}, \dots, X_{n}$ are sampled i.i.d. from the uniform distribution on the unit sphere in $\mathbb{R}^{3}$, and $n = 8000, \rho_{n} = 1$. There are $10$ blue-colored lines (nearly indistinguishable) in each panel, one for each independent Monte Carlo replicate. The dashed line in the left panel corresponds to the best approximation of the form $\lambda_{r}/n \propto r^{-3/2}$.}
  \label{fig:eigengap_simulation1}
\end{figure}

\begin{remark}[Large and small eigenvalue gaps]
\label{rem:eigenval_gap_plot}
  For ease of exposition, our discussion in \cref{rem:polynomial_decay,rem:exponential_decay} assumes that the gaps between consecutive eigenvalues of $\mathscr{K}$ decay to zero at a given rate. However, in practice, while the eigenvalues themselves can exhibit a certain decay rate, gaps between these eigenvalues can be arbitrarily small. Indeed, \cref{fig:eigengap_simulation1} presents a summary of the $40$ largest eigenvalue of the $n \times n$ matrix $P$ for $n = 8000$ when the link function is $\kappa(x,y) = \exp(-\|x - y\|)$ and the latent positions are sampled i.i.d. from the uniform distribution on the unit sphere in $\mathbb{R}^{3}$. While the eigenvalues of $P$ can be fitted quite well by a curve of the form $\lambda_{r}/n \propto r^{-3/2}$, the gaps between consecutive eigenvalues are generally near-zero except for visible jumps at a few locations, such as $r = 1, 4, 9, 15$.
\end{remark}

\subsection{Extension to indefinite kernels}
\label{sec:indefinite}
We now consider the case where $\kappa$ is indefinite, for which the edge probability matrix $P$ and the integral operator $\mathscr{K}$ induced by $\kappa$ per \cref{eq:integral_op} each have both positive and negative eigenvalues. The first issue we face is that the bound $\|U |\Lambda|^{1/2}\|_{2 \to \infty} \leq \rho_{n}^{1/2}$ for all $r$ in  \cref{eq:bounded_coherence} no longer holds. Indeed, although
\begin{equation*}
    \|U |\Lambda|^{1/2}\|_{2 \to \infty}^2
    =
    \|U |\Lambda| U_{\perp}^{\top}\|_{\max}
    \leq
    \| \, |P| \, \|_{2 \to \infty}
    ,
\end{equation*}
here the entries of $U |\Lambda| U^{\top}$ and $|P| \equiv (P^{2})^{1/2}$ are not directly related to those of $P$. In other words, there is no closed-form expression for $|P|$ in terms of simple element-wise operations on $P$ when $P$ is indefinite.
Nevertheless, from \cref{eq:2toinf_submultiplicative} we at least have
\begin{equation}
    \label{eq:u_lambda_2toinf}
    \|U |\Lambda|^{1/2}\|_{2 \to \infty}
    \leq
    |\lambda_r|^{-1/2} \|U \Lambda\|_{2 \to \infty} 
    =
    |\lambda_r|^{-1/2} \|P U\|_{2 \to \infty}
     \leq
    |\lambda_r|^{-1/2} n^{1/2} \rho_{n}
    ,
\end{equation}
where the last inequality holds because $\|PU\|_{2 \to \infty} = \|P\|_{2 \to \infty}$ whenever $U$ has orthonormal columns, and the entries of $P$ are bounded in magnitude by $\rho_{n}$. Since $|\lambda_{r}| \leq n \rho_{n}$, the bound in \cref{eq:u_lambda_2toinf} is worse than in \cref{eq:bounded_coherence}, yet it is still meaningful
because it yields the following generalization of \cref{thm:psd} presented in \cref{thm:general}.

Note that by convention, unless specified otherwise, the eigenvalues $\{\hat{\lambda}_{j}\}_{j \ge 1}$ of $A$ and eigenvalues $\{\lambda_{j}\}_{j \ge 1}$ of $P$ are always ordered in decreasing modulus, that is, $|\hat{\lambda}_{1}| \geq |\hat{\lambda}_{2}| \geq \dots \geq 0$ and $|\lambda_{1}| \geq |\lambda_{2}| \geq \dots \geq 0$.

\begin{theorem}[Fine-grained eigenvector perturbation analysis for LPGs with general kernels]
    \label{thm:general}
    Let $A \sim \mathrm{LPG}(\kappa, F; \rho_{n})$ be a graph on $n$ vertices generated according to \cref{def:lpg}, where $\kappa$ is possibly indefinite. For a given $r \geq 1$, let the diagonal matrices $\hat{\Lambda}$ and $\Lambda$ contain the $r$ largest in magnitude eigenvalues of $A$ and $P$, respectively, and let $U$ and $\hat{U}$ be the $n \times r$ matrices whose orthonormal columns are the corresponding eigenvectors of $A$ and $P$. Let $\delta_{r} = |\lambda_{r}| - |\lambda_{r+1}|$. Fix $\nu \geq 2$ and suppose $r \geq 1$ is chosen such that the conditions
    \begin{gather}
        \label{eq:r_select_general}
        \delta_{r}
        \geq
        \max\{4 \varsigma(\nu,n), \tfrac{16}{3} (\nu + 2) \log n \}
        ,\\
        \label{eq:r_select_general2}
        |\lambda_{r}|
        \geq
        \max
        \left\{16 (\nu + 2) \log n + \frac{64 \vartheta(\nu,n)^{2}}{\delta_{r}},
        \sqrt{n \rho_{n}\vartheta(\nu+2,r,n)} \right\}
        ,
   \end{gather}
   are both satisfied, where $\varsigma$ and $\vartheta$ are defined in \cref{eq:varsigma_def,eq:vartheta_def}. Then, there exists an $r \times r$ orthogonal matrix $W^{(n)}$ such that
    \begin{equation}
      \label{eq:expansion_indefinite}
        \hat{U} |\hat{\Lambda}|^{1/2} W^{(n)} - U |\Lambda|^{1/2}
        =
        E U |\Lambda|^{-1/2}
        +
        Q,
    \end{equation}
    where $E U |\Lambda|^{-1/2}$ satisfies
    \begin{gather}
        \label{eq:EUlambda_indefinite}
        \| E U |\Lambda|^{-1/2} \|_{2 \to \infty} 
        \leq
        \frac{11}{2} |\lambda_{r}|^{-1/2} \sqrt{\rho_{n} \vartheta(\nu+1,r,n)}
    \end{gather}
    with probability at least $1 - n^{-\nu}$, while $Q$ satisfies
    \begin{equation}
        \begin{split}
        \label{eq:Q2inf_bd_indefinite}
        \|Q\|_{2 \to \infty}
        &\lesssim
        \frac{n^{3/2} \rho_{n}^{2}}{|\lambda_{r}|^{1/2} \delta_{r}^{2}} 
        +
        \frac{(\rho_{n} \log n)^{1/2}}{|\lambda_{r}|^{1/2}}
        \left(\frac{(r n \rho_{n})^{1/2} + \log n}{\delta_{r}} \right)
        \end{split} 
    \end{equation}
    with probability at least $1 - O(n^{-\nu})$. Here, $\lesssim$ and $O(\cdot)$ hide universal constants that depend only on $\nu$ but not on $r$, $n$, $\rho_{n}$, or $\kappa$.
\end{theorem}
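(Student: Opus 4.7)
The plan is to follow the same blueprint as \cref{thm:psd}, treating $A$ as a noisy perturbation of $P$ and invoking the deterministic expansion in \cref{thm:deterministic}, but now with all quantities ordered by eigenvalue magnitude and with $|\Lambda|^{1/2}$ replacing $\Lambda^{1/2}$. First I would upgrade the spectral hypotheses to the observed graph: combining Weyl's inequality with $\|E\| \le \varsigma(\nu,n)$ (valid with probability at least $1 - n^{-\nu}$ via \cref{lem:bandeira_vanhandel}), the magnitude gap between the $r$-th and $(r+1)$-st eigenvalues of $A$ remains at least $\delta_r/2$, and a Davis--Kahan $\sin\Theta$ argument produces the orthogonal aligner $W^{(n)} \in \mathcal{O}_r$ satisfying $\|\hat{U}W^{(n)} - U\| \lesssim \|E\|/\delta_r$. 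A short Weyl argument additionally shows that the sign of each $\hat{\lambda}_j$ matches that of $\lambda_j$ for $j \le r$, so $|\hat{\Lambda}|^{1/2}$ and $|\Lambda|^{1/2}$ are well-defined and the decomposition \cref{eq:expansion_indefinite} is meaningful.

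Next I would bound the main order term $E U |\Lambda|^{-1/2}$ by controlling each row of $EU$. A Bernstein inequality on $e_i^\top E U$, combined with a $1/2$-net of cardinality at most $9^r$ over the unit sphere in $\mathbb{R}^r$ and a union bound over $i \in [n]$, yields $\|EU\|_{2 \to \infty} \lesssim \sqrt{\rho_n \vartheta(\nu+1,r,n)}$ with probability at least $1 - n^{-\nu}$, using $\mathrm{Var}(E_{ij}) \le \rho_n$ and $|E_{ij}| \le 1$; dividing by $|\lambda_r|^{1/2}$ delivers \cref{eq:EUlambda_indefinite}. This step is essentially identical to its PSD counterpart, except that $|\lambda_r|$ replaces $\lambda_r$.

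The main work, and the principal obstacle, is bounding the residual $Q$. The decomposition produced by \cref{thm:deterministic} contains summands such as $(\hat U W^{(n)} - U)|\Lambda|^{1/2}$, $\hat U (|\hat\Lambda|^{1/2}W^{(n)} - W^{(n)}|\Lambda|^{1/2})$, and leave-one-out corrections involving row-wise perturbations of $E$. In the PSD proof each of these is tamed by extracting the factor $\|U\Lambda^{1/2}\|_{2 \to \infty} \le \rho_n^{1/2}$ at the appropriate moment. In the indefinite case this bound fails, and I would replace it throughout by \cref{eq:u_lambda_2toinf}, namely $\|U|\Lambda|^{1/2}\|_{2 \to \infty} \le |\lambda_r|^{-1/2} n^{1/2} \rho_n$. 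Propagating this single substitution inflates the deterministic summand $n \rho_n^{3/2}/\delta_r^2$ from \cref{eq:Q2inf_bd} to $n^{3/2} \rho_n^2/(|\lambda_r|^{1/2} \delta_r^2)$, which is precisely the first summand of \cref{eq:Q2inf_bd_indefinite}; the second summand is essentially unchanged, since its derivation only involves $\|E\|$, row-wise leave-one-out concentration of $E u_i$, and the $r \log 9$ net-entropy factor, none of which is sensitive to definiteness of $\kappa$. The trickiest point I anticipate is sign alignment: $W^{(n)}$ must conjugate $|\hat\Lambda|^{1/2}$ to $|\Lambda|^{1/2}$ block-by-block, and sign flips across adjacent eigenvalues of opposite sign must be excluded; the magnitude-gap hypothesis \cref{eq:r_select_general} is exactly what forces each $\hat\lambda_j$ into a Weyl window of width $\varsigma(\nu,n) \ll \delta_r$ around $\lambda_j$ and hence onto the same side of zero. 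After this, the remainder is careful constant bookkeeping analogous to \cref{rem:cor_presentation}.
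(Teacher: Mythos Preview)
Your high-level plan is right, but the proposal misidentifies the principal obstacle. You say the indefinite case is handled by a single substitution, replacing $\|U\Lambda^{1/2}\|_{2\to\infty}\le\rho_n^{1/2}$ with \cref{eq:u_lambda_2toinf}. That addresses only the terms in the \cref{thm:psd} proof that factor through $U$; it does nothing for the terms that factor through $U_\perp$. In the PSD proof the power-series expansion of $(I-UU^\top)P\hat U|\hat\Lambda|^{-1/2}$ and the $\zeta_k^{(2)}$ summands are controlled via $\|U_\perp\Lambda_\perp^{1/2}\|_{2\to\infty}\le\rho_n^{1/2}$, and this bound genuinely fails when $P$ is indefinite: there is no closed-form control on $\|U_\perp|\Lambda_\perp|^{1/2}\|_{2\to\infty}$ because $|P|=(P^2)^{1/2}$ is not an entrywise function of $P$. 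The trick in \cref{eq:u_lambda_2toinf} does not transfer either, since it divides by $|\lambda_r|^{1/2}$, and the analogous divisor $|\lambda_{r+1}|^{1/2}$ for $U_\perp$ may be arbitrarily small. The paper's fix is to avoid $|\Lambda_\perp|^{1/2}$ altogether and instead extract a full power of $\Lambda_\perp$, using that $P^2$ is always PSD so $\|U_\perp\Lambda_\perp\|_{2\to\infty}=\|U_\perp\Lambda_\perp^2U_\perp^\top\|_{\max}^{1/2}\le n^{1/2}\rho_n$; this forces a slight reorganization of the Neumann-series bounds (one extra $|\hat\lambda_r|^{-1/2}$ appears in the denominator of each $\zeta_k$) but yields the same first summand $n^{3/2}\rho_n^2/(|\lambda_r|^{1/2}\delta_r^2)$.

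A second point you gloss over: $W^{(n)}$ is not just ``the Davis--Kahan aligner'' here. The paper constructs it block-diagonally as $\mathrm{diag}(W^{(+)},W^{(-)})$ from separate Procrustes solutions on the positive and negative eigenspaces, so that $JW^{(n)}=W^{(n)}J$ with $J=\mathrm{sgn}(\Lambda)$. This commutation is what makes the decomposition \cref{eq:decomp_general} work. It also means $\|U^\top\hat U-W^{(n)}\|$ picks up an extra off-block term $|\lambda_r|^{-1}(\psi_1+\|E\|\psi_0)$ beyond the $\psi_0^2$ you would get in the PSD case; see \cref{eq:sintheta_bound_indefinite}. Your sign-alignment paragraph gestures at this but does not supply the construction.
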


\begin{remark}[Residual analysis for positive semidefinite versus possibly indefinite kernels]
    \label{rem:comparison_indefinite}
    We now compare the terms appearing in the upper bound for $Q$ in \cref{thm:psd,thm:general}. To begin, observe that the second terms in each of \cref{eq:Q2inf_bd,eq:Q2inf_bd_indefinite} are identical. The first term in \cref{eq:Q2inf_bd_indefinite} resembles the first term in \cref{eq:Q2inf_bd}, with the main difference being the additional multiplicative factor $(n \rho_{n})^{1/2} |\lambda_{r}|^{-1/2}$ due to the use of \cref{eq:u_lambda_2toinf} as described above. \cref{eq:u_lambda_2toinf} also manifests itself in \cref{eq:r_select_general2}, where the former condition $\lambda_{r} \geq \vartheta(\nu+1,r,n)$ (see \cref{eq:r_select_psd2}) is replaced with the more restrictive condition $|\lambda_{r}| \geq \sqrt{n \rho_{n} \vartheta(\nu+2,r,n)}$. The explanation for this is rooted in the main order term $EU|\Lambda|^{-1/2}$. More specifically, a modified version of Bernstein's inequality (see \cref{eq:ex_2inf}) yields the bound
    \begin{equation*}
        \|EU \Lambda^{-1/2}\|_{2 \to \infty}
        \leq
        2 \sqrt{2} \lambda_{r}^{-1/2} \sqrt{\rho_{n} \vartheta(\nu+1,r,n)}
        +
        \tfrac{8}{3} \|U |\Lambda|^{-1/2}\|_{2 \to \infty} \vartheta(\nu+1,r,n)
    \end{equation*}
    with probability at least $1 - n^{-\nu}$. If we simplify the above display expression to keep only the first term as in \cref{eq:EUlambda_indefinite}, then we need to ensure that
    \begin{equation*}
        \|U |\Lambda|^{-1/2} \|_{2 \to \infty} \vartheta(\nu+1,r,n) 
        \leq
        n^{1/2} \rho_{n} |\lambda_{r}|^{-3/2}
        \leq
        |\lambda_{r}|^{-1/2} \sqrt{\rho_{n} \vartheta(\nu+1,r,n)}
        .
    \end{equation*}
    Ignoring constant factors, this is equivalent to requiring $|\lambda_{r}| \geq \sqrt{n \rho_{n} \vartheta(\nu+2,r,n)}$. Finally, similar to the discussion surrounding \cref{eq:condition_rem4}, with high probability $\|Q\|_{2 \to \infty} = o(|\lambda_{r}|^{-1/2} \sqrt{\rho_{n} \vartheta(\nu+2,r,n)})$, so $EU|\Lambda|^{-1/2}$ is the dominant term in the expansion of \cref{eq:expansion_indefinite} whenever the conditions
    \begin{gather}
        \label{eq:dominant_indefinite1}
        \delta_{r}^{4} (r + \log n)
        =
        \omega\left((n \rho_{n})^{3}\right)
        \qquad
        \text{and}
        \qquad
        \delta_{r}
        =
        \omega\left(\max\left\{\sqrt{n \rho_{n} \log n}, \log n\right\}\right)
    \end{gather}
  are simultaneously satisfied.
\end{remark}

We pause here to discuss choosing $r \equiv r(n)$ to grow with $n$ in \cref{thm:general}. For simplicity of presentation, assume that $\mathrm{rk}(\kappa) = \infty$, otherwise there exists a positive constant $M < \infty$ for which $\mathrm{rk}(P) \leq M$ almost surely and hence we are back in the more widely studied, comparatively easy finite-dimensional low-rank setting where we can simply choose $r = \mathrm{rk}(P)$ for sufficiently large $n$. Recall that when $\kappa$ is positive semidefinite, then we can apply the concentration inequality in \cref{eq:eigenvalue_convergence} to obtain a sequence $\{r(n)\}_{n \ge 1}$ (possibly diverging with $n$) for which \cref{thm:general} applies; see \cref{rem:dim_select,rem:polynomial_decay,rem:exponential_decay} for additional details. In contrast, we are unaware of any prior result that yields a bound as sharp as \cref{eq:eigenvalue_convergence} when $\kappa$ is indefinite.

To elaborate, since $\sup_{x,y \in \mathcal{X}} |\kappa(x,y)| \leq 1$, by \cite[Theorem~3.1]{koltchinskii00:_random} we have
\begin{equation*}
    \delta_{2}\left((n \rho_{n})^{-1} \{\lambda_{k}\}_{k \ge 1}, \{\mu_{k}\}_{k \ge 1}\right) 
    \overset{\mathrm{a.s.}}{\longrightarrow}
    0
\end{equation*}
as $n \rightarrow \infty$, where the distance $\delta_{2}$ between sequences $(v_k)_{k \ge 1}$ and $(w_k)_{k \ge 1}$ can be written as
\begin{equation}
  \label{eq:koltchinskii1}
    \delta_{2}(v,w)
    = \min_{\sigma} \sum_{k} (v_k - w_{\sigma(k)})^2
\end{equation}
and the minimization is over all bijections $\sigma$ of the natural integers.  The upshot is that 
\cref{eq:koltchinskii1} only guarantees that the extended spectrum of $P$, properly normalized, converges to that of $\mathscr{K}$ but does not specify a convergence rate.

If we further assume that
\begin{equation}
    \label{eq:koltchinskii2}
    \sum_{r=1}^{\infty} 
    \sum_{s=1}^{\infty}
    (\mu_{r}^{2} + \mu_{s}^{2})
    \int_{\mathcal{X}}
    \phi_{r}^{2}(x) \phi_{s}^{2}(x) \, \mathrm{d} F(x)
    =
    C_{*}
    <
    \infty,
\end{equation}
then \cite[Corollary~4.3]{koltchinskii00:_random} gives the bound
\begin{equation*}
    \mathbb{E}
    \left[\delta_{2}^{2}\left((n \rho_{n})^{-1}\{\lambda_{k}\}_{k \ge 1}, \{\mu_{k}\}_{k \ge 1})\right)
    \right]
    \leq
    6C_{*} n^{-1}
    .
\end{equation*}
An application of Markov's inequality thus yields a convergence rate of $O_{p}(n^{-1/2})$ for $(n \rho_{n})^{-1}\{\lambda_{k}\}_{k \ge 1}$ (here $X_n = O_p(f(n))$ means that for any $\epsilon > 0$ there exists a $C > 0$ such that $\mathbb{P}(X_n \leq C f(n)) \geq 1 - \epsilon$).  
While this bound is qualitatively similar to \cref{eq:eigenvalue_convergence}, there are two significant differences. Firstly, \cref{eq:koltchinskii2} is difficult to verify in practice, since we only observe $A$ while $\kappa$ is unknown. Secondly, the rate $O(n^{-1/2})$ only holds with probability $1 - o(1)$, so we cannot guarantee that $\delta_{2}((n \rho_{n})^{-1}\{\lambda_{k}\}_{k \ge 1}, \{\mu_{k}\}_{k \ge 1}) = O(n^{-1/2})$ asymptotically almost surely. This makes our goal of letting $r$ grow with $n$ more subtle. For example, suppose $|\mu_k|$ exhibits polynomial decay and let $\mathcal{E}_n$ be the event that \cref{eq:r_select_general,eq:r_select_general2} are satisfied simultaneously. Then, for a sequence $\{r(n)\}_{n \ge 1}$ similar to that in \cref{rem:polynomial_decay}, we only have $\lim_{n \rightarrow \infty} \mathbb{P}(\mathcal{E}_n) = 1$ and not $\mathbb{P}( \cup_{m = 1}^{\infty} \cap_{n = m}^{\infty} \mathcal{E}_n) = 1$. 

On a related note, \cite[Definition~3]{graph_root} considers the assumption 
\begin{equation}
    \label{eq:hypothesis_H} 
    \sup_{x \in \mathcal{X}} h_{\kappa}(x)
    =
    \sup_{x \in \mathcal{X}} \sum_{r \ge 1} |\mu_{r}| \phi_{r}^{2}(x)
    <
    \infty
    .
\end{equation}
We note that \cref{eq:hypothesis_H} is somewhat easier to verify than \cref{eq:koltchinskii2}. In particular, it is satisfied whenever $\mathscr{K}$ has only a {\em finite} number of positive or negative eigenvalues; examples of kernels with this property are conditionally positive definite kernels. By applying \cite[Theorem~8.1.2]{tailen_hsing}, one can show that $\max_{k \ge 1} |(n \rho_{n})^{-1} \lambda_{k} - \mu_{k}| = O_{p}(n^{-1/2})$, with the convention that $\lambda_k = 0$ for $k > n$; see also \cite[Equation~16]{graph_root}. Once again, here the rate of order $n^{-1/2}$ only holds with probability $1 - o(1)$, not with high probability as we desire. 

Finally, under the conditions in \cref{eq:hypothesis_H}, for any fixed but arbitrary $k \ge 1$ \cite[Theorem~1]{relative_1} gives the bound
\begin{equation}
  \label{eq:valdivia1}
  |(n \rho_{n})^{-1} \lambda_{k} - \mu_{k}|
  \lesssim
  |\mu_{k}| \sqrt{\frac{\mathcal{V}_{1}(R(k)) \log (R(k)/\alpha)}{n}}
\end{equation}
with probability at least $1 - \alpha$. Here, $\mathcal{V}_{1}(i) = \sup_{x \in \mathcal{X}} \sum_{k=1}^{i} \phi_{k}^{2}(x)$, and $R(k)$ is the minimum index $R$ for which $|\mu_{k}| \geq \max\left\{\sum_{\ell > R} |\mu_{\ell}|, \sqrt{R \sum_{\ell > R} \mu_{\ell}^{2}}\right\}$. Although \cref{eq:valdivia1} closely resembles \cref{eq:eigenvalue_convergence}, one still needs to bound $R(k)$ and $\mathcal{V}_1(R(k))$. Doing so will require introducing further assumptions, as otherwise both the tail sequence of eigenvalues $\{\mu_{\ell}\}_{\ell > k}$ and the eigenfunctions $\{\phi_{i}\}_{i \leq k}$ are unknown. 

Finally, we end this subsection by presenting a result concerning the limiting distribution for the row-wise fluctuations of $\hat{U} |\hat{\Lambda}|^{1/2} W^{(n)} - U |\Lambda|^{1/2}$ as $n \rightarrow \infty$ with $r$ fixed.

\begin{corollary}[Row-wise limiting distribution for $\hat{U} |\hat{\Lambda}|^{1/2}$]
\label{cor:normal}
Assume the setting and notations in \cref{thm:general}. Fix a $r \ge 1$ such that the following condition is satisfied.
\begin{equation}
\label{eq:cor_deltar_rowwise}
    \delta_r
    =
    \omega\left(\max\left\{|\lambda_r|^{-1/2} \sqrt{n \rho_n} \log^{3/2}{n}, |\lambda_r|^{-1/2} n \rho_n \sqrt{r \log n}, |\lambda_r|^{-1/4} n \rho_n\right\}\right)
\end{equation}
For a specified choice of index $i \in [n]$, define
\begin{equation}
   \label{eq:covariance} 
    \Sigma_{i}
    =
    |\Lambda|^{-1/2} \left(\sum_{k} p_{ik}(1 - p_{ik}) U_{k} U_{k}^{\top} \right) |\Lambda|^{-1/2}
    ,
\end{equation}
and suppose that there exists a constant $c_0 > 0$ such that
\begin{equation}
    \label{eq:cond_cov1}
    c_0 \rho_n I
    \preceq
    \left(\sum_{k} p_{ik}(1 - p_{ik}) U_{k} U_{k}^{\top} \right)
    \preceq
    \rho_n I
    ,
\end{equation} 
where $\preceq$ denote the Loewner ordering for positive semidefinite matrices. 
Then,
\begin{gather}
    \label{eq:clt_rowwise} \Sigma_{i}^{-1/2} \left((W^{(n)})^{\top} |\hat{\Lambda}|^{1/2} \hat{U}_i - |\Lambda|^{1/2} U_i\right) 
    \rightsquigarrow
    N(0, I)
\end{gather}
as $n \rightarrow \infty$. Here, $\hat{U}_i$ and $U_i$ are the $i$-th row of $\hat{U}$ and $U$, respectively. Furthermore, if $\kappa$ is positive semidefinite, then \cref{eq:cor_deltar_rowwise} can be weakened slightly to
\begin{equation}
    \label{eq:cor_deltar_rowwise_psd}
    \delta_r
    =
    \omega\left(
    \max
    \left\{\lambda_r^{-1/2} \sqrt{n \rho_n} \log^{3/2}{n},
    \lambda_r^{-1/2} n \rho_n \sqrt{r \log n},
    (n \rho_n)^{3/4}
    \right\}
    \right)
    .
\end{equation}
\end{corollary}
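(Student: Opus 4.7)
The plan is to combine the fine-grained expansion in \cref{thm:general} with a conditional multivariate central limit theorem for the main order term, while verifying that the residual $Q$ becomes asymptotically negligible after rescaling by $\Sigma_{i}^{-1/2}$. Extracting the $i$-th row of \cref{eq:expansion_indefinite} (viewed as a column vector) and transposing, I obtain
\begin{equation*}
    (W^{(n)})^{\top} |\hat{\Lambda}|^{1/2} \hat{U}_{i} - |\Lambda|^{1/2} U_{i}
    =
    |\Lambda|^{-1/2} \sum_{k=1}^{n} (A_{ik} - p_{ik}) U_{k}
    +
    Q_{i}^{\top},
\end{equation*}
where $U_{k}$ denotes the $k$-th row of $U$ (as a column vector). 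After left-multiplication by $\Sigma_{i}^{-1/2}$, the goal is to show that the first term converges to $N(0, I)$ and that the second is $o_{p}(1)$.

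For the main order term, I condition on the latent positions $X_{1}, \ldots, X_{n}$, so that $P$, $U$, $\Lambda$, and $\Sigma_{i}$ are deterministic. The summands $V_{k} := |\Lambda|^{-1/2}(A_{ik} - p_{ik}) U_{k}$ are then independent, mean zero, and uniformly bounded vectors in $\mathbb{R}^{r}$ with conditional covariances summing exactly to $\Sigma_{i}$. Using \cref{eq:cond_cov1} to bound $\|\Sigma_{i}^{-1/2}\| \lesssim (c_{0}\rho_{n})^{-1/2} |\lambda_{r}|^{1/2}$ together with $\|U\|_{2 \to \infty} \lesssim \rho_{n}^{1/2} |\lambda_{r}|^{-1/2}$ in the positive semidefinite case (via \cref{eq:bounded_coherence}) or $\|U\|_{2 \to \infty} \lesssim n^{1/2} \rho_{n} |\lambda_{r}|^{-1}$ in the indefinite case (via \cref{eq:u_lambda_2toinf}), the uniform summand size $\max_{k} \|\Sigma_{i}^{-1/2} V_{k}\|$ tends to zero under the lower bound on $|\lambda_{r}|$ from \cref{eq:r_select_general2}. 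Since $\sum_{k} \mathbb{E}\|\Sigma_{i}^{-1/2} V_{k}\|^{2} = r$ is fixed, Lyapunov's criterion is met and the multivariate Lindeberg-Feller CLT yields $\Sigma_{i}^{-1/2} \sum_{k} V_{k} \rightsquigarrow N(0, I)$ conditionally on the latent positions. Because the limit does not depend on $\{X_{k}\}$, bounded convergence lifts this to unconditional convergence.

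For the residual, I need $\Sigma_{i}^{-1/2} Q_{i}^{\top} = o_{p}(1)$, which reduces to showing $\|Q\|_{2 \to \infty} = o(\rho_{n}^{1/2} |\lambda_{r}|^{-1/2})$ with probability tending to one. Substituting the two-summand bound \cref{eq:Q2inf_bd_indefinite} and matching each summand against $\rho_{n}^{1/2}|\lambda_{r}|^{-1/2}$ yields, after straightforward algebra, exactly the three lower bounds on $\delta_{r}$ stated in \cref{eq:cor_deltar_rowwise}. In the positive semidefinite case, the tighter first-term bound in \cref{eq:Q2inf_bd} replaces the $|\lambda_{r}|^{-1/4} n\rho_{n}$ threshold by the weaker $(n\rho_{n})^{3/4}$, recovering \cref{eq:cor_deltar_rowwise_psd}. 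An application of Slutsky's theorem then yields \cref{eq:clt_rowwise}.

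The principal subtlety lies in interleaving the conditional and unconditional statements: the expansion of \cref{thm:general} only holds on a high-probability event depending on the random latent positions, while the CLT is most naturally applied conditionally given those positions. The assumption \cref{eq:cond_cov1} is essential here, as it guarantees both that $\Sigma_{i}$ is non-degenerate and that $\|\Sigma_{i}^{-1/2}\|$ is uniformly $O(|\lambda_{r}|^{1/2} \rho_{n}^{-1/2})$, which is what lets the residual bound mesh cleanly with the conditions on $\delta_{r}$ and what makes the Lyapunov check uniform over the random latent positions.
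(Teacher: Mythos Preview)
Your overall architecture matches the paper's sketch exactly: extract the $i$-th row of the expansion in \cref{thm:general}, apply Lindeberg--Feller to the main term $\Sigma_i^{-1/2}E_iU|\Lambda|^{-1/2}$, show the rescaled residual $\Sigma_i^{-1/2}Q_i$ vanishes in probability, and conclude via Slutsky. The conditioning device and the lift from conditional to unconditional convergence are also handled correctly.

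There is, however, a concrete quantitative slip that breaks the verification of \cref{eq:cor_deltar_rowwise}. From \cref{eq:cond_cov1} one has $\Sigma_i \succeq c_0\rho_n\,|\Lambda|^{-1}$, so the smallest eigenvalue of $\Sigma_i$ is at least $c_0\rho_n/|\lambda_1|$, not $c_0\rho_n/|\lambda_r|$. Consequently $\|\Sigma_i^{-1/2}\|\lesssim \rho_n^{-1/2}|\lambda_1|^{1/2}\lesssim n^{1/2}$, and the requirement on the residual is $\|Q\|_{2\to\infty}=o(\rho_n^{1/2}|\lambda_1|^{-1/2})$, which (using $|\lambda_1|\lesssim n\rho_n$) is essentially $o(n^{-1/2})$ --- strictly stronger than the $o(\rho_n^{1/2}|\lambda_r|^{-1/2})$ you wrote. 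If you actually match the terms of \cref{eq:Q2inf_bd_indefinite} against your weaker target, you do \emph{not} recover \cref{eq:cor_deltar_rowwise}: for instance the first summand $n^{3/2}\rho_n^2/(|\lambda_r|^{1/2}\delta_r^2)=o(\rho_n^{1/2}|\lambda_r|^{-1/2})$ gives only $\delta_r=\omega((n\rho_n)^{3/4})$, whereas the stated condition is $\delta_r=\omega(|\lambda_r|^{-1/4}n\rho_n)$. Redoing the algebra with the correct target $o(n^{-1/2})$ produces precisely the three thresholds in \cref{eq:cor_deltar_rowwise}, and the analogous computation with \cref{eq:Q2inf_bd} gives \cref{eq:cor_deltar_rowwise_psd}. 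The same correction also feeds into your Lyapunov check, though there the conclusion survives once the stronger $\delta_r$ conditions are in force.
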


\cref{cor:normal} extends previous row-wise limiting distribution results for $\hat{U} |\hat{\Lambda}|^{1/2}$ under low-rank assumptions, where $\mathrm{rk}(P) = d$ for some constant $d$ (see e.g., \cite{xie2021entrywise,grdpg1,athreya2013limit}), to the growing rank or full rank setting. \cref{eq:covariance} provides an expression for the covariance matrix of $E_i U |\Lambda|^{-1/2} = |\Lambda|^{-1/2} \sum_{k=1}^{n} (a_{ik} - p_{ik}) U_k$; here $E_i$ is the $i$-th row of $E$. \cref{eq:cond_cov1} is satisfied whenever there exists a constant $c_0 > 0$ such that $\kappa(x,x') \in [c_0, 1 - c_0]$ for all $x,x' \in \mathcal{X}$. \cref{eq:cor_deltar_rowwise} or \cref{eq:cor_deltar_rowwise_psd} ensure that  $\|\Sigma_{i}^{-1/2} Q_i\| \rightarrow 0$ in probability, and we can ignore all terms depending on $Q$ in the limiting distribution.  \cref{eq:clt_rowwise} then follows from applying the Lindeberg--Feller central limit theorem \cite[Proposition~2.27]{van2000asymptotic} to $\Sigma_{i}^{-1/2} E_i U |\Lambda|^{-1/2}$.  Finally, $r$ is fixed in \cref{cor:normal} as otherwise if $r \rightarrow \infty$ then the convergence in distribution of \cref{eq:clt_rowwise} is possibly not well-defined. Nevertheless, to handle growing $r$, we can replace the limiting distribution of $\xi_i = (W^{(n)})^{\top} |\hat{\Lambda}|^{1/2} \hat{U}_i - |\Lambda|^{1/2} U_i$ with that of $\|\xi_i\|$ and then apply a comparison result for quadratic forms in \cite{rotar_1}; see \cref{thm:clt_quadratic} in \cref{sec:quad_test_proof} for more details.

\subsection{Estimation of $U$ versus $U |\Lambda|^{1/2}$ versus $U\Lambda$}
We next present perturbation expansions and two-to-infinity norm bounds for the expressions $\hat{U}W^{(n)} - U$ and $\hat{U} \hat{\Lambda} W^{(n)} - U \Lambda$, where $W^{(n)}$ denotes an orthogonal matrix. Although these quantities are closely related and themselves resemble $\hat{U} |\hat{\Lambda}|^{1/2} W^{(n)} - U |\Lambda|^{1/2}$, certain inference tasks can be simplified if one chooses wisely from among these expressions. For example, if $\kappa$ is positive semidefinite and $P_{r} = U \Lambda U^{\top}$ denotes the best rank-$r$ approximation to $P$, then it is reasonably straightforward to analyze the estimation error of $\hat{P}_{r} = \hat{Z}_{r} \hat{Z}_{r}^{\top}$, where $\hat{Z}_{r} = \hat{U} \hat{\Lambda}^{1/2}$, using the results in \cref{thm:psd}. Similarly, in \cref{sec:two_sample}, we consider a test statistic based on the Euclidean norm row difference $\|(\hat{U} \hat{\Lambda})_{i} - (\hat{U} \hat{\Lambda})_{j}\|$ for which it is more convenient to leverage the row-wise expansion for $\hat{U} \hat{\Lambda}$ given below.

\begin{theorem}[Additional eigenvector perturbation analysis for LPGs with general kernels]
    \label{thm:generalU_ULambda}
    Assume the setting and notations in \cref{thm:general} where $\kappa$ is possibly indefinite. Then, there exists an $r \times r$ orthogonal matrix $W^{(n)}$ such that
  \begin{equation}
  \label{eq:expansion_hatU}
    \hat{U} W^{(n)} - U
    =
    E U \Lambda^{-1} + \mathring{Q}
    ,
  \end{equation}
  where $E U \Lambda^{-1}$ satisfies
  \begin{gather}
    \label{eq:EUlambda_indefiniteU}
     \| E U \Lambda^{-1} \|_{2 \to \infty}
     \leq
     \frac{11}{2} \rho_{n}^{1/2} |\lambda_{r}|^{-1} \sqrt{\vartheta(\nu+1,r,n)}  \end{gather}
   with probability at least $1 - n^{-\nu}$ and $\mathring{Q}$ satisfies
  \begin{equation}
    \begin{split}
    \label{eq:Q2inf_bd_indefiniteU}
    \|\mathring{Q}\|_{2 \to \infty} &\lesssim
    \frac{n^{3/2} \rho_{n}^{2}}{|\lambda_{r}| \delta_{r}^{2}} 
    +  \frac{(\rho_{n} \log n)^{1/2}}{|\lambda_{r}|} \left(\frac{(r n \rho_{n})^{1/2} + \log n}{\delta_{r}}\right)
  \end{split} 
  \end{equation}
  with probability at least $1 - O(n^{-\nu})$. Similarly, it holds that
    \begin{equation*}
        \hat{U} \hat{\Lambda} W^{(n)} - U \Lambda
        =
        E U + \breve{Q},
    \end{equation*}
   where $E U$ and $\breve{Q}$ now satisfy
    \begin{gather}
        \label{eq:EU_indefiniteU}
        \| E U \|_{2 \to \infty} 
        \leq
        \frac{11}{2} \rho_{n}^{1/2} \sqrt{\vartheta(\nu+1,r,n)},\\
        \label{eq:Q2inf_bd_indefiniteU2}
        \|\breve{Q}\|_{2 \to \infty}
        \lesssim
        \frac{n^{3/2} \rho_{n}^{2}}{\delta_{r}^{2}} 
       +
        (\rho_{n} \log n)^{1/2} \left(\frac{(r n \rho_n)^{1/2} + \log n}{\delta_r}\right)
    \end{gather}
    with probability at least $1 - n^{-\nu}$ and $1 - O(n^{-\nu})$ respectively.
    Furthermore, if $\kappa$ is positive semidefinite, then the above bound for $\mathring{Q}$  
    simplifies to
    \begin{gather}
        \label{eq:ringQ_psd}
        \|\mathring{Q}\|_{2 \to \infty} \lesssim
        \frac{n \rho_{n}^{3/2}}{\lambda_{r}^{1/2} \delta_{r}^{2}}
        + \frac{(\rho_{n} \log n)^{1/2}}{\lambda_{r}}
        \left(\frac{(r n \rho_{n})^{1/2} + \log n}{\delta_{r}}\right) 
     \end{gather}
    but the bound for $\breve{Q}$ remains unchanged. Above, the notation $\lesssim$ and $O(\cdot)$ suppress universal constants that depend only on $\nu$ but not on $r$, $n$, $\rho_{n}$, or $\kappa$.  
\end{theorem}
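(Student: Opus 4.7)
The plan is to follow the same blueprint as the proof of \cref{thm:general}, merely rescaling by different powers of $|\Lambda|$. The orthogonal matrix $W^{(n)}$ is the same sign-corrector from an SVD of $\hat{U}^{\top}U$, and the technical toolkit---the Bandeira--van Handel bound $\|E\|\le\varsigma(\nu,n)$, Davis--Kahan control of $\|\hat{U} - UW^{(n),\top}\|$, and the leave-one-out control of $\|E\hat{U}\|_{2\to\infty}$ from \cref{sec:technical_lemmas}---can all be reused verbatim.

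For $\hat{U}\hat{\Lambda}W^{(n)} - U\Lambda$ the key algebraic observation is
\begin{equation*}
    \hat{U}\hat{\Lambda}W^{(n)} - U\Lambda
    =
    A\hat{U}W^{(n)} - PU
    =
    EU + A\bigl(\hat{U}W^{(n)} - U\bigr),
\end{equation*}
which follows from $A\hat{U} = \hat{U}\hat{\Lambda}$ and $PU = U\Lambda$; this identifies $EU$ as the leading term and $\breve{Q} = A(\hat{U}W^{(n)} - U)$ as the residual. For $\hat{U}W^{(n)} - U$, starting from $\hat{U} = A\hat{U}\hat{\Lambda}^{-1}$ and $U = PU\Lambda^{-1}$ and subtracting gives
\begin{equation*}
    \hat{U}W^{(n)} - U
    =
    EU\Lambda^{-1} + A\bigl(\hat{U}\hat{\Lambda}^{-1}W^{(n)} - U\Lambda^{-1}\bigr),
\end{equation*}
so $\mathring{Q} = A(\hat{U}\hat{\Lambda}^{-1}W^{(n)} - U\Lambda^{-1})$. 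Both residuals can then be further expanded using $A = P + E$ together with the decomposition $\hat{U}\hat{\Lambda}^{-1}W^{(n)} - U\Lambda^{-1} = (\hat{U} - UW^{(n),\top})\hat{\Lambda}^{-1}W^{(n)} + UW^{(n),\top}(\hat{\Lambda}^{-1}W^{(n)} - W^{(n)}\Lambda^{-1})$, producing a small number of products to bound individually.

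The leading-term bounds \cref{eq:EUlambda_indefiniteU,eq:EU_indefiniteU} come from the same vector Bernstein plus $\epsilon$-net argument that produced \cref{eq:EUlambda_indefinite}: each row of $EU\Lambda^{-1}$ is a sum of $n$ independent bounded mean-zero $r$-vectors with operator variance of order $\rho_n/|\lambda_r|^2$, and the $9^r$ net points on the unit $r$-sphere combined with a union bound over $i\in[n]$ contribute the $\vartheta(\nu+1,r,n)$ factor; the analogous argument without the $\Lambda^{-1}$ yields the bound for $\|EU\|_{2\to\infty}$. For the residuals, each product in the expansion is bounded via submultiplicativity \cref{eq:2toinf_submultiplicative} combined with $\|P\|_{2\to\infty}\le n^{1/2}\rho_n$, $\|E\|\lesssim\sqrt{n\rho_n}$, the second-order sin-$\Theta$ bound $\|U^{\top}\hat{U}W^{(n)} - I\|\lesssim\|E\|^2/\delta_r^2$, Davis--Kahan $\|\hat{U} - UW^{(n),\top}\|\lesssim\|E\|/\delta_r$, the eigenvalue-perturbation estimate $\|\hat{\Lambda}^{-1}W^{(n)} - W^{(n)}\Lambda^{-1}\|\lesssim\|E\|/|\lambda_r|^2$, and the leave-one-out control of $\|E\hat{U}\|_{2\to\infty}$. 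Concretely, the $n^{3/2}\rho_n^2/\delta_r^2$ term in $\|\breve{Q}\|_{2\to\infty}$ arises from $\|U\Lambda(U^{\top}\hat{U}W^{(n)} - I)\|_{2\to\infty}\le\|U\Lambda\|_{2\to\infty}\cdot\|E\|^2/\delta_r^2$, while the corresponding $n^{3/2}\rho_n^2/(|\lambda_r|\delta_r^2)$ term in $\|\mathring{Q}\|_{2\to\infty}$ picks up an additional $|\lambda_r|^{-1}$ from the extra $\hat{\Lambda}^{-1}$ factor; the $(\rho_n\log n)^{1/2}$ summands come from the leave-one-out bound on $\|E\hat{U}\|_{2\to\infty}$. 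The psd sharpening \cref{eq:ringQ_psd} for $\mathring{Q}$ follows by replacing the indefinite estimate $\|U\|_{2\to\infty}\le n^{1/2}\rho_n/|\lambda_r|$ (from \cref{eq:u_lambda_2toinf}) with the tighter $\|U\|_{2\to\infty}\le\rho_n^{1/2}/|\lambda_r|^{1/2}$ afforded by \cref{eq:bounded_coherence}; the $\breve{Q}$ bound is unchanged because the corresponding term there factors through $\|U\Lambda\|_{2\to\infty}\le\rho_n^{1/2}|\lambda_1|^{1/2}$, which coincides with the indefinite $n^{1/2}\rho_n$ in the worst case $|\lambda_1|\asymp n\rho_n$.

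The main obstacle is purely organizational: the expansion of $\mathring{Q}$ and $\breve{Q}$ generates several cross-terms that must each be controlled on a common high-probability event built by union-bounding $\|E\|$, $\|E\hat{U}\|_{2\to\infty}$, and $\|EU\Lambda^{-1}\|_{2\to\infty}$, with the $E$--$\hat{U}$ dependence handled via the leave-one-out lemma. No technique beyond those already used in \cref{thm:general} is required; the effort lies in carefully tracking the constants and probability bounds so that the residual is of genuinely lower order than the leading term across all three normalizations.
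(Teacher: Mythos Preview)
Your decomposition $\breve{Q} = A(\hat{U}W^{(n)} - U)$ is correct algebraically, but the subsequent plan to expand via $A = P + E$ and bound each piece by submultiplicativity does \emph{not} recover the claimed rate. Once you split $P(\hat{U}W^{(n)}-U) = U\Lambda(U^{\top}\hat{U}W^{(n)} - I) + U_{\perp}\Lambda_{\perp}U_{\perp}^{\top}\hat{U}W^{(n)}$, the second term is the problem: the only available bound is
\[
\|U_{\perp}\Lambda_{\perp}U_{\perp}^{\top}\hat{U}\|_{2\to\infty}
\leq \|U_{\perp}\Lambda_{\perp}\|_{2\to\infty}\cdot\|U_{\perp}^{\top}\hat{U}\|
\lesssim n^{1/2}\rho_n\cdot\frac{\|E\|}{\delta_r}
\lesssim \frac{n\rho_n^{3/2}}{\delta_r},
\]
which exceeds the target $n^{3/2}\rho_n^{2}/\delta_r^{2}$ by the factor $\delta_r/(n\rho_n)^{1/2}\gtrsim 1$ and is not absorbed by the second summand either. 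The same overshoot occurs for $\mathring{Q}$ after multiplying by $|\hat{\lambda}_r|^{-1}$. The paper avoids this by \emph{iterating} the substitution $\hat{U} = A\hat{U}\hat{\Lambda}^{-1}$ on $(I-UU^{\top})P\hat{U}$, producing a Neumann-type series
\[
(I-UU^{\top})P\hat{U}\hat{\Lambda}^{-1}
=
(I-UU^{\top})P^{m}\hat{U}\hat{\Lambda}^{-m}
+\sum_{k=1}^{m-1}(I-UU^{\top})P^{k}E\hat{U}\hat{\Lambda}^{-(k+1)},
\]
whose leading term decays geometrically with ratio $|\lambda_{r+1}/\hat{\lambda}_r|\leq 1-\delta_r/(2|\lambda_r|)$, and whose summands are controlled via $\psi_3^{(k)}=\|U_{\perp}\Lambda_{\perp}^{k}U_{\perp}^{\top}EU\|_{2\to\infty}$ using \cref{lem:technical4_new}. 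This iterative expansion is the essential ingredient you are missing; a single-step $A=P+E$ split is not enough.

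A secondary issue: your claimed ``eigenvalue-perturbation estimate'' $\|\hat{\Lambda}^{-1}W^{(n)} - W^{(n)}\Lambda^{-1}\|\lesssim\|E\|/|\lambda_r|^{2}$ is not a consequence of Weyl's inequality. Since $W^{(n)}$ need not be (approximately) diagonal when $\Lambda$ has clustered or repeated eigenvalues, there is no reason for $W^{(n)}$ to nearly commute with $\Lambda^{-1}$ or $\hat{\Lambda}^{-1}$. A correct bound of this type requires the Cauchy-matrix/Hadamard-product argument of \cref{lem:approximate_commute}, and in fact the paper bypasses this commutator entirely for the $\hat{U}$ and $\hat{U}\hat{\Lambda}$ expansions by working directly with $U^{\top}\hat{U}\hat{\Lambda} - \Lambda U^{\top}\hat{U} = U^{\top}E\hat{U}$.
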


The first term in the bound for $\mathring{Q}$ corresponds to $\|U\|_{2 \to \infty} \times \|U^{\top} \hat{U} - (W^{(n)})^{\top}\|$. If $\kappa$ is positive semidefinite, then $\|U\|_{2 \to \infty} \leq \rho_{n}^{1/2} |\lambda_{r}|^{-1/2}$ which yields the sharper bound in \cref{eq:ringQ_psd}, while if $\kappa$ is indefinite then we can only obtain $\|U\|_{2 \to \infty} \leq n^{1/2} \rho_{n} |\lambda_{r}|^{-1}$ as in \cref{eq:Q2inf_bd_indefiniteU}. See also the discussion in \cref{rem:comparison_indefinite}. Similarly, the first term in the bound for $\breve{Q}$ corresponds to $\|U \Lambda\|_{2 \to \infty} \times \|U^{\top} \hat{U} - (W^{(n)})^{\top}\|$. However, in contrast to the case involving $\|U\|_{2 \to \infty}$, we always have $\|U \Lambda\|_{2 \to \infty} \leq n^{1/2} \rho_{n}$, and thus there are no differences in the upper bound for $\breve{Q}$ when $\kappa$ is positive semidefinite versus when it is indefinite. 

Finally, we note that \cref{thm:generalU_ULambda} can be adapted to yields a deterministic perturbation bound for the $r$ leading eigenvectors $U$ of a symmetric matrix $M$ when it is perturbed by some arbitrary noise matrix $E$; see \cref{thm:deterministic} in \cref{sec:deterministic} for a formal statement. Notably, the bound in \cref{thm:deterministic} only depend on quantities that are {\em linear} in $E$ and thus, for many inference settings, can be analyzed using standard matrix perturbation and concentration inequalities. 
\subsection{Related works}
We now compare our results with existing perturbation analysis and bounds in the literature. Notably, two-to-infinity norm bounds for $\hat{U} - U W^{(n)}$ have appeared in numerous publications to date; see \cite{cape2019two,ctp_biometrika,abbe2020entrywise,lei2019unified,mao_sarkar,pensky_2toinfinity,xie2021entrywise,damle_sun} for an incomplete list of references. However, these works overwhelmingly either focus only on first order upper bounds for $\hat{U} - U W^{(n)}$, which may not be sufficiently refined for inference purpose, or are restricted to the case where $\mathrm{rk}(P) \ll n$. In contrast, this paper delicately decomposes $\hat{U} - U W^{(n)}$ into the main order term $E U \Lambda^{-1}$ and a residual order term $Q$ while minimizing factors that depend on inverse powers of $\delta_{r}$. To illustrate these challenges, suppose that $\kappa$ is positive semidefinite and consider the statement of \cite[Theorem~2.1]{abbe2020entrywise}. Assumption~A.3 in \cite{abbe2020entrywise} posits that $32 \|P\|/\delta_{r} \max\{\gamma, \varphi(\gamma)\} \leq 1$ and that $\|E\| \leq \gamma \delta_{r}$, where $\gamma$ is any value for which $\|P\|_{2 \to \infty} \leq \gamma \delta_{r}$ and $\varphi(x)$ is a function proportional to $1/(\max\{1, \log(x^{-1})\})$. For the setting in \cref{thm:generalU_ULambda}, we have $\|P\| \asymp n \rho_{n}$ and $\|P\|_{2 \to \infty} \asymp n^{1/2} \rho_{n}$, hence we can take $\gamma = \delta_{r}^{-1} (n \rho_{n})^{1/2}$. Theorem~2.1 in \cite{abbe2020entrywise} then yields the high probability bound
\begin{equation*}
    \|\hat{U} W^{(n)} - U\|_{2 \to \infty} 
    \leq
    \|E U \Lambda^{-1}\|_{2 \to \infty}
    +
    \|\tilde{Q}\|_{2 \to \infty}
    ,
\end{equation*}
where $\tilde{Q}$ satisfies
\begin{equation}
\label{eq:Q_abbe_comparison}
\begin{split}
   \|\tilde{Q}\|_{2 \to \infty}
   &\lesssim
   \kappa_{r} (\kappa_{r} + \phi(1)) (\gamma + \phi(\gamma)) \|U\|_{2 \to \infty} + \gamma \|P\|_{2 \to \infty} \times \frac{1}{\delta_{r}} \\
   &\lesssim
   \|U\|_{2 \to \infty} \times \frac{\kappa_{r}^{2} (n \rho_{n})^{1/2}}{\delta_{r}} + \frac{n \rho_{n}^{3/2}}{\delta_{r}^{2}} \\
   &\lesssim 
   \frac{\kappa_{r}^{2} n^{1/2} \rho_{n}}{\lambda_{r}^{1/2} \delta_{r}} + \frac{n\rho_{n}^{3/2}}{\delta_{r}^{2}}
   .
\end{split}
\end{equation}
Here (slight abuse of notation), $\kappa_{r} = \lambda_{1}/\delta_{r}$, with $\kappa_{r} \rightarrow \infty$ as $r \rightarrow \infty$. 

Comparing \cref{eq:Q_abbe_comparison,eq:ringQ_psd}, we see that the first term in \cref{eq:ringQ_psd} is $\lambda_{r}^{-1/2}$ times smaller than the last term in \cref{eq:Q_abbe_comparison}. Meanwhile, the ratio of the second term in \cref{eq:ringQ_psd} to that of the second to last term in \cref{eq:Q_abbe_comparison} is
\begin{equation*}
    \left( \frac{\kappa_{r}^{2} n^{1/2} \rho_{n}}{\lambda_{r}^{1/2} \delta_{r}}\right)^{-1} \frac{(\rho_{n} \log n)^{1/2}}{\lambda_{r}}
    \left(\frac{\log n + (r n \rho_{n})^{1/2}}{\delta_{r}}\right)
    =
    \frac{ (r \log n)^{1/2}}{\kappa_{r}^{2}} \left(\frac{\log n}{\lambda_{r}^{1/2} (n \rho_{n})^{1/2}} + \frac{1}{\lambda_{r}^{1/2}}\right)
    ,
\end{equation*}
which also converges to zero at the rate $O(\kappa_{r}^{-2} \lambda_{r}^{-1/2})$ (ignoring logarithmic factors). As such, the bound in \cref{eq:ringQ_psd} is smaller than in
\cref{eq:Q_abbe_comparison} by a multiplicative factor of at least $\lambda_{r}^{1/2}$. 
Furthermore, for \cref{eq:Q_abbe_comparison}, to guarantee that $\tilde{Q}$ is negligible compared to $E U \Lambda^{-1}$, we would need to require that simultaneously
\begin{gather}
\label{eq:discussion0}
    \lambda_{r}^{-1} (r \rho_{n} \log n)^{1/2}
    =
    \omega(n \rho_{n}^{3/2} \delta_{r}^{-2})
    \quad
    \Longleftrightarrow
    \quad
    \delta_{r}^{2} (r \log n)^{1/2}
    =
    \omega(n \rho_{n} \lambda_{r}), \\ \label{eq:discussion1}
    \lambda_{r}^{-1} (r \rho_{n} \log n)^{1/2}
    =
    \omega(\kappa_{r}^{2} n^{1/2} \rho_{n}/(\lambda_{r}^{1/2} \delta_{r})) 
    \quad
    \Longleftrightarrow
    \quad
    \delta_{r}^{2} (r \log n)
    =
    \omega(\kappa_{r}^{4} n \rho_{n} \lambda_{r})
    .
\end{gather}
However, the conditions in \cref{eq:discussion0,eq:discussion1} might both be infeasible when $n$ increases as $\kappa_{r} > 1$ and $n \rho_{n} \geq \lambda_{1} \geq \lambda_{r} \geq \delta_{r}$ always. In contrast, our bounds in \cref{thm:generalU_ULambda} imply that $\|\mathring{Q}\|_{2 \to \infty}$ is negligible compared to $\|E U \Lambda^{-1}\|_{2 \to \infty}$ whenever
\begin{equation}
\label{eq:discussion4}
    r \delta_{r}^{4} \log n
    =
    \omega((n \rho_{n})^{2} \lambda_{r}),
\end{equation}
which is a much milder condition and can always be satisfied for any given $r$ as $n$ increases. 

Next, \cite[Theorem~3.4]{lei2019unified} improves upon \cite[Theorem~2.1]{abbe2020entrywise} and yields an upper bound for $\tilde{Q}$ of the form
\begin{equation} 
\label{eq:lihua_comparison}
\|\tilde{Q}\|_{2 \to \infty} \lesssim \frac{n^{1/2} \rho_n}{\lambda_r^{1/2} \delta_r} + \frac{\rho_n^{1/2} (r^{1/2} + \log^{1/2}{n})}{\lambda_r} \times \frac{ (n \rho_n)^{1/2} (r + \log n)^{3/4}}{\delta_r}.
\end{equation}
Comparing \cref{eq:ringQ_psd,eq:lihua_comparison} we see that our
bound in \cref{eq:ringQ_psd} is still of smaller order. More
importantly, however, is that to guarantee $\tilde{Q}$ negligible
compared to $EU \Lambda^{-1}$ in \cref{eq:lihua_comparison} one needs
\begin{equation}
\label{eq:discussion2} 
\delta_r (r^{1/2} + \log^{1/2}{n}) = \omega((n \rho_n)^{1/2} \lambda_r^{1/2})
\end{equation} which, similar to the conditions in
\cref{eq:discussion0,eq:discussion1}, may be infeasible as $n$
increases.  Given that our inference tasks generally require choosing
$r$ to grow with $n$, the ability to leverage less stringent
conditions (such as \cref{eq:discussion4}) leads to a faster growth
rate for $r$ which then yields more accurate estimation of $P$ in
\cref{sec:entrywise_approximation} and larger asymptotic power for
testing equality of latent positions in \cref{sec:two_sample}.

Next, we compare our results with those in \cite{graph_root} wherein
the author considered the notion of graph root distribution. More
specifically, let $\kappa$ be a graphon (equivalently, a symmetric
measurable function from $[0,1]^{2}$ to $[0,1]$), and suppose the
integral operator associated with $\kappa$ (see \cref{eq:integral_op})
satisfies \cref{eq:hypothesis_H}. Let $\xi_{1} \geq \xi_{2} \geq \dots
> 0$ be the enumeration of the positive eigenvalues of $\mathscr{K}$,
and, with a slight abuse of notations, let $\phi_{1}, \phi_{2}, \dots$
denote the corresponding eigenfunctions. Similarly, let $\zeta_{1}
\geq \zeta_{2} \geq \dots > 0$ be the enumeration of the {\em moduli}
of the negative eigenvalues of $\mathscr{K}$, and let $\psi_{1},
\psi_{2}, \dots$ denote the corresponding eigenfunctions. Suppose also
that there exists positive numbers $c_{1} \leq c_{2}$, $1 < \alpha
\leq \beta$ such that for all $j \geq 1$
\begin{equation*}
    c_1 j^{-\alpha}
    \leq
    \min\{\xi_j, \zeta_j\}
    \leq
    \max\{\xi_j, \zeta_j\}
    \leq
    c_2 j^{-\alpha},
    \quad
    \text{and}
    \quad\min\{\xi_j - \xi_{j+1}, \zeta_j - \zeta_{j+1} \}
    \geq
    c_1 j^{-\beta};
\end{equation*}
see \cite[Assumption~A2]{graph_root}. If
$n \rho_n \rightarrow \infty$ and $r = o(\min\{n^{1/(2\beta + \alpha)}, (n \rho_n)^{1/(2 \beta)}\})$, then by \cite[Theorem~4.5]{graph_root} we have
\begin{gather}
    \label{eq:graph_root1}
    \|\hat{U} |\hat{\Lambda}|^{1/2} - U |\Lambda|^{1/2} \|_{F}
    =
    O_{p}\left(r^{(\beta - \alpha + 1)/2}\right), \\
    \label{eq:graph_root2}
    n^{-1} \|\rho_n^{-1/2} \hat{U} |\hat{\Lambda}|^{1/2} - Z_r \|_{F}^2
    =
    O_{p}\left(r^{2 \beta - \alpha + 1} (n \rho_n)^{-1} + n^{-(\alpha - 1)/(2\beta)} + r^{2 \beta+1} n^{-1}\right)
    .
\end{gather}
Here, $ \hat{U} |\hat{\Lambda}|^{1/2}$ and $U |\Lambda|^{1/2}$ are now $n \times 2r$ matrices whose columns are the scaled eigenvectors of $A$ and $P$, corresponding to the $r$ largest positive eigenvalues and $r$ largest (in modulus) negative eigenvalues, while $Z_r$ is the $n \times 2r$ matrix with rows of the form
\begin{equation*}
    \left(\sqrt{\xi_1} \phi_1(X_i), \sqrt{\xi_2} \phi_2(X_i),
    \dots,
    \sqrt{\xi_{r}} \phi_{r}(X_i), \sqrt{\zeta_1} \psi_1(X_i),
    \dots,
    \sqrt{\zeta_r} \psi_r(X_i)\right)
    .
\end{equation*}
Above, the notation $O_{p}(\cdot)$ denotes a big-Oh bound that holds with probability converging to one as $n \rightarrow \infty$ and where, for ease of exposition, we have ignored all orthogonal transformations $W$ in the bounds. In contrast, applying our \cref{thm:general} to the current setting yields 
\begin{equation*}
    \begin{split} \|\hat{U} |\hat{\Lambda}|^{1/2} - U |\Lambda|^{1/2} \|_{2 \to \infty} 
    &\lesssim
    \frac{r^{\alpha/2}(r^{1/2}
    +
    \log^{1/2}{n})}{n^{1/2}}
    +
    \frac{r^{2\beta + \alpha/2} + r^{\beta + (\alpha+1)/2} \log^{1/2}{n}}{n \rho_n^{1/2}} \\
    &\qquad+
    \frac{r^{\beta + \alpha/2} \log^{3/2}{n}}{n^{3/2} \rho_n}
\end{split} 
\end{equation*}
with high probability. The first term in our $2 \to \infty$ norm bound is roughly of order $n^{-1/2}$ times smaller than the corresponding Frobenius norm error in \cref{eq:graph_root1}; the extra $\log^{1/2}{n}$ term can be attributed to the bound holding with non-asymptotic high probability as opposed to with probability converging to one. Meanwhile the remaining terms in our $2 \to \infty$ norm bound have denominators that are of order larger than $n^{1/2}$ and can be made negligible for appropriate choices of $r$, e.g., $r = (n \rho_n)^{1/(4\beta - 2)}$.

Our results do not include bounds of the form in \cref{eq:graph_root2}, as our paper is mainly concerned with the row-wise difference of the leading eigenvectors of $A$ compared to that of $P$, while \cref{eq:graph_root2} is, intrinsically, about the difference between the leading eigenvectors of $P$ compared to the {\em eigenfunctions} of $\mathscr{K}$. Nevertheless, if $\kappa$ is positive semidefinite, then we can apply the ideas in \cref{rem:uniform_1} below to also obtain $2 \to \infty$ norm bounds for $\hat{U} \hat{\Lambda}^{1/2} - Z_r$, where now both matrices are of size $n \times r$ as $\mathscr{K}$ cannot have negative eigenvalues. The case when $\kappa$ is indefinite is more complicated and we leave it for future investigation.

Finally, we note that our results explicitly allow for repeated eigenvalues in both $P$ and $\mathscr{K}$. In contrast, \cite{graph_root} requires a gap of order at least $j^{-\beta}$ between the $j$-th and $j+1$-th eigenvalue for all $j \geq 1$. While \cite{graph_root} mentioned that repeated eigenvalues can be handled using the arguments therein, they did not explicitly do so, and to do so would require significant additional technicalities. Nevertheless, we believe that precise statements of results under the repeated eigenvalues setting are necessary in order to make them more widely applicable, especially in the infinite rank setting. Indeed, as \cref{rem:eigenval_gap_plot} clearly shows, it is only meaningful to make assumptions about the rate of decay of eigenvalues but not their gaps.

\section{Implications for inference}
\label{sec:inference}
\subsection{Entrywise bound for $P$}
\label{sec:entrywise_approximation}
We now apply the results in \cref{sec:main} to obtain entrywise error bounds for estimating the edge probability matrix $P$. 
Note that, for ease of exposition, the conditions on $r$ are stated in terms of the eigenvalues $\{\lambda_j\}_{j \ge 1}$ of $P$. In practice, we can reformulated these conditions in terms of the eigenvalues $\{\hat{\lambda}_{j}\}_{j \ge 1}$ of $A$. The details are straightforward (see e.g., \cref{cor:main1}) and thus omitted. 

\begin{corollary}
\label{cor:entrywise}
    Assume the setting and notations in \cref{thm:general}. Also, suppose $r$ is chosen so that the conditions in 
    \cref{eq:dominant_indefinite1} are satisfied. 
    Define $\hat{P}_{r} = \hat{U} \hat{\Lambda} \hat{U}^{\top}$ and $P_{r} = U \Lambda U^{\top}$. 
    Then
    \begin{gather}
   \label{eq:phat-p_max_part1b_indefinite2}
    \rho_{n}^{-1}\|\hat{P}_{r} - P\|_{\max} \leq \rho_n^{-1} \|P_r - P\|_{\max}
    +
    O\left(|\lambda_r|^{-1} (n \rho_n)^{1/2} (r^{1/2} + \log^{1/2}{n}\right),
\end{gather}
which holds with high probability. Furthermore, if $\kappa$ is positive semidefinite then, under the conditions in \cref{eq:condition_rem4}, the above bound can be improved to
\begin{gather}
       \label{eq:phat-p_max_part2}
      \rho_{n}^{-1}\|\hat{P}_{r} - P\|_{\max}
    \leq
    \rho_{n}^{-1}\|P_{r} - P\|_{\max}
    +
    O\left(\lambda_{r}^{-1/2} (r^{1/2} + \log^{1/2} n)\right)
    ,
\end{gather}
which holds with high probability.
\end{corollary}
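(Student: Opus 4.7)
The plan is to apply the triangle inequality
\[
\|\hat{P}_{r} - P\|_{\max}
\leq
\|\hat{P}_{r} - P_{r}\|_{\max}
+
\|P_{r} - P\|_{\max}
\]
and focus on bounding the first piece by expanding $\hat{P}_{r}$ through the row-wise expansions in \cref{thm:generalU_ULambda}. Fix the alignment matrix $W \equiv W^{(n)}$ from that theorem and use the identity $\hat{U}\hat{\Lambda}\hat{U}^{\top} = (\hat{U}\hat{\Lambda}W)(\hat{U}W)^{\top}$. Substituting $\hat{U}\hat{\Lambda}W = U\Lambda + EU + \breve{Q}$ and $\hat{U}W = U + EU\Lambda^{-1} + \mathring{Q}$, then subtracting $P_{r} = U\Lambda U^{\top}$ and using that $E = A-P$ is symmetric (so that $U\Lambda(EU\Lambda^{-1})^{\top} = UU^{\top}E$), the difference $\hat{P}_{r} - P_{r}$ decomposes into two linear ``main'' contributions $UU^{\top}E$ and $EUU^{\top}$, plus six residual cross terms, each carrying at least one factor of $\breve{Q}$ or $\mathring{Q}$, or being quadratic in $E$.

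The next step is to convert the contributions into max-norm bounds via the inequality $\|MN\|_{\max} \leq \|M\|_{2\to\infty}\|N^{\top}\|_{2\to\infty}$. Writing $UU^{\top}E = U(EU)^{\top}$ and $EUU^{\top} = (EU)U^{\top}$, this yields
\[
\|UU^{\top}E\|_{\max} + \|EUU^{\top}\|_{\max}
\leq
2\,\|U\|_{2\to\infty}\,\|EU\|_{2\to\infty}.
\]
I would then plug in $\|EU\|_{2\to\infty} \lesssim \rho_{n}^{1/2}(r^{1/2} + \log^{1/2}{n})$ from \cref{eq:EU_indefiniteU}, together with the appropriate bound on $\|U\|_{2\to\infty}$: namely $\|U\|_{2\to\infty} \leq n^{1/2}\rho_{n}/|\lambda_{r}|$ via \cref{eq:u_lambda_2toinf} in the indefinite case, and the sharper $\|U\|_{2\to\infty} \leq \rho_{n}^{1/2}/\lambda_{r}^{1/2}$ via \cref{eq:bounded_coherence} in the positive semidefinite case. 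Dividing the resulting product by $\rho_{n}$ reproduces, respectively, the $|\lambda_{r}|^{-1}(n\rho_{n})^{1/2}(r^{1/2} + \log^{1/2}{n})$ rate of \cref{eq:phat-p_max_part1b_indefinite2} and the improved $\lambda_{r}^{-1/2}(r^{1/2}+\log^{1/2}{n})$ rate of \cref{eq:phat-p_max_part2}.

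Finally, I would verify that each of the six residual cross terms is of strictly smaller order. All relevant $2\to\infty$ factors are already bounded in the paper: the $\breve{Q}$ and $\mathring{Q}$ estimates come from \cref{eq:Q2inf_bd_indefiniteU,eq:Q2inf_bd_indefiniteU2,eq:ringQ_psd}, and the $U\Lambda$ factor satisfies $\|U\Lambda\|_{2\to\infty} = \|PU\|_{2\to\infty} \leq n^{1/2}\rho_{n}$. Pairing these using the same max-norm inequality and invoking \cref{eq:dominant_indefinite1} in the general case (or \cref{eq:condition_rem4} in the positive semidefinite case) shows that each such product is $o(1)$ times the main contribution. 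The main obstacle is purely algebraic bookkeeping: one must go through each of the six cross-term ratios and verify that the stated eigenvalue assumptions force every ratio to vanish. No probabilistic machinery beyond what is already deployed in \cref{thm:general,thm:generalU_ULambda} is required.
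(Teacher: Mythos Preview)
Your approach via the asymmetric factorization $\hat{P}_r=(\hat U\hat\Lambda W)(\hat U W)^{\top}$ and the expansions of \cref{thm:generalU_ULambda} is correct for the indefinite bound \cref{eq:phat-p_max_part1b_indefinite2}: the main contribution $2\|U\|_{2\to\infty}\|EU\|_{2\to\infty}$ matches the paper's, and under \cref{eq:dominant_indefinite1} all six cross terms are indeed $o$ of that main term. The paper instead uses the symmetric square-root factorization $\hat P_r=\hat Z_r J\hat Z_r^{\top}$ with $\hat Z_r=\hat U|\hat\Lambda|^{1/2}W$ from \cref{thm:general}; in the indefinite case the two routes produce residuals of the same order, so your method is a legitimate alternative there.

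The gap is in the positive semidefinite improvement \cref{eq:phat-p_max_part2}. Your cross terms $U\Lambda\mathring Q^{\top}$ and $\breve Q\,U^{\top}$, bounded via $\|U\Lambda\|_{2\to\infty}\|\mathring Q\|_{2\to\infty}$ and $\|\breve Q\|_{2\to\infty}\|U\|_{2\to\infty}$, each contain a piece of size $n^{3/2}\rho_n^{5/2}/(\lambda_r^{1/2}\delta_r^{2})$ (plug the first term of \cref{eq:ringQ_psd} and \cref{eq:Q2inf_bd_indefiniteU2} against $\|U\Lambda\|_{2\to\infty}\le n^{1/2}\rho_n$ and $\|U\|_{2\to\infty}\le\rho_n^{1/2}\lambda_r^{-1/2}$). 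The ratio of this to the target $\rho_n\lambda_r^{-1/2}(r^{1/2}+\log^{1/2}n)$ is $(n\rho_n)^{3/2}/(\delta_r^{2}(r^{1/2}+\log^{1/2}n))$, and \cref{eq:condition_rem4} only gives $\delta_r^{2}(r^{1/2}+\log^{1/2}n)=\omega(n\rho_n\lambda_r^{1/2})$, so the ratio is merely $o\bigl((n\rho_n/\lambda_r)^{1/2}\bigr)$, not $O(1)$. The paper's symmetric factorization avoids this: it pairs the residual $Q$ from \cref{thm:psd} with $\|U\Lambda^{1/2}\|_{2\to\infty}\le\rho_n^{1/2}$, yielding instead $\rho_n^{1/2}\cdot n\rho_n^{3/2}/\delta_r^{2}$, which is exactly a factor $(n\rho_n/\lambda_r)^{1/2}$ smaller and \emph{is} controlled by \cref{eq:condition_rem4}. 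To fix your argument in the PSD case you would need to switch to the square-root embedding of \cref{thm:psd}, which is precisely what the paper does.
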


We now compare \cref{eq:phat-p_max_part2,eq:phat-p_max_part1b_indefinite2} with the results in \cite{xu_spectral} for $\|\hat{P}_{r} - P\|_{F}$. Using the notations in the present paper, \cite[Theorem~1]{xu_spectral} establishes that with high probability,
\begin{equation}
    \label{eq:xu_comparison1}
    (n \rho_{n})^{-2} \|\hat{P}_{r} - P\|_{F}^{2}
    \leq
    \frac{C_{1} r}{n \rho_{n}}
    +
    \frac{C_{2}}{(n \rho_{n})^{2}} \sum_{k > r} \lambda_{k}^{2}
    ,
\end{equation}
where $C_{1}$ and $C_{2}$ are positive constants depending on the eigenvalue threshold for choosing $r$ in universal singular value thresholding (USVT) \cite{chatterjee,xu_spectral}. The first term on the right hand side of \cref{eq:xu_comparison1} is due to the effect of estimating $\hat{P}_{r}$ by $P_{r}$ and is analogous to the terms $O(\lambda_{r}^{-1} (r + \log n))$ and $O(\lambda_{r}^{-2} n \rho_{n} (r + \log n)$ obtained by squaring the second term in \cref{eq:phat-p_max_part2,eq:phat-p_max_part1b_indefinite2}, respectively, with the main differences being either the use of $\lambda_{r}^{-1}$ or $\lambda_{r}^{-2} n \rho_{n}$ in place of $(n \rho_{n})^{-1}$. These differences are attributable to the fact that $(n \rho_{n})^{-2} \|\hat{P}_{r} - P_{r}\|_{F}^{2}$ quantifies the {\em average} squared entrywise error between $\hat{P}_{r}$ and $P$, whereas the second term in \cref{eq:phat-p_max_part2,eq:phat-p_max_part1b_indefinite2} is for the {\em maximum} entrywise errors. Indeed, the factors $\lambda_{r}^{-1}$ and $\lambda_{r}^{-2} n \rho_{n}$ appear due to the fact that although $\|U\| = 1$, the individual rows of $U$ can be heterogeneous, and $\lambda_{r}/(n \rho_{n})$ ($\lambda_{r}^{2}/(n \rho_{n})^{2}$, resp.) serves as a measure for the degree of heterogeneity between different rows of $U$ when $P$ is positive semidefinite (indefinite, resp.). Finally, the term $C_{2} (n \rho_{n})^{-2} \sum_{k > r} \lambda_{k}^{2}$ in \cref{eq:xu_comparison1} is an upper bound for $(n \rho_{n})^{-2} \|P_{r} - P\|_{F}^{2}$ and serves as the analogue of $\rho_{n}^{-1}\|P_{r} - P\|_{\max}$ in \cref{eq:phat-p_max_part2,eq:phat-p_max_part1b_indefinite2}.

\begin{remark}[On edge probability estimation in network analysis]
    \label{rem:graphon_estimate}
    There is a sizable existing literature on the estimation of $P$ for independent edge random graphs. See \cite{wolfe13:_nonpar,yang14:_nonpar,chatterjee,airoldi13:_stoch,klopp,pensky_graphon1,gao,xu_spectral,zhang_biometrika,qin_nips} for an incomplete list of references. In particular, the two most common approaches are (i) fitting, via either (penalized) least squares or maximum likelihood estimation, a stochastic blockmodel (SBM) to the adjacency matrix $A$ \cite{wolfe13:_nonpar, klopp, gao, airoldi13:_stoch,pensky_graphon1} and (ii) (universal) singular value thresholding (USVT) \cite{xu_spectral, chatterjee,yang14:_nonpar}. It has been noted that, under certain conditions on the sparsity of the graphs, estimators obtained by fitting SBMs have Frobenius norm errors that are minimax optimal over the class of Hölder-continuous link functions $\kappa$ on $[0,1] \times [0,1]$; see for example \cite{gao,klopp} or \cite[Appendix~A]{xu_spectral}. These estimators, however, can be computationally infeasible as their running time can be exponential in $n$. In contrast, estimators based on singular value thresholding, and thus (low rank) matrix factorizations, have running time of order $O(n^{3})$. Furthermore, while the Frobenius norm error of USVT estimates are not minimax optimal for the class of Hölder-continuous $\kappa$, \cite{luo_gao} showed that they are nevertheless optimal for stochastic blockmodels among the class of all low-degree polynomial estimators. Subsequently, \cite{lei_graphon_rate} showed that the Frobenius norm error of USVT is, up to a logarithmic factor, also minimax optimal among all $\kappa$ for which $|\mu_{k}| \lesssim k^{-\alpha}$ for some $\alpha \geq 1$ and any $k \geq 1$. In other words, USVT is minimax optimal whenever the eigenvalues of $\mathscr{K}$ have polynomial or exponential rate of decay.
    
    Our results presented in the current section provide further compelling evidence for the use of USVT, particularly since it leads to entrywise bounds and, potentially, confidence intervals for $P$ (see \cref{rem:uniform_1,rem:normal}). Both of these results are currently not available for other type of estimators due to the fact that Frobenius norm error bounds are not suitable for inferring row-wise or entrywise behavior of estimates. 
\end{remark}

\begin{remark}[Entrywise bounds and rates of convergence]
\label{rem:uniform_1}
If $\kappa$ is continuous and positive semidefinite, then we also have the following bound for $\rho_{n}^{-1}\|P_{r} - P\|_{\max}$. 
\begin{proposition}
\label{prop:mercer_entrywise}
Consider the setting in \cref{thm:psd} where $\kappa$ is a continuous, positive semidefinite kernel and suppose $\mu_r > \mu_{r+1}$. Then    
\begin{equation}
    \label{eq:Pr-P_definite1}
    \begin{split}
    \rho_n^{-1} \|P_r - P\|_{\max}
    &\lesssim 
    \frac{\log^{1/2}{n}}{n^{1/2}(\mu_r - \mu_{r+1})}
    +
    \max_{ij} \left|\sum_{k > r} \mu_k \phi_k(X_i) \phi_k(X_j)\right|
\end{split}
\end{equation}
with high probability. 
\end{proposition}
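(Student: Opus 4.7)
The plan is to exploit that $\kappa$ being continuous and positive semidefinite makes both $P$ and $P - P_r = U_\perp \Lambda_\perp U_\perp^\top$ positive semidefinite. The standard PSD inequality $|M_{ij}|^2 \le M_{ii} M_{jj}$ then yields
\begin{equation*}
    \|P - P_r\|_{\max} \le \max_{i \in [n]} \bigl[ P_{ii} - (P_r)_{ii} \bigr],
\end{equation*}
reducing the problem to bounding diagonal entries.

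Next, introduce the \emph{Mercer-truncated} matrix $P^{(r)} \in \mathbb{R}^{n \times n}$ with entries $[P^{(r)}]_{ij} = \rho_n \sum_{k=1}^{r} \mu_k \phi_k(X_i) \phi_k(X_j)$, and split
\begin{equation*}
    P_{ii} - (P_r)_{ii} = \bigl[ P_{ii} - (P^{(r)})_{ii} \bigr] + \bigl[ (P^{(r)})_{ii} - (P_r)_{ii} \bigr].
\end{equation*}
By Mercer's theorem (\cref{eq:mercer_rep}), the first bracket equals $\rho_n \sum_{k > r} \mu_k \phi_k(X_i)^2 \ge 0$, which is bounded above by $\rho_n \max_{i',j'} \bigl| \sum_{k>r} \mu_k \phi_k(X_{i'}) \phi_k(X_{j'}) \bigr|$ since that maximum includes the diagonal $i' = j' = i$. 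Dividing by $\rho_n$ gives exactly the second term of the claimed bound.

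It then remains to show $|(P^{(r)})_{ii} - (P_r)_{ii}| \lesssim \rho_n \log^{1/2} n / (n^{1/2}(\mu_r - \mu_{r+1}))$ uniformly in $i$, with high probability. Using the outer-product factorizations $P_r = (U \Lambda^{1/2})(U \Lambda^{1/2})^\top$ and $P^{(r)} = Z Z^\top$ with $Z := \sqrt{\rho_n}\, \Phi_r D_r^{1/2}$ (where $[\Phi_r]_{ik} = \phi_k(X_i)$ and $D_r$ is diagonal with entries $\mu_1, \dots, \mu_r$), and selecting a Procrustes aligner $W \in \mathcal{O}_r$, the substitution $\Delta := U \Lambda^{1/2} W - Z$ yields the row-wise identity
\begin{equation*}
    (P_r)_{ii} - (P^{(r)})_{ii} = 2 \langle Z_i, \Delta_i \rangle + \|\Delta_i\|^2 .
\end{equation*}
Since $\|Z_i\|^2 = \rho_n \sum_{k \le r} \mu_k \phi_k(X_i)^2 \le \rho_n\, \kappa(X_i, X_i) \le \rho_n$, Cauchy--Schwarz yields the clean reduction $|(P_r)_{ii} - (P^{(r)})_{ii}| \le 2 \rho_n^{1/2} \|\Delta\|_{2 \to \infty} + \|\Delta\|_{2 \to \infty}^2$, so matching the target rate reduces to proving $\|\Delta\|_{2 \to \infty} \lesssim (\rho_n/n)^{1/2} \log^{1/2} n / (\mu_r - \mu_{r+1})$ with high probability.

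For this last step, I would view $P = P^{(r)} + P^{(>r)}$ as a perturbation of the rank-$r$ matrix $P^{(r)}$ by the Mercer tail $P^{(>r)} := \rho_n \sum_{k > r} \mu_k \phi_k \phi_k^\top$, and apply a deterministic eigenvector perturbation argument in the spirit of \cref{thm:deterministic}, with the key stochastic input being the Koltchinskii--Gin\'e-type eigenvalue concentration of \cref{eq:eigenvalue_convergence} which delivers $\max_j |(n \rho_n)^{-1} \lambda_j - \mu_j| \lesssim n^{-1/2} \log^{1/2} n$ with high probability and, through Davis--Kahan under the gap $\mu_r > \mu_{r+1}$, controls the resulting projector perturbation at the rate $O(\log^{1/2} n / (n^{1/2} (\mu_r - \mu_{r+1})))$. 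The main obstacle will be promoting this spectral-norm projector bound to the required $2 \to \infty$ row-wise bound on $\Delta$; this follows the Procrustes-style decomposition strategy developed in the proof of \cref{thm:psd}, leveraging the population coherence bound $\|U\|_{2 \to \infty} \le \rho_n^{1/2} \lambda_r^{-1/2}$ of \cref{eq:bounded_coherence} together with the sub-multiplicative identity \cref{eq:2toinf_submultiplicative} to distribute the projector error across an incoherent row-wise factor and a spectral factor.
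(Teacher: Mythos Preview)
Your reduction to diagonal entries via the PSD inequality and the Mercer split $P_{ii} - (P_r)_{ii} = [P_{ii} - (P^{(r)})_{ii}] + [(P^{(r)})_{ii} - (P_r)_{ii}]$ is correct and clean. The gap is in the final step.

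You propose to bound $\|\Delta\|_{2\to\infty}$ by treating $P = P^{(r)} + P^{(>r)}$ as a perturbation of the rank-$r$ matrix $P^{(r)}$, and claim that Davis--Kahan together with the eigenvalue concentration \cref{eq:eigenvalue_convergence} yields a projector perturbation of order $n^{-1/2}\log^{1/2}n/(\mu_r-\mu_{r+1})$. This does not work: Davis--Kahan requires the \emph{norm} of the perturbation in the numerator, and $\|P^{(>r)}\|$ is of order $n\rho_n\mu_{r+1}$, not $(n\rho_n)^{1/2}$. Indeed, taking $v$ proportional to $(\phi_{r+1}(X_1),\dots,\phi_{r+1}(X_n))$ gives $v^\top P^{(>r)} v \approx n\rho_n\mu_{r+1}$. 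Since the eigengap is also of order $n\rho_n(\mu_r-\mu_{r+1})$, Davis--Kahan on $P^{(r)}$ versus $P$ yields only $\|\sin\Theta\| \lesssim \mu_{r+1}/(\mu_r-\mu_{r+1})$, a constant in $n$. The eigenvalue concentration \cref{eq:eigenvalue_convergence} compares $\lambda_j(P)$ to $\mu_j$; it says nothing about $\|P^{(>r)}\|$ being small.

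The paper's proof avoids this by working in the RKHS $\mathcal{H}$ of $\kappa$, where the relevant comparison is between the \emph{empirical} operator $n^{-1}\mathcal{K}_{\mathcal{H},n}$ and the \emph{population} operator $\mathscr{K}$. This perturbation genuinely has operator norm $O_p(n^{-1/2}\log^{1/2}n)$ by concentration, and Davis--Kahan at the operator level (Rosasco et al.) gives $\|\hat\Pi_r - \Pi_r\|_{\mathcal{H}} \lesssim n^{-1/2}\log^{1/2}n/(\mu_r-\mu_{r+1})$. The identity $p^{(r)}_{ij} = \rho_n\langle\hat\Pi_r\Phi(X_i),\hat\Pi_r\Phi(X_j)\rangle_{\mathcal{H}}$ then converts this directly into a uniform entrywise bound, since $\|\Phi(X_i)\|_{\mathcal{H}}\le 1$. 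In short, the $n^{-1/2}$ rate comes from empirical-vs-population concentration, not from the Mercer truncation $P^{(r)}$ vs $P$; your framing conflates the two. A purely matrix-level fix would need to exploit the near-orthogonality $\frac{1}{n}\Phi_{>r}^\top\Phi_r \approx 0$ via a refined Davis--Kahan with $\|P^{(>r)}\tilde U\|$ in the numerator, but carrying this out is essentially reproducing the RKHS argument and is substantially more than what you have sketched.
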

The first term on the right hand side of \cref{eq:Pr-P_definite1} converges to zero as $r \to \infty$, provided that $\mu_r - \mu_{r+1} = \omega(n^{-1/2} \log^{1/2}{n})$. As the only accumulation point for the $\{\mu_i\}_{i \ge 1}$ is at zero, such a sequence for $r$ always exists. The second term also converges to zero due to the uniform convergence in Mercer's theorem (see, e.g., \cite[Theorem~4.49]{steinwart08:_suppor_vector_machin}) but there is no {\em a priori} explicit convergence rate unless we put additional assumptions on $\kappa$. For example, if $\{X_i\} \subset \mathbb{R}^{d}$ and $\kappa$ is $m$ times continuously differentiable, then
\begin{equation*}
    \left|\max_{ij} \sum_{k > r} \mu_k \phi_k(X_i) \phi_k(X_j)\right|
    \lesssim
    \left(\sum_{k > r} \mu_k^2 \right)^{m/(2m+2d)}
    .
\end{equation*}
See \cite[Theorem~1.2]{mercer_convergence} for more details. As a related example, if $\mu_r = O(r^{-\alpha})$ and $\|\phi_r\|_{\infty} = O(r^{\beta})$ for constants $\alpha \geq 2 \beta + 1 > 0$ then \cite[Theorem~1]{modell}
yields the high probability bound
\[
    \rho_n^{-1}\|P_r - P\|_{\max}
    =
    O\left(n^{-(\alpha-1)/\alpha} \log n\right)
    .
\]
Combining \cref{eq:phat-p_max_part2,eq:Pr-P_definite1} together with the Cauchy--Schwarz inequality gives
\[
    \rho_n^{-1} \|\hat{P}_r - P\|_{\max} \lesssim \frac{r^{1/2} + \log^{1/2}{n}}{\lambda_r^{1/2}} + \frac{\log^{1/2}{n}}{n^{1/2}(\mu_r - \mu_{r+1})} + \sup_{x} \sum_{k > r} \mu_k \phi_k^{2}(x)
    .
\]
Of note, the first term in the above bound depends on both $n$ and graph sparsity $\rho_n$ (through $\lambda_r$), the second term depends on $n$ and the eigenvalue gap $\mu_r - \mu_{r+1}$ but not on $\rho_n$, and the last term is independent of $n$. 
\end{remark}

\begin{remark}[Towards entrywise normality and inference]
\label{rem:normal}
Normal approximations and confidence intervals for the entries of $\hat{P}_r$ as estimates for $P$ are much more difficult to derive in the current setting compared to when $P$ is low-rank; see \cite[Theorems 4.10 and 4.11]{spectral_chen} for low-rank examples. More specifically, assume for simplicity that $\kappa$ is positive semidefinite. Then, using a similar derivations as in \cref{eq:uniform_expansion1}, we have
\begin{equation*}
    \rho_n^{-1}(\hat{P}_r - P)
    =
    \rho_n^{-1} (A - P) UU^{\top}
    +
    \rho_n^{-1} UU^{\top}(A - P)
    +
    \rho_n^{-1}(P_r - P) + R
    ,
\end{equation*}
where $R$ is a residual matrix. Let $M = UU^{\top}$ and denote by $e_{ij}$ and $m_{ij}$ the $ij$-th entry of $E = A - P$ and $M$, respectively. Let $\zeta_{ij}$ be the $ij$-th element of $E UU^{\top} + UU^{\top}E$. Then, for $i \not = j$, we have
\begin{equation*}
    \begin{split}
    \rho_n^{-1} (\zeta_{ij} + \zeta_{ji})
   &=
    \rho_n^{-1} \sum_{k \not = j} e_{ik} m_{kj} + \rho_n^{-1}\sum_{k \not = i} e_{kj} m_{ik} + \rho_n^{-1}e_{ij} (m_{ii} + m_{jj})
    ,
  \end{split}
\end{equation*}
which, conditioning on $P$, is a sum of independent mean zero random variables. Next, define
\begin{equation}
    \label{eq:sigma_ij}
    \sigma_{ij}^2
    =
    \rho_n^{-2}
    \left(\sum_{k} [\, p_{ik}(1 - p_{ik}) m_{kj}^2 +  p_{kj}(1 - p_{kj}) m_{ik}^2] + 2 p_{ij}(1 - p_{ij}) m_{ii} m_{jj}\right)
    .
\end{equation}
Then, for a fixed $(i,j)$ pair, we can apply the Lindeberg--Feller central limit theorem to show
\begin{gather*}
    \sigma_{ij}^{-1} \zeta_{ij}
    \rightsquigarrow
    N(0,1)
    \qquad
    \text{as }
    n
    \to
    \infty.
\end{gather*}
Suppose there exists a constant $c_0 > 0$ such that $\kappa(x,y) \geq c_0$ for all $(x,y) \in \Omega \times \Omega$. Then,
\begin{equation*}
    \sigma_{ij}^2
    \geq
    \rho_n^{-2} \times \rho_n c_0 (1 - \rho_n c_0) (m_{jj} + m_{ii})
    ,
\end{equation*}
and $\sigma_{ij}$ converges to zero at rate $\rho_n^{-1/2} \|U\|_{2 \to \infty} \asymp \lambda_r^{-1/2}$ as $n \rightarrow \infty$. Furthermore, there are also plug-in estimators $\hat{\sigma}_{ij}$ for which $\hat{\sigma}_{ij}/\sigma_{ij} \rightarrow 1$ in probability so that, by Slutsky's theorem, we can replace with $\hat{\sigma}_{ij}$ in the above distributional convergence. 
In contrast, the $ij$-th entry of $P_r - P$ induces a bias term $b_{ij} = p^{(r)}_{ij} - p_{ij}$ which, from the discussion in \cref{rem:uniform_1}, converges to zero but at a possibly arbitrarily slow rate. In other words, while we have a normal 
approximation of the form 
\begin{equation}
\label{eq:clt_entrywise}
    \sigma_{ij}^{-1}(\hat{p}^{(r)}_{ij} + b_{ij} - p_{ij}) 
    \rightsquigarrow
    \mathcal{N}(0,1)
    \qquad
    \text{as }
    n
    \to
    \infty,
\end{equation}
it might not always lead to meaningful inference results. For example, we cannot directly apply \cref{eq:clt_entrywise} to construct confidence interval for $b_{ij}$ due to the unknown bias term $b_{ij}$. Furthermore, while we can try to estimate $b_{ij}$, it is not clear if $\sigma_{ij}^{-1} |\hat{b}_{ij} - b_{ij}| \rightarrow 0$ as $n \rightarrow \infty$. Indeed, as $P$ is unknown, we can only estimate each {\em individual} $b_{ij}$ using the adjacency matrix $A$ but since $b_{ij}$ depends on the eigenvalues and eigenvectors of $P$ {\em not in} $U$ and $\Lambda$, we cannot directly control their estimation errors using the results in the current paper. 
We thus leave the open question of constructing confidence intervals for $p_{ij}$ to future work. 
\end{remark}

\subsection{Testing for equality of latent positions}
\label{sec:two_sample}
We now consider the problem of determining whether or not two given vertices $i$ and $j$ have the same latent positions. In other words, we are interested in testing the hypotheses
\begin{equation}
    \label{eq:two-sample-H0H1}
    \mathbb{H}_{0}
    :
    X_{i} = X_{j}
    \quad
    \text{ versus }
    \quad
    \mathbb{H}_{1}
    :
    X_{i} \neq X_{j}
    .
\end{equation}
This problem had been previously studied in \cite{fan2019simple,du_tang} under the simpler setting wherein $P$ is low-rank, i.e., when the kernel $\kappa$ is a finite-rank kernel of the form $\kappa(X_i, X_j) = X_i^{\top} M X_j$ for some $d \times d$ symmetric matrix $M$. The authors of \cite{fan2019simple,du_tang} showed that, with $r = d$, the test statistic based on the Mahalanobis distance between $\hat{U}_i$ and $\hat{U}_j$ (i.e., the $i$ and $j$-th row of $\hat{U}$) converges to a central $\chi^2$ (non-central $\chi^2$, resp.) with $r$ degrees of freedom under the null hypothesis (local alternative hypothesis, resp.). As the Mahalanobis distance is invariant with respect to invertible transformations, the same limiting distribution holds for the difference between $\hat{\Lambda}^{1/2} \hat{U}_i$ and $\Lambda^{1/2} \hat{U}_j$.

We now extend these results to the setting of our paper by allowing $r$ to grow with $n$. Our test statistic, however, will be based on the {\em Euclidean distance} between $\hat{\Lambda} \hat{U}_i$ and $\hat{\Lambda} \hat{U}_j$, as opposed to either form of the Mahalonobis distance described above. The rationale for using the Euclidean distance will be clarified in \cref{rem:comp_potential}.

Let $\hat{\xi}_i \in \mathbb{R}^{r}$ and $\xi_i \in \mathbb{R}^{r}$ be the $i$-th row of $\hat{U} \hat{\Lambda}$ and $U \Lambda$, respectively. Fix an arbitrary pair $\{i,j\}$ and suppose that $X_i = X_j$. Then, $\xi_i = \xi_j$, and by \cref{thm:generalU_ULambda}, we have 
\begin{equation*}
    \begin{split}
    W^{\top}(\hat{\xi}_i - \hat{\xi}_j) 
    =
    (E_i - E_j) U + \gamma_{ij}
\end{split}
\end{equation*}
for some $r \times r$ orthogonal matrix $W$, where $\gamma_{ij}$ is a residual lower-order term while $E_i$ and $E_j$ are the $i$-th and $j$-th row of $A - P$, respectively.
We then have
\begin{equation}
    \label{eq:quadratic_test}
    \begin{split}
    \|\hat{\xi}_i - \hat{\xi}_j\|^2
    =
    \|W^{\top}(\hat{\xi}_i - \hat{\xi}_j)\|^2 
    =
    (1 + o(1)) \|(E_i - E_j) U\|^2
    =
    (1+o(1)) \zeta^{\top} \mathcal{D}U  U^{\top} \mathcal{D} \zeta
    ,
\end{split}
\end{equation}
where $\zeta$ is now a vector whose components are independent sub-Gaussian random variables with mean zero and variance one, and $\mathcal{D}$ is a $n \times n$ diagonal matrix whose diagonal entries are $d_{k} = (2p_{ik}(1 - p_{ik}))^{1/2}$ for $1 \leq k \leq n$. For notational simplicity, we have dropped the indices $\{i,j\}$ from the matrix $\mathcal{D}$. 
The following theorem establishes that $\|\hat{\xi}_i - \hat{\xi}_j\|$, properly translated and scaled, converges in distribution to a weighted sum of independent $\chi^2_1$ random variables under the null hypothesis.

\begin{theorem}[Asymptotic null distribution for testing equality of latent positions]
    \label{thm:quadratic_test}
    Let $A \sim \mathrm{LPG}(\kappa, F; \rho_{n})$ be a graph on $n$ vertices generated according to \cref{def:lpg} with sparsity $\rho_n$. Denote the associated 
    latent positions by $\{X_1, \dots, X_n\}$. Set $r = r(n)$ to be any positive integer such that $\delta_r = \omega(\sqrt{r n \rho_n \log n})$. Let $\hat{U}$ and $U$ denote the $n \times r$ matrices whose orthonormal columns are the eigenvectors corresponding to the $r$ largest  in magnitude eigenvalues of $A$ and $P$, respectively. Let $M^* = \mathcal{D} U U^{\top} \mathcal{D}$, where $\mathcal{D}$ is a diagonal matrix with diagonal entries
    \begin{equation*}
    d_{k} = (p_{ik}(1 - p_{ik}) + p_{jk}(1 - p_{jk}))^{1/2},
    \quad
    \text{for }
    1 \leq k \leq n
    .
    \end{equation*}
    Then, under the null hypothesis, $\mathbb{H}_{0}: X_i = X_j$ in \cref{eq:two-sample-H0H1}, it holds that
    \begin{equation}
    \label{eq:conv_quadratic}
        \frac{\|(A \hat{U})_i - (A \hat{U})_{j}\|^2 - \mathrm{tr} \, M^*}{\|M^*\|_{F}}
        =
        \frac{1}{\|M^*\|_{F}}\sum_{s=1}^{r} (Z_s^2 - 1) \lambda_s(M^*) 
        +
        \epsilon_n
        ,
    \end{equation}
    where $\epsilon_n \rightarrow 0$ in probability as $n \rightarrow \infty$, and $Z_1, Z_2, \dots, Z_r$ are i.i.d. standard normals.
    
    Furthermore, if $\kappa$ is an infinite rank kernel, then $r(n) \rightarrow \infty$ in probability as $n \rightarrow \infty$ and
    \begin{equation}
    \label{eq:conv_normal}
        \frac{\|(A \hat{U})_i - (A \hat{U})_{j}\|^2 - \mathrm{tr} \, M^*}{\|M^*\|_{F}}
        \rightsquigarrow
        N(0,2)
        \qquad
        \text{as }
        n
        \to
        \infty
        .
    \end{equation}
\end{theorem}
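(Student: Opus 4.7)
The plan is to linearize $A\hat{U}$ via \cref{thm:generalU_ULambda} and then invoke a Rotar-type quadratic-form central limit theorem. Since $A\hat{U} = \hat{U}\hat{\Lambda}$, \cref{thm:generalU_ULambda} provides an $r \times r$ orthogonal matrix $W^{(n)}$ with $(\hat{U}\hat{\Lambda} W^{(n)})_k = (U\Lambda)_k + (EU)_k + \breve{Q}_k$ for each row $k$. Under $\mathbb{H}_0$ we have $X_i = X_j$, so rows $i$ and $j$ of $P$ coincide, giving $(U\Lambda)_i = (PU)_i = (PU)_j = (U\Lambda)_j$. Writing $\gamma := \breve{Q}_i - \breve{Q}_j$ and using $(E_i - E_j)U = U^{\top}\mathcal{D}\zeta$, where the entries $\zeta_k := (E_{ik} - E_{jk})/d_k$ are independent, mean-zero, variance-one, and uniformly bounded (modulo $O(1)$ corrections at $k \in \{i,j\}$), the orthogonal invariance of the Euclidean norm gives
\[
   \|(A\hat{U})_i - (A\hat{U})_j\|^2 - \tr M^*
   = \bigl(\zeta^{\top} M^* \zeta - \tr M^*\bigr)
   + 2\langle U^{\top}\mathcal{D}\zeta,\, \gamma\rangle
   + \|\gamma\|^2.
\]

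The next step is to verify that the last two terms are $o_p(\|M^*\|_F)$. The bound \cref{eq:Q2inf_bd_indefiniteU2}, together with the hypothesis $\delta_r = \omega(\sqrt{rn\rho_n \log n})$, yields $\|\breve{Q}\|_{2 \to \infty}^{2} = o(\rho_n)$ with high probability, hence $\|\gamma\|^2 = o_p(\rho_n)$. From $\mathbb{E}[\zeta^{\top} M^* \zeta \mid P] = \tr M^* \leq 2 r \rho_n$ and Markov's inequality one obtains $\|U^{\top}\mathcal{D}\zeta\| = O_p(\sqrt{r \rho_n})$. Under the non-degeneracy implicit in $\|M^*\|_F > 0$ (concretely $\|M^*\|_F \gtrsim \sqrt{r}\rho_n$, which follows whenever $\{p_{ik}, p_{jk}\}$ are bounded away from $0$ and $\rho_n$ up to a fixed constant factor, matching the spirit of \cref{eq:cond_cov1}), the Cauchy--Schwarz inequality yields $|\langle U^{\top}\mathcal{D}\zeta, \gamma\rangle| = O_p(\sqrt{r \rho_n}) \cdot o_p(\sqrt{\rho_n}) = o_p(\sqrt{r}\rho_n) = o_p(\|M^*\|_F)$, and similarly $\|\gamma\|^2 = o_p(\|M^*\|_F)$.

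It remains to analyze the dominant quadratic form $(\zeta^{\top} M^* \zeta - \tr M^*)/\|M^*\|_F$. Conditional on the latent positions, this is a centered quadratic form in independent, bounded, mean-zero, unit-variance random variables with $M^*$ a deterministic symmetric matrix of rank $r$. I would invoke the Rotar-type quadratic-form comparison theorem of \citep{rotar_1} as formalized in \cref{thm:clt_quadratic}; the Lindeberg condition is automatic since the $|\zeta_k|$ are uniformly bounded. This allows replacement of $\zeta$ by a standard Gaussian vector $Z$, and diagonalizing $M^* = V \,\mathrm{diag}(\lambda_s(M^*))\, V^{\top}$ with $\tilde{Z} := V^{\top} Z \sim N(0, I_r)$ gives $Z^{\top} M^* Z - \tr M^* = \sum_{s=1}^{r} \lambda_s(M^*)(\tilde{Z}_s^2 - 1)$, yielding \cref{eq:conv_quadratic}. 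For the Gaussian limit when $\mathrm{rk}(\kappa) = \infty$ and $r(n) \to \infty$, the bound $\|M^*\|_{\mathrm{op}} \leq \|\mathcal{D}\|^2 = O(\rho_n)$ combined with $\|M^*\|_F \asymp \sqrt{r}\rho_n$ gives $\max_s \lambda_s(M^*)/\|M^*\|_F = O(r^{-1/2}) \to 0$; applying the Lindeberg--Feller CLT to the i.i.d.\ sum $\sum_s \lambda_s(M^*)(Z_s^2 - 1)/\|M^*\|_F$, which has variance $2$, then gives the $N(0,2)$ conclusion in \cref{eq:conv_normal}. The most delicate step is the quadratic-form comparison \cref{thm:clt_quadratic}; the rest of the argument is a largely routine bookkeeping exercise from \cref{thm:generalU_ULambda}.
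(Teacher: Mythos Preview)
Your overall strategy—linearize via the expansion of $\hat{U}\hat{\Lambda}$, reduce to a quadratic form in $(E_i - E_j)U$, and apply Rotar's comparison theorem followed by Lindeberg--Feller—matches the paper's. The quadratic-form analysis and the $N(0,2)$ limit are handled essentially as in the paper.

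There is, however, a genuine gap in your residual bound. You claim that \cref{eq:Q2inf_bd_indefiniteU2} together with $\delta_r = \omega(\sqrt{rn\rho_n\log n})$ gives $\|\breve{Q}\|_{2\to\infty} = o(\rho_n^{1/2})$. But the first term in \cref{eq:Q2inf_bd_indefiniteU2} is $n^{3/2}\rho_n^{2}/\delta_r^{2}$, and to make this $o(\rho_n^{1/2})$ you would need $\delta_r = \omega((n\rho_n)^{3/4})$, which is \emph{not} implied by the stated hypothesis $\delta_r = \omega(\sqrt{rn\rho_n\log n})$. The paper flags exactly this point: a direct application of \cref{thm:generalU_ULambda} is sub-optimal here.

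The fix is to go back into the decomposition underlying \cref{thm:generalU_ULambda} rather than quote its black-box bound. The offending $n^{3/2}\rho_n^{2}/\delta_r^{2}$ term comes from $U\Lambda(U^{\top}\hat{U} - W^{(n)})$ (and more generally from every residual piece whose leftmost factor is $U$ or $I - UU^{\top}$). Under $\mathbb{H}_0$ the $i$-th and $j$-th rows of $U$, of $U\Lambda = PU$, and of $(I - UU^{\top})P\hat{U}$ coincide, so all of these pieces \emph{cancel} in the row difference $\hat{\xi}_i - \hat{\xi}_j$. What survives are only the terms whose leftmost factor is $E$, namely $EU(U^{\top}\hat{U} - W^{(n)})$ and $E(I-UU^{\top})\hat{U}$, and these are controlled by the second summand in \cref{eq:Q2inf_bd_indefiniteU2} alone, which \emph{is} $o(\rho_n^{1/2})$ under the stated hypothesis. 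Once you make this cancellation explicit, the rest of your argument goes through.
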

\cref{eq:conv_quadratic,eq:conv_normal} yield large-sample approximations for $\|(A \hat{U})_{i} - (A \hat{U})_{j}\|^2$ as a weighted sum of independent $\chi^2_1$ random variables, provided that we can find estimates for $\mathrm{tr}\, M^*$ and $\|M^*\|_{F}$. The following lemma illustrates one valid approach for doing so.
\begin{lemma}[Consistent estimation of centering and scaling terms]
    \label{lem:estimate}
    Let $\hat{\mathcal{D}}$ a be diagonal matrix with diagonal entries
    $$
    \hat{d}_{k}
    =
    |a_{ik} - a_{jk}|
    \quad
    \text{for }
    1 \leq k \leq n
    .
    $$
    Suppose $r$ is chosen such that
    \begin{equation}
    \label{eq:select_r_lemma_test}
    \delta_r = \omega\left( \max\left\{ \log^{3/2}{n}, \sqrt{r n \rho_n \log n}, (n \rho_n)^{3/4}/(r^{1/4} + \log^{1/4}n) \right\}\right)
    \end{equation}
    is satisfied. 
    Next, define
    $$
    \hat{\theta}
    =
    \mathrm{tr} \, \hat{U}^{\top} \, \hat{\mathcal{D}}^2 \hat{U}
    =
    \|\hat{\mathcal{D}} \hat{U}\|_{F}^2,
    \quad
    \text{and}
    \quad
      \hat{\sigma}
    =
    \|\hat{\mathcal{D}} \hat{U} \hat{U}^{\top} \hat{\mathcal{D}}\|_{F}
    .
    $$
    Then, under the setting of \cref{thm:quadratic_test} together with the condition in \cref{eq:select_r_lemma_test}, we have
    $$
    \frac{\hat{\theta} - \mathrm{tr} \, M^*}{\|M^*\|_{F}}
    \overset{\operatorname{p}}{\longrightarrow}
    0,
    \quad
    \text{and}
    \quad
    \frac{\hat{\sigma}}{\|M^*\|_{F}}
    \overset{\operatorname{p}}{\longrightarrow}
    1,
    \qquad
    \text{as }
    n
    \to
    \infty
    .
    $$
\end{lemma}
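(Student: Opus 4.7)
The plan is to decompose each of $\hat{\theta}-\operatorname{tr} M^{*}$ and $\hat{\sigma}^{2}-\|M^{*}\|_{F}^{2}$ into (a) the fluctuation of $\hat{\mathcal{D}}$ about $\mathcal{D}$ and (b) the perturbation of $\hat{U}$ about $U$, bound each piece via Bernstein's inequality and \cref{thm:generalU_ULambda} respectively, and then divide by a lower bound on $\|M^{*}\|_{F}$. The key observation is that under $\mathbb{H}_{0}$ the conditional independence structure becomes very clean: $p_{ik}=p_{jk}$, so $\hat{d}_{k}^{2}=(a_{ik}-a_{jk})^{2}\in\{0,1\}$ is Bernoulli with mean $d_{k}^{2}=2p_{ik}(1-p_{ik})$, and moreover $\{\hat{d}_{k}^{2}\}_{k}$ is conditionally independent of the columns of $U$ given $P$. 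A union bound on Bernstein gives $\sum_{k}\hat{d}_{k}^{2}=O_{p}(n\rho_{n})$ and $\sum_{k}|\hat{d}_{k}^{2}-d_{k}^{2}|=O_{p}(n\rho_{n})$.

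Write the centered quantity as
\begin{equation*}
\hat{\theta}-\operatorname{tr} M^{*}
=\underbrace{\sum_{k}(\hat{d}_{k}^{2}-d_{k}^{2})\|U_{k}\|^{2}}_{S_{1}}
+\underbrace{\sum_{k}d_{k}^{2}\bigl(\|\hat{U}_{k}\|^{2}-\|U_{k}\|^{2}\bigr)}_{S_{2}}
+\underbrace{\sum_{k}(\hat{d}_{k}^{2}-d_{k}^{2})\bigl(\|\hat{U}_{k}\|^{2}-\|U_{k}\|^{2}\bigr)}_{S_{3}}.
\end{equation*}
The summands in $S_{1}$ are independent, centered, bounded by $\|U\|_{2\to\infty}^{2}$, with conditional variance proxy $\sum_{k}\|U_{k}\|^{4}d_{k}^{2}\lesssim \rho_{n}\|U\|_{2\to\infty}^{2}r$; so Bernstein gives $|S_{1}|\lesssim \|U\|_{2\to\infty}\sqrt{\rho_{n}r\log n}+\|U\|_{2\to\infty}^{2}\log n$ with high probability. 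For $S_{2}$ and $S_{3}$, I would use the elementary bound $\big|\|\hat{U}_{k}\|^{2}-\|U_{k}\|^{2}\big|\le \epsilon(2\|U\|_{2\to\infty}+\epsilon)$ with $\epsilon:=\|\hat{U}W^{(n)}-U\|_{2\to\infty}$, combined with $\sum_{k}d_{k}^{2}\lesssim n\rho_{n}$ and $\sum_{k}\hat{d}_{k}^{2}=O_{p}(n\rho_{n})$, to get $|S_{2}|+|S_{3}|\lesssim n\rho_{n}\,\epsilon(\|U\|_{2\to\infty}+\epsilon)$. Plugging in $\|U\|_{2\to\infty}\lesssim n^{1/2}\rho_{n}|\lambda_{r}|^{-1}$ (or the sharper $\rho_{n}^{1/2}|\lambda_{r}|^{-1/2}$ in the PSD case) and the $2\to\infty$ bound on $\epsilon$ from \cref{thm:generalU_ULambda}, the denominator $\|M^{*}\|_{F}\gtrsim \operatorname{tr} M^{*}/\sqrt{r}$ obtained via Cauchy--Schwarz on the eigenvalues of $U^{\top}\mathcal{D}^{2}U$ completes the consistency statement for $\hat{\theta}$. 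The three regimes in the selection condition \cref{eq:select_r_lemma_test} are precisely what is needed: the $\log^{3/2}n$ piece dominates the $S_{1}$ contribution, the $\sqrt{rn\rho_{n}\log n}$ piece matches the cross-term coming from $\|EU\Lambda^{-1}\|_{2\to\infty}\cdot\|U\|_{2\to\infty}\cdot n\rho_{n}$, and the $(n\rho_{n})^{3/4}$ piece matches the $\epsilon^{2}n\rho_{n}$ term.

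For $\hat{\sigma}^{2}-\|M^{*}\|_{F}^{2}=\sum_{k,\ell}\bigl[\hat{d}_{k}^{2}\hat{d}_{\ell}^{2}(\hat{U}\hat{U}^{\top})_{k\ell}^{2}-d_{k}^{2}d_{\ell}^{2}(UU^{\top})_{k\ell}^{2}\bigr]$, the same philosophy applies, but now on a double sum. Split into a diagonal ($k=\ell$) piece whose analysis reduces to the single-sum argument above, and an off-diagonal piece which I handle by writing $\hat{d}_{k}^{2}\hat{d}_{\ell}^{2}=(d_{k}^{2}+\xi_{k})(d_{\ell}^{2}+\xi_{\ell})$ with $\xi_{k}:=\hat{d}_{k}^{2}-d_{k}^{2}$, and $(\hat{U}\hat{U}^{\top})_{k\ell}=(UU^{\top})_{k\ell}+(\hat{U}\hat{U}^{\top}-UU^{\top})_{k\ell}$. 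The purely $\xi_{k}$ pieces give a double Bernstein sum with variance $O(\rho_{n}^{2}\|UU^{\top}\|_{F}^{2})=O(\rho_{n}^{2}r)$, and the perturbation piece uses $\|\hat{U}\hat{U}^{\top}-UU^{\top}\|_{2\to\infty}\lesssim \|U\|_{2\to\infty}\,\|U^{\top}\hat{U}-(W^{(n)})^{\top}\|+\|\mathring{Q}\|_{2\to\infty}$, which is precisely of the order controlled by \cref{thm:generalU_ULambda}. Dividing by $\|M^{*}\|_{F}^{2}\gtrsim \rho_{n}^{2}r$ and invoking the same regime analysis yields $\hat{\sigma}^{2}/\|M^{*}\|_{F}^{2}\to 1$ in probability, whence $\hat{\sigma}/\|M^{*}\|_{F}\to 1$ by continuous mapping.

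The principal difficulty is that $\hat{\mathcal{D}}$ and $\hat{U}$ are both functions of $A$ and therefore dependent, which precludes a naive conditioning argument. The fix sketched above is to force the leading terms $S_{1}$ (and its $\hat{\sigma}^{2}$ analogue) to involve $\hat{\mathcal{D}}$ paired with the deterministic $U$, isolating the dependence into pure cross terms that can be bounded deterministically once $\|\hat{U}W^{(n)}-U\|_{2\to\infty}$ and $\sum_{k}\hat{d}_{k}^{2}$ are controlled. The second subtle point is the off-diagonal Bernstein argument for $\hat{\sigma}$: because the double sum naturally has correlations across rows, I would either decouple it via a classical Hanson--Wright-style inequality applied to the Bernoulli vector $(\hat{d}_{k}^{2}-d_{k}^{2})_{k}$ weighted by $(UU^{\top})_{k\ell}^{2}$, or equivalently rewrite it as $\operatorname{tr}((U^{\top}(\hat{\mathcal{D}}^{2}-\mathcal{D}^{2})U)^{2})+2\operatorname{tr}((U^{\top}\mathcal{D}^{2}U)(U^{\top}(\hat{\mathcal{D}}^{2}-\mathcal{D}^{2})U))$ and bound each trace via its spectral norm times $r$. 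Everything else is book-keeping against the three regimes in \cref{eq:select_r_lemma_test}.
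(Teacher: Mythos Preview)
Your approach is essentially the same as the paper's. For $\hat{\theta}$, your three-term split $S_1+S_2+S_3$ is algebraically identical to the paper's two-term split $\operatorname{tr}(\hat{U}^\top\hat{\mathcal{D}}^2\hat{U}-U^\top\hat{\mathcal{D}}^2 U)+\operatorname{tr}(U^\top(\hat{\mathcal{D}}^2-\mathcal{D}^2)U)$ (your $S_2+S_3$ is their first term, your $S_1$ is their second), and both are bounded exactly as you describe: Bernstein for the $\hat{\mathcal{D}}$-fluctuation against deterministic $U$, and a deterministic $2\to\infty$ bound for the $\hat{U}$-perturbation using \cref{thm:generalU_ULambda} together with $|S|=\sum_k\hat{d}_k^2=O_p(n\rho_n)$.

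For $\hat{\sigma}$ the paper takes a slightly cleaner route that you may prefer: rather than expanding $\hat{\sigma}^2-\|M^*\|_F^2$ as a double sum and invoking Hanson--Wright or decoupling, it uses the reverse triangle inequality $\bigl|\hat{\sigma}-\|M^*\|_F\bigr|\le \|W^\top\hat{U}^\top\hat{\mathcal{D}}^2\hat{U}W-U^\top\mathcal{D}^2 U\|_F$ directly, then splits this Frobenius norm into $\|W^\top\hat{U}^\top\hat{\mathcal{D}}^2\hat{U}W-U^\top\hat{\mathcal{D}}^2 U\|_F+\|U^\top(\hat{\mathcal{D}}^2-\mathcal{D}^2)U\|_F$. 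The first piece is handled by $\|\hat{\mathcal{D}}(\hat{U}W-U)\|_F^2+2\|\hat{\mathcal{D}}(\hat{U}W-U)\|_F\|\hat{\mathcal{D}}U\|_F$ (same $2\to\infty$ control times $|S|^{1/2}$), and the second by the crude bound $\|U^\top(\hat{\mathcal{D}}^2-\mathcal{D}^2)U\|_F\le r\|U^\top(\hat{\mathcal{D}}^2-\mathcal{D}^2)U\|_{\max}$ followed by entrywise Bernstein. This sidesteps the off-diagonal dependence issue you flag entirely, at the cost of a looser factor of $r$ that is harmless under \cref{eq:select_r_lemma_test}. Your trace-rewriting alternative at the end amounts to the same thing.
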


\begin{remark}[Selection of the embedding dimension $r$]
We remark that the condition for $r$ in \cref{eq:select_r_lemma_test} is more stringent than that in \cref{thm:quadratic_test}. The additional constraint manifests itself when we apply \cref{thm:general} or \cref{thm:generalU_ULambda} to estimate $U$ as it allows us to guarantee that the residual terms $\mathring{Q}$ can be ignored. Although we can in principle relax \cref{eq:select_r_lemma_test} to only require $\delta_r = \omega(\sqrt{r n \rho_n \log n})$, this weaker condition will require a more delicate analysis of $\|\hat{U}^{\top} (\hat{\mathcal{D}}^2 - \mathcal{D}^2) \hat{U}\|_{F}$. More specifically, if we replace $\hat{U}$ by some quantity $\tilde{U} \approx \hat{U}$ that is independent of $\hat{\mathcal{D}}$, then we can bound $\|\tilde{U}^{\top}  (\hat{\mathcal{D}}^2 - \mathcal{D}^2) \tilde{U}\|_F$ via Bernstein's inequality. As $\hat{\mathcal{D}}$ depends only on the $i$-th and $j$-th row of $P$, a natural candidate for $\tilde{U}$ would be based on a leave-two-out analysis that replaces the entries in the $i$-th and $j$-th rows (as well as those in the $i$-th and $j$-th columns) of $A$ with independent copies thereof. The technical details, while potentially interesting, are also rather tedious. Since our primary objective in \cref{lem:estimate} is to present simple estimators for $\mathrm{tr} \, M_*$ and $\|M_*\|_{F}$, we leave proving \cref{lem:estimate} under weaker conditions to the interested reader. 
\end{remark}

Combining \cref{thm:quadratic_test,lem:estimate} yields the following testing procedure for the hypothesis testing problem in \cref{eq:two-sample-H0H1}
Notably, the proposed test statistic only involves $A$.

\begin{corollary}[Testing equality of latent positions with data-driven rank selection]
\label{cor:test}
Consider the setting in \cref{thm:quadratic_test}. Let $\hat{r} = \hat{r}(n)$ be defined as
\begin{equation}
    \label{eq:selection_r_test}
    \hat{r}
    =
    \argmax\Bigl\{j \colon |\hat{\lambda}_j| - |\hat{\lambda}_{j+1}| \geq \max\Bigl( \log^{7/4}(n), j^{1/2} \sqrt{d_{\mathrm{ave}}(A)} \log^{3/4}{n}, (d_{\mathrm{ave}}(A))^{3/4} \Bigr) \Bigr\}
    ,
\end{equation}
where $d_{\mathrm{ave}}(A) = n^{-1} \sum_{i} \sum_{j} a_{ij}$ is the average degree of $A$. Denote by $\hat{U}$ the $n \times \hat{r}$ matrix whose columns are the eigenvectors corresponding to the $\hat{r}$ largest eigenvalues of $A$. For any $i \neq j$, define the test statistic
\begin{align*}
    T(\hat{X}_i, \hat{X}_j)
    =
    \frac{\| (A_{i} - A_j)^{\top}\hat{U} \|^2 - \|\hat{\mathcal{D}} \hat{U}\|_{F}^2 }{\|\hat{U}^{\top} \hat{\mathcal{D}}^2 \hat{U} \|_{F}} 
    =
    \frac{\sum_{k \not = \ell} (a_{ik} - a_{jk})(a_{i\ell} - a_{j \ell}) \hat{U}_k^{\top} \hat{U}_{\ell}}{\bigl(\sum_{k} \sum_{\ell} (a_{ik} - a_{jk})^2(a_{i \ell} - a_{j \ell})^2 (\hat{U}_{k}^{\top} \hat{U}_{\ell})^2\bigr)^{1/2}}
    ,
\end{align*}
where $A_i$ and $A_j$ are the $i$-th and $j$-th columns of $A$, respectively, and where $\hat{\mathcal{D}}$ is the diagonal matrix with diagonal entries $\hat{d}_k = a_{ik} - a_{jk}$. Then, under the null hypothesis, $\mathbb{H}_0 : X_i = X_j$ in \cref{eq:two-sample-H0H1}, we have
\begin{equation}
    \label{eq:approximation_adaptive}
    T(\hat{X}_i, \hat{X}_j)
    =
    \frac{1}{\|U^{\top} \mathcal{D}^2 U\|_{F}} \sum_{s=1}^{\hat{r}}(Z_s^2 - 1) \lambda_s(U^{\top} \mathcal{D}^2 U)
    +
    \epsilon_n
    ,
\end{equation}
where $\epsilon_n \to 0$ in probability as $n \rightarrow \infty$, $\mathcal{D}^2$ is the diagonal matrix with diagonal entries $d_{k} = 2p_{ik}(1 - p_{ik})$ for $1 \le k \le n$, $U$ is the $n \times \hat{r}$ matrix whose columns are the eigenvectors corresponding to the $\hat{r}$ largest eigenvalues of $P$, and $Z_1, \dots, Z_{\hat{r}}$ are independent $N(0,1)$ random variables.  

Furthermore, the sequence $\hat{r} = \hat{r}(n)$ for $n \ge 1$ is adaptive to the rank of $\kappa$, i.e.,
\begin{enumerate}
    \item If $\mathrm{rk}(\kappa) = r^* < \infty$, then as $n \to \infty$, both $\hat{r} \rightarrow r^*$ in probability
        and 
      \begin{equation}
        \label{eq:test_stat_finite_rank}
        T(\hat{X}_i, \hat{X}_j)
        \rightsquigarrow
        \frac{1}{\| U_{r^*}^{\top} \mathcal{D}^2 U_{r^*}\|_{F}} \sum_{s=1}^{r^*} (Z_s^2 - 1) \lambda_s(U_{r^*}^{\top} \mathcal{D}^2 U_{r^*})
        ,
    \end{equation}
    where $U_{r^*}$ is the $n \times r^*$ matrix whose columns are the eigenvectors corresponding to the non-zero eigenvalues of $P$.
   \item If $\mathrm{rk}(\kappa) = \infty$, then as $n \to \infty$, both $\hat{r} \rightarrow \infty$ in probablity and 
     \begin{equation}
        \label{eq:test_stat_infinite_rank}
        T(\hat{X}_i, \hat{X}_j)
        \rightsquigarrow
        N(0,2)
        .
    \end{equation}
\end{enumerate}
\end{corollary}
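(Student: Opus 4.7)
My plan is to combine \cref{thm:quadratic_test} and \cref{lem:estimate} with eigenvalue concentration to verify that the data-driven choice $\hat{r}$ is admissible, and then to establish adaptivity separately in the finite- and infinite-rank regimes. I would begin by conditioning on three high-probability events: $\|E\| \leq \varsigma(\nu,n) \lesssim \sqrt{n\rho_n}$ via the Bandeira--van Handel spectral bound, $|\hat{\lambda}_k - \lambda_k| \leq \|E\|$ by Weyl's inequality, and $d_{\mathrm{ave}}(A) \asymp n\rho_n$ via Bernstein combined with continuity of $\kappa$. On the intersection of these events, any index $j$ satisfying the gap condition in \cref{eq:selection_r_test} automatically satisfies $\delta_j$ strictly dominating the right-hand side of \cref{eq:select_r_lemma_test}; the extra $\log^{1/4} n$ and $j^{1/4}$ slack baked into the definition of $\hat{r}$ supplies the required $\omega$-growth. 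Thus \cref{thm:quadratic_test} and \cref{lem:estimate} apply with $r = \hat{r}$. Using the identifications $\hat{\theta} = \|\hat{\mathcal{D}} \hat{U}\|_F^2$ and $\hat{\sigma} = \|\hat{\mathcal{D}} \hat{U} \hat{U}^\top \hat{\mathcal{D}}\|_F = \|\hat{U}^\top \hat{\mathcal{D}}^2 \hat{U}\|_F$, together with Slutsky's theorem, the approximation in \cref{thm:quadratic_test} transports into \cref{eq:approximation_adaptive} for $T(\hat{X}_i,\hat{X}_j)$.

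For adaptivity, first consider $\mathrm{rk}(\kappa) = r^* < \infty$. Then $P$ has rank $r^*$ almost surely, so $|\lambda_{r^*}| \asymp n\rho_n\mu_{r^*}$ while $\lambda_{r^*+1} = 0$; Weyl gives $|\hat{\lambda}_{r^*}| - |\hat{\lambda}_{r^*+1}| \asymp n\rho_n$, which dominates the threshold at $j = r^*$. For $j > r^*$, both $|\hat{\lambda}_j|$ and $|\hat{\lambda}_{j+1}|$ are bounded by $\|E\| = O(\sqrt{n\rho_n})$, whereas $(d_{\mathrm{ave}}(A))^{3/4} \asymp (n\rho_n)^{3/4} \gg \sqrt{n\rho_n}$, so the condition fails and $\hat{r} = r^*$ with high probability. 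For $\mathrm{rk}(\kappa) = \infty$, compactness of $\mathscr{K}$ provides, for every fixed $K$, an index $j \geq K$ with $\mu_j - \mu_{j+1} > 0$; eigenvalue concentration (\cref{eq:eigenvalue_convergence} in the positive semidefinite case, or the Koltchinskii--Gin\'e-type bounds discussed before \cref{cor:normal} in the indefinite case) yields $|\hat{\lambda}_j| - |\hat{\lambda}_{j+1}| \gtrsim n\rho_n(\mu_j - \mu_{j+1})$ with probability tending to one. Since each threshold term in \cref{eq:selection_r_test} is $o(n\rho_n)$ for fixed $j$, the gap condition is satisfied at this $j$, so $\hat{r} \geq K$ with probability tending to one, and hence $\hat{r} \to \infty$ in probability.

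The distributional conclusions then follow. In the finite-rank case, $\hat{r} \to r^*$ together with \cref{eq:approximation_adaptive} collapses $T(\hat{X}_i,\hat{X}_j)$ to a weighted sum of $r^*$ centred independent $\chi^2_1$ random variables; the continuous mapping theorem applied to the eigenvalues of $U_{r^*}^\top \mathcal{D}^2 U_{r^*}$ yields \cref{eq:test_stat_finite_rank}. In the infinite-rank case, $\hat{r} \to \infty$ combined with \cref{eq:conv_normal} of \cref{thm:quadratic_test} delivers \cref{eq:test_stat_infinite_rank}. The main obstacle is that \cref{eq:conv_normal} is stated for a \emph{deterministic} sequence $r(n) \to \infty$ whereas $\hat{r}$ is data-dependent; the resolution is to observe that the Lyapunov condition $\max_s \lambda_s(M^*)/\|M^*\|_F \to 0$ that drives the normal limit depends only on $\delta_r$ through the admissibility threshold in \cref{eq:select_r_lemma_test}, and therefore holds uniformly over all admissible $r$. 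One then passes to the limit on the high-probability event that $\hat{r}$ is admissible. The remaining algebraic step---equating the eigenvalues of $M^* = \mathcal{D} U U^\top \mathcal{D}$ with the non-zero eigenvalues of $U^\top \mathcal{D}^2 U$---is routine.
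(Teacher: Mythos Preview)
Your proposal is correct and follows the same route the paper indicates: the paper does not give a standalone proof of this corollary but presents it as the direct consequence of combining \cref{thm:quadratic_test} and \cref{lem:estimate}, and your argument carries out precisely that combination. Your additional work---verifying via Weyl's inequality and $d_{\mathrm{ave}}(A) \asymp n\rho_n$ that the selection rule \cref{eq:selection_r_test} forces the $\omega$-conditions in \cref{eq:select_r_lemma_test} (the extra $\log^{1/4} n$ and $(r^{1/4}+\log^{1/4} n)$ slack is exactly the point), and then handling the finite- versus infinite-rank adaptivity separately---fills in details the paper leaves implicit; your observation about passing from a deterministic to a random $\hat{r}$ via uniformity of the Lyapunov condition over admissible $r$ is a legitimate subtlety that the paper also elides.
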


\begin{remark}[Rank adaptivity of test statistic]
    The test statistic $T(\hat{X}_i, \hat{X}_j)$ in \cref{cor:test} is, up to scaling and translation, the same as $\|(A_i - A_j)^{\top} \hat{U}\|^2$. Notably, it is shown to be adaptive to the rank of $\kappa$. Per \cref{cor:test}, if $\mathrm{rk}(\kappa) = \infty$, then $T(\hat{X}_i, \hat{X}_j)$ converges in distribution to $\mathcal{N}(0,2)$. In contrast, if $\mathrm{rk}(\kappa) = r^* < \infty$, then the limiting distribution of $T(\hat{X}_i, \hat{X}_j)$ depends on the eigenvalues of the $r^* \times r^*$ matrix $U^{\top} \mathcal{D}^2 U$ and is thus less elegant, seeing as we need to additionally estimate these unknown eigenvalues unlike the setting with $\mathrm{rk}(\kappa) = \infty$. Nevertheless, for any $\alpha \in (0,1)$, the rejection region $\mathcal{R}_{\alpha} = \{ T(\hat{X}_i, \hat{X}_j) > c^{*}_{1 - \alpha} \}$ yields a test procedure with asymptotic significance level $\alpha$, where $c^{*}_{1 - \alpha}$ is the $1 - \alpha)$ quantile of
    \begin{equation}
        \label{eq:twosample_critical_estimate}
        \frac{1}{\|\hat{U}^{\top} \hat{\mathcal{D}}^2 \hat{U}\|_{F}}
        \sum_{s=1}^{r} (Z_s^2 - 1) \lambda_s(\hat{U}^{\top} \hat{\mathcal{D}}^2 \hat{U})
        .
    \end{equation}
    We emphasize that $c^{*}_{1 - \alpha}$ is a function depending only on the {\em estimate} $\hat{U}^{\top}\hat{\mathcal{D}}^2 \hat{U}$ and can be obtained by parametric bootstrap or by using the algorithm in \cite{farebrother}. 
\end{remark}

\begin{remark}[Comparisons to other potential test statistics]
\label{rem:comp_potential}
One might ask whether there are test statistics similar to $T$ in \cref{cor:test} but that are simpler to compute or admit simpler limiting distributions. To address this question, we consider the following four secondary test statistics and show that all of them are inferior to $T$, either practically or theoretically.

Specifically, define
\begin{enumerate}
    \item $T_{1}(\hat{X}_i, \hat{X}_j) = \|A_{i} - A_{j}\|^2$,
    
    \item $T_{2}(\hat{X}_i, \hat{X}_j) = \|(A_i - A_j)^{\top}\hat{D}^{-1} \hat{U}\|^2$,
    
    \item $T_{3}(\hat{X}_i, \hat{X}_j) = \|\hat{X}_i - \hat{X}_j\|^2 = \|(A_{i} - A_{j})^{\top} \hat{U} \hat{\Lambda}^{-1/2}\|^2$,
    
    \item $T_{4}(\hat{X}_i, \hat{X}_j) = (\hat{X}_i - \hat{X}_j)^{\top} (\hat{\Sigma}(\hat{X}_i) + \hat{\Sigma}(\hat{X}_j))^{-1}(\hat{X}_i - \hat{X}_j)$.
\end{enumerate}
Above, $T_1$, $T_{2}$, and $T_{3}$ are variants of the $\ell_2$ norm distance between rows $A_i$ and $A_j$, where $\hat{D}$ is a diagonal matrix whose diagonal entries are some estimates of $d_{k}$ for $1 \le k \le n$, whereas $T_4$ is the Mahalanobis distance between the (truncated) embeddings $\hat{X}_i$ and $\hat{X}_j$.

For $T_1$, we have $T_1(\hat{X}_i, \hat{X}_j) = \sum_{k} Y_k$, where $Y_k$ are independent Bernoulli random variables with success probabilities $p_{ik}(1 - p_{jk}) + p_{jk}(1 - p_{ik})$. Now, consider $p_{ik} = 1/2$, so $p_{ik}(1 - p_{jk}) + p_{jk}(1 - p_{ik}) = 1/2$, which does not depend on the value of $p_{jk}$. Therefore, if $p_{ik} \equiv 1/2$ for all $k$ then the distribution of $T_1(\hat{X}_i, \hat{X}_j)$ does not depend on $\{p_{jk}\}$ and we cannot use $T_1$ to construct a consistent test procedure.

For $T_2$, by using the same arguments as for \cref{thm:quadratic_test}, we have that
\begin{equation*}
    r^{-1/2}(\|(A_i - A_j) \mathcal{D}^{-1} U\|^2 - r) 
    \rightsquigarrow
    r^{-1/2} (\chi^2_r - r)
\end{equation*}
as $n \to \infty$ under the null hypothesis. We might therefore hope that $r^{-1/2}(T_2(\hat{X}_i, \hat{X}_j) - r)$ also converges to $r^{-1/2}(\chi^2_r - r)$, thereby yielding a simpler limiting distribution that that of $T$. However, the main difficulty with using $T_2$ is in defining an estimate $\hat{D}$. Indeed, a natural choice for the entries of $\hat{D}$ is either the plug-in estimates $\hat{d}_{k} = (\hat{p}_{ik}(1 - \hat{p}_{ik}) + \hat{p}_{jk}(1 - \hat{p}_{jk}))^{1/2}$, where $\hat{p}_{k \ell}$ are the entries of $\hat{U} \hat{\Lambda} \hat{U}^{\top}$, or $\hat{d}_{k} = a_{ik} - a_{jk}$. The first choice is problematic in that, as \cref{rem:uniform_1} shows, $\hat{p}_{k \ell}$ might converge arbitrarily slowly to $p_{k \ell}$. The second choice is also problematic as $a_{ik} - a_{jk} = 0$ with non-zero probability and so $\hat{D}^{-1}$ may be undefined. In contrast, $T$ only depends on the Frobenius norm of $\hat{U}^{\top} \hat{\mathcal{D}}^2 \hat{U}$ and is thus robust to the specific values of a large number of entries of $\hat{\mathcal{D}}$.

For $T_3$, write $M_3 = \mathcal{D} U \Lambda^{-1} U^{\top} \mathcal{D}$. Once again, using the same arguments as for \cref{thm:quadratic_test}, we have that
\begin{equation*}
    \frac{T_3(\hat{X}_i, \hat{X}_j) - \mathrm{tr} \, M_3}{\|M_3\|_F}
    \rightsquigarrow
    \frac{1}{\|M_3\|_F} \sum_{s=1}^{r} (Z_s^2 - 1) \lambda_s(M_3)
\end{equation*}
as $n \to \infty$ under the null hypothesis. The limiting distributions of $T$ and $T_{3}$ are therefore quite similar if $r$ is fixed as $n \rightarrow \infty$ but can be qualitatively different when $r \rightarrow \infty$ with $n$. Recall that if $r \rightarrow \infty$, then $\|M^*\|_{F}^{-1} \sum_{s=1}^{r} (Z_s^2 - 1) \lambda_s(M^*) \rightsquigarrow N(0,2)$ as $n \to \infty$ under the null hypothesis, which is a consequence of the Lindeberg--Feller central limit theorem. The non-zero eigenvalues of $M^*$ are all of order $\rho_n$ and thus $\max_{s \leq r} \|M^*\|_{F}^2 \lambda_s(M^*)^2 \asymp r^{-1} \rightarrow 0$. In contrast, if $s \leq r$, then $|\lambda_s(M_3)| \asymp \rho_n |\lambda_s^{-1}|$ and thus
\begin{equation*}
    \max_{s \leq r} \frac{\lambda_s(M_3)^2}{\|M_3\|_{F}^2}
    \asymp
    \frac{\lambda_r^{-2}}{\sum_{s=1}^{r} \lambda_s^{-2}} 
    \asymp
    \frac{\mu_r^{-2}}{\sum_{s=1}^{r} \mu_s^{-2}}
    ,
\end{equation*}
where as before $\{\mu_s\}_{s \ge 1}$ are the eigenvalues of $\mathscr{K}$. Now, if $\{\mu_s\}_{s \ge 1}$ decays to zero sufficiently rapidly, then $\max_{s \le r} \lambda_s(M_3)^2/\|M_3\|_{F}^2$ does not converge to zero. Concretely, for $\mu_s \asymp e^{-s}$, we have $e^{2r}/(\sum_{s=1}^{r} e^{2s}) \succsim 1$. Thus, we cannot apply the Lindeberg--Feller CLT to guarantee that $T_3$, properly scaled and translated, converges to a normal distribution.

The distribution of $T_4$ has been previously studied in \cite{du_tang,fan2019simple} when $\kappa$ is a finite rank kernel. There, it has been shown that $T_4$ converges to a central (non-central, resp.) $\chi^2_r$ under the null (local alternative, resp.) hypothesis. If instead $\kappa$ is an infinite rank kernel and $r$ is fixed then, by combining the normal approximation in \cref{cor:normal} with the analysis in \cite{du_tang}, we can show that $T_4$ still converges to a central (non-central, resp.) $\chi^2_r$ under the null (resp. local alternative) hypothesis. A precise statement of this result is left to the interested reader. We note, however, that a fixed value of $r$ does not lead to a consistent test procedure as it can fail to reject the null hypothesis when $X_i \not = X_j$ if their difference does not manifest itself in the coordinates of the leading $r$ eigenvectors of $P$. The case when $\kappa$ is an infinite rank kernel with $r \rightarrow \infty$ is more complicated as computing $T_4$ requires inverting an $r \times r$ estimated covariance matrix $\hat{\Sigma}(\hat{X}_i) + \hat{\Sigma}(\hat{X}_j)$, which then introduce additional constraints on the rate at which $r$ can grow with $n \rho_n$. We leave the precise analysis of this dependency, and its effects on the limiting distribution for $T_4$, to the interested reader.
\end{remark}

\begin{figure}[t!]
    \centering
    \subfloat[$n=4000$]{\includegraphics[width=.5\textwidth]{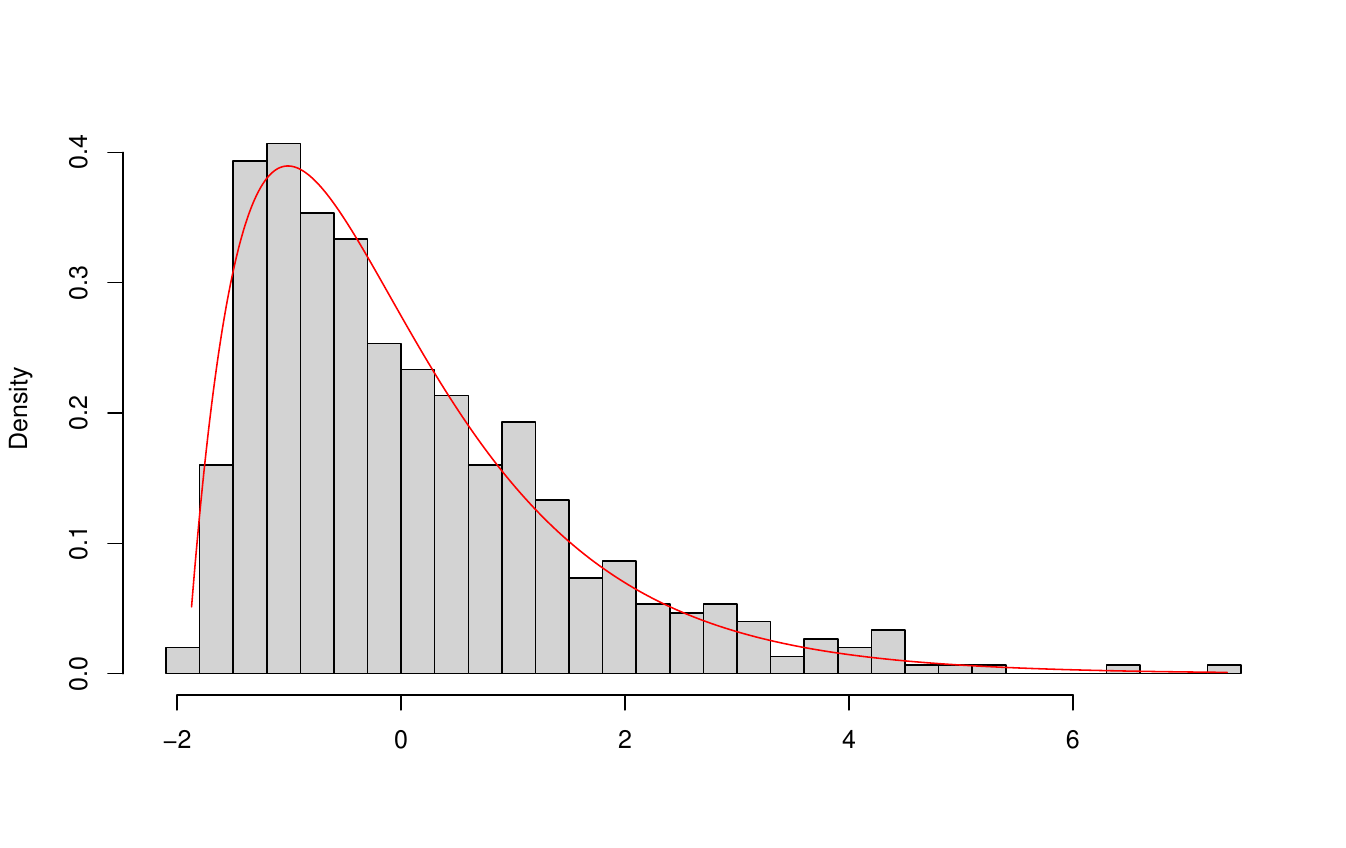}} 
    \subfloat[$n=8000$]{\includegraphics[width=.5\textwidth]{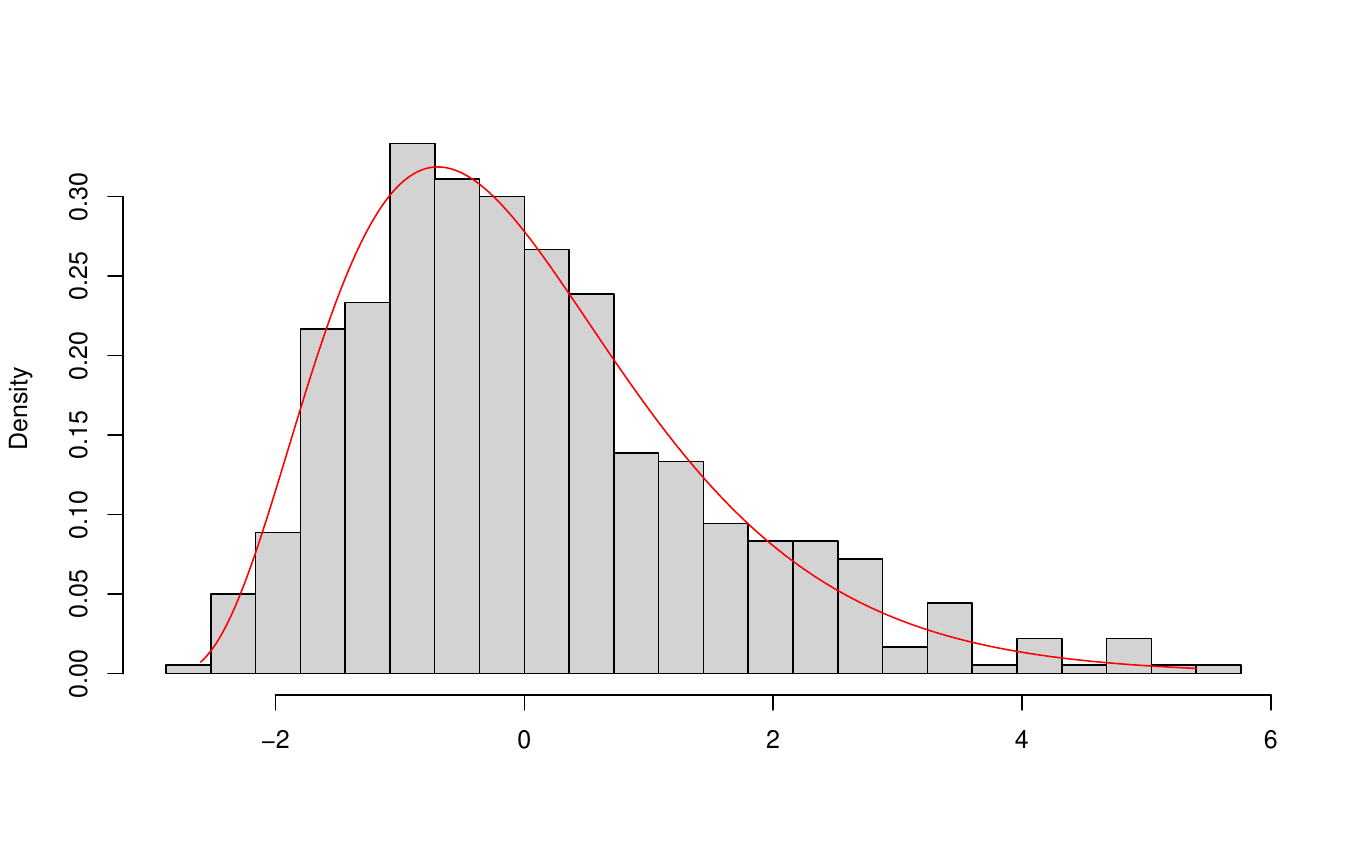}}
    \caption{Empirical histograms, based on $500$ Monte Carlo replicates, for $T(\hat{X}_i, \hat{X}_j)$ under $\mathbb{H}_0 \colon X_i = X_j$ when the link function is $\kappa(x,y) = \exp(-\|x - y\|)$, the latent positions $\{X_i\}$ are sampled i.i.d. from the uniform distribution on the unit sphere in $\mathbb{R}^{3}$, and the sparsity parameter is $\rho_n = 0.4$. The red curve in each plot is the probability density function for a weighted sum of $\hat{r}$ independent $\chi^2_1$ random variables as given in \cref{eq:twosample_critical_estimate}, where $\hat{r}$ is chosen according to \cref{eq:selection_r_test} resulting in $\hat{r} \equiv 4$ when $n = 4000$ and $\hat{r} \equiv 9$ when $n = 8000$.}
    \label{fig:T_simulation1}
\end{figure}

\begin{figure}[t!]
    \centering
    \subfloat[$n = 2000$]{\includegraphics[width=.5\textwidth]{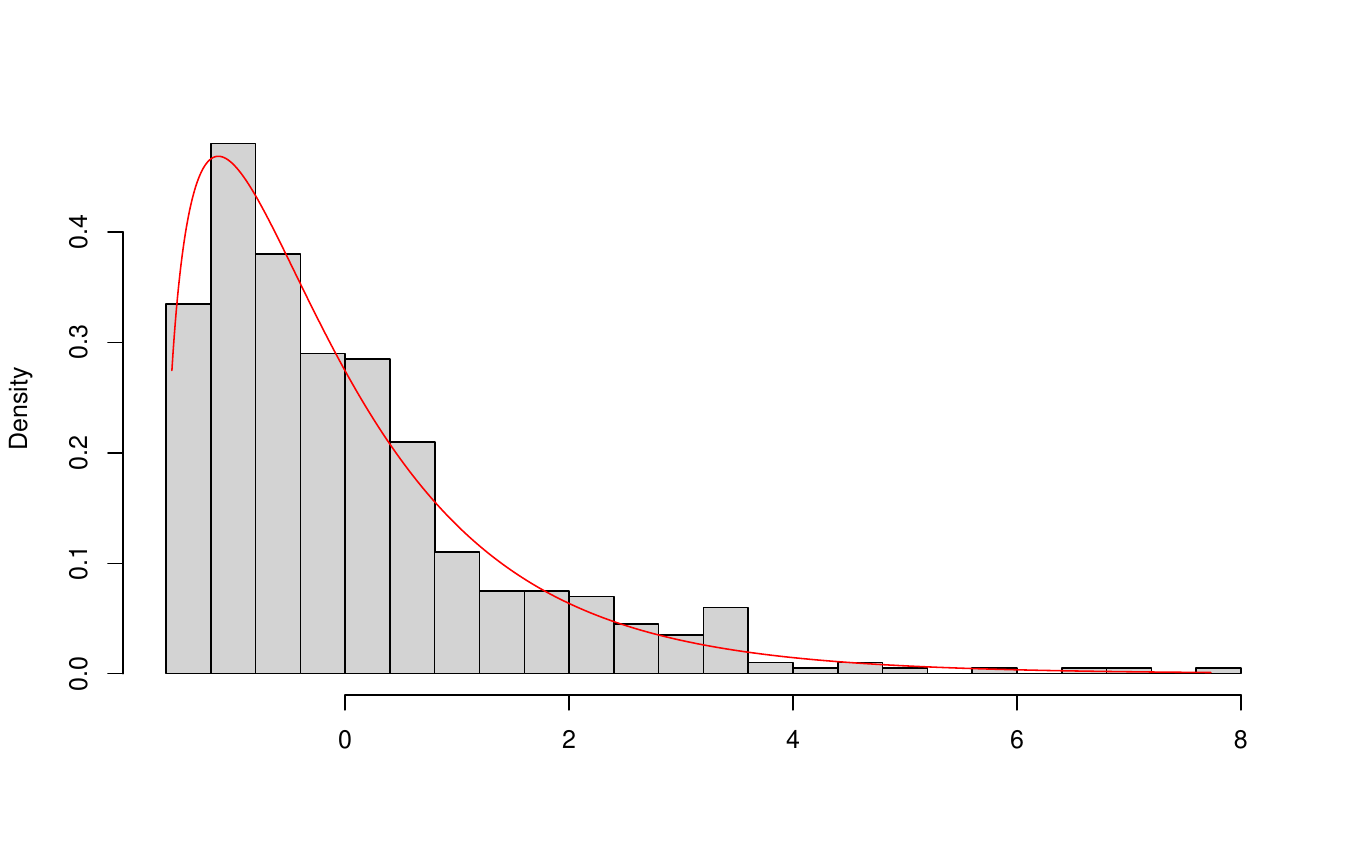}} 
    \subfloat[$n=4000$]{\includegraphics[width=.5\textwidth]{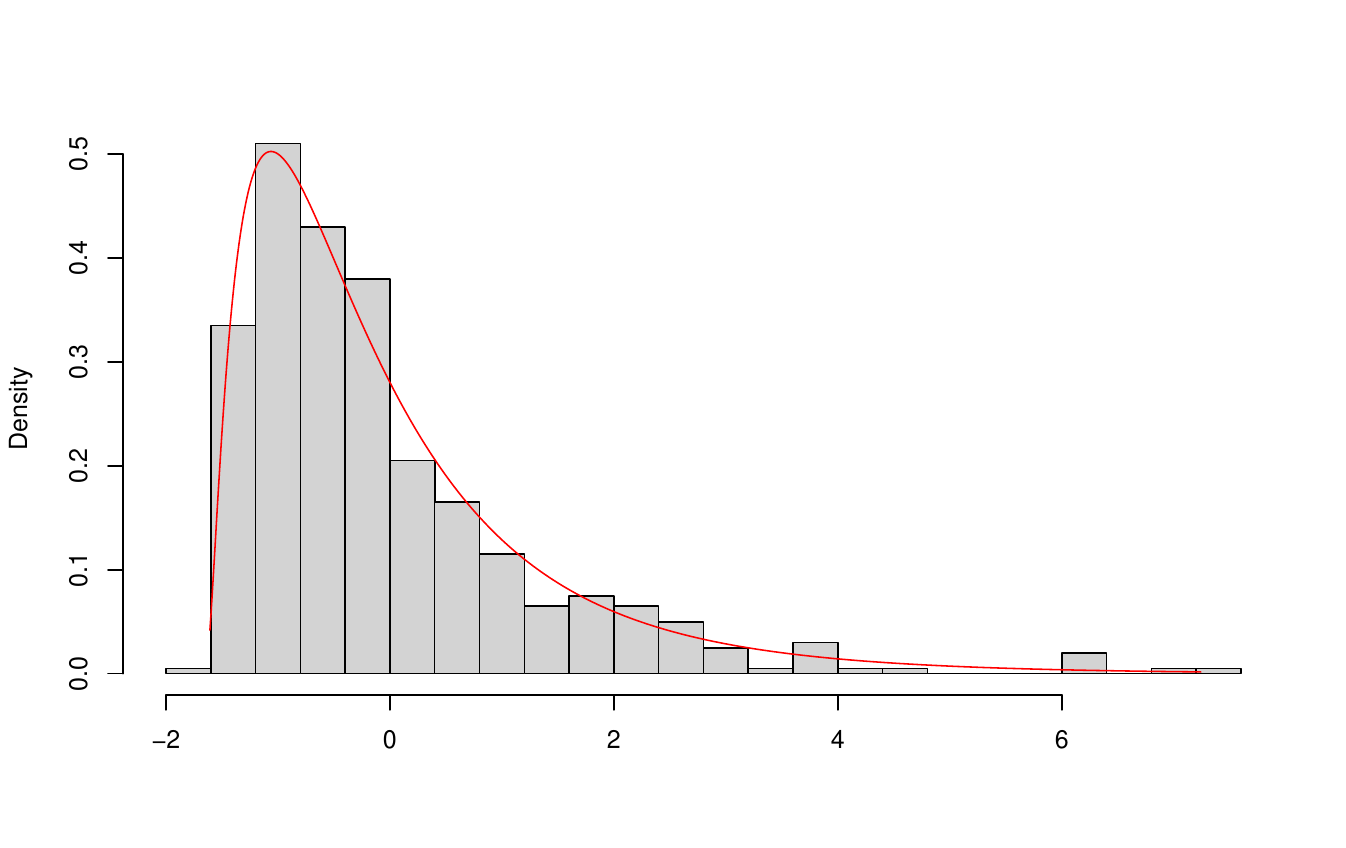}}
    \caption{Empirical histograms, based on $500$ Monte Carlo replicates, for $T(\hat{X}_i, \hat{X}_j)$ under $\mathbb{H}_0 \colon X_i = X_j$ when the link function is $\kappa(x,y) = \exp(-\tfrac{5}{2}\|x - y\|^2)$, the latent positions $\{X_i\}$ are sampled i.i.d. from the bivariate normal distribution with mean zero and identity covariance matrix, and the sparsity parameter is $\rho_n = 0.4$. The red curve in each plot is the probability density function for a weighted sum of $\hat{r}$ independent $\chi^2_1$ random variables as given in \cref{eq:twosample_critical_estimate}, where $\hat{r}$ is chosen via the criteria \cref{eq:selection_r_test} resulting in $\hat{r} \equiv 3$ when $n = 2000$ and $\hat{r} \equiv 6$ when $n = 4000$.}
    \label{fig:T_simulation2}
\end{figure}

\section{Numerical examples}
\label{sec:numerical}
We conduct several simulations to evaluate the finite-sample performance of the test statistic studied in \cref{thm:quadratic_test}.

For our first simulation, we sample a collection of $n - 1$ points $\{X_1, \dots, X_{n-1}\}$ in $\mathbb{R}^3$ according to a uniform distribution on the unit sphere, with $n \in \{4000,8000\}$. We set $X_n = X_1$ and let $p_{ij} = \rho \exp(-\|X_i - X_j\|)$, where $\rho = 0.4$. Given $P = (p_{ij})$, we sample $500$ independent realizations of $A$ from $P$. For each realization $A$ we obtain $\hat{U}$, the matrix formed by the $\hat{r}$ leading eigenvectors of $A$, where $\hat{r}$ is chosen according to \cref{eq:selection_r_test}, and then finally compute $T(\hat{X}_1, \hat{X}_n)$. For the link function specified here, the eigenvalues $\mu_r$ converge to zero at rate $r^{-3/2}$ \cite{scetbon}. Histograms of $T(\hat{X}_1, \hat{X}_n)$ based on these $500$ realizations of $A$ are shown in \cref{fig:T_simulation1}. These histograms indicate that the empirical distributions of $T(\hat{X}_1, \hat{X}_n)$ are well-approximated in finite samples by weighted sums of independent $\chi^2_1$ as specified in \cref{thm:quadratic_test}. We emphasize that, to make this simulation example fully data dependent, the weights are obtained from the eigenvalues of $\hat{U}^{\top} \hat{\mathcal{D}}^2 \hat{U}$. See \cref{eq:twosample_critical_estimate}.

For our second simulation, we sample a collection of $n - 1$ points $\{X_1, \dots, X_{n-1}\}$ in $\mathbb{R}^2$ according to a bivariate normal distribution with mean zero and identity covariance matrix. We set $X_n = X_1$ and let $P$ be the matrix whose entries are $p_{ij} = \rho \exp(-\|X_i - X_j\|^2/\sigma^2)$, where $\rho = 0.4$ and $\sigma^2 = 0.4$. Given $P$, we sample $500$ independent realizations of $A$ from $P$. For each realization $A$ we obtain $\hat{U}$, the matrix formed by the $r$ leading eigenvectors of $A$, where $\hat{r}$ is chosen according to \cref{eq:selection_r_test}, and then finally compute $T(\hat{X}_1, \hat{X}_n)$. Histograms of $T(\hat{X}_1, \hat{X}_n)$ based on these $500$ realizations of $A$ are shown in \cref{fig:T_simulation2}. These histograms again indicate that the empirical distributions of $T(\hat{X}_1, \hat{X}_n)$ are well-approximated in finite samples by weighted sums of independent $\chi^2_1$ as specified in \cref{thm:quadratic_test}. As above, the weights are obtained from the eigenvalues of  $\hat{U}^{\top} \hat{\mathcal{D}}^2 \hat{U}$ as described in \cref{eq:twosample_critical_estimate}. 

\begin{table}[tp]
   \begin{center}
  \begin{tabular}{|c|c|c|c|c|c|c|c|}
    \hline
    & $\epsilon = 0$ & $\epsilon = 0.1$ & $\epsilon = 0.2$ & $\epsilon = 0.3$ & $\epsilon = 0.5$
    & $\epsilon = 1$ & $\hat{r}$ \\
    \hline
    $n = 1000, \rho_n = 0.4$ & $0.046$ & $0.054$ & $0.068$ & $0.048$ & $0.062$ & $0.038$ &  $[1 (1.0)]$ \\ 
    $n = 1000, \rho_n = 0.6$ & $0.068$ & $0.064$ & $0.17$ & $0.332$ & $0.734$ & $0.984$ &  $[4 (1.0)]$ \\ 
    $n = 2000, \rho_n = 0.2$ & $0.056$ & $0.068$ & $0.056$ & $0.076$ & $0.072$ & $0.048$ & $[1 (1.0)]$ \\
    $n = 2000, \rho_n = 0.3$ & $0.046$ & $ 0.062$ & $0.176$ & $0.286$ & $0.666$ & $0.986$ & $[4 (1.0)]$ \\
    $n = 2000, \rho_n = 0.4$ & $0.064$ & $0.126$ & $0.216$ & $0.368$ & $0.796$ & $1$ & $[4 (1.0)]$  \\
    \hline
  \end{tabular}
  \end{center}
  \caption{Empirical estimate of power for testing equality of latent positions with
    exponential kernel. }
     \label{tab:laplace_power1}
\end{table}

\begin{table}[tp]
  \begin{center}
  \begin{tabular}{|c|c|c|c|c|c|c|c|}
    \hline
    & $\epsilon = 0$ & $\epsilon = 0.1$ & $\epsilon = 0.2$ & $\epsilon = 0.3$ & $\epsilon = 0.5$
    & $\epsilon = 1$ & $\hat{r}$ \\
    \hline
    $n = 500, \rho_n = 0.8$ & $0.03$ & $ 0.06$ & $0.172$ & $0.328$ & $0.528$ & $0.784$ & $[1 (0.78), 3 (0.22)]$ \\
    $n = 1000, \rho_n = 0.4$ & $0.042$ & $0.06$ & $0.15$ & $0.3$ & $0.584$ & $0.774$ &  $[1 (0.71), 3 (0.29)]$ \\ 
    $n = 2000, \rho_n = 0.2$ & $0.03$ & $0.06$ & $0.125$ & $0.29$ & $0.56$ & $0.8$ & $[1 (0.68), 3 (0.32)]$ \\
    $n = 2000, \rho_n = 0.4$ & $0.034$ & $0.068$ & $0.35$ & $0.6$ & $0.886$ & $0.972$ & $[3 (1.0)]$  \\
    $n = 2000, \rho_n = 0.8$ & $0.048$ & $ 0.276$ & $0.726$ & $0.882$ & $0.948$ & $0.982$ & $[3 (0.05), 6 (0.95)]$ \\
    \hline
  \end{tabular}
  \end{center}
  \caption{Empirical estimate of power for testing equality of latent positions with Gaussian kernel.}
    \label{tab:gaussian_power1}
\end{table}
Finally, we evaluate the empirical size and power of our proposed test statistic. The settings here are the same as described above except that we now sample $X_n$ so that $\|X_n - X_1\| = \epsilon$ for $\epsilon \in \{0, 0.1, 0.2, 0.3, 0.5, 1\}$. For each realization $A$, we compute $T_{1n} = T(\hat{X}_1, \hat{X}_n)$ and reject the null hypothesis whenever $T_{1n}$ exceeds the $0.95$ quantile of the distribution described in \cref{eq:twosample_critical_estimate}. The results are presented in \cref{tab:gaussian_power1,tab:laplace_power1} for several combinations of $n$ and $\rho_n$. Each entry in these tables is a sample average based on $500$ Monte Carlo replicates. The entries in column $\epsilon = 0$ are the empirical size of the test statistic under the null hypothesis, while those in the columns with $\epsilon > 0$ are the empirical power under the alternative hypothesis. The last column in each table records the embedding dimensions $\hat{r}$ (chosen according to the criteria in \cref{eq:selection_r_test}) and their frequency. For example, in \cref{tab:gaussian_power1} with $n = 500$ and $\rho_n = 0.8$, we observed $\hat{r} = 1$ in $78\%$ of the replicates and $\hat{r} = 3$ in $22\%$ of the replicates, while with $n = 2000$ and $\rho_n = 0.4$ we observed $\hat{r} = 3$ in all replicates. The results in \cref{tab:gaussian_power1,tab:laplace_power1} indicate that our test statistic has well-controlled Type I error under the null and exhibits significant power under the alternative. Finally, recorded values of $\hat{r}$ also suggest that, although both the Laplace and Gaussian kernels are of infinite rank, we only need embedding dimensions of $\hat{r} \leq 6$ to obtain a valid and consistent test procedure. This supports the folklore observation in Statistics and Data Science that, for many inference tasks, high or even full rank models can be well-approximated by their low-rank counterparts.

{\bf Acknowledgments}
    MT was partially supported by the National Science Foundation under grant DMS-2210805. JC was partially supported by the National Science Foundation under grant DMS-2413552. JC gratefully acknowledges support from the University of Wisconsin–Madison, Office of the Vice Chancellor for Research and Graduate Education, with funding from the Wisconsin Alumni Research Foundation.

\bibliographystyle{abbrvnat}
\bibliography{growing_rank}

\newpage
\appendix
\section{Proofs of stated results}
\label{sec:proofs}

\subsection{Proof of \cref{thm:psd} (positive semidefinite kernel $\kappa$)}
\label{sec:proof_simple} 
Recall that by convention, we index the eigenvalues of $A$ and $P$ in decreasing order by magnitude. The eigenvalues of $P$ are all non-negative by the positive semidefiniteness of $\kappa$, so in particular
\begin{equation*}
  |\hat{\lambda}_1|
  \geq
  |\hat{\lambda}_2|
  \geq
  \dots
  \geq
  |\hat{\lambda}_n|,
  \quad
  \text{and}
  \quad
  \lambda_1
  \geq
  \lambda_2
  \geq
  \dots
  \geq
  \lambda_n
  .
\end{equation*}
For any $s \geq 1$, define the eigenvalue gap $\delta_s = \lambda_{s} - \lambda_{s+1} \ge 0$. Let $\mathcal{E}_0$ be the event
\begin{equation}
    \label{eq:event_0}
    \mathcal{E}_0
    =
    \left\{
    \delta_r \geq \max\{4\|E\|, 8 C_2(\nu) \log n\}
    \right\}
    , 
\end{equation}
where $C_2(\nu) = \tfrac{2}{3}(\nu + 2)$. By Weyl's inequality and the stated hypotheses, on $\mathcal{E}_0$ it holds that
\begin{equation}
    \label{eq:eigenvalues_relation}
    \hat{\lambda}_r
    \geq
    \lambda_r - \|E\|
    \geq
    \lambda_r - \frac{1}{4} \delta_r
    \geq
    \frac{1}{2} \lambda_r
    >
    0
    .
\end{equation}
Furthermore, by the discussion in the main text, since $\kappa$ is positive semidefinite, for any $r \geq 1$, 
\begin{gather}
    \label{eq:two_infU_psd}
    \|U \Lambda^{1/2}\|_{2 \to \infty}
    \leq
    \rho_n^{1/2},
    \quad
    \|U_{\perp} \Lambda_{\perp}^{1/2}\|_{2 \to \infty} 
    \leq
    \rho_n^{1/2},
    \quad
    \|U\|_{2 \to \infty}
    \leq
    \lambda_r^{-1/2} \rho_n^{1/2}
    .
\end{gather}
Now, letting $W^{(n)} = \operatorname{arg~min}_{W \in \mathcal{O}(r)} \|\hat{U}-UW\|_{F}$, consider the expansion
\begin{equation}
  \label{eq:decomp}
  \begin{split}
    \hat{U} \hat{\Lambda}^{1/2} - U \Lambda^{1/2} W^{(n)}
    =
    (I - UU^{\top})\hat{U} \hat{\Lambda}^{1/2}
    &+
    U U^{\top} \hat{U} \hat{\Lambda}^{1/2} - U \Lambda^{1/2} U^{\top} \hat{U} \\
    &+
    U \Lambda^{1/2} (U^{\top} \hat{U} - W^{(n)})
    .
  \end{split}
\end{equation}
Next, we shall bound the two-to-infinity norm of each term on the right hand side of \cref{eq:decomp}. Our bounds involve the following quantities that are well-controlled using the technical lemmas in \cref{sec:technical_lemmas}. Specifically,
\begin{gather*}
  \|(I - UU^{\top}) \hat{U}\|
  \leq
  \frac{2\|E\|}{\delta_r}
  =
  \psi_0,
  \qquad
  \psi_1
  =
  \|U^{\top} E U\|, \\
  \psi_2
  =
  \| U^{\top} \hat{U} \hat{\Lambda}^{1/2} - \Lambda^{1/2} U^{\top} \hat{U} \|,
  \qquad
  \psi_3^{(k)}
  =
  \|U_{\perp} |\Lambda_{\perp}|^{k}  U_{\perp}^{\top} E U |\Lambda|^{1/2}\|_{2 \to \infty}.
\end{gather*}

\subsubsection*{Bounding $R_0 \coloneqq U \Lambda^{1/2} (U^{\top} \hat{U} - W^{(n)})$}
Applying \cref{eq:2toinf_submultiplicative} and the aforementioned technical lemma gives
\begin{equation}
    \label{eq:main_term_1}
    \begin{split}
    \|R_0\|_{2 \to \infty}
    &=
    \|U \Lambda^{1/2} (U^{\top} \hat{U} - W^{(n)})\|_{2 \to \infty} \\
    &\leq
    \|U \Lambda^{1/2}\|_{2 \to\infty} \times \|U^{\top} \hat{U} - W^{(n)}\| \\
    &\le
    \|U \Lambda^{1/2}\|_{2 \to \infty} \times \|(I-UU^{\top})\widehat{U}\|^{2}\\
    &\le
    \|U \Lambda^{1/2}\|_{2 \to \infty} \times \psi_0^2
    ,
    \end{split}
\end{equation}
where the second inequality follows from \cite[Lemma~6.7]{cape2019two}.
\subsubsection*{Bounding $R_1 \coloneqq UU^{\top} \hat{U} \hat{\Lambda}^{1/2} - U \Lambda^{1/2} U^{\top}\hat{U}$} 
We have
\begin{equation*}
    \|R_1\|_{2 \to \infty}
    \leq
    \|U\|_{2 \to \infty} \times \| \Lambda^{1/2} U^{\top} \hat{U} - U^{\top} \hat{U} \hat{\Lambda}^{1/2} \|
    \leq
    \|U\|_{2 \to \infty} \times \psi_2
    ,
\end{equation*}
where $\psi_2$ is the upper bound for $\| \Lambda^{1/2} U^{\top} \hat{U} - U^{\top} \hat{U} \hat{\Lambda}^{1/2}\|$ given in \cref{lem:approximate_commute} and mentioned above (with an immaterial sign change inside the operator norm).

\subsubsection*{Bounding $(I - UU^{\top}) \hat{U} \hat{\Lambda}^{1/2}$}
By expanding $\hat{U}\hat{\Lambda} = A\hat{U} = P \hat{U} + E \hat{U}$, we have
\begin{equation*}
  \begin{split}
  (I - U U^{\top}) \hat{U} \hat{\Lambda}^{1/2}
  &=
  (I - UU^{\top}) E \hat{U} \hat{\Lambda}^{-1/2} + (I - UU^{\top}) P \hat{U} \hat{\Lambda}^{-1/2} \\
  &=
  E \hat{U} \hat{\Lambda}^{-1/2} - U U^{\top} E \hat{U} \hat{\Lambda}^{-1/2} + (I - UU^{\top}) P \hat{U} \hat{\Lambda}^{-1/2}
  .
  \end{split}
\end{equation*}
The term $U U^{\top} E \hat{U} \hat{\Lambda}^{-1/2}$ admits the bound
\begin{equation*}
    \begin{split}
    \|U U^{\top} E \hat{U} \hat{\Lambda}^{-1/2} \|_{2 \to \infty} 
    &\leq 
    \|U\|_{2 \to \infty} \times \|U^{\top} E \hat{U} \| \times \|\hat{\Lambda}^{-1/2}\| \\
    &\leq
    \|U\|_{2 \to \infty} \times \frac{1}{|\hat{\lambda}_r|^{1/2}} \times (\|U^{\top} E U\| + \|U^{\top} E (I - UU^{\top}) \hat{U}\|) \\
    &\leq
    \frac{\sqrt{2} \|U\|_{2 \to \infty}(\psi_1 + \|E\| \psi_0)}{\lambda_r^{1/2}}
    .
  \end{split}
\end{equation*}
Above, the final inequality follows from bounding $\|(I - UU^{\top}) \hat{U}\|$ via the Davis--Kahan theorem \cite{davis70} and using \cref{eq:eigenvalues_relation} to replace $\hat{\lambda}_r$ by $\lambda_r$. 

Next, write $\Pi_{U}^{\perp} = I - UU^{\top}$. Repeatedly expanding $\hat{U}$ as $P \hat{U} \hat{\Lambda}^{-1} + E \hat{U} \hat{\Lambda}^{-1}$ yields
\begin{equation*}
    \begin{split}
    (I - UU^{\top}) P \hat{U} \hat{\Lambda}^{-1/2}
    &=
    \Pi_{U}^{\perp} P^2 \hat{U} \hat{\Lambda}^{-3/2} + \Pi_{U}^{\perp} PE \hat{U} \hat{\Lambda}^{-3/2} \\
    &=
    \Pi_{U}^{\perp} P^3 \hat{U} \hat{\Lambda}^{-5/2} + \Pi_{U}^{\perp} P^2 E \hat{U} \hat{\Lambda}^{-5/2} + \Pi_{U}^{\perp} P E \hat{U} \hat{\Lambda}^{-3/2} \\
    &=
    \dots \\
    &=
    \Pi_{U}^{\perp} P^{m} \hat{U} \hat{\Lambda}^{-(m-1/2)}
    +
    \sum_{k=1}^{m-1} \Pi_{U}^{\perp} P^{k} E \hat{U} \hat{\Lambda}^{-(k+1/2)}
    ,
    \end{split}
    \end{equation*}
which holds for any integer $m \geq 2$. For the first term on the right-hand side, we have
\begin{equation*}
    \begin{split}
    \|\Pi_{U}^{\perp} P^m \hat{U} \hat{\Lambda}^{-(m - 1/2)}\|_{2 \to \infty}
    &=
    \|U_{\perp} \Lambda_{\perp}^{m} U_{\perp}^{\top} \hat{U} \hat{\Lambda}^{-(m-1/2)}\|_{2 \to \infty} \\
    &\leq
    \|U_{\perp} \Lambda_{\perp}^{1/2}\|_{2 \to \infty} \times \|\Lambda_{\perp}\|^{m-1/2} \times \|U_{\perp}^{\top} \hat{U}\| 
    \times \|\hat{\Lambda}^{-(m-1/2)}\| \\
    &\leq
    \rho_n^{1/2} \times \psi_0 \times \left|\frac{\lambda_{r+1}}{\hat{\lambda}_r}\right|^{m-1/2}
    ,
    \end{split}
\end{equation*}
where the second inequality follows from bounding $\|U_{\perp}^{\top} \hat{U}\|$ using the Davis--Kahan theorem, i.e., $\|U_{\perp}^{\top} \hat{U}\| = \|(I - UU^{\top}) \hat{U}\| \le \psi_0$.

On the event $\mathcal{E}_0$, it holds that
\begin{equation}
    \label{eq:bound_gap_psd}
    \frac{\lambda_{r+1}}{\hat{\lambda}_r}
    \leq
    \frac{\lambda_{r+1}}{\lambda_r - \|E\|}
    \leq
    \frac{\lambda_{r+1} + \|E\|}{\lambda_r}
    \leq
    1 - \frac{\lambda_r - \lambda_{r+1}}{2\lambda_r}
    =
    1 - \frac{\delta_r}{2 \lambda_r}
    .
\end{equation}
Since $\delta_r \geq 4 \varsigma(\nu,n) \geq 18 (n \rho_n)^{1/2}$ (see \cref{eq:r_select_psd1}) and $\lambda_r \leq n \rho_n$, choosing 
$m = n \rho_n^{1/2} + 1 $ yields 
\begin{equation}
    \label{eq:bound_gap_psd2}
    \left(1 - \frac{\lambda_{r} - \lambda_{r-1}}{2\lambda_r}\right)^{m-1/2}
    \leq
    \left(1 - \frac{9}{(n \rho_n)^{1/2}}\right)^{n \rho_n^{1/2}}
    \leq
    \exp\left(-9n^{1/2}\right)
    ,
    \end{equation}
where the final inequality holds since $1 - x \leq e^{-x}$ for $x \in [0,1]$. For simplicity of notations, we have assumed that $n \rho_n^{1/2}$ is an integer, for otherwise we can replace it with $\lceil n \rho_n^{1/2}\rceil$ without changing the subsequent argument. In summary,
\begin{equation}
\label{eq:bound_gap_psd3}
    \|\Pi_{U}^{\perp} P^m \hat{U} \hat{\Lambda}^{-(m - 1/2)}\|_{2 \to \infty}
    \leq
    \rho_n^{1/2} \psi_0 \exp(-9n^{1/2})
    .
\end{equation}
We now evaluate $\zeta_k = 
\|\Pi_{U}^{\perp} P^k E  \hat{U} \hat{\Lambda}^{-(k+1/2)}\|_{2 \to \infty}$ for $k \geq 1$.
Since $I = UU^{\top} + U_{\perp}U_{\perp}^{\top}$, we first write
\begin{equation*}
    \begin{split}
    \zeta_k
    &\leq
    \underbrace{\|\Pi_{U}^{\perp} P^k E U U^{\top} \hat{U} \hat{\Lambda}^{-(k+1/2)}\|_{2 \to \infty}}_{\zeta_k^{(1)}}
    +
    \underbrace{\|\Pi_{U}^{\perp} P^k E U_{\perp} U_{\perp}^{\top} \hat{U}\hat{\Lambda}^{-(k+1/2)}\|_{2 \to \infty}}_{\zeta_k^{(2)}}
    .
\end{split}
\end{equation*}
For $\zeta_k^{(2)}$, observe that
\begin{equation*}
    \begin{split}
    \zeta_k^{(2)}
    &=
    \|U_{\perp} \Lambda_{\perp}^{k} U_{\perp}^{\top} E U_{\perp} U_{\perp}^{\top} \hat{U} \hat{\Lambda}^{-(k+1/2)}\|_{2 \to \infty} \\
    &\leq
    \|U_{\perp} \Lambda_{\perp}^{1/2}\|_{2 \to \infty}
    \times
    \|\Lambda_{\perp}\|^{k-1/2}
    \times
    \|E\|
    \times
    \|U_{\perp}^{\top} \hat{U}\|
    \times
    \|\hat{\Lambda}^{-1}\|^{(k+1/2)} \\
    &\leq
    \frac{\rho_n^{1/2} \|E\| \psi_0}{|\hat{\lambda}_r|}
    \times
    \left|\frac{\lambda_{r+1}}{\hat{\lambda}_r}\right|^{k-1/2}  \\
    &\leq
    \frac{2 \rho_n^{1/2} \|E\| \psi_0}{\lambda_r} \times \left(1 - \frac{\delta_r}{2\lambda_r}\right)^{k-1/2}
    ,
\end{split}
\end{equation*}
where the final inequality follows from \cref{eq:bound_gap_psd}. This in turn implies
\begin{equation*}
    \sum_{k=1}^{m-1} \zeta_k^{(2)}
    \leq
    \sum_{k=1}^{m-1} \frac{2 \rho_n^{1/2} \|E\| \psi_0}{\lambda_r} \left(1 - \frac{\delta_r}{2\lambda_r}\right)^{k-1/2}
    \leq
    \frac{4 \rho_n^{1/2} \|E\| \psi_0}{\delta_r}
    .
\end{equation*}
Next, for $1 \leq k \leq m$, it holds that
\begin{equation*}
    \begin{split}
    \zeta_k^{(1)}
    &\leq
    \|U_{\perp} \Lambda_{\perp}^{k} U_{\perp} E U\|_{2 \to \infty}
    \times
    \|\hat{\Lambda}^{-(k+1/2)}\|
    \leq
    \psi_3^{(k)} |\hat{\lambda}_r|^{-(k+1/2)}
    ,
\end{split}
\end{equation*}
where $\psi_3^{(k)}$ is an upper bound for $\|U_{\perp} \Lambda_{\perp}^{k} U_{\perp}^{\top} E U\|_{2 \to \infty}$ as given in \cref{lem:technical4_new}, and we have used the fact that $\|U^{\top}\hat{U}\| \le 1$.

In summary, thus far, we have established that
\begin{equation}
    \label{eq:prior_R2}
    (I - UU^{\top})\hat{U} \hat{\Lambda}^{1/2}
    =
    E \hat{U} \hat{\Lambda}^{-1/2}
    +
    R_2
    ,
\end{equation}
where $R_2$ satisfies
\begin{equation}
    \label{eq:residual_r3}
    \begin{split}
    \|R_2\|_{2 \to \infty}
    &\leq
    \frac{\sqrt{2}\|U\|_{2 \to \infty}(\psi_1 + \|E\| \psi_0)}{\lambda_r^{1/2}} + \rho_n^{1/2} \psi_0 \left(\exp(-9n^{1/2})
    + 
    \frac{4 \|E\|}{\delta_r}\right) \\
    &\qquad+
    \sum_{k=1}^{n \rho_n^{1/2}} \psi_3^{(k)} |\hat{\lambda}_{r}|^{-(k+1/2)}
    .
\end{split}
\end{equation}

\subsubsection*{Bounding $E \hat{U} \hat{\Lambda}^{-1/2}$}
We begin with the expansion
\begin{equation*}
    \begin{split}
    E \hat{U} \hat{\Lambda}^{-1/2}
    &=
    E U U^{\top} \hat{U} \hat{\Lambda}^{-1/2}
    +
    E (I - U U^{\top}) \hat{U} \hat{\Lambda}^{-1/2} \\ 
    &=
    E U \Lambda^{-1/2} \left[W^{(n)}
    +
    (U^{\top} \hat{U} - W^{(n)})
    +
    (\Lambda^{1/2} U^{\top} \hat{U} - U^{\top} \hat{U} \hat{\Lambda}^{1/2}) \hat{\Lambda}^{-1/2}\right] \\
    &\qquad+
    E (I - U U^{\top}) \hat{U} \hat{\Lambda}^{-1/2}
    ,
  \end{split}
\end{equation*}
where $W^{(n)}$ denotes the orthogonal matrix mentioned above that solves the Frobenius norm Procrustes problem for $U$ and $\hat{U}$. Considering the above expansion, we have
\begin{align*}
    \|E U \Lambda^{-1/2} (U^{\top} \hat{U} - W^{(n)})\|_{2 \to \infty}
    &\leq
    \|E U \Lambda^{-1/2}\|_{2 \to \infty} \times \psi_0^2, \\
    \| E U \Lambda^{-1/2} (\Lambda^{1/2} U^{\top} \hat{U} - U^{\top} \hat{U} \hat{\Lambda}^{1/2}) \hat{\Lambda}^{-1/2} \|_{2 \to \infty}
    &\leq
    \sqrt{2} |\lambda_r|^{-1/2}\|E U \Lambda^{-1/2}\|_{2 \to \infty} \times \psi_2,
\end{align*}
where $\psi_2$ is an upper bound for $\|\Lambda^{1/2} U^{\top} \hat{U}  - U^{\top} \hat{U} \hat{\Lambda}^{1/2}\|$ given in \cref{lem:approximate_commute}.

Our last remaining technical challenge in this proof is to bound
$\|E (I - UU^{\top}) \hat{U}\|_{2 \to \infty}$, and we do so through a
careful leave-one-out analysis. Leave-one-out style arguments provide a useful
and elegant approach for handling the (often times) complicated dependencies
between the rows of $\hat{U}$. See
\cite{abbe2020entrywise,spectral_chen,lei2019unified,javanmard_montanari,zhong_boumal,xie2021entrywise}
for various examples of leave-one-out analysis in the context of random graph
inference, linear regression using LASSO, and phase synchronization.

Although we are motivated in part by the ideas used in the proofs of \cite[Theorem~2.1]{abbe2020entrywise} and \cite[Theorem~3.1]{lei2019unified}, our arguments require noticeably more delicate analysis in order to decompose the bound for $E(I - UU^{\top}) \hat{U}$ into a main order term $E U \Lambda^{-1/2}$ and a negligible lower order term. Inference results in \cref{sec:entrywise_approximation} and \ref{sec:two_sample} are subsequently made possible by further study of the main order term.

To proceed with the proof, we first introduce some helpful notations. Given $A$, define the collection of auxiliary matrices $A^{[1]}, A^{[2]}, \dots, A^{[n]}$, where for each $1 \leq h \leq n$,
\begin{equation}
    \label{eq:def_Ah}
    A^{[h]}(i,j)
    =
    \begin{cases}
        A(i,j)
        &
        \text{if $i \not = h$ and $j \not = h$}, \\
        P(i,j)
        &
        \text{otherwise}.
    \end{cases}    
\end{equation}
\cref{eq:def_Ah} implies that the entries of $A^{[h]} - A$ are all zeros except for the entries in the $h$-th row and $h$-th column which are the same as those for the $h$-th row and $h$-th column of $E = A - P$, up to a negative sign. For consistency of notation, let $\hat{U}^{[h]}$ be the $n \times r$ matrix whose columns are the leading eigenvectors of $A^{[h]}$.

\begin{lemma}
\label{lem:loo}
Consider the setting in \cref{thm:psd} or \cref{thm:general}. Define $\hat{U}^{[h]}$ as above, and let $V^{[h]} = (I - U U^{\top}) \hat{U}^{[h]}$. Suppose there exists $\nu > 0$ such that the events 
\begin{gather}
    \mathcal{E}_0
    =
    \left\{
    \delta_r
    \geq
    \max\{4\|E\|, 8 C_2(\nu) \log n\}
    \right\}, \\
    \label{eq:e1_event}
    \mathcal{E}_1
    =
    \left\{
    \forall h \in [n],
    \,\,\,
    \|e_h^{\top} E V^{[h]}\|
    \leq
    C_1(\nu) \sqrt{\rho_n \log n} \|V^{[h]}\|_F + C_2(\nu) \|V^{[h]}\|_{2 \to \infty} \log n
    \right\}
\end{gather}
hold simultaneously, with $C_1(\nu) = \sqrt{2 (\nu + 2)}$ and $C_2(\nu) = \tfrac{2}{3} (\nu + 2)$. Then,
\begin{equation}
    \label{eq:loo_final3_lemma}
    \begin{split}
    \|E(I - UU^{\top}) \hat{U}\|_{2 \to \infty}
    &\leq
    \frac{16 C_1(\nu) \sqrt{r \rho_n \log n} \|E\|}{\delta_r} \\
    &\qquad+
    \frac{8 (\|E\| \cdot\|U\|_{2 \to \infty} + \|EU\|_{2 \to \infty}) (C_2(\nu) \log n + 2 \|E\|)}{\delta_r} \\
    &\qquad+
    \|(I - UU^{\top}) \hat{U}\|_{2 \to \infty} \left(6 C_2(\nu) \log n + \frac{16 \|E\|^2 }{\delta_r}\right)
    .
    \end{split}
\end{equation}
\end{lemma}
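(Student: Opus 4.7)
The plan is to control $\|E(I-UU^{\top})\hat{U}\|_{2\to\infty}$ by bounding each row $e_h^{\top} E V$ individually, where $V = (I-UU^{\top})\hat{U}$, and then taking the maximum over $h \in [n]$. The leave-one-out matrix $A^{[h]}$ replaces the $h$-th row and column of $A$ with the corresponding entries of $P$, so $V^{[h]} = (I-UU^{\top})\hat{U}^{[h]}$ is a measurable function of $\{E_{ij}\}_{i,j \ne h}$ and hence independent of the $h$-th row of $E$. Let $W_h = \argmin_{W \in \mathcal{O}_r}\|\hat{U}-\hat{U}^{[h]}W\|_F$. Two Davis--Kahan estimates will be the workhorses: on $\mathcal{E}_0$, the crude bounds $\|A^{[h]} - P\| \le 3\|E\|$ and $\|A - A^{[h]}\| \le 2\|E\|$ (both using that the difference is supported on row and column $h$) combined with $\delta_r \ge 4\|E\|$ give $\|V^{[h]}\| \lesssim \|E\|/\delta_r$ and $\|\hat{U}-\hat{U}^{[h]}W_h\| \lesssim \|E\|/\delta_r$.

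I then write
\begin{equation*}
  e_h^{\top} E V
  =
  e_h^{\top} E V^{[h]} W_h
  +
  e_h^{\top} E(\hat{U}-\hat{U}^{[h]}W_h)
  -
  e_h^{\top} E\, U U^{\top}(\hat{U}-\hat{U}^{[h]}W_h),
\end{equation*}
using $V - V^{[h]}W_h = (I - UU^{\top})(\hat{U}-\hat{U}^{[h]}W_h)$ for the last two pieces. Orthogonal invariance reduces the first summand to $\|e_h^{\top} E V^{[h]}\|_2$, which is controlled by $\mathcal{E}_1$: the bound $C_1(\nu) \sqrt{\rho_n \log n}\, \|V^{[h]}\|_F + C_2(\nu) \log n\, \|V^{[h]}\|_{2 \to \infty}$ combined with $\|V^{[h]}\|_F \le \sqrt{r}\,\|V^{[h]}\|$ immediately yields the first term of the target inequality. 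To bound $\|V^{[h]}\|_{2 \to \infty}$ I use $\|V^{[h]}\|_{2 \to \infty} = \|V^{[h]}W_h\|_{2 \to \infty} \le \|V\|_{2 \to \infty} + \|V - V^{[h]}W_h\|_{2 \to \infty}$ and expand the new piece via \eqref{eq:2toinf_submultiplicative}, splitting it into $\|U\|_{2 \to \infty}\|\hat{U}-\hat{U}^{[h]}W_h\|$ plus a residual that is itself bounded by $\|V\|_{2 \to \infty}$ times an $O(\|E\|/\delta_r)$ factor; this is how the $16\|E\|^2/\delta_r$ coefficient attached to $\|V\|_{2 \to \infty}$ will appear. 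The remaining two summands are handled using $\|e_h^{\top} E\|_2 \le \|E\|$ and $\|e_h^{\top} E U\|_2 \le \|EU\|_{2 \to \infty}$, each multiplied by the Davis--Kahan bound $\|\hat{U}-\hat{U}^{[h]}W_h\| \lesssim \|E\|/\delta_r$.

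The main obstacle is recovering the additive factor $C_2(\nu)\log n$ (not just $\|E\|$) that multiplies $\|E\|\|U\|_{2 \to \infty} + \|EU\|_{2 \to \infty}$ in the target bound. To achieve this I anticipate applying event $\mathcal{E}_1$ a second time, in effect replacing the blunt bound $\|e_h^{\top} E\|_2 \cdot \|\hat{U}-\hat{U}^{[h]}W_h\|$ by a Bernstein-type estimate against $\hat{U}^{[h]}W_h$ (which is independent of the $h$-th row of $E$) with a small coupling residue controlled by the Davis--Kahan estimates above; this is the step that contributes the $C_2(\nu)\log n$ summand. Once that refinement is in place, the rest of the proof is bookkeeping: taking the maximum over $h \in [n]$, absorbing lower-order terms using $\|E\| \le \delta_r/4$ on $\mathcal{E}_0$, and tracking the explicit constants $8$, $16$, and $6$ that appear in the final inequality.
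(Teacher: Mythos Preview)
Your overall architecture is right, but the plan contains a genuine gap at the step you yourself flag as ``the main obstacle.'' Using the crude estimate $\|A-A^{[h]}\|\le 2\|E\|$ in Davis--Kahan gives only $\|\hat{U}-\hat{U}^{[h]}W_h\|\lesssim \|E\|/\delta_r$, and then your second summand $e_h^{\top}E(\hat{U}-\hat{U}^{[h]}W_h)$ contributes a bare term of order $\|E\|^2/\delta_r$. This term is \emph{not} absorbable into the stated bound: it is larger than $\sqrt{r\rho_n\log n}\,\|E\|/\delta_r$ (since $\|E\|\asymp\sqrt{n\rho_n}\gg\sqrt{r\rho_n\log n}$) and larger than $(\|E\|\|U\|_{2\to\infty}+\|EU\|_{2\to\infty})\|E\|/\delta_r$ (since $\|U\|_{2\to\infty},\|EU\|_{2\to\infty}\ll 1$). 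Your proposed fix, ``apply $\mathcal{E}_1$ a second time'' against $\hat{U}^{[h]}W_h$, does not help here: $\mathcal{E}_1$ controls $e_h^{\top}EV^{[h]}$, not $e_h^{\top}E(\hat{U}-\hat{U}^{[h]}W_h)$, and there is no way to turn the difference $\hat{U}-\hat{U}^{[h]}W_h$ into something independent of row $h$.

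The missing idea is to exploit the \emph{sparse} structure of $A-A^{[h]}$ (nonzero only on row and column $h$) inside Davis--Kahan rather than passing through its operator norm. Concretely, the paper bounds
\[
\|(I-\hat{U}^{[h]}\hat{U}^{[h]\top})\hat{U}\|
\le \frac{2\|(A-A^{[h]})\hat{U}^{[h]}\|}{\delta_r},
\qquad
\|(A-A^{[h]})\hat{U}^{[h]}\|
\le \|E\|\,\|\hat{U}^{[h]}\|_{2\to\infty}+\|e_h^{\top}E\hat{U}^{[h]}\|,
\]
then writes $\|e_h^{\top}E\hat{U}^{[h]}\|\le\|EU\|_{2\to\infty}+\|e_h^{\top}EV^{[h]}\|$ and $\|\hat{U}^{[h]}\|_{2\to\infty}\le \|U\|_{2\to\infty}+\|(I-UU^{\top})\hat{U}\|_{2\to\infty}+\|(I-\hat{U}^{[h]}\hat{U}^{[h]\top})\hat{U}\|$. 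Combined with $\mathcal{E}_1$ (which injects $\|V^{[h]}\|_{2\to\infty}$, itself bounded via $\|(I-\hat{U}^{[h]}\hat{U}^{[h]\top})\hat{U}\|$), this produces a \emph{self-referential} inequality for $\|(I-\hat{U}^{[h]}\hat{U}^{[h]\top})\hat{U}\|$; the hypothesis $\delta_r\ge 8C_2(\nu)\log n$ is exactly what allows you to close it. Once this refined $\sin\Theta$ bound is in hand, multiplying by $\|E\|$ yields the terms $\|E\|^2\|U\|_{2\to\infty}/\delta_r$, $\|E\|\|EU\|_{2\to\infty}/\delta_r$, and $\|E\|^2\|(I-UU^{\top})\hat{U}\|_{2\to\infty}/\delta_r$ that actually appear in the lemma, and the $C_2(\nu)\log n$ factor you were looking for emerges from feeding the refined $\sin\Theta$ bound back through the $\|V^{[h]}\|_{2\to\infty}$ term of $\mathcal{E}_1$, not from a second Bernstein estimate.
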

We emphasize that, conditional on $P$, $V^{[h]}$ is independent of the entries in the $h$-th row and $h$-th column of $E$. Therefore, $e_h^{\top} E$ and $V^{[h]}$ are independent, and we can bound the probability of the event $\mathcal{E}_1$ via \cref{lem:ex_2inf}.

By applying \cref{lem:loo}, we have
\begin{equation*}
    \begin{split}
    &\|E(I - UU^{\top}) \hat{U} \hat{\Lambda}^{-1/2}\|_{2 \to \infty} \\
    &\qquad\leq
   \frac{16 \sqrt{2} C_1(\nu) \sqrt{r \rho_n \log n} \|E\|}{\delta_r \lambda_r^{1/2}} \\
   &\qquad\qquad+
   \frac{8 \sqrt{2}(\|E\| \cdot\|U\|_{2 \to \infty} + \|EU\|_{2 \to \infty}) (C_2(\nu) \log n + 2 \|E\|)}{\delta_r \lambda_r^{1/2}} \\
   &\qquad\qquad+
   \sqrt{2} \lambda_r^{-1/2}\|(I - UU^{\top}) \hat{U}\|_{2 \to \infty} \left(6 C_2(\nu) \log n + 
  \frac{16 \|E\|^2 }{\delta_r}\right)
  .
\end{split} 
\end{equation*}
Next, observe that
\begin{equation*}
    \|(I - UU^{\top}) \hat{U}\|_{2 \to \infty}
    \leq
    \sqrt{2} \lambda_r^{-1/2} \|(I - UU^{\top}) \hat{U} \hat{\Lambda}^{1/2}\|_{2 \to \infty}
    ,
\end{equation*}
where the term $(I - UU^{\top}) \hat{U} \hat{\Lambda}^{1/2}$ previously appeared in \cref{eq:prior_R2}. Hence, we have
\begin{equation*}
    \begin{split}
    &\|E(I - UU^{\top}) \hat{U} \hat{\Lambda}^{-1/2}\|_{2 \to \infty} \\
    &\qquad\le
    \frac{16 \sqrt{2} C_1(\nu) \sqrt{r \rho_n \log n} \|E\|}{\delta_r \lambda_r^{1/2}} \\
    &\qquad\qquad+ 
    \frac{8 \sqrt{2}(\|E\| \cdot\|U\|_{2 \to \infty} + \|EU\|_{2 \to \infty}) (C_2(\nu) \log n + 2 \|E\|)}{\delta_r \lambda_r^{1/2}} \\
    &\qquad\qquad+
    \lambda_r^{-1}\|(I - UU^{\top}) \hat{U} \hat{\Lambda}^{1/2}\|_{2 \to \infty} \left(12 C_2(\nu) \log n + \frac{32 \|E\|^2 }{\delta_r}\right)
    .
  \end{split}
\end{equation*} 
Let $T_* = E U \Lambda^{-1/2} W^{(n)}$. Combining the above expressions, we obtain
\begin{align*}
    (I - UU^{\top}) \hat{U} \hat{\Lambda}^{1/2}
    &=
    T_* + R_2 + Y_0 + Y_1 + Y_2, \\
    \|Y_0\|_{2 \to \infty}
    &\leq
    \|T_*\|_{2 \to \infty} (\psi_0^2 + 2^{1/2} \lambda_r^{-1/2} \psi_2), \\
    \|Y_1\|_{2 \to \infty}
    &\leq
    \frac{16 \sqrt{2} C_1(\nu) \sqrt{r \rho_n \log n} \|E\|}{\delta_r \lambda_r^{1/2}} \\
    &\qquad+
    \frac{8 \sqrt{2} (\|E\| \cdot\|U\|_{2 \to \infty} + \|EU\|_{2 \to \infty}) (C_2(\nu) \log n + 2  \|E\|)}{\delta_r \lambda_r^{1/2}} \\
    \|Y_2\|_{2 \to \infty}
    &\leq
    \frac{12 C_2(\nu) \log n + 32 \delta_r^{-1} \|E\|^2}{\lambda_r} \|(I - UU^{\top}) \hat{U} \hat{\Lambda}^{1/2}\|_{2 \to \infty}
    .
\end{align*}
Hence, provided $\lambda_r \geq 24 C_2(\nu) \log n + 64 \delta_r^{-1} \|E\|^2$, then
\begin{align*}
    &\|(I - UU^{\top})\hat{U} \hat{\Lambda}^{1/2}\|_{2 \to \infty} \\
    &\qquad\leq
    \frac{1}{1 - \frac{12 C_2(\nu) \log n + 32 \delta_r^{-1} \|E\|^2}{\lambda_r}}
    \left(\|T_*\|_{2 \to \infty} + \|R_2\|_{2 \to \infty} + \|Y_0\|_{2 \to \infty} + \|Y_1\|_{2 \to \infty} \right)
\end{align*}
and
\begin{align*}
    \|Y_2\|_{2 \to \infty}
    &\leq
    \frac{24 C_2(\nu) \log n + 64 \delta_r^{-1} \|E\|^2}{\lambda_r}
    \left(\|T_*\|_{2 \to \infty} + \|R_2\|_{2 \to \infty} + \|Y_0\|_{2 \to \infty} +\|Y_1\|_{2 \to \infty} \right)
    .
\end{align*}

\subsubsection{Wrapping up}
In summary, suppose there exists a constant $\nu > 0$ such that all of the following events simultaneously hold.
\begin{gather*}
    \mathcal{E}_0
    =
    \{ \delta_{r} \geq \max\{6 \|E\|, 8 C_2(\nu) \log n\}\}, \\
    \mathcal{E}_1
    =
    \{\forall h \in [n], \,\,\, \|e_h^{\top} E V^{[h]}\| \leq C_1(\nu) \sqrt{\rho_n \log n} \|V^{[h]}\|_{F} + C_2(\nu) \|V^{[h]}\|_{2 \to \infty} \log n\}, \\
    \mathcal{E}_2
    =
    \{ \lambda_r \geq 24 C_2(\nu) \log n + 64 \delta_r^{-1} \|E\|^2)\}
    .
\end{gather*}
Recall that $V^{[h]} = (I - UU^{\top}) \hat{U}^{[h]}$, $C_1(\nu) = \sqrt{2 (\nu + 2)}$, and $C_2(\nu) = \tfrac{2}{3} (\nu + 2)$. 

We therefore have
\begin{equation}
    \label{eq:wrapup_expansion}
    \hat{U} \hat{\Lambda}^{1/2} - U \Lambda^{1/2} W^{(n)}
    =
    E U \Lambda^{-1/2} W^{(n)}
    +
    R_0 + R_1 + R_2 + Y_0 + Y_1,
\end{equation}
where $T_* = E U \Lambda^{-1/2} W^{(n)}$ and the residuals $R_0$ through $Y_2$ satisfy
{\footnotesize
\begin{gather*}
    \|R_0\|_{2 \to \infty} \leq \|U \Lambda^{1/2} \|_{2 \to \infty} \psi_0^2,
    \qquad
    \|R_1\|_{2 \to \infty}
    \leq
    \|U\|_{2 \to \infty} \psi_2 \\
    \|R_2\|_{2 \to \infty}
    \leq
    \frac{\sqrt{2}\|U\|_{2 \to \infty}( \psi_1 + \|E\| \psi_0)}{\lambda_r^{1/2}} + \rho_n^{1/2} \psi_0 \left(\exp(-9n^{1/2}) + \frac{4 \|E\|}{\delta_r}\right) + \sum_{k=1}^{n \rho_n^{1/2}} \psi_3^{(k)} |\hat{\lambda}_r|^{-(k+1/2)}, \\
    \|Y_0\|_{2 \to \infty}
    \leq
    \|T_*\|_{2 \to \infty} (\psi_0^2 + 2^{1/2} \lambda_r^{-1/2} \psi_2), \\
    \|Y_1\|_{2 \to \infty}
    \leq
    \frac{16 \sqrt{2} C_1(\nu) \sqrt{r \rho_n \log n} \|E\|}{\delta_r \lambda_r^{1/2}} + \frac{8 \sqrt{2} (\|E\| \cdot\|U\|_{2 \to \infty} + \|EU\|_{2 \to \infty}) (C_2(\nu) \log n + 2  \|E\|)}{\delta_r \lambda_r^{1/2}}, \\
    \|Y_2\|_{2 \to \infty}
    \leq
    \frac{ 24 C_2(\nu) + 64 \delta_r^{-1} \|E\|^2)}{\lambda_r} \left(\|T_*\|_{2 \to \infty} + \|R_2\|_{2 \to \infty} + \|Y_0\|_{2 \to \infty} + \|Y_1\|_{2 \to \infty} \right).
\end{gather*}
}
We now bound each term in the above display equations. By \cref{lem:bandeira_vanhandel},
\begin{equation}
    \label{eq:E_spectral_norm}
    \|E\|
    \leq
    \varsigma(\nu,n)
\end{equation}
with probability at least $1 - n^{-\nu}$, and hence
\begin{equation*}
    \psi_0
    =
    \frac{2 \|E\|}{\delta_r}
    \leq
    \frac{2\varsigma(\nu,n)}{\delta_r}
\end{equation*}
with probability at least $1 - n^{-\nu}$. Next, for $\psi_1$, we have by \cref{lem:technical2a} that
\begin{equation*}
    \psi_1
    =
    \|U^{\top} E U \|
    \leq
    4 \sqrt{\rho_n \vartheta(\nu,r,n)} + \frac{8}{3} \|U\|_{2 \to \infty}^2 \vartheta(\nu,r,n)
\end{equation*}
with probability at least $1 - 2n^{-\nu}$, where $\vartheta(\nu,r,n) = \nu \log n + r \log 9$. 
In the positive semidefinite setting, we have $\|U\|_{2 \to \infty} \leq \rho_n^{1/2} \lambda_r^{-1/2}$ so that
\begin{equation*}
    \psi_1
    \leq
    4 \sqrt{\rho_n \vartheta(\nu,r,n)} + \frac{8}{3} \lambda_r^{-1} \rho_n \vartheta(\nu,r,n)
\end{equation*}
with probability at least $1 - 2n^{-\nu}$.
For $\psi_2$, by \cref{lem:approximate_commute},
\begin{equation*}
    \psi_2
    \leq
    \lambda_r^{-1/2}\left(4 \sqrt{\rho_n \vartheta(\nu,r,n)} + \frac{8}{3} \lambda_r^{-1} \rho_n \vartheta(\nu,r,n) + \frac{2 \varsigma(\nu,n)^2}{\delta_r}\right)
\end{equation*}
with probability at least $1 - 2n^{-\nu}$.
For $\psi_3^{(k)}$, by \cref{lem:technical4_new},
\begin{equation*}
    \begin{split}
    \psi_3^{(k)}
    &\leq
    \|U_{\perp} \Lambda_{\perp}^{k}\|_{2 \to \infty}
    \left(\frac{8\|U\|_{2 \to \infty} \vartheta(\nu+2,r,n)}{3} + \sqrt{8 \rho_n \vartheta(\nu+2,r,n)}\right) \\
    &\leq
    \rho_n^{1/2} \lambda_{r+1}^{k-1/2} \left(\frac{8 \rho_n^{1/2} \vartheta(\nu+2,r,n)}{3 \lambda_r^{1/2}} + \sqrt{8 \rho_n \vartheta(\nu+2,r,n)}\right)
    \end{split}
\end{equation*} 
with probability at least $1 - n^{-(\nu+1)}$.
The above derivations use the fact that in the positive semidefinite setting, 
\begin{equation*}
    \|U\|_{2 \to \infty}
    \leq
    \rho_n^{1/2} \lambda_r^{-1/2},
    \quad
    \text{and}
    \quad
    \|U_{\perp} \Lambda_{\perp}^{1/2}\|_{2 \to \infty} 
    \leq
    \rho_n^{1/2}.
\end{equation*}
Taking a union over all $k$ satisfying $1 \le k \leq n \rho_n^{1/2}$, we obtain
\begin{equation*}
    \begin{split}
    \sum_{k=1}^{n \rho_n^{1/2}} \psi_3^{(k)} |\hat{\lambda}_{r}|^{-(k+1/2)}
    &\leq
    \rho_n^{1/2} |\hat{\lambda}_r|^{-1} \left(\frac{8 \rho_n^{1/2} \vartheta(\nu+2,r,n)}{3 \lambda_r^{1/2}} + \sqrt{8 \rho_n \vartheta(\nu+2,r,n)}\right) \sum_{k=1}^{n \rho_n^{1/2}} \left|\frac{\lambda_{r+1}}{\hat{\lambda}_r}\right|^{k-1/2} \\
    &\leq \frac{2\rho_n}{\lambda_r} \Bigl( \frac{8 \vartheta(\nu+2,r,n)}{3 \lambda_r^{1/2}} + \sqrt{8 \vartheta(\nu+2,r,n)}\Bigr) \sum_{k=1}^{n \rho_n^{1/2}}\Bigl(1 - \frac{\delta_r}{2 \lambda_r}\Bigr)^{k-1/2} \\
    &\leq \frac{4\rho_n}{\delta_r} \Bigl(\frac{8 \vartheta(\nu+2,r,n)}{3 \lambda_r^{1/2}} + \sqrt{8 \vartheta(\nu+2,r,n)}\Bigr) 
    \end{split}
\end{equation*}
with probability at least $1 - n^{-\nu}$, where the second inequality in the above display follows from \cref{eq:bound_gap_psd}.
Next, by \cref{lem:ex_2inf},
\begin{equation}
  \begin{split}
  \label{eq:EU_psd}
  \|EU\|_{2 \to \infty}
  &\leq
  \sqrt{8 \rho_n \vartheta(\nu + 1,r,n)} + \tfrac{8}{3}\|U\|_{2 \to \infty} \vartheta(\nu + 1,r,n) \\
  &\leq
  \frac{11}{2} \rho_n^{1/2} \sqrt{\vartheta(\nu + 1, r,n)}
  \end{split}
\end{equation}
with probability at least $1 - n^{-\nu}$, where the final inequality follows from the fact that $\|U\|_{2 \to \infty} \leq \rho_n^{1/2} \lambda_r^{-1/2}$ and $\lambda_{r} \geq \vartheta(\nu + 1, r, n)$.

Using the same argument as before, we also have
\begin{equation}
  \begin{split}
  \label{eq:EULambda_half_psd}
  \|EU \Lambda^{-1/2}\|_{2 \to \infty}
  &\leq
  \sqrt{8 \rho_n \vartheta(\nu + 1,r,n)} \lambda_r^{-1/2} + \tfrac{8}{3}\|U \Lambda^{-1/2}\|_{2 \to \infty} \vartheta(\nu + 1,r,n) \\
  &\leq
  \frac{11}{2} \rho_n^{1/2} \lambda_r^{-1/2} \sqrt{\vartheta(\nu + 1, r,n)}
  ,
  \end{split}
\end{equation}
where now $\|U \Lambda^{-1/2}\|_{2 \to \infty} \leq \rho_n^{1/2} \lambda_r^{-1}$.  \cref{eq:EULambda_half_psd} implies the bound in \cref{eq:EUlambda}.

Finally, we bound the probability of the event $\mathcal{E}_1$ in \cref{lem:loo}. Recall that $V^{[h]}_i$ denotes the $i$-th row of $V^{[h]} = (I - UU^{\top})\hat{U}^{[h]}$ and that $V^{[h]}$ is independent of $e_h^{\top} E$ as $\hat{U}^{[h]}$ is a function of the entries in $A^{[h]}$ which are mutually independent from the entries in the $h$-th row of $E$. Therefore, by the matrix Bernstein inequality (e.g., see \cite[Theorem~1.6]{tropp}), we have
\begin{equation*}
    \mathbb{P}\left(\|e_h^{\top} E V^{[h]}\| \geq t\right)
    \leq
    (r+1) \exp\left(-\frac{t^2/2}{\sigma_{h}^2 +
    \|V^{[h]}\|_{2 \to \infty} t/3}\right)
    ,
\end{equation*}
where $\sigma_h^2 = \sum_{j} P(i,j) (1 - P(i,j)) \|V^{[h]}_i\|^2 \leq \rho_n \|V^{[h]}\|_F^2$.
We thus have
\begin{equation}
    \label{eq:loo4} 
    \|e_h^{\top} E V^{[h]}\|_{2 \to \infty}
    \leq
    \sqrt{2 (\nu + 2) \rho_n \log n} \|V^{[h]}\|_{F} + \frac{2(\nu + 2)}{3} \|V^{[h]}\|_{2 \to \infty} \log n
\end{equation}
with probability at least $1 - 2n^{-(\nu + 1)}$. By taking a union over all $h \in [n]$, we conclude that $\mathcal{E}_1$ holds with probability at least $1 - 2n^{-\nu}$, where $C_1(\nu) = \sqrt{2(\nu+2)}$ and $C_2(\nu) = \tfrac{2}{3}(\nu + 2)$. 

For ease of presentation, we shall now drop explicit constants from our derivations. Readers who are interested in keeping track of these constants can continue to do so using the values given above. By combining the above bounds and noting that $\vartheta(\nu,n) \lesssim (n \rho_n)^{1/2}$, after some tedious algebraic manipulations, we have
\begin{gather*}
    \|R_0\|_{2 \to \infty}
    \lesssim
    \|U \Lambda^{1/2}\|_{2 \to \infty} \times \frac{n \rho_n}{\delta_r^2}
    , \\
    \|R_1\|_{2 \to \infty}
    \lesssim
    \|U\|_{2 \to \infty} \left(\frac{\rho_n^{1/2} (r^{1/2} + \log^{1/2}{n})}{\lambda_r^{1/2}} + \frac{n \rho_n}{\delta_r \lambda_r^{1/2}}\right)
    , \\
    \|R_2\|_{2 \to \infty}
    \lesssim
    \|U\|_{2 \to \infty} \left(\frac{\rho_n^{1/2} (r^{1/2} + \log^{1/2}{n})}{\lambda_r^{1/2}} + \frac{n \rho_n}{\delta_r \lambda_r^{1/2}}\right) + \frac{n \rho_n^{3/2}}{\delta_r^2} + \frac{\rho_n (r^{1/2} + \log^{1/2}{n})}{\delta_r}
    , \\
    \|Y_0\|_{2 \to \infty}
    \lesssim
    \frac{\rho_n^{1/2}(r^{1/2} + \log^{1/2}{n})}{\lambda_r^{1/2}} \left(\frac{n \rho_n}{\delta_r^2} + \frac{\rho_n^{1/2}(r^{1/2} + \log^{1/2}{n})}{\lambda_r}\right)
    , \\
    \|Y_1\|_{2 \to \infty}
    \leq
    \frac{(r \rho_n \log n)^{1/2} \times (n \rho_n)^{1/2}}{\delta_r \lambda_r^{1/2}} + \frac{\rho_n^{1/2} \log^{3/2}{n}}{\lambda_r^{1/2} \delta_r} + \|U\|_{2 \to \infty} \times \frac{n \rho_n}{\delta_r \lambda_r^{1/2}}
\end{gather*}
with probability at least $1 - O(n^{-\nu})$. Substituting these expressions into \cref{eq:wrapup_expansion} and using the fact that $\|U\|_{2 \to \infty} \leq \rho_n^{1/2} \lambda_r^{-1/2}$ and $\|U \Lambda^{1/2}\|_{2 \to \infty} \leq \rho_n^{1/2}$, we obtain
\begin{equation*}
    \hat{U} \hat{\Lambda}^{1/2} - U \Lambda^{1/2} W^{(n)}
    =
    E U \Lambda^{-1/2} W^{(n)} + \tilde{R}
    ,
\end{equation*}
where $\tilde{R}$ satisfies
\begin{equation*}
    \begin{split}
    \|\tilde{R}\|_{2 \to \infty}
    &\lesssim
    \frac{n \rho_n^{3/2}}{\delta_r^2} + \frac{ (\rho_n \log n)^{1/2}}{\lambda_r^{1/2}}\left(\frac{(r n \rho_n)^{1/2} + \log n}{\delta_r}\right) 
    \end{split}
\end{equation*}
with probability at least $1 - O(n^{-\nu})$. The stated result follows from multiplying both side of the above display by $(W^{(n)})^{\top}$ and then redefining (rewriting) this matrix without the transpose operation. Note that the two-to-infinity norm is right-orthogonal invariant, i.e., $\|M W\|_{2 \to \infty} = \|M\|_{2 \to \infty}$ for any $n \times r$ matrix $M$ and $r \times r$ orthogonal matrix $W$. This concludes the proof of \cref{thm:psd}.

\subsection{Proof of \cref{lem:loo}}
\label{sec:loo_proof}
Let $e_h$ denote the $h$-th elementary basis vector in $\mathbb{R}^{n}$. For each $h \in [n]$, consider the expansion
\begin{equation}
    \label{eq:loo0aa_psd}
    \begin{split} e_h^{\top} E (I - UU^{\top}) \hat{U} 
    &=
    e_h^{\top} E (I - UU^{\top}) \hat{U}^{[h]} \hat{U}^{[h]\top} \hat{U}
    +
    e_h^{\top} E (I - \hat{U}^{[h]} \hat{U}^{[h]\top}) \hat{U} \\
    &\qquad-
    e_h^{\top} E U U^{\top} (I - \hat{U}^{[h]} \hat{U}^{[h]\top}) \hat{U}
    .
\end{split}
\end{equation}
By submultiplicativity of the operator norm, together with the fact that $\|UU^{\top}\| = 1$ and $\|e_h\| = 1$ for each $h$, we have
\begin{gather*}
    \|e_h^{\top} E U U^{\top} (I - \hat{U}^{[h]} \hat{U}^{[h]\top}) \hat{U} \|
    \leq
    \|E\|
    \times
    \|(I - \hat{U}^{[h]} \hat{U}^{[h]\top}) \hat{U} \|, \\
    \|e_h^{\top} E (I - \hat{U}^{[h]} \hat{U}^{[h]\top}) \hat{U}\|
    \leq
    \|E\|
    \times
    \|(I - \hat{U}^{[h]} \hat{U}^{[h]\top}) \hat{U}\|
    .
\end{gather*}

Let $\hat{\lambda}_r^{[h]}$ denote the $r$-th largest eigenvalue of $A^{[h]}$. Under the event $\mathcal{E}_0$ specified in \cref{eq:event_0}, we have by Weyl's inequality that
\begin{equation*}
    \hat{\lambda}_r^{[h]} - \hat{\lambda}_{r+1}
    \geq
\lambda_{r} - \|E\| - \hat{\lambda}_{r+1}
    \geq
    \lambda_{r} - 2\|E\| - \lambda_{r+1}
    \geq
    \frac{1}{2} \delta_r
    .
\end{equation*}
So, by applying the general form of the Davis--Kahan theorem (e.g., see \cite[Theorem~VII.3.1]{bhatia}), we have
\begin{equation}
    \label{eq:loo00}
    \begin{split}
    \|(I -  \hat{U}^{[h]} \hat{U}^{[h]\top}) \hat{U} \|
    &\leq
    \frac{\|(A^{[h]} - A) \hat{U}^{[h]}\|}{\hat{\lambda}_r^{[h]} - \hat{\lambda}_{r+1}}
    \leq
    \frac{2\|(A^{[h]} - A) \hat{U}^{[h]}\|}{\delta_r}
    .
    \end{split}
\end{equation}
For the numerator term, by the construction of $A^{[h]}$, we have 
\begin{equation}
    \label{eq:loo01}
    \begin{split}
    \|(A^{[h]} - A) \hat{U}^{[h]}\|
    &\leq
    \|\operatorname{diag}(E_{1,h}, \dots, E_{n,h}) 1_{n \times 1} \hat{U}^{[h]}_{h}\|
    +
    \|e_h^{\top} E \hat{U}^{[h]}\| \\
    &=
    \|(E_{1,h}, \dots, E_{n,h})^{\top}\| \times \|\hat{U}^{[h]}_{h}\|
    +
    \|e_h^{\top} E \hat{U}^{[h]}\| \\
    &\leq
    \|E\|_{2 \to \infty} \times \|\hat{U}^{[h]}\|_{2 \to \infty} + \|e_h^{\top} E \hat{U}^{[h]}\| \\
    &\leq
    \|E\| \times \|\hat{U}^{[h]}\|_{2 \to \infty} + \|e_h^{\top} E \hat{U}^{[h]}\|
    .
  \end{split}
\end{equation}
Next, since $\|U^{\top} \hat{U}^{[h]}\| \leq 1$ and $\|\hat{U}^{\top} \hat{U}^{[h]}\| \leq 1$, it follows that
\begin{equation}
    \label{eq:loo02}
    \begin{split}
    \|\hat{U}^{[h]}\|_{2 \to \infty} &\leq \|UU^{\top} \hat{U}^{[h]}\|_{2 \to \infty} + \|(I - UU)^{\top} \hat{U}^{[h]}\|_{2 \to \infty} \\
    &\leq
    \|U\|_{2 \to \infty} + \|(I - UU^{\top})[\hat{U} \hat{U}^{\top} \hat{U}^{[h]} + (I - \hat{U} \hat{U}^{\top}) \hat{U}^{[h]}]\|_{2 \to \infty} \\ 
    &\leq
    \|U\|_{2 \to \infty} + \|(I - UU^{\top}) \hat{U}\|_{2 \to \infty} + \|(I - \hat{U} \hat{U}^{\top}) \hat{U}^{[h]}\|
    .
\end{split}
\end{equation}
Here, properties of the $\sin$ $\Theta$ distance (e.g., see \cite[Lemma~1]{cai2018rate}) yield
\begin{equation}
    \label{eq:sintheta_equivalence}
    \|(I - \hat{U} \hat{U}^{\top}) \hat{U}^{[h]}\|
    =
    \|\sin \Theta(\hat{U}, \hat{U}^{[h]})\|
    =
    \|(I - \hat{U}^{[h]}
  \hat{U}^{[h]\top}) \hat{U}\|
  .
\end{equation}
Hence, by substituting \cref{eq:loo02} into \cref{eq:loo01}, we obtain
\begin{equation}
    \begin{split}
    \label{eq:loo03}
    \|(A^{[h]} - A) \hat{U}^{[h]}\|
    &\leq
    \|E\|(\|U\|_{2 \to \infty} + \|(I - UU^{\top}) \hat{U}\|_{2 \to \infty}) \\
    &\qquad+
    \|E\| \times \|(I - \hat{U}^{[h]} \hat{U}^{[h]\top}) \hat{U}\| + \|e_h^{\top} E \hat{U}^{[h]}\|
    .
  \end{split}
\end{equation}
Substituting \cref{eq:loo03} into \cref{eq:loo00} and then rearranging terms yields
\begin{equation}
    \label{eq:loo04}
    \|(I - \hat{U}^{[h]}\hat{U}^{[h]\top}) \hat{U}\| 
    \leq
    \frac{4\|E\|(\|U\|_{2 \to \infty} + \|(I - UU^{\top})\hat{U}\|_{2 \to \infty}) + 4 \|e_h^{\top} E \hat{U}^{[h]}\|}{\delta_r}
\end{equation}
under the condition $\delta_r \geq 4\|E\|$ implied by the event $\mathcal{E}_{0}$.

Writing
$E \hat{U}^{[h]} = E U U^{\top} \hat{U}^{[h]} + E V^{[h]}$ leads to
\begin{equation}
    \begin{split}
    \label{eq:sintheta1_psd}
    \|e_h^{\top} E \hat{U}^{[h]}\|
    &\leq
    \|e_h^{\top} E U U^{\top} \hat{U}^{[h]}\| + \|e_h^{\top} E V^{[h]}\|
    \leq
    \|E U\|_{2 \to \infty} + \|e_h^{\top} E V^{[h]}\|
    ,
  \end{split}
\end{equation}
and hence 
\begin{equation}
    \label{eq:sintheta2_psd} \begin{split}
    \|(I - \hat{U}^{[h]}\hat{U}^{[h]\top}) \hat{U}\| 
    &\leq
    \frac{4\|E\|(\|U\|_{2 \to \infty} + \|(I - UU^{\top})\hat{U}\|_{2 \to \infty})}{\delta_r} \\
    &\qquad+
    \frac{4 \|E U\|_{2 \to \infty} + 4 \|e_h^{\top} E V^{[h]}\|}{\delta_r}
    .
  \end{split}
\end{equation}
Now, recall the definition of the event $\mathcal{E}_1$ in \cref{eq:e1_event}, and note that
\begin{equation*}
    \|V^{[h]}\|_{F}
    =
    \|\sin \Theta(\hat{U}^{[h]}, U)\|_{F}
    \leq
    \|\sin \Theta(\hat{U}^{[h]}, \hat{U})\|_{F} + \|\sin \Theta(\hat{U}, U)\|_F
    \leq
    \frac{4 r^{1/2} \|E\|}{\delta_r}
    .
\end{equation*}
Furthermore,
\begin{equation*}
    \begin{split}
    \|V^{[h]}\|_{2 \to \infty}
    &\leq
    \|(I - UU^{\top}) \hat{U} \hat{U}^{\top} \hat{U}^{[h]}\|_{2 \to \infty}
    +
    \|(I - UU^{\top}) (I - \hat{U} \hat{U}^{\top}) \hat{U}^{[h]}\|_{2 \to \infty} \\
    &\leq
    \|(I - UU^{\top}) \hat{U}\|_{2 \to \infty} + \|(I - \hat{U} \hat{U}^{\top}) \hat{U}^{[h]}\| \\
    &\le
    \|(I - UU^{\top}) \hat{U}\|_{2 \to \infty} + \|(I - \hat{U}^{[h]} \hat{U}^{[h]\top}) \hat{U}\|
    ,
    \end{split}
\end{equation*}
where the final inequality follows from \cref{eq:sintheta_equivalence}.

Substituting the above expressions for $\|V^{[h]}\|_{F}$ and $\|V^{[h]}\|_{2 \to \infty}$ into \cref{eq:e1_event} yields
\begin{equation}
    \label{eq:loo6_psd}
    \begin{split}
    \| e_h^{\top} E V^{[h]}\|
    &\leq
    \frac{4 C_1(\nu) \sqrt{r \rho_n \log n}\|E\|}{\delta_r} \\
    &\qquad+
    C_{2}(\nu) (\|(I - UU^{\top}) \hat{U}\|_{2 \to \infty} + \|(I - \hat{U}^{[h]} \hat{U}^{[h]\top}) \hat{U}\|) \log n
    .
 \end{split}
\end{equation}
Substituting \cref{eq:loo6_psd} into \cref{eq:sintheta2_psd} and then rearranging terms yields
\begin{equation}
    \label{eq:loo_not_yet_final_psd}
    \begin{split}
    \|(I - \hat{U}^{[h]} \hat{U}^{[h]\top}) \hat{U} \| 
    &\leq
    \frac{8 \|E\|(\|U\|_{2 \to \infty} + \|(I - UU^{\top})\hat{U}\|_{2\ \to \infty})}{\delta_r} \\
    &\qquad+
    \frac{8\|EU\|_{2 \to \infty}}{\delta_r} + \frac{32 C_1(\nu) \sqrt{r \rho_n \log n} \|E\|}{\delta_r^2}  \\
    &\qquad+
    \frac{8 C_2(\nu) \|(I - UU^{\top}) \hat{U}\|_{2 \to \infty} \log n}{\delta_r}
    ,
    \end{split}
\end{equation}
provided that $\delta_r \geq 8 C_{2}(\nu) \log n$. Another substitution of the above into \cref{eq:loo6_psd} yields
\begin{equation}
    \label{eq:loo_final_psd}
    \begin{split}
    \| e_h^{\top} E V^{[h]}\|
    &\leq
    \frac{4 C_1(\nu) \sqrt{r \rho_n \log n} \|E\|}{\delta_r} + C_2(\nu) \|(I - UU^{\top}) \hat{U}\|_{2 \to \infty} \log n \\
    &\qquad+
    \frac{8 C_2(\nu) \|E\|(\|U\|_{2 \to \infty} + \|(I - UU^{\top})\hat{U}\|_{2\ \to \infty}) \log n}{\delta_r} \\
    &\qquad+
    \frac{8 C_2(\nu) \|EU\|_{2 \to \infty} \log n}{\delta_r} + \frac{32 C_1(\nu) C_2(\nu) \sqrt{r \rho_n \log^{3}{n}} \|E\|}{\delta_r^2} \\
    &\qquad+
    \frac{8 C_2(\nu)^2 \|(I - UU^{\top}) \hat{U}\|_{2 \to \infty} \log^{2}{n}}{\delta_r}
    .
    \end{split}
\end{equation}
Finally, substituting \cref{eq:loo_final_psd} and \cref{eq:loo_not_yet_final_psd} into \cref{eq:loo0aa_psd} yields 
\begin{equation*}
    \begin{split}
    \|e_h^{\top} E(I - UU^{\top}) \hat{U}\|
    &\leq
    \frac{4 C_1(\nu) \sqrt{r \rho_n \log n} \|E\|}{\delta_r} + C_2(\nu) \|(I - UU^{\top}) \hat{U}\|_{2 \to \infty} \log n \\
    &\qquad+
    \frac{8 C_2(\nu) \|E\|(\|U\|_{2 \to \infty} + \|(I - UU^{\top})\hat{U}\|_{2\ \to \infty}) \log n}{\delta_r} \\
    &\qquad+
    \frac{8 C_2(\nu) \|EU\|_{2 \to \infty} \log n}{\delta_r} + \frac{32 C_1(\nu) C_2(\nu) \sqrt{r \rho_n \log^{3}{n}} \|E\|}{\delta_r^2} \\
    &\qquad+
    \frac{8 C_2(\nu)^2 \|(I - UU^{\top}) \hat{U}\|_{2 \to \infty} \log^{2}{n}}{\delta_r} \\
    &\qquad+
    \frac{16 \|E\|^2(\|U\|_{2 \to \infty} + \|(I - UU^{\top})\hat{U}\|_{2\ \to \infty})}{\delta_r} \\
    &\qquad+
    \frac{16 \|E\| \cdot \|EU\|_{2 \to \infty}}{\delta_r} + \frac{32 C_1(\nu) \sqrt{r \rho_n \log n} \|E\|^2}{\delta_r^2} \\
    &\qquad+
    \frac{8 C_2(\nu) \|E\| \cdot \|(I - UU^{\top}) \hat{U}\|_{2 \to \infty} \log n}{\delta_r}
    .
\end{split} 
\end{equation*}
Under the event $\mathcal{E}_0$, we can further obtain the simplification
\begin{equation}
    \label{eq:loo_final3_psd}
    \begin{split}
    \|e_h^{\top} E(I - UU^{\top}) \hat{U}\|
    &\leq
    \frac{16 C_1(\nu) \sqrt{r \rho_n \log n} \|E\|}{\delta_r} \\
    &\qquad+
    \frac{8 (\|E\| \cdot\|U\|_{2 \to \infty} + \|EU\|_{2 \to \infty}) (C_2(\nu) \log n  +  2 \|E\|)}{\delta_r} \\
    &\qquad+
    \|(I - UU^{\top}) \hat{U}\|_{2 \to \infty} \left(6 C_2(\nu) \log n + \frac{16 \|E\|^2 }{\delta_r}\right)
    .
    \end{split} 
\end{equation}
Note that the right hand side of \cref{eq:loo_final3_psd} does not depend on $h$. In summary, we have shown
\begin{equation}
    \label{eq:loo_final4_psd}
    \begin{split}
    \|E (I - UU^{\top}) \hat{U}\|_{2 \to \infty}
    &\leq
    \frac{16 C_1(\nu) \sqrt{r \rho_n \log n} \|E\|}{\delta_r} \\
    &\qquad+
    \frac{8 (\|E\| \cdot\|U\|_{2 \to \infty} + \|EU\|_{2 \to \infty}) (C_2(\nu) \log n + 2 \|E\|)}{\delta_r} \\
    &\qquad+
    \|(I - UU^{\top}) \hat{U}\|_{2 \to \infty} \left(6 C_2(\nu) \log n +  \frac{16 \|E\|^2 }{\delta_r}\right)
    ,
  \end{split} 
\end{equation}
as claimed. This completes the proof of \cref{lem:loo}.

\subsection{Proof of \cref{thm:general}}
\label{sec:proof_general}
In this proof we follow the same steps as in \cref{sec:proof_simple}, with the main difference being that we have to avoid terms depending on
$\|U_{\perp} |\Lambda_{\perp}|^{1/2}\|_{2 \to \infty}$
as we can no longer guarantee that $\|U_{\perp} |\Lambda_{\perp}|^{1/2}\|_{2 \to \infty} \leq \rho_n^{1/2}$. Indeed, in general
\begin{equation*}
    \|U_{\perp} |\Lambda_{\perp}|^{1/2}\|_{2 \to \infty}^2
    =
    \|U_{\perp} |\Lambda_{\perp}|U_{\perp}^{\top}\|_{\max}
    ,
\end{equation*}
and the entries of both $U_{\perp} |\Lambda_{\perp}|U_{\perp}^{\top}$ and $|P| = (P^2)^{1/2}$ are not directly related to those of $P$. In other words, there is no closed form expression for $|P|$ in terms of simple element-wise operations on $P$ when $P$ is indefinite. Similarly, we cannot guarantee that $\|U |\Lambda|^{1/2}\|_{2 \to \infty} \leq \rho_n^{1/2}$. 
Nevertheless, our approach involves the quantities
\begin{gather}
    \label{eq:U2inf_nonpsd0}
    \epsilon_0
    \coloneqq
    \|U |\Lambda|^{1/2}\|_{2 \to \infty}
    \leq
    \|U \Lambda\|_{2 \to \infty} \times \||\Lambda|^{-1/2}\|
    \leq
    n^{1/2} \rho_n \times |\lambda_r|^{-1/2}, \\
    \label{eq:U2inf_nonpsd2}
    \epsilon_1
    \coloneqq
    \|U_{\perp} \Lambda_{\perp}\|_{2 \to \infty}
    =
    \|U_{\perp} \Lambda_{\perp}^2 U_{\perp}^{\top}\|_{\max}^{1/2}
    \leq
    n^{1/2} \rho_n
    ,
\end{gather}
where the final inequalities above are due to the fact that $P^2$ is positive semidefinite, so 
\begin{equation}
    \label{eq:2inf_U_nonpsd1}
    \begin{split}
    \max\{\|U \Lambda\|_{2 \to \infty}^2, \|U_{\perp} \Lambda_{\perp}\|_{2 \to \infty}^2\}
    &=
    \max\{\|U \Lambda^2 U^{\top}\|_{\max}, \|U_{\perp} \Lambda_{\perp}^2 U_{\perp}^{\top}\|_{\max}\} \\
    &\leq
    \|P^2\|_{\max}
    \leq
    n \|P\|_{\max}^2
    \leq
    n \rho_n^2
    .
  \end{split}
\end{equation}
If $P$ were positive semidefinite, then \cref{eq:two_infU_psd} implies $\epsilon_1 \leq \rho_n^{1/2} \lambda_{r+1}^{1/2}$. The bounds in \cref{eq:U2inf_nonpsd0,eq:U2inf_nonpsd2} are thus weaker that those in \cref{eq:two_infU_psd} as $|\lambda_r/(n \rho_n)|^{1/2}$ is a decreasing function of $r$ (for $n$ fixed). 

Recall that, by convention, we arrange the eigenvalues of $A$ and $P$ in decreasing order of magnitude, i.e.,
\begin{equation*}
    |\hat{\lambda}_1|
    \geq
    |\hat{\lambda}_2|
    \geq
    \dots
    \geq
    |\hat{\lambda}_n|
    ,
    \qquad
    \text{and}
    \qquad
    |\lambda_1|
    \geq
    |\lambda_2|
    \geq
    \dots
    \geq
    |\lambda_n|
    .
\end{equation*}
Once again, as in previous proofs, consider the event
\begin{equation}
    \label{eq:select_r_general}
    \mathcal{E}_0
    =
    \{\delta_r \geq \max\{4 \|E\|, 8 C_{2}(\nu) \log n\}\}
    ,
\end{equation}
where $C_2(\nu) = \tfrac{2}{3}(\nu + 2)$.   
Now, consider
\begin{equation*}
    W^{(+)}
    =
    \argmin_{W \in \mathcal{O}_{r_{+}}} \|\hat{U}_{+}-U_{+} W\|_{F},
    \quad
    W^{(-)}
    =
    \argmin_{W \in \mathcal{O}_{r_{-}}} \|\hat{U}_{-}-U_{-} W\|_{F}
    ,
\end{equation*} 
where $r_{+}$ and $r_{-}$ are the number of positive and negative eigenvalues among $\{\lambda_1, \dots, \lambda_{r}\}$, respectively, where $U_{+}$ and $U_{-}$ are the eigenvectors of $P$ corresponding to the positive and negative eigenvalues among $\{\lambda_1,\dots, \lambda_r\}$, and where $\hat{U}_{+}$ and $\hat{U}_{-}$ are the eigenvectors of $A$ corresponding to the positive and negative eigenvalues among $\{\hat{\lambda}_1, \dots, \hat{\lambda}_r\}$. 
Using these matrices, define the block-structured orthogonal matrix
\begin{equation*}
    W^{(n)}
    =
    \begin{bmatrix}
        W^{(+)} & 0 \\
        0 & W^{(-)}
    \end{bmatrix}
    .
\end{equation*}
Let $J$ be the $r \times r$ diagonal matrix with diagonal entries equal to $-1$ or $1$ such that $\Lambda = |\Lambda| J$. Note that, under $\mathcal{E}_0$, we also have $|\hat{\Lambda}| = \hat{\Lambda} J$. Furthermore, $J W^{(n)} = W^{(n)} J$. Since
\begin{equation*}
    U^{\top} \hat{U} - W^{(n)}
    =
    \begin{bmatrix}
        U_{+}^{\top} \hat{U}_{+} - W^{(+)} & 0 \\
        0 & U_{-}^{\top} \hat{U}_{-} - W^{(-)}
    \end{bmatrix}
    +
    \begin{bmatrix}
        0 & U_{+}^{\top} \hat{U}_{-} \\
        U_{-}^{\top} \hat{U}_{+} & 0
    \end{bmatrix}
    ,
\end{equation*}
it holds that
\begin{equation*}
    \|U^{\top} \hat{U} - W^{(n)}\|
    \leq
    \max\{ \| U_{+}^{\top} \hat{U}_{+} - W^{(+)} \|, \|U_{-}^{\top} \hat{U}_{-} - W^{(-)} \| \}
    +
    \max\{ \|U_{+}^{\top} \hat{U}_{-}\|, \|U_{-}^{\top} \hat{U}_{+}\| \}
    .
\end{equation*}
By \cite[Lemma~6.7]{cape2019two} and the Davis--Kahan theorem, we have the bound
\begin{equation*}
    \| U_{+}^{\top} \hat{U}_{+} - W^{(+)} \|
    \leq
    \|(I - U_{+} U_{+}^{\top}) \hat{U}_{+})\|^2
    \leq
    \left(\frac{2 \|E\|}{\delta_r}\right)^2
    =
    \psi_0^2
    ,
\end{equation*}
which likewise applies to $\|U_{-}^{\top} \hat{U}_{-} - W^{(-)}\|$. Next, by the general form of the Davis--Kahan theorem \cite[Theorem~VII.3.1]{bhatia}, we obtain
\begin{equation*}
    \begin{split}
    \|U_{+}^{\top} \hat{U}_{-}\|
    &\leq
    \frac{\|U_{+}^{\top} (A - P) \hat{U}_{-}\|}{|\lambda_r|} \\
    &\leq
    \frac{\|U_{+}^{\top} E U_{-}\| + \|U_{+}^{\top} E ( I - U_{-} U_{-}^{\top}) \hat{U}_{-}\|}{|\lambda_r|} \\
    &\leq
    \frac{\|U^{\top} E U\| + \|E\| \times \|(I - U_{-}U_{-}^{\top}) \hat{U}_{-}\|}{|\lambda_r|} \\
    &\leq
    \frac{\psi_1 + \|E\| \psi_0}{|\lambda_r|}
    ,
    \end{split}
\end{equation*}
and similarly for $\|U_{-}^{\top} \hat{U}_{+}\|$. Combining the above bounds yields
\begin{equation}
    \label{eq:sintheta_bound_indefinite}
    \| U^{\top} \hat{U}- W^{(n)} \|
    \leq
    \psi_0^2
    +
    |\lambda_r|^{-1}(\psi_1 + \|E\| \psi_0)
    .
\end{equation}
Now, consider the decomposition
\begin{equation}
    \label{eq:decomp_general}
    \begin{split}
    \hat{U} |\hat{\Lambda}|^{1/2} - U |\Lambda|^{1/2} W^{(n)}
    &=
    (I - UU^{\top})\hat{U} |\hat{\Lambda}|^{1/2} \\
    &\qquad+
    U (U^{\top} \hat{U} |\hat{\Lambda}|^{1/2} J - |\Lambda|^{1/2} J U^{\top} \hat{U}) J \\
    &\qquad+
    U |\Lambda|^{1/2} J (U^{\top} \hat{U} - W^{(n)}) J
    ,
    \end{split}
\end{equation}
where we have used the fact that $J W^{(n)} J = W^{(n)}$. 

For $R_0 \coloneqq U |\Lambda|^{1/2} J (U^{\top} \hat{U}) - W^{(n)}) J$, applying \cref{eq:sintheta_bound_indefinite} yields
\begin{equation*}
    \begin{split}
    \|U |\Lambda|^{1/2} J (U^{\top} \hat{U} - W^{(n)}) J\|_{2 \to \infty}
    &\leq
    \|U |\Lambda|^{1/2}\|_{2 \to\infty} \times \|J (U^{\top} \hat{U} - W^{(n)}) J\| \\
    &\leq
    \|U |\Lambda|^{1/2}\|_{2 \to\infty} \times \|U^{\top} \hat{U} - W^{(n)} \| \\
    &\leq
    \epsilon_0 (\psi_0^2 + |\lambda_r|^{-1} \|E\| \psi_0 + |\lambda_r|^{-1} \psi_1)
    .
    \end{split}
\end{equation*}
Next, for $R_1 \coloneqq U ( U^{\top} \hat{U} |\hat{\Lambda}|^{1/2} J - |\Lambda|^{1/2} J U^{\top} \hat{U}) J$, we have
\begin{equation*}
    \|R_1\|_{2 \to \infty}
    \leq
    \|U\|_{2 \to \infty} \times \| |\Lambda|^{1/2} J U^{\top} \hat{U} - U^{\top} \hat{U} |\hat{\Lambda}|^{1/2} J\|
    \leq 
    \epsilon_0 |\lambda_r|^{-1/2} \psi_2
    ,
\end{equation*}
where $\psi_2$ is the upper bound for $\||\Lambda|^{1/2} J U^{\top} \hat{U} - U^{\top} \hat{U} |\hat{\Lambda}|^{1/2} J\|$ given in \cref{lem:approximate_commute}.

\subsubsection*{Bounding $(I - UU^{\top}) \hat{U} |\hat{\Lambda}|^{1/2}$}
Due to $\hat{U} |\hat{\Lambda}| = \hat{U} \hat{\Lambda} J = A\hat{U} J = (P \hat{U} + E \hat{U}) J$, we have
\begin{equation*}
    \begin{split}
    (I - U U^{\top}) \hat{U} |\hat{\Lambda}|^{1/2}
    &=
    (I - UU^{\top}) E \hat{U} |\hat{\Lambda}|^{-1/2} J + (I - UU^{\top}) P \hat{U} |\hat{\Lambda}|^{-1/2} J \\
    &=
    E \hat{U} |\hat{\Lambda}|^{-1/2} J - U U^{\top} E \hat{U} |\hat{\Lambda}|^{-1/2} J + (I - UU^{\top}) P \hat{U} |\hat{\Lambda}|^{-1/2} J
    .
    \end{split}
\end{equation*}
The term $U U^{\top} E \hat{U} |\hat{\Lambda}|^{-1/2} J$ is bounded using the same argument as in the positive semidefinite case, namely
\begin{equation*}
    \begin{split}
    \|U U^{\top} E \hat{U} |\hat{\Lambda}|^{-1/2} J\|_{2 \to \infty}
    &=
    \|U U^{\top} E \hat{U} |\hat{\Lambda}|^{-1/2} \|_{2 \to \infty}
    \leq
    \frac{\sqrt{2} \epsilon_0 (\psi_1 + \|E\| \psi_0)}{|\lambda_r|}
    .
    \end{split}
\end{equation*}
Let $\Pi_{U}^{\perp} = (I - UU^{\top})$. Observe that the term $(I - UU^{\top})P \hat{U} |\hat{\Lambda}|^{-1/2} J$ admits the expansion
\begin{equation*}
    \begin{split}
    \Pi_{U}^{\perp} P \hat{U} |\hat{\Lambda}|^{-1/2} J
    &=
    (\Pi_{U}^{\perp} P^2 \hat{U} \hat{\Lambda}^{-1} + \Pi_{U}^{\perp} PE \hat{U} \hat{\Lambda}^{-1}) |\hat{\Lambda}|^{-1/2} J\\
    &=
    (\Pi_{U}^{\perp} P^3 \hat{U} \hat{\Lambda}^{-2} + \Pi_{U}^{\perp} P^2 E \hat{U} \hat{\Lambda}^{-2} + \Pi_{U}^{\perp} P E \hat{U} \hat{\Lambda}^{-1}) |\hat{\Lambda}|^{-1/2} J\\
    &\dots \\
    &=
    \left(\Pi_{U}^{\perp} P^{m} \hat{U} \hat{\Lambda}^{-(m-1)} + \sum_{k=1}^{m-1} \Pi_{U}^{\perp} P^{k} E \hat{U} \hat{\Lambda}^{-k}\right)|\hat{\Lambda}|^{-1/2} J
    .
    \end{split}
\end{equation*}
We then have
\begin{equation*}
    \begin{split}
    &\|\Pi_{U}^{\perp} P^m \hat{U} \hat{\Lambda}^{-(m - 1)}|\hat{\Lambda}|^{-1/2} J\|_{2 \to \infty} \\
    &\qquad\qquad=
    \|U_{\perp} \Lambda_{\perp}^{m} U_{\perp}^{\top} \hat{U} |\hat{\Lambda}|^{-(m-1/2)}\|_{2 \to \infty} \\
    &\qquad\qquad\leq
    \|U_{\perp} \Lambda_{\perp}\|_{2 \to \infty} \times \|\Lambda_{\perp}\|^{m-1} \times \|U_{\perp}^{\top} \hat{U}\| \times \||\hat{\Lambda}|^{-(m-1/2)}\| \\
    &\qquad\qquad\leq
    \epsilon_1 \times \psi_0 \times |\hat{\lambda}_{r}|^{-1/2} \left|\frac{\lambda_{r+1}}{\hat{\lambda}_r}\right|^{(m-1)}
    .
    \end{split}
  \end{equation*}
Under $\mathcal{E}_0$, we have
\begin{equation}
    \label{eq:bound_gap}
    \frac{|\lambda_{r+1}|}{|\hat{\lambda}_r|}
    \leq
    1 - \frac{|\lambda_r| - |\lambda_{r+1}|}{2|\lambda_r|}
    .
\end{equation}
Setting $m = n \rho_n^{1/2} + 1$, the same argument as that for \cref{eq:bound_gap_psd3} yields
\begin{equation}
\label{eq:bound_gap_indefinite3}
    \|\Pi_{U}^{\perp} P^m \hat{U} \hat{\Lambda}^{-(m - 1)}|\hat{\Lambda}|^{-1/2} J\|_{2 \to \infty}
    \leq
    \sqrt{2} \epsilon_1 \psi_0 |\lambda_r|^{-1/2} \exp(-9 n^{1/2})
    .
\end{equation}
We now evaluate $\zeta_k \coloneqq \|\Pi_{U}^{\perp} P^k E  \hat{U} \hat{\Lambda}^{-k} |\hat{\Lambda}|^{-1/2} J\|_{2 \to \infty}$ for $k \geq 1$. We first write
\begin{equation*}
    \begin{split}
    \zeta_k
    &\leq
    \underbrace{\|\Pi_{U}^{\perp} P^k E U U^{\top} \hat{U} |\hat{\Lambda}|^{-(k+1/2)}\|_{2 \to \infty}}_{\zeta_k^{(1)}}
    +
    \underbrace{\|\Pi_{U}^{\perp} P^k E U_{\perp} U_{\perp}^{\top} \hat{U} |\hat{\Lambda}|^{-(k+1/2)}\|_{2 \to \infty}}_{\zeta_k^{(2)}}
    .
\end{split}
\end{equation*}
We then have
\begin{equation*}
    \begin{split}
    \zeta_k^{(2)}
    &=
    \|U_{\perp} \Lambda_{\perp}^{k}
    U_{\perp}^{\top} E U_{\perp} U_{\perp}^{\top} \hat{U} |\hat{\Lambda}|^{-(k+1/2)}\|_{2 \to \infty} \\
    &\leq
    \|U_{\perp} \Lambda_{\perp}\|_{2 \to \infty} \times \|\Lambda_{\perp}\|^{k-1} \times \|E\| \times  \|U_{\perp}^{\top} \hat{U}\| \times \|\hat{\Lambda}^{-1}\|^{(k+1/2)} \\
    &\leq
    \frac{\epsilon_1 \|E\| \psi_0}{|\hat{\lambda}_r|^{3/2}} \times \left|\frac{\lambda_{r+1}}{\hat{\lambda}_r}\right|^{k-1} \\
    &\leq
    \frac{2^{3/2} \epsilon_1 \|E\| \psi_0}{|\lambda_r|^{3/2}} \times \left(1 - \frac{|\lambda_{r}| - |\lambda_{r+1}|}{2|\lambda_r|}\right)^{k-1}
    ,
\end{split}
\end{equation*}
and hence
\begin{equation}
    \label{eq:zeta_k2_indefinite}
    \sum_{k=1}^{n^{1/2} \rho_n} \zeta_{k}^{(2)}
    \leq
    \frac{2^{3/2} \epsilon_1 \|E\|\psi_0}{|\lambda_r|^{3/2}}
    \times
    \left(1 - \frac{|\lambda_{r}| - |\lambda_{r+1}|}{2|\lambda_r|}\right)^{k-1}
    \leq
    \frac{2^{5/2} \epsilon_1 \|E\| \psi_0}{|\lambda_r|^{1/2} \delta_r}
    .
\end{equation}
Observe that we avoid directly bounding $\|U_{\perp} |\Lambda_{\perp}\|^{1/2}\|_{2 \to \infty}$. For $\zeta_k^{(1)}$, we have
\begin{equation*}
    \begin{split}
    \zeta_k^{(1)}
    &\leq
    \|U_{\perp} \Lambda_{\perp}^{k} U_{\perp}^{\top} E U \|_{2 \to \infty} \times \|\hat{\Lambda}^{-1}\|^{(k+1/2)} 
    \leq
    \psi_3^{(k)} \times |\hat{\lambda}_r|^{-(k+1/2)}
    ,
    \end{split}
\end{equation*}
where $\psi_3^{(k)}$ is an upper bound for  $\|U_{\perp} \Lambda_{\perp}^{k}  U_{\perp}^{\top} E U\|$ given in \cref{lem:technical4_new}.


In summary, we have
\begin{equation*}
    (I - UU^{\top})\hat{U} |\hat{\Lambda}|^{1/2}
    =
    E \hat{U} |\hat{\Lambda}|^{-1/2} J
    +
    R_2,
\end{equation*}
where $R_2$ satisfies
\begin{equation}
    \label{eq:residual_r2}
    \begin{split}
    \|R_2\|_{2 \to \infty}
    &\leq
    \frac{2^{1/2} \epsilon_0(\psi_1 + \|E\| \psi_0)}{|\lambda_r|} + \frac{\sqrt{2} \epsilon_1 \psi_0}{|\lambda_r|^{1/2}}\left(\exp(-18 \sqrt{n}) + \frac{4\|E\|}{\delta_r}\right) \\
    &\qquad+
    \sum_{k=1}^{n \rho_n^{1/2}} \psi_3^{(k)} |\hat{\lambda}_r|^{-(k+1/2)}
    .
    \end{split}
\end{equation}

\subsubsection*{Bounding $E \hat{U} |\hat{\Lambda}|^{-1/2} J$}
We once again have the expansion
\begin{equation*}
    \begin{split}
    E \hat{U} |\hat{\Lambda}|^{-1/2} J 
    &=
    E U |\Lambda|^{-1/2} J [W^{(n)} + (U^{\top} \hat{U} - W^{(n)})] \\
    &\qquad+
    E U |\Lambda|^{-1/2} J (|\Lambda|^{1/2} J U^{\top} \hat{U} - U^{\top} \hat{U} |\hat{\Lambda}|^{1/2} J) |\hat{\Lambda}|^{-1/2} J \\
    &\qquad+ 
    E (I - U U^{\top}) \hat{U} |\hat{\Lambda}|^{-1/2} J
    .
    \end{split}
\end{equation*}
Therefore,
\begin{gather*}
    \|E U |\Lambda|^{-1/2} J (U^{\top} \hat{U} - W^{(n)})\|_{2 \to \infty} 
    \leq
    \|E U |\Lambda|^{-1/2}\|_{2 \to \infty} \times (\psi_0^2 + |\lambda_r|^{-1} \|E\| \psi_0 + |\lambda_r|^{-1} \psi_1), \\ 
    \| E U |\Lambda|^{-1/2} (|\Lambda|^{1/2} J U^{\top} \hat{U} - U^{\top} \hat{U} |\hat{\Lambda}|^{1/2} J) |\hat{\Lambda}|^{-1/2} J \|
    \leq
    \|E U |\Lambda|^{-1/2}\|_{2 \to \infty} \times 2^{1/2} |\lambda_r|^{-1/2} \psi_2
    ,
\end{gather*}
where $\psi_2$ is an upper bound for $\||\Lambda|^{1/2} J U^{\top} \hat{U} - U^{\top} \hat{U} |\hat{\Lambda}|^{1/2} J\|$ given in \cref{lem:approximate_commute}, and we bounded  $\|U^{\top} \hat{U} - W^{(n)}\|$ using \cref{eq:sintheta_bound_indefinite}. 

The term $\|E(I - UU^{\top})\hat{U}|\hat{\Lambda}|^{-1/2} J\|_{2 \to \infty} = \|E(I - UU^{\top})\hat{U}|\hat{\Lambda}|^{-1/2} \|_{2 \to \infty}$ can be bounded using \cref{lem:loo} within the proof of \cref{thm:psd}. In summary, we have 
\begin{equation*}
    \begin{split}
    &\|E(I - UU^{\top}) \hat{U} |\hat{\Lambda}|^{-1/2} J\|_{2 \to \infty} \\
    &\qquad\qquad\leq
    \frac{16 \sqrt{2} C_1(\nu) \sqrt{r \rho_n \log n} \|E\|}{\delta_r |\lambda_r|^{1/2}} \\
    &\qquad\qquad\qquad+
    \frac{8 \sqrt{2}(\|E\| \cdot\|U\|_{2 \to \infty} + \|EU\|_{2 \to \infty}) (C_2(\nu) \log n + 2 \|E\|)}{\delta_r |\lambda_r|^{1/2}} \\
    &\qquad\qquad\qquad+
    |\lambda_r|^{-1}\|(I - UU^{\top}) \hat{U} |\hat{\Lambda}|^{1/2} \|_{2 \to \infty} \left(12 C_2(\nu) \log n + \frac{32 \|E\|^2 }{\delta_r}\right)
    .
    \end{split} 
\end{equation*}
Let $T_* = E U |\Lambda|^{-1/2} J W^{(n)}$. By combining the above expressions,
\begin{align*}
    (I - UU^{\top}) \hat{U} |\hat{\Lambda}|^{1/2}
    &=
    T_* + R_2 + R_3 + Y_0 + Y_1, \\
    \|Y_0\|_{2 \to \infty}
    &\leq
    \|T_*\|_{2 \to \infty} (\psi_0^2 + |\lambda_r|^{-1} \|E\| \psi_0 + |\lambda_r|^{-1} \psi_1 + 2^{1/2} |\lambda_r|^{-1/2} \psi_2), \\
    \|Y_1\|_{2 \to \infty}
    &\leq
    \frac{16 \sqrt{2} C_1(\nu)(r \rho_n \log n)^{1/2} \|E\|}{\delta_r |\lambda_r|^{1/2}}, \\
    &\qquad+
    \frac{8 \sqrt{2} (\|E\| \cdot \|U\|_{2 \to \infty} + \|EU\|_{2 \to \infty}) (C_{2}(\nu) \log n + 2 \|E\|)}{\delta_r |\lambda_r|^{1/2}}, \\
    \|Y_2\|_{2 \to \infty}
    &\leq
    \left(\frac{12 C_{2}(\nu) \log n}{|\lambda_r|} + \frac{32\|E\|^2}{\delta_r |\lambda_r|}\right) \|(I - UU^{\top}) \hat{U} |\hat{\Lambda}|^{1/2}\|_{2 \to \infty}
    .
\end{align*}
We therefore have
\begin{align*}
    &\|(I - UU^{\top})\hat{U} |\hat{\Lambda}|^{1/2}\|_{2 \to \infty} \\ 
    &\quad\leq
    \frac{1}{1 - \tfrac{12 C_{2}(\nu) \log n}{|\lambda_r|} - \frac{32 \|E\|^2}{\delta_r |\lambda_r|}} \bigl(\|T_*\|_{2 \to \infty} + \|R_2\|_{2 \to \infty} + \|Y_0\|_{2 \to \infty} + \|Y_1\|_{2 \to \infty} \bigr), \\
    &\|Y_2\|_{2 \to \infty} \\
    &\quad\leq
    \frac{24 C_{2}(\nu) \log n + 64 \delta_r^{-1} \|E\|^2}{|\lambda_r|} (\|T_*\|_{2 \to \infty} + \|R_2\|_{2 \to \infty} + \|Y_0\|_{2 \to \infty} + \|Y_1\|_{2 \to \infty} )
    ,
\end{align*}
provided that $|\lambda_r| \geq 24 C_{2}(\nu) \log n + 64 \delta_r^{-1} \|E\|^2$. 

\subsubsection*{Wrapping up}
Collecting the above bounds yields an expansion of the form
\begin{gather}
\label{eq:expansion_final1}
    \hat{U} |\hat{\Lambda}|^{1/2} - U |\Lambda|^{1/2} W^{(n)}
    =
    E U |\Lambda|^{-1/2} J W^{(n)}
    +
    R_0 + R_1 + R_2 + Y_0 + Y_1 + Y_2 
\end{gather}
where $T_* \coloneqq E U |\Lambda|^{-1/2} J W^{(n)}$ and the residual matrices $R_0$ through $Y_2$ satisfy
\begin{align*}
    \|R_0\|_{2 \to \infty}
    &\leq
    \epsilon_0 (\psi_0^2 + |\lambda_r|^{-1} \|E\| \psi_0 + |\lambda_r|^{-1} \psi_1), \\
    \|R_1\|_{2 \to \infty}
    &\leq
    \epsilon_0 |\lambda_r|^{-1/2} \psi_2, \\
    \|R_2\|_{2 \to \infty}
    &\leq
    \frac{2^{1/2} \epsilon_0(\psi_1 + \|E\| \psi_0)}{|\lambda_r|} + \frac{\sqrt{2} \epsilon_1 \psi_0}{|\lambda_r|^{1/2}}\left(\exp(-18 \sqrt{n}) \frac{4\|E\|}{\delta_r}\right) \\
    &\qquad+
    \sum_{k=1}^{n \rho_n^{1/2}} \psi_3^{(k)} |\hat{\lambda}_r|^{-(k+1/2)}, \\
    \|Y_0\|_{2 \to \infty}
    &\leq
    \|T_*\|_{2 \to \infty} (\psi_0^2 + |\lambda_r|^{-1} \|E\| \psi_0 + |\lambda_r|^{-1} \psi_1 + 2^{1/2} |\lambda_r|^{-1/2} \psi_2), \\
    \|Y_1\|_{2 \to \infty}
    &\leq
    \frac{16 \sqrt{2} C_{1}(\nu) (r \rho_n \log n)^{1/2} \|E\|}{\delta_r |\lambda_r|^{1/2}} \\
    &\qquad+
    \frac{8 \sqrt{2} (\|E\| \cdot \|U\|_{2 \to \infty} + \|EU\|_{2 \to \infty})(C_{2}(\nu) \log n + 2 \|E\|)}{\delta_r |\lambda_r|^{1/2}}, \\
    \|Y_2\|_{2 \to \infty}
    &\leq
    \frac{24 C_2(\nu) \log n + 64 \delta_r^{-1} \|E\|^2}{|\lambda_r|}\\
    &\qquad\times
    \bigl(\|T_*\|_{2 \to \infty} + \|R_2\|_{2 \to \infty} + \|Y_0\|_{2 \to \infty} + \|Y_1\|_{2 \to \infty} \bigr)
    .
\end{align*}
We now bound the quantities appearing in the above expressions.
First recall that
\begin{equation}
    \label{eq:epsilon_bounds}
    \epsilon_0
    =
    \|U |\Lambda|^{1/2}\|_{2 \to \infty} 
    \leq
    n^{1/2} \rho_n |\lambda_r|^{-1/2},
    \quad
    \epsilon_1
    =
    \|U_{\perp} \Lambda_{\perp}\|_{2 \to \infty} \leq n^{1/2} \rho_n
    . 
\end{equation}
Next, recall that, according to the conditions in \cref{eq:r_select_general2}, we have 
\begin{equation}
    \label{eq:select_r_general_proof}
    |\lambda_r|
    \geq
    \max\left\{24 C_{2}(\nu) \log n + \frac{64 \varsigma(\nu,n)^2}{\delta_r}, \sqrt{n \rho_n \vartheta(\nu+2,r,n)} \right\}
    ,
\end{equation}
where $\vartheta(c,r,n) = c \log n + r \log 9$ for any $c > 0$. Bounds for $\|E\|$, $\psi_0$, and $\psi_1$ are almost identical to those in the positive semidefinite case, namely with probability at least $1 - 3n^{-\nu}$ we have that
\begin{gather}
    \label{eq:e_spectral_norm}
    \|E\|
    \leq
    \varsigma(\nu,n), \\
    \label{eq:psi0_general}
    \psi_0
    \leq
    \frac{2\varsigma(\nu,n)}{\delta_r}, \\
    \label{eq:psi1_general}
    \psi_1
    \leq
    4 \sqrt{\rho_n \vartheta(\nu,r,n)} + \frac{8}{3} n \rho_n^{2} \lambda_r^{-2} \vartheta(\nu,r,n),
\end{gather}
hold simultaneously. Next, by \cref{lem:approximate_commute,eq:e_spectral_norm}, we have
\begin{equation}
    \label{eq:psi2_general}
    \begin{split}
    \psi_2
    &\leq
    8 |\lambda_r|^{-1/2} \sqrt{\rho_n \vartheta(\nu,r,n)} + \frac{16 n \rho_n^{2} \vartheta(\nu,r,n)}{3 |\lambda_r|^{5/2}} + \frac{4 \varsigma(\nu,n)^2}{|\lambda_r|^{1/2} \delta_r},
    \end{split}
\end{equation}
with probability at least $1 - 3n^{-\nu}$. Note that for bounding $\psi_1$ and $\psi_2$ we used the fact that $\|U\|_{2 \to \infty} \leq \|U |\Lambda|^{1/2}\|_{2 \to \infty} \times |\lambda_r|^{-1/2} \leq n^{1/2} \rho_n |\lambda_r|^{-1}$.
Now by \cref{eq:select_r_general_proof}, we have 
\begin{equation*}
    \sqrt{\rho_n \vartheta(\nu,r,n)}
    \geq
    n \rho_n^2 |\lambda_r|^{-2} \vartheta(\nu,r,n)
\end{equation*}
and the upper bounds for $\psi_1$ and $\psi_2$ simplify to 
\begin{gather}
    \label{eq:psi1_general2}
    \psi_1
    \leq
    \frac{20}{3} \sqrt{\rho_n \vartheta(\nu,r,n)}, \\
    \label{eq:psi2_general2}
    \psi_2
    \leq
    \frac{40}{3} |\lambda_r|^{-1/2} \sqrt{\rho_n \vartheta(\nu,r,n)} + \frac{4 \varsigma(\nu,n)^2}{\delta_r |\lambda_r|^{1/2}}
    .
\end{gather}
Next, by \cref{lem:technical4_new}, we have, {\em simultaneously} for all $1 \leq k \leq n$ that
\begin{equation}
    \label{eq:psi3_general}
    \begin{split}
    \psi_3^{(k)}
    &\leq
    \|U_{\perp} \Lambda_{\perp}^{k}\|_{2 \to \infty}\left(\sqrt{8 \rho_n \vartheta(\nu+2,r,n)} + \frac{8}{3}\|U\|_{2 \to \infty} \vartheta(\nu+2,r,n)\right), \\
    &\leq
    n^{1/2} \rho_n |\lambda_{r+1}|^{k-1} \left(\sqrt{8 \rho_n \vartheta(\nu+2,r,n)} + \frac{8}{3} n^{1/2} \rho_n \lambda_r^{-1} \vartheta(\nu+2,r,n)\right), \\
    &\leq
    n^{1/2} \rho_n |\lambda_{r+1}|^{k-1}  \times \frac{11}{2} \sqrt{\rho_n \vartheta(\nu+2,r,n)}
    \end{split}  
\end{equation}
with probability at least $1 - n^{-\nu}$, where the final inequality follows from \cref{eq:select_r_general_proof}. We therefore have
\begin{equation}
    \label{eq:sum_psi_3_k}
    \begin{split}
    \sum_{k=1}^{n} \frac{\psi_3^{(k)}}{ |\hat{\lambda}_r|^{k+1/2}}
    &\leq
    \frac{11 n^{1/2} \rho_n^{3/2} \sqrt{\vartheta(\nu+2,r,n)}}{2 |\hat{\lambda}_r|^{3/2}} \sum_{k=1}^{n \rho_n^{1/2}}\left|\frac{\hat{\lambda}_{r+1}}{\lambda_r}\right|^{k-1} \\
    &\leq
    \frac{11 \sqrt{2} n^{1/2} \rho_n^{3/2} \sqrt{\vartheta(\nu+2,r,n)}}{ |\lambda_r|^{3/2}} \sum_{k=1}^{n \rho_n^{1/2}} \left(1 - \frac{|\lambda_r| - |\lambda_{r+1}|}{|\lambda_r|}\right)^{k-1} \\
    &\leq
    \frac{11 \sqrt{2} n^{1/2} \rho_n^{3/2} \sqrt{\vartheta(\nu+2,r,n)}}{ |\lambda_r|^{1/2} \delta_r}
    .
    \end{split}
\end{equation}
We now bound $\|E U |\Lambda|^{1/2}\|_{2 \to \infty}$ and $\|E U \Lambda^{-1/2}\|_{2 \to \infty}$. By \cref{lem:ex_2inf}, we have
\begin{equation}
    \begin{split}
    \label{eq:EU_indefinite}
    \|EU\|_{2 \to \infty}
    &\leq
    \sqrt{8 \rho_n \vartheta(\nu + 1,r,n)} + \tfrac{8}{3}\|U\|_{2 \to \infty} \vartheta(\nu + 1,r,n) \\
    &\leq
    \frac{11}{2} \sqrt{\rho_n \vartheta(\nu + 1, r,n)}
    \end{split}
\end{equation}
with probability at least $1 - n^{-\nu}$, where the final inequality follows from the fact that $\|U\|_{2 \to \infty} \leq n^{1/2} \rho_n |\lambda_r|^{-1}$ and $|\lambda_{r}| \geq \sqrt{n \rho_n \vartheta(\nu+1,r,n)}$. \cref{eq:EU_indefinite} also implies
\begin{equation}
    \label{eq:EULambda_half_indefinite} 
    \|EU |\Lambda|^{-1/2}\|_{2 \to \infty} 
    \leq
    \frac{11}{2} |\lambda_r|^{-1/2} \sqrt{\rho_n \vartheta(\nu+1,r,n)}
    ,
\end{equation}
which yields the bound in \cref{eq:EUlambda_indefinite}. Finally, similar to the positive semidefinite case, we can show that the event $\mathcal{E}_1$ holds with probability at least $1 - 2n^{-\nu}$, where we take $C_1(\nu) = \sqrt{2(\nu+2)}$ and $C_2(\nu) = \tfrac{2}{3}(\nu + 2)$. 

Now, assume the above bounds hold. Substituting the corresponding quantities into the expressions for $R_0$ through $Y_1$ yields
\begin{gather*}
    \|R_0\|_{2 \to \infty}
    \lesssim
    \epsilon_0 \times \frac{n \rho_n}{\delta_r^2}, \\
    \|R_1\|_{2 \to \infty}
    \lesssim
    \frac{\epsilon_0 \rho_n^{1/2} (r^{1/2} + \log^{1/2}{n})}{|\lambda_r|} + \frac{\epsilon_0 n \rho_n}{|\lambda_r| \delta_r}, \\
    \|R_2\|_{2 \to \infty}
    \lesssim
    \epsilon_0 \left(\frac{\rho_n^{1/2} (r^{1/2} + \log^{1/2}{n})}{|\lambda_r|} + \frac{n \rho_n}{|\lambda_r| \delta_r}\right) + \frac{\epsilon_1 n \rho_n}{\delta_r^2 |\lambda_r|^{1/2}} + \frac{n^{1/2} \rho_n^{3/2} (r^{1/2} + \log^{1/2}{n})}{|\lambda_r|^{1/2} \delta_r}, \\
    \|Y_0\|_{2 \to \infty}
    \lesssim
    \frac{\rho_n^{1/2} (r^{1/2} + \log^{1/2}{n})}{|\lambda_r|^{1/2}}\left(\frac{n \rho_n}{\delta_r^2} + \frac{\rho_n^{1/2}(r^{1/2} + \log^{1/2}{n})}{|\lambda_r|}\right), \\
    \|Y_1\|_{2 \to \infty}
    \lesssim
    \frac{(r \rho_n \log n)^{1/2} \cdot (n \rho_n)^{1/2}}{\delta_r |\lambda_r|^{1/2}} + \frac{(\|U\|_{2 \to \infty} + \rho_n^{1/2}(r^{1/2} + \log^{1/2}{n})) ((n \rho_n)^{1/2} + \log n)}{\delta_r |\lambda_r|^{1/2}}.
\end{gather*}
By \cref{eq:epsilon_bounds}, the bound for $\|R_2\|_{2 \to \infty}$ can be further simplified to
\begin{equation*}
    \|R_2\|_{2 \to \infty}
    \lesssim
    \frac{n^{3/2} \rho_n^2}{\delta_r^2 |\lambda_r|^{1/2}} + \frac{n^{1/2} \rho_n^{3/2}(r^{1/2} + \log^{1/2}{n})}{\delta_r |\lambda_r|^{1/2}}
    .
\end{equation*}
Similarly, the bound for $\|Y_1\|_{2 \to \infty}$ can be further simplified to
\begin{equation*}
    \|Y_1\|_{2 \to \infty}
    \lesssim
    \frac{(r \rho_n \log n)^{1/2} \cdot (n \rho_n)^{1/2}}{\delta_r |\lambda_r|^{1/2}} + \frac{\rho_n^{1/2} \log^{3/2}{n}}{|\lambda_r|^{1/2} \delta_r}
    .
\end{equation*}
We then have
\begin{equation*}
    \|Y_0\|_{2 \to \infty} + \|Y_1\|_{2 \to \infty}
    \lesssim
    \frac{(r \rho_n \log n)^{1/2} \cdot (n \rho_n)^{1/2}}{\delta_r |\lambda_r|^{1/2}} + \frac{\rho_n^{1/2} \log^{3/2}{n}}{|\lambda_r|^{1/2} \delta_r}
    ,
\end{equation*}
where we had used the fact that $(n \rho_n)/\delta_r^2 \leq (n \rho_n)^{1/2}/\delta_r$, recalling that $\delta_r \geq 4 \varsigma(\nu,n) \geq (n \rho_n)^{1/2}$ (the dropped constant is immaterial here). 
Combining the above bounds we obtain 
\begin{gather*}
    \|R_0\|_{2 \to \infty} + \|R_1\|_{2 \to \infty}
    \lesssim
    \epsilon_0 \left(\frac{n \rho_n}{\delta_r^2} + \frac{\rho_n^{1/2} (r^{1/2} + \log^{1/2}{n})}{|\lambda_r|}\right), \\
    \|R_2\|_{2 \to \infty} + \|Y_0\|_{2 \to \infty} + \|Y_1\|_{2 \to \infty} 
    \lesssim
    \frac{n^{3/2} \rho_n^2}{\delta_r^2 |\lambda_r|^{1/2}} + \frac{(\rho_n \log n)^{1/2} ((r n \rho_n)^{1/2} + \log n)}{|\lambda_r|^{1/2} \delta_r}
    .
\end{gather*}
Recall that $T_* = E U |\Lambda|^{-1/2} J W^{(n)}$. Then, by \cref{eq:EULambda_half_indefinite}, we have
\begin{equation*}
    \begin{split}
    \|Y_2\|_{2 \to \infty}
    &\leq
    \frac{24 C_2(\nu) \log n + 64 \delta_r^{-1} \|E\|^2}{|\lambda_r|} \bigl(\|T_*\|_{2 \to \infty} + \|R_2\|_{2 \to \infty} + \|Y_0\|_{2 \to \infty} + \|Y_1\|_{2 \to \infty} \bigr) \\
    &\lesssim
    \frac{\rho_n^{1/2}(r^{1/2} + \log^{1/2}{n})}{|\lambda_r|^{1/2}}\Bigl(\frac{\log n}{|\lambda_r|} + \frac{n \rho_n}{\delta_r |\lambda_r|}\Bigr) + \|R_2\|_{2 \to \infty} + \|Y_0\|_{2 \to \infty} + \|Y_1\|_{2 \to \infty} \\
    &\lesssim
    \frac{n^{3/2} \rho_n^2}{\delta_r^2 |\lambda_r|^{1/2}} + \frac{(\rho_n \log n)^{1/2} ((r n \rho_n)^{1/2} + \log n)}{|\lambda_r|^{1/2} \delta_r}
    ,
    \end{split}
\end{equation*}
where the second inequality follows from the fact that $|\lambda_r| \geq 24 C_2(\nu) \log n + 64 \tfrac{\varsigma(\nu,n)^2}{\delta_r}$, and the third inequality follows from the fact that $(n \rho_n)/(|\lambda_r| \delta_r) \leq (n \rho_n)/\delta_r^2 \leq (n \rho_n)^{1/2}/\delta_r$. 
In summary, letting $\tilde{R} = R_0 + R_1 + R_2 + Y_0 + Y_1 + Y_2$, we have
\begin{equation*}
    \begin{split}
    \|\tilde{R}\|_{2 \to \infty}
    &\lesssim
    \epsilon_0 \left(\frac{n \rho_n}{\delta_r^2} + \frac{\rho_n^{1/2}(r^{1/2}
    +
    \log^{1/2}{n})}{|\lambda_r|}\right) + \frac{n^{3/2} \rho_n^2}{|\lambda_r|^{1/2} \delta_r^2}
    +
    \frac{(\rho_n \log n)^{1/2} ((r n \rho_n)^{1/2} + \log n)}{|\lambda_r|^{1/2} \delta_r}
    .
    \end{split}
\end{equation*}
Finally, as $\epsilon_0 \leq n^{1/2} \rho_n |\lambda_r|^{-1/2}$, the above bound simplifies to 
\begin{equation*}
    \begin{split}
    \|\tilde{R}\|_{2 \to \infty}
    &\lesssim
    \frac{n^{3/2} \rho_n^2}{|\lambda_r|^{1/2} \delta_r^2}
    +
    \frac{(\rho_n \log n)^{1/2}}{|\lambda_r|^{1/2}} \left(\frac{(r n \rho_n)^{1/2} + \log n}{\delta_r}\right)
    ,
    \end{split}
\end{equation*}
as desired. This concludes the proof of \cref{thm:general}.

\subsection{Proof of \cref{thm:generalU_ULambda}}
In this proof, for conciseness, we briefly sketch the main steps of the argument, since the technical details are similar to the proofs for \cref{thm:psd,thm:general}. Note that we will, on numerous occasions, silently apply \cref{eq:eigenvalues_relation} to replace $\hat{\lambda}_r$ with $\lambda_r$.

First, write $\hat{U} - U W^{(n)}$ as
\[
    \hat{U} - U W^{(n)}
    =
    \hat{U} - U U^{\top} \hat{U} + U U^{\top} \hat{U} - U W^{(n)}
    .
\]
For $R_0 \coloneqq U U^{\top} \hat{U} - U W^{(n)}$, we have
\[
    \|R_0\|_{2 \to \infty}
    \leq
    \|U\|_{2 \to \infty} \times \psi_0^2
    .
\]
Next, writing $\hat{U} = A \hat{U} \hat{\Lambda}^{1} = P \hat{U} \hat{\Lambda}^{-1} + P \hat{U} \hat{\Lambda}^{-1}$, we have
\[
    \hat{U} - U U^{\top} \hat{U}
    =
    (I - UU^{\top}) P \hat{U} \hat{\Lambda}^{-1}
    +
    E \hat{U} \hat{\Lambda}^{-1} - U U^{\top} E \hat{U} \hat{\Lambda}^{-1}
    .
\]
Consequently,
\[
    \|U U^{\top} E \hat{U} \hat{\Lambda}^{-1}\|_{2 \to \infty}
    \leq
    \frac{2 \|U\|_{2 \to \infty} (\psi_1 + \|E\| \psi_0)}{|\lambda_r|}
    .
\]
The term $(I - UU^{\top}) P \hat{U} \hat{\Lambda}^{-1}$ admits the expansion
\[
    \begin{split}
    \Pi_{U}^{\perp} P \hat{U} \hat{\Lambda}^{-1}
    &=
    \Pi_{U}^{\perp} P^{m} \hat{U} \hat{\Lambda}^{-m}
    +
    \sum_{k=1}^{m-1} \Pi_{U}^{\perp} P^{k} E \hat{U} \hat{\Lambda}^{-(k+1)} 
    \end{split}
\]
for any $m \geq 2$. Setting $m = n \rho_n^{1/2} + 1$, the same argument as for \cref{eq:bound_gap_indefinite3,eq:bound_gap_psd3} yields
\begin{equation}
\|\Pi_{U}^{\perp} P^{m} \hat{U} \hat{\Lambda}^{-m}\|_{2 \to \infty} \leq \begin{cases} 
2 \epsilon_1 \psi_0 |\lambda_r|^{-1} \exp(-9n^{1/2}), & \text{if $P$ is indefinite} \\
\sqrt{2} \rho_n^{1/2} \psi_0 \lambda_r^{-1/2} \exp(-9n^{1/2}) & \text{if $P$ is definite} 
\end{cases}. 
\end{equation}
Next, if $\kappa$ is positive semidefinite, then using the same derivations as for \cref{thm:psd} gives
\[
    \Bigl\|\sum_{k=1}^{n \rho_n^{1/2}} \Pi_{U}^{\perp} P^{k} E \hat{U} \hat{\Lambda}^{-(k+1)}\Bigr\|_{2 \to \infty}
    \leq
    \frac{2^{5/2} \rho_n^{1/2} \|E\| \psi_0}{\lambda_r^{1/2} \delta_r} + \sum_{k=1}^{n \rho_n^{1/2}} \psi_3^{(k)} \hat{\lambda}_{k}^{-(k+1)}
    .
\]
Otherwise, if $\kappa$ is indefinite, then using the same derivations as for \cref{thm:general} gives
\[
    \Bigl\|\sum_{k=1}^{n \rho_n^{1/2}} \Pi_{U}^{\perp} P^{k} E \hat{U} \hat{\Lambda}^{-(k+1)}\Bigr\|_{2 \to \infty}
    \leq \frac{8 \epsilon_1 \|E\| \psi_0}{|\lambda_r| \delta_r} + \sum_{k=1}^{n \rho_n^{1/2}} \psi_3^{(k)} \times |\hat{\lambda}_r|^{-(k+1)}
\]
Next, for $E \hat{U} \hat{\Lambda}^{-1}$, we have
\[
    E \hat{U} \hat{\Lambda}^{-1}
    =
    E U \Lambda^{-1}[W^{(n} + (U^{\top} \hat{U} - W^{(n)}) + (\Lambda U^{\top} \hat{U} \hat{\Lambda}^{-1} - U^{\top} \hat{U})] + E (I - UU^{\top}) \hat{U} \hat{\Lambda}^{-1}
    .
\]
Here,
\begin{gather*} 
    \|E U \Lambda^{-1} (U^{\top} \hat{U} - W^{(n)})\|_{2 \to \infty}
    \leq
    \|E U \Lambda^{-1}\|_{2 \to \infty} \times \psi_0^2, \\
    \|E U \Lambda^{-1} (\Lambda U^{\top} \hat{U} \hat{\Lambda}^{-1} - U^{\top} \hat{U})\|_{2 \to \infty}
    \leq
    2 |\lambda_r|^{-1} (\|E U \Lambda^{-1}\|_{2 \to \infty}( \psi_1 + \|E\| \psi_0)
    .
\end{gather*}
Finally, \cref{lem:loo} implies 
\begin{align*}
    &\|E (I - UU^{\top}) \hat{U} \hat{\Lambda}^{-1}\|_{2 \to \infty} \\
    &\qquad\leq
    \frac{32 C_1(\nu) (r \rho_n \log n)^{1/2} \|E\|}{\delta_r |\lambda_r|} \\
    &\qquad\qquad+
    \frac{16 (\|E\| \cdot \|U\|_{2 \to \infty} + \|E U\|_{2 \to \infty}) (C_{2}(\nu) \log n + 2 \|E\|)}{\delta_r |\lambda_r|} \\
    &\qquad\qquad+
    \|(I - UU^{\top}) \hat{U}\|_{2 \to \infty} \Bigl(\frac{12 C_2(\nu) \log n}{|\lambda_r|} + \frac{32 \|E\|^2}{\delta_r |\lambda_r|}\Bigr)
    .
\end{align*}
Now suppose $\kappa$ is positive semidefinite and $\lambda_r \geq 24 C_2(\nu) \log n + 64 \delta_r^{-1} \|E\|^2$. Collecting the above bounds we obtain an expression of the form
\[
    \hat{U} - U W^{(n)}
    =
    E U \Lambda^{-1} W^{(n)}
    +
    R_0 + R_1 + R_2 + Y_0 + Y_1
    ,
\]
where, with $T_* = E U \Lambda^{-1} W^{(n)}$, we have
\begin{gather*}
    \|R_0\|
    \leq
    \|U\|_{2 \to \infty} \times \psi_0^2,
    \quad
    \|R_1\|_{2 \to \infty} \leq 2 \lambda_r^{-1} 
    \|U\|_{2 \to \infty} (\psi_1 + \|E\| \psi_0), \\
    \|R_2\|_{2 \to \infty}
    \leq
    \frac{\sqrt{2} \rho_n^{1/2} \psi_0}{\lambda_r^{1/2}} \Bigl(\exp(-9 \sqrt{n}) + \frac{4 \|E\|}{\delta_r}\Bigr) + \sum_{k=1}^{n \rho_n^{1/2}} \psi_3^{(k)} \hat{\lambda}_r^{-(k+1)} \\
    \|Y_0\|_{2 \to \infty}
    \leq
    \|T_*\|_{2 \to \infty} (2\lambda_r^{-1}(\psi_1 + \|E\| \psi_0) + \psi_0^2) + \frac{32 C_{1}(\nu) (r \rho_n \log n)^{1/2} \|E\|}{\delta_r \lambda_r} \\
    \qquad\qquad+
    \frac{16 (\|E\| \cdot \|U\|_{2 \to \infty} + \|E U\|_{2 \to \infty}) (C_{\nu} \log n + 2 \|E\|)}{\delta_r \lambda_r}, \\
    \|Y_1\|_{2 \to \infty}
    \leq
    \frac{24 C_{2}(\nu) \log n + 64 \delta_r^{-1} \|E\|^2}{\lambda_r} \bigl(\|T_*\|_{2 \to \infty} + \|R_1\| _{2 \to \infty} + \|R_2\|_{2 \to \infty} + \|Y_0\|_{2 \to \infty} \bigr)
    .
\end{gather*}
Once again, substituting the bounds for $\psi_0, \psi_1, \psi_2, \psi_3^{(k)}$ and $\|E\|$ in the proof of \cref{thm:psd} together with some involved calculations, we obtain 
\begin{gather*}
    \hat{U} W^{(n)\top} - U
    =
    E U \Lambda^{-1} + \tilde{R}, \\
    \|\tilde{R}\|_{2 \to \infty}
    \lesssim
    \frac{n \rho_n^{3/2}}{\lambda_r^{1/2} \delta_r^2}
    +
    \frac{(\rho_n \log n)^{1/2}}{\lambda_r}\Bigl(\frac{\log n}{\delta_r} + \frac{(r n \rho_n)^{1/2}}{\delta_r}\Bigr)
\end{gather*} 
with probability at least $1 - O(n^{-\nu})$. The derivations when $\kappa$ is indefinite is similar and thus omitted.

For $\hat{U} \hat{\Lambda} - U \Lambda W^{(n)}$, we have
\[
    \hat{U} \hat{\Lambda} - U \Lambda W^{(n)}
    =
    (I - UU^{\top}) \hat{U} \hat{\Lambda} + U (U^{\top} \hat{U} \hat{\Lambda} - \Lambda U^{\top} \hat{U})
    +
    U \Lambda (U^{\top} \hat{U} - W^{(n)})
    .
\]
For $R_0 \coloneqq U \Lambda (U^{\top} \hat{U} - W^{(n)})$, we have
\[
    \|R_0\|_{2 \to \infty}
    \leq
    \|U \Lambda\|_{2 \to \infty} \times \|U^{\top} \hat{U} - W^{(n)}\| \leq n^{1/2} \rho_n \times \psi_0^2
    .
\]
Next, for $R_1 \coloneqq U (U^{\top} \hat{U} \hat{\Lambda} - \Lambda U^{\top} \hat{U})$ we have
\begin{equation*}
    \begin{split}
    \|R_1\|_{2 \to \infty}
    &\leq
    \|U\|_{2 \to \infty} \times \|U^{\top} \hat{U} \hat{\Lambda} - \Lambda U^{\top} \hat{U}\| \\
    &\leq
    \|U\|_{2 \to \infty} \times \|U^{\top} E \hat{U}\| \\
    &\leq
    \|U\|_{2 \to \infty} \times (\psi_1 + \|E\| \psi_0)
    .
    \end{split}
\end{equation*}
The term $(I - UU^{\top}) \hat{U} \hat{\Lambda}$ has the expansion
\begin{equation*}
    \begin{split}
    (I - UU^{\top}) \hat{U} \hat{\Lambda}
    &=
    (I - UU^{\top}) E \hat{U} + (I - UU)^{\top} P \hat{U} \\
    &=
    E \hat{U} + U U^{\top} E \hat{U} + (I - UU^{\top}) P^{m+1} \hat{U} \hat{\Lambda}^{-m} + \sum_{k=1}^{m} (I - UU^{\top}) P^{k} E \hat{U} \hat{\Lambda}^{-k}
    \end{split}
\end{equation*}
for any $m \geq 1$. 
Once again, we have
\[
    \|U U^{\top} E \hat{U}\|_{2 \to \infty}
    \leq
    \|U\|_{2 \to \infty} \leq \|U\|_{2 \to \infty} \times (\psi_1 + \E\|\psi_0)
    .
\]
Setting $m = n \rho_n^{1/2} + 1$ we have,
\begin{equation}
\|\Pi_{U}^{\perp} P^{m+1} \hat{U} \hat{\Lambda}^{-m}\|_{2 \to \infty} \leq \begin{cases} 
\epsilon_1 \psi_0 \exp(-9n^{1/2}), & \text{if $P$ is indefinite} \\
\rho_n^{1/2} \psi_0 \lambda_{r+1}^{1/2} \exp(-9n^{1/2}) & \text{if $P$ is definite} 
\end{cases}. 
\end{equation}
Next, if $\kappa$ is positive semidefinite, then using the same derivations as for \cref{thm:psd}, we have
\[
    \Bigl\|\sum_{k=1}^{n \rho_n^{1/2}} \Pi_{U}^{\perp} P^{k} E \hat{U} \hat{\Lambda}^{-k}\Bigr\|_{2 \to \infty}
    \leq \frac{2^{3/2} \rho_n^{1/2} \lambda_r^{1/2} \|E\| \psi_0}{\delta_r} + \sum_{k=1}^{n \rho_n^{1/2}} \psi_3^{(k)} \hat{\lambda}_{r}^{-k}   .
\]
Otherwise, if $\kappa$ is indefinite, then using the same derivations as for \cref{thm:general}, we have
\[
    \Bigl\|\sum_{k=1}^{\infty} \Pi_{U}^{\perp} P^{k} E \hat{U} \hat{\Lambda}^{-k}\Bigr\|_{2 \to \infty}
    \leq \frac{4 \epsilon_1 \|E\| \psi_0}{\delta_r} + \sum_{k=1}^{n \rho_n^{1/2}} \psi_3^{(k)} |\hat{\lambda}_r|^{-k}
    .
\]
Next, write
\[
    E \hat{U}
    =
    E U [W^{(n} + (U^{\top} \hat{U} - W^{(n)})] + E (I - UU^{\top}) \hat{U}
    .
\]
\cref{lem:loo} then implies
\[
    \begin{split}
        \|E (I - UU^{\top}) \hat{U}\|_{2 \to \infty} 
        &\leq
        \frac{16 C_1(\nu)(r \rho_n \log n)^{1/2} \|E\|}{\delta_r} \\
        &\qquad+
        \frac{8 (\|E\| \cdot \|U\|_{2 \to \infty} + \|E U\|_{2 \to \infty}) (C_{\nu} \log n + 2 \|E\|)}{\delta_r} \\
        &\qquad+
        \|(I - UU^{\top}) \hat{U} \hat{\Lambda}\|_{2 \to \infty} \Bigl(\frac{12 C_2(\nu) \log n}{|\lambda_r|} + \frac{32 \|E\|^2}{\delta_r |\lambda_r|}\Bigr)
        .
  \end{split}
\]
Now, suppose $\kappa$ is positive definite and $\lambda_r \geq 24 C_2(\nu) \log n + 64 \delta_r^{-1} \|E\|^2$. Collecting the above bounds we obtain an expression of the form
\[
    \hat{U} \hat{\Lambda} - U \Lambda W^{(n)}
    =
    E U W^{(n)} + R_0 + R_1 + R_2 + Y_0 + Y_1
    ,
\]
where, with $T_* = E U W^{(n)}$, we have
\begin{gather*}
    \|R_0\|
    \leq
    n^{1/2} \rho_n \times \psi_0^2,
    \quad \|R_1\|_{2 \to \infty} \leq  \|U\|_{2 \to \infty} (\psi_1 + \|E\| \psi_0), \\
    \|R_2\|_{2 \to \infty}
    \leq
    \rho_n^{1/2} \psi_0 \Bigl( \lambda_{r+1}^{1/2} \exp(-9n^{1/2}) + \frac{2^{3/2} \lambda_r^{1/2} \|E\|}{\delta_r} \Bigr) + \sum_{m=1}^{n \rho_n^{1/2}} \psi_3^{(k)} \hat{\lambda}_r^{-k} \\
    \|Y_0\|_{2 \to \infty}
    \leq
    \|T_*\|_{2 \to \infty} \psi_0^2 + \frac{16 C_{1}(\nu) (r \rho_n \log n)^{1/2} \|E\|}{\delta_r} \\
    \qquad\qquad\qquad\qquad\qquad\qquad\qquad+
    \frac{8 (\|E\| \cdot \|U\|_{2 \to \infty} + \|E U\|_{2 \to \infty}) (C_{\nu} \log n + 2 \|E\|)}{\delta_r}, \\
    \|Y_1\|_{2 \to \infty}
    \leq
    \frac{24 C_{2}(\nu) \log n + 64 \delta_r^{-1} \|E\|^2}{|\lambda_r|} \bigl(\|T_*\|_{2 \to \infty} + \|R_1\| _{2 \to \infty} + \|R_2\|_{2 \to \infty} + \|Y_0\|_{2 \to \infty} \bigr)
    .
\end{gather*}
Once again, substituting the bounds for $\psi_0, \psi_1, \psi_2, \psi_3^{(k)}$ and $\|E\|$ in the proof of \cref{thm:psd} together with some involved calculations, we obtain 
\begin{gather*}
    \hat{U} \hat{\Lambda} W^{(n)\top} - U \Lambda
    =
    E U + \tilde{R}, \\
    \|\tilde{R}\|_{2 \to \infty}
    \lesssim
    \frac{n^{3/2} \rho_n^{2}}{\delta_r^2} + (\rho_n \log n)^{1/2}\Bigl(\frac{\log n}{\delta_r} + \frac{(r n \rho_n)^{1/2}}{\delta_r}\Bigr)
\end{gather*} 
with probability at least $1 - O(n^{-\nu})$. The derivations when $\kappa$ is similar and we therefore omit the details. This completes the proof of \cref{thm:generalU_ULambda}.

\subsection{Deterministic perturbation bound}
\label{sec:deterministic} We now state a deterministic perturbation
bound for the $r$ leading eigenvectors $U$ of a symmetric matrix $M$
when it is additively perturbed by an arbitrary noise matrix $E$. The
result can be viewed as a generalization or reformulation of
\cref{eq:expansion_hatU} in \cref{thm:generalU_ULambda}. While there
are numerous terms in the expansion, they all depend on quantities
that are {\em linear} in $E$ and thus, for many inference problems,
can be bounded using standard matrix perturbation and matrix
concentration inequalities. In particular \cref{eq:expansion_hatU}
follows from \cref{thm:deterministic} by bounding $\|E\|$ using
\cref{lem:bandeira_vanhandel}, bounding $\psi_0$ using the Davis-Kahan
theorem (given an upper bound for $\|E\|$), bounding $\psi_1$ using
\cref{lem:technical2a}, bounding $\|U_{\perp} \Lambda_{\perp}
U_{\perp}^{\top} E U\|_{2 \to \infty}$ using
\cref{lem:technical4_new}, bounding $\|EU\|_{2 \to \infty}$ and
$\psi_*$ using \cref{lem:ex_2inf}, bounding $\eta$ using
\cref{lem:eta_bound}, and finally, verifying
\cref{eq:E1_general_noise} using a matrix Bernstein inequality (see
\cref{eq:loo4}). 
\begin{theorem}[Deterministic row-wise eigenvector perturbation bound]
\label{thm:deterministic} 
Let $M$ and $E$ be symmetric $n \times n$ matrices, and define $\hat{M} = M + E$. For a given $r \geq 1$, let the diagonal matrices $\hat{\Lambda}$ and $\Lambda$ contain the $r$ largest in magnitude eigenvalues of $\hat{M}$ and $M$, respectively, and let $\hat{U}$ and $U$ be the $n \times r$ matrices whose orthonormal columns are the corresponding eigenvectors of $\hat{M}$ and $M$. Also, for $h \in [n]$, denote by $\hat{M}^{[h]}$ the matrix obtained by replacing the entries in the $h$th row and $h$th column of $\hat{M}$ with those in the $h$th row and $h$th column of $M$, and let $\hat{U}^{[h]}$ be defined similar to $\hat{U}$, but with $\hat{M}^{[h]}$ in place of $\hat{M}$. Let $V^{[h]} = (I - UU^{\top}) \hat{U}^{[h]}$ and $\delta_{r} = |\lambda_{r}| - |\lambda_{r+1}|$. Suppose there exists quantities $\alpha > 0$ and $\beta > 0$ such that the following conditions
\begin{gather}
\label{eq:E0_general_noise}
\mathcal{E}_0 = \{ \delta_r \geq \max\{4 \|E\|, 8 \beta \} \}, \\
\label{eq:E1_general_noise}
\mathcal{E}_1 = \{ \forall h \in [n], \|e_h^{\top} E V^{[h]} \| \leq \alpha \|V^{[h]}\|_{F} + \beta \|V^{[h]}\|_{2 \to \infty}\}, \\
\label{eq:E2_general_noise}
\mathcal{E}_2 = \{ |\lambda_r| \geq 24 \beta + 64 \delta_r^{-1} \|E\| \}.
\end{gather}
hold simultaneously. 
We then have
\[ \hat{U} W^{(n)} - U = E U \Lambda^{-1} + R \]
where the residual matrix $R$ satisfies
\begin{gather*}
    \|R\|_{2 \to \infty} \leq r_{2,\infty}^{(0)} + r_{2,\infty}^{(1)} + r_{2, \infty}^{(2)} + y_{2, \infty}^{(0)} + y_{2,\infty}^{(1)}, \\
    r_{2, \infty}^{(0)} = \|U\|_{2 \to \infty} \psi_0^2, \quad r_{2, \infty}^{(1)} =  \frac{2 \|U\|_{2 \to \infty} (\psi_1 + \|E\| \psi_0)}{|\lambda_r|}, \\
    r_{2, \infty}^{(2)} = \frac{8 \|U_{\perp} \Lambda_{\perp}\|_{2 \to \infty} \times \|E\| \psi_0}{|\lambda_r| \delta_r}  + \frac{4 \|U_{\perp} \Lambda_{\perp} U_{\perp}^{\top} E U\|_{2 \to \infty}}{\lambda_r^2} + \frac{16 \|U_{\perp} \Lambda_{\perp}\|_{2 \to \infty} \times  \eta}{\lambda_r^2 \delta_r} \\
   y_{2,\infty}^{(0)} = \psi_* \Bigl(\frac{\psi_1 + \|E\| \psi_0}{2 |\lambda_r|} + \psi_0^2\Bigr) + \frac{32 \alpha r^{1/2} \|E\|}{|\lambda_r| \delta_r} + \frac{(\|E\| \cdot \|U\|_{2 \to \infty} + \|EU\|_{2 \to \infty})(16 \beta + 32 \|E\|)}{\delta_r |\lambda_r|}, \\
    y_{2,\infty}^{(1)} =  \frac{24 \beta + 64 \delta_r^{-1} \|E\|^2}{|\lambda_r|}\Bigl(\psi_{*} + r_{2,\infty}^{(1)} + r_{2,\infty}^{(2)} + y_{2,\infty}^{(0)}\Bigr)
\end{gather*} 
and $\psi_0, \psi_1, \eta, \psi_*$ are defined as
\begin{gather*} 
\psi_0 := \|(I - UU^{\top}) \hat{U} \|, \quad \psi_1 := \|U^{\top} E U \|, \quad \eta := \|\Lambda_{\perp} U_{\perp}^{\top} E U \|, \quad
\psi_* := \|E U \Lambda^{-1}\|_{2 \to \infty}.
\end{gather*}
If $M$ is positive semidefinite then the quantity $r_{2,\infty}^{(2)}$ can be simplified to 
\begin{gather*}
   r_{2, \infty}^{(2)} = \frac{2^{5/2} \|U_{\perp} \Lambda_{\perp}^{1/2}\|_{2 \to \infty} \times \|E\| \psi_0}{\lambda_r^{1/2} \delta_r}  +  \frac{8 \|U_{\perp} \Lambda_{\perp}^{1/2}\|_{2 \to \infty}  \tilde{\eta}}{\lambda_r \delta_r} 
\end{gather*}
where now $\tilde{\eta} := \|\Lambda_{\perp}^{1/2} U_{\perp}^{\top} E U \|$. 
\end{theorem}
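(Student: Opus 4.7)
The plan is to mimic the proof of \cref{thm:generalU_ULambda}'s expansion for $\hat U - UW^{(n)}$, systematically replacing every probabilistic bound by the deterministic quantities supplied in the hypotheses. Begin with the decomposition
\[
  \hat U - U W^{(n)}
  =
  U\bigl(U^{\top}\hat U - W^{(n)}\bigr) + (I - U U^{\top})\hat U.
\]
The first summand yields $r_{2,\infty}^{(0)}$ via $\|U^{\top}\hat U - W^{(n)}\|\leq \psi_0^{2}$ from \cite[Lemma~6.7]{cape2019two}. For the second summand, substitute $\hat U = M\hat U\hat\Lambda^{-1} + E\hat U\hat\Lambda^{-1}$ and iterate inside the $M$-piece. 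Letting the iteration length go to infinity is legitimate: $\mathcal{E}_0$ together with Weyl's inequality gives $|\lambda_{r+1}/\hat\lambda_r|\leq 1 - \delta_r/(2|\lambda_r|) < 1$, so the leading remainder decays geometrically and
\[
  (I-UU^{\top})\hat U
  =
  \sum_{k=1}^{\infty} U_\perp \Lambda_\perp^{k}\, U_\perp^{\top} E \hat U \hat\Lambda^{-(k+1)}
  +
  (I-UU^{\top}) E\hat U\hat\Lambda^{-1}.
\]

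To control the infinite series, split each summand via $\hat U = UU^{\top}\hat U + U_\perp U_\perp^{\top}\hat U$. The $U_\perp U_\perp^{\top}\hat U$ branch uses $\|U_\perp^{\top}\hat U\|\leq\psi_0$ and a geometric sum of ratio $|\lambda_{r+1}/\hat\lambda_r|$ (summing to $\leq 2|\lambda_r|/\delta_r$), producing the first piece of $r_{2,\infty}^{(2)}$. The $UU^{\top}\hat U$ branch at $k=1$ gives the middle $\|U_\perp \Lambda_\perp U_\perp^{\top} E U\|_{2\to\infty}/\lambda_r^{2}$ term, while for $k\geq 2$ the crude estimate $\|U_\perp\Lambda_\perp^{k}U_\perp^{\top}EU\|_{2\to\infty}\leq\|U_\perp\Lambda_\perp\|_{2\to\infty}|\lambda_{r+1}|^{k-1}\eta$ together with another geometric sum produces the final $\eta$-piece of $r_{2,\infty}^{(2)}$. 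The positive semidefinite simplification is obtained by replacing $\|U_\perp\Lambda_\perp^{k}\|_{2\to\infty}$ with $\|U_\perp\Lambda_\perp^{1/2}\|_{2\to\infty}|\lambda_{r+1}|^{k-1/2}$ throughout, redistributing a factor of $|\lambda_r|^{1/2}$ in the bounds.

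Next, extract the main order from $(I-UU^{\top})E\hat U\hat\Lambda^{-1} = E\hat U\hat\Lambda^{-1} - UU^{\top} E\hat U\hat\Lambda^{-1}$. The $UU^{\top}E\hat U\hat\Lambda^{-1}$ term yields $r_{2,\infty}^{(1)}$ directly via $\|U^{\top} E \hat U\|\leq\psi_1+\|E\|\psi_0$ (Davis--Kahan) and $|\hat\lambda_r|\geq|\lambda_r|/2$. For $E\hat U\hat\Lambda^{-1}$, apply the algebraic identity
\[
  E\hat U\hat\Lambda^{-1}
  =
  EU\Lambda^{-1} W^{(n)}
  + EU\Lambda^{-1}(U^{\top}\hat U - W^{(n)})
  + EU\Lambda^{-1}(\Lambda U^{\top}\hat U\hat\Lambda^{-1} - U^{\top}\hat U)
  + E(I-UU^{\top})\hat U\hat\Lambda^{-1},
\]
which isolates the main-order $EU\Lambda^{-1}W^{(n)}$. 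The two middle correctors contribute $\psi_*\psi_0^{2}$ and $\psi_*(\psi_1+\|E\|\psi_0)/(2|\lambda_r|)$ to $y_{2,\infty}^{(0)}$, the latter using \cref{lem:approximate_commute}. The crucial $E(I-UU^{\top})\hat U\hat\Lambda^{-1}$ piece is bounded by \cref{lem:loo}, whose statement transfers verbatim with $\alpha,\beta$ in place of $C_1(\nu)\sqrt{\rho_n\log n}$ and $C_2(\nu)\log n$, because hypothesis $\mathcal{E}_1$ is exactly the deterministic surrogate of the Bernstein-type concentration step \cref{eq:loo4} used inside that lemma.

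The final step dissolves the self-reference: \cref{lem:loo} bounds $\|E(I-UU^{\top})\hat U\|_{2\to\infty}$ in terms of $\|(I-UU^{\top})\hat U\|_{2\to\infty}$ with coefficient $O(\beta + \delta_r^{-1}\|E\|^{2})$, and after dividing by $|\hat\lambda_r|\geq|\lambda_r|/2$ the induced self-referential coefficient is at most a universal fraction under $\mathcal{E}_2$. Rearranging isolates $\|(I-UU^{\top})\hat U\|_{2\to\infty}$ and packs the contraction overhead into $y_{2,\infty}^{(1)}$; a right multiplication by $(W^{(n)})^{\top}$ (which preserves the $2\to\infty$ norm) and relabeling delivers the stated expansion. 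The principal obstacle is bookkeeping: although every individual step mirrors a piece of the proofs of \cref{thm:psd,thm:generalU_ULambda}, the residual $R$ must be decomposed into exactly the five additive bounds $r_{2,\infty}^{(0)},r_{2,\infty}^{(1)},r_{2,\infty}^{(2)},y_{2,\infty}^{(0)},y_{2,\infty}^{(1)}$, each expressed solely in the abstract quantities $\psi_0,\psi_1,\eta,\psi_*,\alpha,\beta,\|E\|,\|EU\|_{2\to\infty},\|U\|_{2\to\infty},\|U_\perp\Lambda_\perp\|_{2\to\infty}$, so that downstream applications can plug in concentration inequalities piecewise. A subtlety absent from the random-graph proofs is that the finite cutoff $m = n\rho_n^{1/2}+1$ used there is unnecessary here: under $\mathcal{E}_0$ the full geometric series in $|\lambda_{r+1}/\hat\lambda_r|$ converges unconditionally.
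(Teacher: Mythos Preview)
Your proposal is correct and follows essentially the same approach as the paper's proof: both mimic the proof of \cref{thm:generalU_ULambda}, replace the probabilistic bounds by the deterministic hypotheses $\mathcal{E}_0,\mathcal{E}_1,\mathcal{E}_2$, invoke \cref{lem:loo} with $\alpha,\beta$ in place of the Bernstein constants, and let the geometric expansion run to infinity rather than truncating at $m=n\rho_n^{1/2}+1$. One small slip: the corrector $EU\Lambda^{-1}(\Lambda U^{\top}\hat U\hat\Lambda^{-1}-U^{\top}\hat U)$ does not require \cref{lem:approximate_commute}, since $\Lambda U^{\top}\hat U - U^{\top}\hat U\hat\Lambda = -U^{\top}E\hat U$ directly, giving the $(\psi_1+\|E\|\psi_0)/|\lambda_r|$ factor without any square-root manipulation.
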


\begin{proof}
The terms $r_{2, \infty}^{(0)}, r_{2, \infty}^{(1)}$ are derived the same way as the upper bounds for $\|R_0\|_{2 \to \infty}$ and $\|R_1\|_{2 \to \infty}$ 
in the proof of \cref{thm:generalU_ULambda}. 
Next, if $\mathcal{E}_1$ holds, then we can apply the same arguments as that in the proof of \cref{lem:loo} to obtain
\begin{equation}
\label{eq:loo_general_noise}
\begin{split}
\|E(I - UU^{\top}) \hat{U}\| &\leq
    \frac{16 \alpha r^{1/2} \|E\|}{\delta_r} \\
    &\qquad+
    \frac{ (\|E\| \cdot\|U\|_{2 \to \infty} + \|EU\|_{2 \to \infty}) (8 \beta  + 16 \|E\|)}{\delta_r} \\
    &\qquad+
    \|(I - UU^{\top}) \hat{U}\|_{2 \to \infty} \left(6 \beta +  \frac{16 \|E\|^2 }{\delta_r}\right).
 \end{split}
\end{equation}
Given \cref{eq:loo_general_noise}, the terms $y_{2,\infty}^{(0)}$ and $y_{2, \infty}^{(1)}$ are derived the same way as the upper bounds for $\|Y_0\|_{2 \to \infty}$ and $\|Y_1\|_{2 \to \infty}$ in the proof of \cref{thm:generalU_ULambda}. We now derive the expression for $r_{2, \infty}^{(2)}$, which is similar to the  upper bound for $\|R_2\|_{2 \to \infty}$ in \cref{thm:generalU_ULambda}.

First, we have
\[ (I- UU^{\top}) M \hat{U} \hat{\Lambda}^{-1} = (I - UU^{\top}) M^{m} \hat{U} \hat{\Lambda}^{-m} + \sum_{k=1}^{m-1} (I - UU^{\top}) M^{k} E \hat{U} \hat{\Lambda}^{-(k+1)}\]
which holds for any $m \geq 2$. 
We also have
\[ \Pi_{U}^{\perp} M^{k} E \hat{U} \hat{\Lambda}^{-(k+1)} = \Pi_{U}^{\perp} M^{k} E \Pi_{U}^{\perp} \hat{U} \hat{\Lambda}^{-(k+1)} + \Pi_{U}^{\perp} M^{k} E U U^{\top} \hat{U} \hat{\Lambda}^{-(k+1)} \]
where $\Pi_{U}^{\perp} = I - UU^{\top}$. 
Now suppose $M$ is positive semidefinite. Then
\[ \begin{split} \|\Pi_{U}^{\perp} M^{k} E \Pi_{U}^{\perp} \hat{U} \hat{\Lambda}^{-(k+1)}\|_{2 \to \infty} &\leq \|U_{\perp} \Lambda_{\perp}^{1/2}\|_{2 \to \infty} \times \|\Lambda_{\perp}\|^{k-1/2} \times \|E\| \times \|(I - UU^{\top}) \hat{U} \| \times \|\hat{\Lambda}^{-(k+1)}\| \\
& \leq \|U_{\perp} \Lambda_{\perp}^{1/2}\|_{2 \to \infty} \times \|E\| \times \psi_0 \times |\hat{\lambda}_{r}|^{-3/2} \times \Bigl(\frac{\lambda_{r+1}}{|\hat{\lambda}_r|}\Bigr)^{k - 1/2} \\
& \leq \|U_{\perp} \Lambda_{\perp}^{1/2}\|_{2 \to \infty} \times \|E\| \times \psi_0 \times 2^{3/2} \lambda_r^{-3/2} \times \Bigl(1 - \frac{\lambda_r - \lambda_{r+1}}{2 \lambda_r} \Bigr)^{k-1/2}
\end{split} \]
where the final inequality follows from the same argument as that in \cref{eq:bound_gap_psd,eq:eigenvalues_relation}.
Similarly, we also have
\[ \begin{split} \|\Pi_{U}^{\perp} M^{k} E U U^{\top} \hat{U} \hat{\Lambda}^{-(k+1)}\|_{2 \to \infty} &\leq \|U_{\perp} \Lambda_{\perp}^{1/2}\|_{2 \to \infty} \times \|\Lambda_{\perp}\|^{k-1} \times \|\Lambda_{\perp}^{1/2} U_{\perp}^{\top} E U\| \times \|U^{\top} \hat{U}\| \times \|\hat{\Lambda}^{-(k+1)}\| \\
& \leq \|U_{\perp} \Lambda_{\perp}^{1/2}\|_{2 \to \infty} \times \tilde{\eta} \times |\hat{\lambda}_{r}|^{-2} \times \Bigl(\frac{\lambda_{r+1}}{|\hat{\lambda}_r|}\Bigr)^{k - 1} \\
& \leq \|U_{\perp} \Lambda_{\perp}^{1/2}\|_{2 \to \infty} \times \tilde{\eta} \times 4 \lambda_r^{-2} \times \Bigl(1 - \frac{\lambda_r - \lambda_{r+1}}{2 \lambda_r} \Bigr)^{k-1/2}.
\end{split} \]
Finally, 
\[ \|(I - UU^{\top})M^{m} \hat{\Lambda}^{-m}\| \leq \Bigl(1 - \frac{\lambda_r - \lambda_{r+1}}{2 \lambda_r}\Bigr)^{m}.  \]
As $\delta_r = \lambda_{r} - \lambda_{r+1} > 0$, we have
\[ \lim_{m \rightarrow \infty} \|(I - UU^{\top})M^{m} \hat{\Lambda}^{-m}\| \leq \lim_{m \rightarrow \infty} \Bigl(1 - \frac{\lambda_r - \lambda_{r+1}}{2 \lambda_r}\Bigr)^{m}  = 0.\]
We also have
\[ \begin{split} \sum_{k=1}^{\infty} \|\Pi_{U}^{\perp} M^{k} E \Pi_{U}^{\perp} \hat{U} \hat{\Lambda}^{-(k+1)}\|_{2 \to \infty} &\leq 2^{3/2} \lambda_r^{-3/2} \|U_{\perp} \Lambda_{\perp}^{1/2}\|_{2 \to \infty} \times  \|E\| \psi_0 \sum_{k=1}^{\infty} \Bigl(1 - \frac{\lambda_r - \lambda_{r+1}}{2 \lambda_r} \Bigr)^{k-1/2} \\ & \leq 2^{5/2} \lambda_r^{-1/2} \delta_r^{-1} \|U_{\perp} \Lambda_{\perp}^{1/2}\|_{2 \to \infty} \times \|E\| \psi_0 \end{split} \]
\[ \begin{split} \sum_{k=1}^{\infty} \|\Pi_{U}^{\perp} M^{k} E U U^{\top} \hat{U} \hat{\Lambda}^{-(k+1)}\|_{2 \to \infty} &\leq 4 \lambda_r^{-2} \|U_{\perp} \Lambda_{\perp}^{1/2}\|_{2 \to \infty} \times  \tilde{\eta} \times \sum_{k=1}^{\infty} \Bigl(1 - \frac{\lambda_r - \lambda_{r+1}}{2 \lambda_r} \Bigr)^{k-1} \\ & \leq 8 \lambda_r^{-1} \delta_r^{-1} \|U_{\perp} \Lambda_{\perp}^{1/2}\|_{2 \to \infty} \tilde{\eta} \end{split} \]
Combining the above inequalities we obtain the expression
\[   r_{2, \infty}^{(2)} = \frac{2^{5/2} \|U_{\perp} \Lambda_{\perp}^{1/2}\|_{2 \to \infty} \times \|E\| \psi_0}{\lambda_r^{1/2} \delta_r}  +  \frac{8 \|U_{\perp} \Lambda_{\perp}^{1/2}\|_{2 \to \infty} \times  \tilde{\eta}}{\lambda_r \delta_r}  \]
when $M$ is positive semidefinite. 

Now, suppose $M$ is possibly indefinite. We then have 
\[ \begin{split} (I - UU^{\top}) M \hat{U} \hat{\Lambda}^{-1} &= (I - UU^{\top}) M^{m} \hat{U} \hat{\Lambda}^{-m} +  
(I - UU^{\top}) M E U U ^{\top} \hat{U} \hat{\Lambda}^{-2} \\ &+ \sum_{k=2}^{m-1}  (I - UU^{\top}) M^{k} U U^{\top} \hat{U} \hat{\Lambda}^{-(k+1)} + 
\sum_{k=1}^{m-1} (I - UU^{\top}) M^{k} (I - U U^{\top}) \hat{U} \hat{\Lambda}^{-(k+1)}
\end{split} \]
for any $m \geq 3$. For each $k \geq 1$ we have
\[ \begin{split} \|\Pi_{U}^{\perp} M^{k} E \Pi_{U}^{\perp} \hat{U} \hat{\Lambda}^{-(k+1)}\|_{2 \to \infty} &\leq \|U_{\perp} \Lambda_{\perp}\|_{2 \to \infty} \times \|\Lambda_{\perp}\|^{k-1} \times \|E\| \times \|(I - U U^{\top}) \hat{U}\| \times \|\hat{\Lambda}^{-(k+1)}\| \\
& \leq \|U_{\perp} \Lambda_{\perp}\|_{2 \to \infty}
\times |\hat{\lambda}_{r}|^{-2} \times \|E\| \times \psi_0 \times \Bigl(\frac{|\lambda_{r+1}|}{|\hat{\lambda}_r|}\Bigr)^{k - 1} \\
& \leq \|U_{\perp} \Lambda_{\perp}\|_{2 \to \infty} \times 4 |\lambda_r|^{-2} \times \|E\| \psi_0 \times \Bigl(1 - \frac{|\lambda_r| - |\lambda_{r+1}|}{2 |\lambda_r|} \Bigr)^{k-1}.
\end{split} \]
Similarly, for each $k \geq 2$ we have
\[ \begin{split} \|\Pi_{U}^{\perp} M^{k} E U U^{\top} \hat{U} \hat{\Lambda}^{-(k+1)}\|_{2 \to \infty} &\leq \|U_{\perp} \Lambda_{\perp}\|_{2 \to \infty} \times \|\Lambda_{\perp}\|^{k-2} \times \|\Lambda_{\perp} U_{\perp}^{\top} E U\| \times \|U^{\top} \hat{U}\| \times \|\hat{\Lambda}^{-(k+1)}\| \\
& \leq
\|U_{\perp} \Lambda_{\perp}\|_{2 \to \infty} \times \eta \times |\hat{\lambda}_{r}|^{-3} \times \Bigl(\frac{|\lambda_{r+1}|}{|\hat{\lambda}_r|}\Bigr)^{k - 2} \\
& \leq \|U_{\perp} \Lambda_{\perp}\|_{2 \to \infty} \times \eta \times 8 |\lambda_r|^{-3} \times \Bigl(1 - \frac{|\lambda_r| - |\lambda_{r+1}|}{2 |\lambda_r|} \Bigr)^{k-2}
\end{split} \]
Note that we avoided bounding $\|U_{\perp} |\Lambda_{\perp}|^{1/2}\|_{2 \to \infty}$ directly when $M$ is indefinite. 
Then by taking $m \rightarrow \infty$ obtain
\begin{gather*}
    \lim_{m \rightarrow \infty} \|(I - UU^{\top}) M^{m} \hat{U} \hat{\Lambda}^{-m}\| = 0, \\
    \sum_{k=1}^{\infty} \|\Pi_{U}^{\perp} M^{k} E \Pi_{U}^{\perp} \hat{U} \hat{\Lambda}^{-(k+1)}\|_{2 \to \infty} \leq 8 |\lambda_r|^{-1} \delta_r^{-1}\|U_{\perp} \Lambda_{\perp}\|_{2 \to \infty} \times \|E\| \psi_0, \\
    \sum_{k=2}^{\infty} \|\Pi_{U}^{\perp} M^{k} E U U^{\top} \hat{U} \hat{\Lambda}^{-(k+1)}\|_{2 \to \infty} \leq 16 \lambda_r^{-2} \delta_r^{-1} \|U_{\perp} \Lambda_{\perp}\|_{2 \to \infty} \times \eta
\end{gather*} 
Finally, we have
\[ \begin{split} \|(I - UU^{\top}) M E U U^{\top} \hat{U} \hat{\Lambda}^{-2}\|_{2 \to \infty} & \leq \|(I - UU^{\top}) M E U\|_{2 \to \infty} \times \|U^{\top} \hat{U}\| \times |\hat{\Lambda}^{-2}\| \\ & 
\leq \|(I - UU^{\top}) M E U\|_{2 \to \infty} \times 4 \lambda_r^{-2}
\end{split} \]
Combining the above bounds we obtain the expression 
\[ r_{2, \infty}^{(2)} = \frac{4 \|U_{\perp} \Lambda_{\perp} U_{\perp}^{\top} E U \|_{2 \to \infty}}{\lambda_r^2} + \frac{8 \|U_{\perp} \Lambda_{\perp} \|_{2 \to \infty} \times \|E\| \psi_0}{|\lambda_r| \delta_r}  +  \frac{16 \|U_{\perp} \Lambda_{\perp}\|_{2 \to \infty} \times  \eta}{\lambda_r^2 \delta_r} 
\]
when $M$ is indefinite. This concludes the proof of \cref{thm:deterministic}. 
\end{proof}

\subsection{Proof of \cref{cor:entrywise}}
Write $Z_{r} = U |\Lambda|^{1/2} \in \mathbb{R}^{n \times r}$ and $\hat{Z}_{r} = \hat{U} |\hat{\Lambda}|^{1/2} W^{(n)} \in \mathbb{R}^{n \times r}$. 
Define $\hat{P}_{r} = \hat{U} \hat{\Lambda} \hat{U}^{\top} \equiv \hat{Z}_{r} J \hat{Z}_{r}^{\top}$ and $P_{r} = U \Lambda U^{\top} \equiv Z_{r} J Z_{r}^{\top}$, where $J$ is the diagonal matrix with diagonal elements $+1$ and $-1$ such that $\Lambda = |\Lambda| J$. 
Further define $\xi_{r} = \|\hat{Z}_{r} - Z_{r}\|_{2 \to \infty}$. As
$\|M_{1} M_{2}^{\top}\|_{\max} \leq \|M_{1}\|_{2 \to \infty} \times \|M_{2}\|_{2 \to \infty}$ for conformable matrices $M_1$ and $M_2$, we have
\begin{equation}
    \label{eq:uniform_expansion1}
    \begin{split}
    \|\hat{P}_{r} - P_{r}\|_{\max}
    &=
    \|Z_{r} J (Z_{r} - \hat{Z}_{r})^{\top} + (Z_{r} - \hat{Z}_{r}) J Z_{r}^{\top} - (Z_{r} - \hat{Z}_{r}) J(Z_{r} - \hat{Z}_{r})^{\top}\|_{\max} \\
    &\leq
    2 \|Z_{r} J (Z_{r} - \hat{Z}_{r})^{\top}\|_{\max} + \|(Z_{r} - \hat{Z}_{r})J (Z_{r} - \hat{Z}_{r})^{\top}\|_{\max} \\
    &=
    2 \|Z_{r} J (E U \Lambda^{-1/2} + Q)^{\top}\|_{\max} + \xi_{r}^{2} \\
    &\leq
    2\|U JU^{\top}E \|_{\max} + 2 \|Z_{r}\|_{2 \to \infty} \cdot \|Q\|_{2 \to \infty} + \xi_{r}^{2} \\
    &\leq
    2 \|U\|_{2 \to \infty} \cdot \|E U \|_{2 \to \infty} + 2 \|U |\Lambda|^{1/2}\|_{2 \to \infty} \cdot \|Q\|_{2 \to \infty} + \xi_{r}^{2}
    .
    \end{split}
\end{equation}
Now, supposing $\kappa$ is a positive semidefinite kernel, we have
\begin{equation}
    \label{eq:uniform_expansion2psd}
    \begin{split}
    \rho_{n}^{-1} \|\hat{P}_{r} - P_{r}\|_{\max}
    &\leq
    2 \rho_{n}^{-1} \|EU\|_{2 \to \infty} \cdot \|U\|_{2 \to \infty}
    +
    \rho_{n}^{-1/2} \|Q\|_{2 \to \infty} + (\rho_{n}^{-1/2} \xi_{r})^{2}
    ,
    \end{split}
\end{equation}
since $\|U \Lambda^{1/2}\|_{2 \to \infty} \leq \rho_{n}^{1/2}$. In this setting, \cref{lem:ex_2inf} gives
\begin{equation}
    \label{eq:eu_2inf}
    \|EU\|_{2 \to \infty}
    \lesssim
    \rho_{n}^{1/2}(r^{1/2} + \log^{1/2}{n})
\end{equation} 
with high probability. Further, supposing $r$ is chosen so that the conditions in \cref{eq:condition_rem4} are satisfied (which can be achieved simply by using the eigenvalues $\{\hat{\lambda}_{j}\}_{j \ge 1}$ of $A$, similar to \cref{cor:main1}), then
\begin{equation}
    \label{eq:q_xi_bound}
    \|Q\|_{2 \to \infty}
    =
    o(\rho_{n}^{1/2} \lambda_{r}^{-1/2} (r^{1/2} + \log^{1/2}{n})) 
    \quad
    \text{and}
    \quad
    \xi_{r}
    \lesssim
    \rho_{n}^{1/2} \lambda_{r}^{-1/2} (r^{1/2} + \log^{1/2}{n})
\end{equation}
with high probability. As $\|U\|_{2 \to \infty} \leq \rho_{n}^{1/2} \lambda_{r}^{-1/2}$, combining \cref{eq:eu_2inf,eq:q_xi_bound} yields
\begin{equation}
    \begin{split}
    \label{eq:phat-p_max_part1b}
    \rho_{n}^{-1}\|\hat{P}_{r} - P_{r}\|_{\max}
    \lesssim
    \frac{(r^{1/2} + \log^{1/2}{n})}{\lambda_{r}^{1/2}}
    \end{split}
\end{equation}
with high probability. Hence, under the aforementioned assumptions, we arrive at the entrywise bound
\begin{equation}
    \label{eq:phat-p_max_part2_proof}
    \begin{split}
    \rho_{n}^{-1}\|\hat{P}_{r} - P\|_{\max}
    &\leq
    \rho_{n}^{-1}\|P_{r} - P\|_{\max}
    +
    O\left(\lambda_{r}^{-1/2} (r^{1/2} + \log^{1/2} n)\right)
    ,
\end{split}
\end{equation}
which holds with high probability.

Next, suppose instead that $\kappa$ is possibly indefinite. Then, in place of \cref{eq:uniform_expansion2psd}, we have
\begin{equation}
    \label{eq:uniform_expansion2indefinite}
    \begin{split}
    \rho_{n}^{-1} \|\hat{P}_{r} - P_{r}\|_{\max}
    &\leq
    2 \rho_{n}^{-1} \|EU\|_{2 \to \infty} \cdot \|U\|_{2 \to \infty} 
    +
    \frac{n^{1/2}}{|\lambda_{r}|^{1/2}} \|Q\|_{2 \to \infty}
    +
    (\rho_n^{-1/2} \xi_{r})^{2}
    ,
    \end{split}
\end{equation}
since $\|U |\Lambda|^{1/2}\|_{2 \to \infty} \leq n^{1/2} \rho_{n} |\lambda_{r}|^{-1/2}$ per \cref{rem:comparison_indefinite}. \cref{eq:eu_2inf} still holds under the indefinite setting while \cref{eq:q_xi_bound} holds with $|\lambda_{r}|^{-1/2}$ in place of $\lambda_{r}^{-1/2}$. Finally $\|U\|_{2 \to \infty} \leq n^{1/2} \rho_{n} |\lambda_{r}|^{-1}$, and hence, by combining \cref{eq:eu_2inf,eq:q_xi_bound}, we obtain the more widely-applicable entrywise bounds
\begin{gather}
    \label{eq:phat-p_max_part1b_indefinite}
    \rho_{n}^{-1}\|\hat{P}_{r} - P_{r}\|_{\max}
    \lesssim
    \frac{ (n \rho_{n})^{1/2} (r^{1/2} + \log^{1/2}{n})}{|\lambda_{r}|}, \\
    \label{eq:phat-p_max_part1b_indefinite2_proof}
    \rho_{n}^{-1}\|\hat{P}_{r} - P\|_{\max} \leq \rho_n^{-1} \|P_r - P\|_{\max}
    +
    O\left(|\lambda_r|^{-1} (n \rho_n)^{1/2} (r^{1/2} + \log^{1/2}{n}\right),
\end{gather}
which holds with high probability. This concludes the proof of \cref{cor:entrywise}.

\subsection{Proof of \cref{prop:mercer_entrywise}}
\label{sec:proof_mercer_entrywise}
Let $\mathcal{H}$ denote the reproducing kernel Hilbert space (RKHS) associated with $\kappa$. Given $\{X_{i}\}_{i=1}^{n}$, let $\mathcal{K}_{\mathcal{H},n}$ denote the linear operator
\begin{equation*}
    \mathcal{K}_{\mathcal{H},n} \eta
    =
    \sum_{i=1}^{n} \langle \eta, \Phi(X_{i}) \rangle_{\mathcal{H}} \Phi(X_{i}),
    \qquad
    \text{for all }
    \eta \in \mathcal{H}
    ,
\end{equation*}
where $\Phi(x) = \kappa(\cdot, x) \in \mathcal{H}$ denotes the reproducing element at $x \in \Omega$. The (non-zero) eigenvalues of $\mathcal{K}_{\mathcal{H},n}$ are the same as those of $\rho_{n}^{-1} P$, while the eigenfunctions of $\mathcal{K}_{\mathcal{H},n}$ are {\em extensions} of the corresponding eigenvectors of $\rho_{n}^{-1} P$. See \cite{rosasco_1} for more details.

Let $\hat{\Pi}_{r}$ denote the projection operator onto the $r$-dimensional subspaces spanned by the $r$ leading eigenvectors of $\mathcal{K}_{\mathcal{H},n}$, and let $p^{(r)}_{ij}$ denote the $ij$-th entry of $P_{r}$. We then have 
\begin{equation*}
    p^{(r)}_{ij}
    = \rho_n \langle \hat{\Pi}_{r} \Phi(X_{i}), \hat{\Pi}_{r} \Phi(X_{j}) \rangle_{\mathcal{H}}.
\end{equation*}
See the proof of \cite[Lemma~3.4]{tang2012universally} for a derivation of the above identity. Consequently,
\begin{equation}
    \label{eq:uniform_part0}
    \begin{split}
    \rho_n^{-1}|p^{(r)}_{ij} - p_{ij}|
    &=
    \left|\langle \hat{\Pi}_r \Phi(X_i), \hat{\Pi}_r \Phi(X_j) \rangle_{\mathcal{H}} - \langle \Phi(X_i), \Phi(X_j) \rangle_{\mathcal{H}}\right| \\
    &\leq
    \|(\hat{\Pi}_r - \Pi_r) \Phi(X_i)\|_{\mathcal{H}} \times \|\Pi_r \Phi(X_j)\|_{\mathcal{H}} \\
    &\qquad+
    \|(\hat{\Pi}_r - \Pi_r) \Phi(X_j)\|_{\mathcal{H}} \times \|\Pi_r \Phi(X_i)\|_{\mathcal{H}} \\
    &\qquad+
    \|(\hat{\Pi}_r - \Pi_r) \Phi(X_i)\|_{\mathcal{H}} \times \|(\hat{\Pi}_r - \Pi_r) \Phi(X_j)\|_{\mathcal{H}} \\
    &\qquad+
    \left|\langle \Pi_r \Phi(X_i), \Pi_r \Phi(X_j) \rangle_{\mathcal{H}} - \langle \Phi(X_i), \Phi(X_j) \rangle_{\mathcal{H}} \right|
    ,
\end{split}
\end{equation}
where $\Pi_r$ is the projection onto the subspaces spanned by the $r$ leading {\em eigenfunctions} of $\mathcal{K}$. We now have
\begin{equation}
    \label{eq:uniform_part1}
    \begin{split}
    \|(\hat{\Pi}_r - \Pi_r)\Phi(X_i)\|_{\mathcal{H}}
    &\leq
    \|\hat{\Pi}_r - \Pi_r\|_{\mathcal{H}}
    \times
    \|\Phi(X_i)\|_{\mathcal{H}}
    \leq
    \kappa(X_i, X_i)
    \times
    \|\hat{\Pi}_r - \Pi_r\|_{\mathcal{H}}
    .
\end{split}
\end{equation}
Perturbation bounds for projection operators (e.g., Proposition~6 and Theorem~7 in \cite{rosasco_1}) yield
\begin{equation}
    \label{eq:uniform_part2}
    \|\hat{\Pi}_r - \Pi_r\|_{\mathcal{H}}
    \lesssim 
    \frac{\log^{1/2}{n}}{n^{1/2}(\mu_r - \mu_{r+1})}
\end{equation}
with high probability. Next, by Mercer's theorem \cite[Corollary~4.50]{steinwart08:_suppor_vector_machin}, $\{\sqrt{\mu_r} \phi_r\}_{r \ge 1}$ forms an orthonormal basis for $\mathcal{H}$, and we can write
\begin{equation*}
    \Pi_r \Phi(X_i)
    =
    \sum_{k=1}^{r} \langle \sqrt{\mu_k} \phi_k(\cdot), \Phi(X_i) \rangle_{\mathcal{H}} \sqrt{\mu_k} \phi_k(\cdot)
    =
    \sum_{k=1}^{r} \mu_k \phi_k(X_i) \phi_k(\cdot)
    ,
\end{equation*}
and $\Phi(X_i) = \sum_{k=1}^{\infty} \mu_k \phi_k(X_i) \phi_k(\cdot)$. Here, the final equality in the expression for $\Pi_r \Phi(X_i)$ follows from the reproducing kernel property of $\Phi$. Finally, the inner product $\langle \cdot, \cdot \rangle_{\mathcal{H}}$ can be manipulated using \cite[Theorem~4.51]{steinwart08:_suppor_vector_machin} to yield
\begin{gather}
    \label{eq:uniform_part2b}
    \langle \Pi_r \Phi(X_i), \Pi_r \Phi(X_j) \rangle_{\mathcal{H}} = \sum_{k=1}^{r} \mu_k \phi_k(X_i) \phi_k(X_j), \\
    \label{eq:uniform_part2c}
    \langle \Phi(X_i), \Phi(X_j) \rangle_{\mathcal{H}} = \sum_{k=1}^{\infty} \mu_k \phi_k(X_i) \phi_k(X_j).
\end{gather}

Plugging \cref{eq:uniform_part2c,eq:uniform_part2b,eq:uniform_part2,eq:uniform_part1} into \cref{eq:uniform_part0}, we obtain
\begin{equation}
    \label{eq:Pr-P_definite1_proof}
    \begin{split}
    \max_{ij} \rho_n^{-1} |p_{ij}^{(r)} - p_{ij}|
    &\lesssim 
    \frac{\log^{1/2}{n}}{n^{1/2}(\mu_r - \mu_{r+1})}
    +
    \max_{ij} \left|\sum_{k > r} \mu_k \phi_k(X_i) \phi_k(X_j)\right|
\end{split}
\end{equation}
with high probability. This concludes the proof of Proposition~\ref{prop:mercer_entrywise}. 
\subsection{Proof of \cref{thm:quadratic_test}}
\label{sec:quad_test_proof}
Let $\hat{\xi}_i$ and $\xi_i$ denote the $i$-th row of $\hat{U} \hat{\Lambda}$ and $U \Lambda^{1/2}$, respectively. Suppose that the null hypothesis $\mathbb{H}_0 \colon X_i = X_j$ is true. We first note that a direct application of \cref{thm:generalU_ULambda} is sub-optimal as then, to make $\|Q\|_{2 \to \infty} = o_{p}(\rho_n^{1/2})$, we need $\delta_r^2 = \omega((n \rho_n)^{3/2}$ which leads to a more stringent condition than $\delta_r = \omega(\sqrt{n \rho_n \log n})$ given in \cref{thm:quadratic_test}. Rather, we need to exploit the fact that we can drop the term $n^{3/2} \rho_n^2 \delta_r^{-2}$ from the bound for $Q$ under $\mathbb{H}_0$. Indeed, from the proof of \cref{thm:generalU_ULambda}, we have the decomposition
\begin{equation}
    \label{eq:xii-xij_thm_quad_test}
    \begin{split}
    \hat{U} \hat{\Lambda} - U \Lambda W^{(n)}
    &=
    U \Lambda (U^{\top} \hat{U} - W^{(n)}) +  U (U^{\top} \hat{U} \hat{\Lambda} - \Lambda U^{\top} \hat{U}) + U U^{\top} E \hat{U} + (I - UU)^{\top} P \hat{U} \\
    &\qquad+
    E U [W^{(n)} + (U^{\top} \hat{U} - W^{(n)}] + E(I - UU^{\top}) \hat{U}
    .
    \end{split} 
\end{equation}
Then, under $\mathbb{H}_0$, the $i$-th and $j$-th row of all matrix products in \cref{eq:xii-xij_thm_quad_test} whose first term is either $U$ or $(I - UU^{\top})$ are the same. Hence 
\[
    \hat{\xi}_i - \hat{\xi}_j
    =
    (E_i - E_j) U [W^{(n)} + (U^{\top} \hat{U} - W^{(n)}] + (E_i - E_j) (I - UU^{\top}) \hat{U}
    ,
\]
where $E_i$ and $E_j$ are the $i$-th and $j$-th row of $E$, respectively. Finally, by following the remaining steps in the proof of \cref{thm:generalU_ULambda}, we obtain
\begin{equation}
    \label{eq:xii-xij_thm_quad_test2}
    \hat{\xi}_i - \hat{\xi}_j
    =
    (E_i - E_j) U W^{(n)} + \gamma_{ij},
    \quad
    \|\gamma_{ij}\|
    =
    O\Bigl[(\rho_n \log n)^{1/2}\Bigl(\frac{\log n}{\delta_r} + \frac{(r n \rho_n)^{1/2}}{\delta_r}\Bigr)\Bigr]
\end{equation}
with high probability.
Now, choose $r$ such that the conditions
\begin{gather}
    \label{eq:select_r_quad1}
    \delta_r
    =
    \omega\left( \max\left\{\log^{3/2}{n}, \sqrt{r n \rho_n \log n} \right\}\right)
\end{gather}
is satisfied. Then, \cref{eq:xii-xij_thm_quad_test2} implies
\[
    \hat{\xi}_i - \hat{\xi}_j
    =
    (E_i - E_j) U + \gamma_{ij},
    \quad
    \|\gamma_{ij}\|
    =
    o(\rho_n^{1/2})
\]
with high probability.  
We therefore have
\[
    \Bigl|\|\hat{\xi}_i - \hat{\xi}_j\|^2 - \|(E_i - E_j)U\|^2\Bigr|
    =
    o(\rho_n) + o(\rho_n^{1/2}) \times \|(E_i - E_j) U\|
\]
with high probability.

The remainder of this proof is devoted to analyzing $\tilde{T} \coloneqq \|(E_i - E_j)U\|^2$. In particular, we will show that $\tilde{T}$, properly centered and scaled by a term of order $\rho_n r^{1/2}$, converges to a weighted sum of independent $\chi^2_1$ random variables. As a by-product, we have $\|(E_i - E_j)U\| = O_{p}(\rho_n^{1/2} r^{1/2})$, so that
\[
    \Bigl|\|\hat{\xi}_i - \hat{\xi}_j\|^2 - \|(E_i - E_j)U\|^2\Bigr|
    =
    o_{p}(\rho_n r^{1/2})
    .
\]
Slutsky's theorem implies the limiting distribution of $T \coloneqq \|\hat{\xi}_i - \hat{\xi}_j\|^2$ is the same as that of $\tilde{T}$, after centering and scaling by a term of order $\rho_n r^{1/2}$.

We now derive the limiting distribution for $\tilde{T}$. 
Recall \cref{eq:quadratic_test} and write
\[
    \tilde{T}
    =
    \|(E_i - E_j)U\|^2 = \zeta^{\top} \mathcal{D} U U^{\top} \mathcal{D} \zeta
    ,
\]
where $\zeta = (\zeta_1, \zeta_2, \dots, \zeta_n)$ is a vector whose components are independent sub-Gaussian random variables with mean zero and variance one. 

Next, we apply the following comparison result for quadratic forms.
\begin{theorem}[Quadratic forms comparison \cite{rotar_1}]
\label{thm:clt_quadratic}
Let $X_1, X_2, \dots,$ be a sequence of independent random variables with 
\begin{equation}
    \label{eq:mean_0_quadratic_form}
    \mathbb{E}[X_k] = 0,
    \quad
    \text{and}
    \quad
    \mathrm{Var}[X_k] = 1
    \quad
    \text{for all $k$.}
\end{equation}
Let $W(n) = \sum_{1 \leq k < \ell \leq n} X_k X_{\ell}
m_{k \ell}$ be a quadratic form, where the coefficients $(m_{k \ell})$ can vary with $n$. Suppose $M = [m_{k\ell}]$ is symmetric, with zeroes on the diagonal, and 
\[
    \sum_{k=1}^{n} \sum_{\ell=1}^{n} m_{k \ell}^2
    =
    1
    .
\]
Let $F = (F_1, F_2, \dots)$ and $G = (G_1, G_2, \dots, )$ be two arbitrary collections of cumulative distribution functions for random variables $(X_1, X_2, \dots)$ satisfying \cref{eq:mean_0_quadratic_form}. Denote by $Q(W(n), F)$ and $Q(W(n), G)$ the distribution of $W(n)$ when the  $(X_1, X_2, \dots)$ has CDFs $F$ and $G$, respectively. Let $\Psi_{k} = F_k - G_k$. Define
\[
    s_{k}^2(n)
    =
    \sum_{\ell=1}^{n} m_{k \ell}^2
    .
\]
Now, suppose that, as $n \rightarrow \infty$, the following conditions are satisfied
\begin{enumerate}
    \item For any $A \rightarrow \infty$,
    \begin{equation}
        \label{eq:technical_quadratic_1}
            \sup_{1 \leq k \leq n}  \int_{|x| \geq A} x^2 dG_k(x)
            \rightarrow
            0
            .
    \end{equation}
    \item For any fixed but arbitrary $\epsilon > 0$,
    \begin{gather}
        \label{eq:technical_quadratic_2}
        \sum_{k=1}^{n} \sum_{\ell=k+1}^{n} m_{k \ell}^2 \iint_{|xy| \geq |\epsilon/m_{k \ell}|} |xy \Psi_k(x) \Psi_{\ell}(y)| \, \mathrm{d} x \, \mathrm{d} y
        \rightarrow
        0
        , \\
        \label{eq:technical_quadratic_3}
        \sum_{k=1}^{n} s_k^2(n) \int_{|x| \geq \epsilon/s_k(n)} |x \Psi_k(x)| \, \mathrm{d} x 
        \rightarrow
        0
        .
        \end{gather}
    \end{enumerate}
Then, $Q(W(n), F) \rightarrow Q(W(n), G)$ in {\em distribution} as $n \rightarrow \infty$.
\end{theorem}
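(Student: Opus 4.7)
The plan is to prove the quadratic form comparison via a Lindeberg-type interpolation between the two sequences, taking as the target the pointwise convergence $\mathbb{E}_F[e^{itW(n)}] - \mathbb{E}_G[e^{itW(n)}] \to 0$ for each fixed $t \in \mathbb{R}$. I would introduce hybrid sequences $Z^{(j)} = (Y_1, \dots, Y_j, X_{j+1}, \dots, X_n)$ and decompose the characteristic function difference as a telescoping sum
\begin{equation*}
\mathbb{E}_F[e^{itW(n)}] - \mathbb{E}_G[e^{itW(n)}]
= \sum_{j=1}^n \bigl(\mathbb{E}[e^{itW(Z^{(j-1)})}] - \mathbb{E}[e^{itW(Z^{(j)})}]\bigr),
\end{equation*}
and argue that the full sum vanishes under the three technical conditions.

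The structural observation driving each swap is the linear-in-$j$ decomposition $W(Z^{(j-1)}) = X_j L_j + R_j$, $W(Z^{(j)}) = Y_j L_j + R_j$, where $L_j = \sum_{\ell < j} m_{j\ell} Y_\ell + \sum_{\ell > j} m_{j\ell} X_\ell$ and $R_j$ is the restricted quadratic form with the $j$-th row and column removed, both independent of $X_j$ and $Y_j$. Conditioning on $(L_j, R_j)$, the $j$-th telescoping term becomes $\mathbb{E}[e^{itR_j}(\phi^F_j(tL_j) - \phi^G_j(tL_j))]$, with $\phi^F_j$, $\phi^G_j$ the respective characteristic functions. Because $F_j$ and $G_j$ share mean zero and variance one, a Taylor expansion yields
\begin{equation*}
\phi^F_j(s) - \phi^G_j(s) = \int \bigl(e^{isx} - 1 - isx + \tfrac{(isx)^2}{2}\bigr)\, d\Psi_j(x), \qquad \Psi_j = F_j - G_j.
\end{equation*}
Splitting this integral at $|x| = \epsilon/|s|$, integrating by parts to pass from $d\Psi_j$ to $\Psi_j(x)\,dx$, and substituting $s = tL_j$, the tail contribution is controlled by $t^2 \mathbb{E}\bigl[L_j^2 \int_{|x| \geq \epsilon/|tL_j|} |x\, \Psi_j(x)|\,dx\bigr]$, which after unconditioning and using $\mathbb{E}[L_j^2] = s_j^2(n)$ lines up exactly with condition \cref{eq:technical_quadratic_3} after summing in $j$.

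The main obstacle will be the bulk cubic contribution $|s|^3 \int_{|x| \leq \epsilon/|s|} |x|^3 |d\Psi_j(x)|$ after substitution $s = tL_j$: it produces $\mathbb{E}[|L_j|^3]$ multiplied by an integrated difference of $\Psi_j$, and expanding $L_j^3$ yields both diagonal $m_{j\ell}^2 m_{j\ell''}$ contributions and genuine off-diagonal triples that couple different indices. The interplay between the random cutoff $\epsilon/|tL_j|$ and these triple-index weights creates pairwise interactions that must be matched against integrals of the form $\iint |xy\, \Psi_j(x) \Psi_\ell(y)|\,dx\,dy$ weighted by $m_{j\ell}^2$, which is precisely what condition \cref{eq:technical_quadratic_2} is designed to absorb. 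Condition \cref{eq:technical_quadratic_1} enters as an auxiliary tightness requirement, ensuring the squared benchmark variables under $G$ are uniformly integrable so that moments of $L_j$ and $R_j$ remain bounded along the interpolation and the preceding error estimates propagate. The remaining work, while tedious, reduces to verifying that the three conditions jointly dominate every error term in the telescoping sum.
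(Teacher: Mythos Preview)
The paper does not prove this theorem: it is quoted verbatim as a known result from \cite{rotar_1} and then \emph{applied} (the surrounding text in \cref{sec:quad_test_proof} only verifies the hypotheses \cref{eq:technical_quadratic_1,eq:technical_quadratic_2,eq:technical_quadratic_3} for a specific choice of $F$, $G$, and $M$). So there is no in-paper proof to compare against.

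That said, your Lindeberg replacement scheme is the standard route to Rotar's comparison theorem and is structurally sound: the linearity of $W$ in each coordinate (no diagonal), the telescoping over hybrid sequences, the second-order Taylor matching of $\phi_j^F - \phi_j^G$, and the bulk/tail split at $|x| = \epsilon/|s|$ are all the right moves. One point you should be more careful about: in the hybrid sequence, $L_j$ is built from \emph{both} $Y_\ell \sim G_\ell$ (for $\ell < j$) and $X_\ell \sim F_\ell$ (for $\ell > j$), yet condition \cref{eq:technical_quadratic_1} only supplies uniform integrability on the $G$ side. To control the $F$-side contributions to $\mathbb{E}[L_j^2 \mathbf{1}\{|L_j| \text{ large}\}]$ and to pass from the random cutoff $\epsilon/|tL_j|$ to the deterministic $\epsilon/s_j(n)$, you will need to extract the required uniform integrability of the $F_k$'s from the $\Psi_k$-conditions \cref{eq:technical_quadratic_2,eq:technical_quadratic_3} combined with \cref{eq:technical_quadratic_1}, rather than assume it directly. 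This is where the asymmetry in the hypotheses bites, and it is the one place your sketch glosses over nontrivial bookkeeping.
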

Recall that we denote $d_k^2 = 2p_{ik}(1 - p_{ik})$. We now verify the conditions in
\cref{thm:clt_quadratic} for the case when the $F_k$'s are the CDFs for discrete random variables with probability mass function
\begin{gather*}
    \mathbb{P}(X_k = 0)
    =
    1 - d_{k}^2 = p_{ik}^2 + (1 - p_{ik})^2, \\
    \mathbb{P}(X_k = -1/d_{k})
    =
    \mathbb{P}(X_k = 1/d_{k}) = p_{ik}(1 - p_{ik}) = \frac{1}{2} d_{k}^2
\end{gather*}
and $G_k \equiv \Phi$ for all $k$ where $\Phi$ is the CDF of the standard normal distribution.

\cref{eq:technical_quadratic_1} is straightforward to verify. Indeed, $x^2 e^{-x^2/2} \precsim x^{-2}$ as $x$ increases and hence 
\[
    \lim_{A \rightarrow \infty} \sup_{1 \leq k \leq n} \int_{|x| \geq A} x^2 dG_k(x)
    \precsim
    \lim_{A \rightarrow \infty} \sup_{1 \leq k \leq n} \int_{|x| \geq A} x^{-2} \, \mathrm{d} x
    \precsim
    \lim_{A \rightarrow \infty} \sup_{1 \leq k \leq n} A^{-1}
    =
    0
    .
\]
We next verify the conditions in \cref{eq:technical_quadratic_2,eq:technical_quadratic_3}. Define
\[
    M^*
    =
    \mathcal{D} U U^{\top} \mathcal{D},
    \quad
    M^{(0)}
    =
    M^* - \mathrm{diag}(M^*),
    \quad M
    =
    \frac{1}{\|M^{(0)}\|_{F}} M^{(0)}
    .
\]
Here, $M^{(0)}$ is obtained by setting the diagonal entries of $M^*$ to zero while $\|M\|_{F} = 1$.  
Let $m^*_{k \ell}$ denote the $k\ell$-th entry of $M^*$. From the above criteria for $r$, we have
\begin{gather}
    \label{eq:proof_thm_test1}
    \|M^*\|_{F}^2
    =
    \mathrm{tr} \bigl(U^{\top} \mathcal{D}^2 U\bigr)^2 \asymp \rho_n^2 r, \\
    \label{eq:proof_thm_test2}
    \max_{k} |m_{kk}^*|
    \leq
    \bigl(\|\mathcal{D}\|^2 \times \|U\|_{2 \to \infty}^2\bigr) \leq \frac{4 \rho_n^2}{|\lambda_r|}, \\
    \label{eq:proof_thm_test3}
    \|M^{(0)}\|_{F}^2
    =
    \|M^*\|_{F}^2 - \sum_{k} (m_{kk}^*)^2
    \leq
    \|M^*\|_{F}^2 - \frac{16 n \rho_n^4}{\lambda_r^2} 
    =
    (1 - o(1)) \|M^*\|_{F}^2
    .
\end{gather}
The quadratic form $\zeta^{\top} \mathcal{D} UU^{\top} \mathcal{D} \zeta$ in \cref{eq:quadratic_test} can be written as
\begin{equation}
    \label{eq:quadratic_term_part2}
    \zeta^{\top} M^* \zeta
    =
    \zeta^{\top} M^{(0)} \zeta + \sum_{k} \zeta_k^2 m_{kk}^* = \|M^{(0)}\|_{F} \times \zeta^{\top} M \zeta + \sum_{k} \zeta_k^2 m_{kk}^*
    .
\end{equation}
For the condition in \cref{eq:technical_quadratic_3}, let $m_{kk}^{(2)}$ and $m_{kk}^{(2*)}$ denote the $k$-th diagonal element of $M^2$ and $(M^*)^2$.
Then,
\[
    \max_{k} m_{kk}^{(2*)}
    \leq
    \|(M^*)^2\|_{\max} = \|\mathcal{D} U U^{\top} \mathcal{D}^2 U U^{\top} \mathcal{D}\|_{\max}
    \leq
    \|\mathcal{D} U\|_{2 \to \infty} \times \|U^{\top} \mathcal{D}^2 U\| \times \|\mathcal{D} U\|_{2 \to \infty}
    ,
\]
where the first inequality follows from $\|MN^{\top}\|_{\max} \leq \|M\|_{2 \to \infty} \times \|N\|_{2 \to \infty}$ together with \cref{eq:2toinf_submultiplicative}. As $\mathcal{D}$ is a diagonal matrix, we have $\|\mathcal{D} U\|_{2 \to \infty} \leq \|\mathcal{D}\| \times \|U\|_{2 \to \infty}$. Hence,
\begin{equation*}
    \begin{split}
    s_{k}^2(n)
    =
    \sum_{\ell=1}^{n} m_{k \ell}^2
    \leq
    m_{kk}^{(2)}
    &\leq
    \frac{1}{\|M_0\|_F^2} \times m_{kk}^{(2*)} \\ 
    &\leq
    \frac{1}{\|M_0\|_{F}^2} \times \|\mathcal{D}\|^2 \times \|U\|_{2 \to \infty}^2 \times \|U^{\top} \mathcal{D}^2 U\| \\
    &\lesssim
    (\rho_n^2 r)^{-1} \times \rho_n \times n \rho_n^2 \times \lambda_r^{-2} \times \rho_n \lesssim r^{-1} \lambda_r^{-2} n \rho_n^2
    .
\end{split}
\end{equation*}
Now, choose an arbitrary but fixed $\epsilon > 0$. Then,
\[
    \epsilon/s_{k}(n)
    \gtrsim
    r^{1/2} |\lambda_r| (n^{-1/2} \rho_n^{-1}) \epsilon
    =
    \omega(\rho_n^{-1/2}) = \omega(1/d_{kk})
    ,
\]
as our criterion in \cref{eq:select_r_quad1} implies $\lambda_r = \omega((n \rho_n \log n)^{1/2})$ as $n \rightarrow \infty$. We thus have $F_k(-\epsilon/s_{k}(n)) = 0$ and $F_k(\epsilon/s_k(n)) = 1$ for sufficiently large $n$, hence
\begin{equation*}
    \begin{split}
    \int_{|x| \geq \epsilon/s_{k}(n)} |x \Psi_k(x)| \, \mathrm{d} x
    &=
    \int_{x \leq - \epsilon/s_{k}(n)} |x \Phi(x)| \, \mathrm{d} x + \int_{x \geq \epsilon/s_{k}(n)} |x (1 - \Phi(x))| \, \mathrm{d} x \\
    &=
    2 \int_{x \geq \epsilon/s_{k}(n)} x(1 - \Phi(x)) \, \mathrm{d}x
    .
  \end{split}
\end{equation*}
Next, recall the Mill's ratio upper bound
\[
    \frac{1 - \Phi(x)}{\phi(x)}
    \leq
    \frac{1}{x}
    ,
\]
where $\phi(x)$ is the probability density function of a standard normal random variable. We then have
\begin{equation}
    \label{eq:derive_technical1}
    \int_{x \geq \epsilon/s_{k}(n)} x(1 - \Phi(x)) \, \mathrm{d}x
    \leq
    \int_{x \geq \epsilon/s_{k}(n)} \phi(x) \, \mathrm{d}x
    =
    1 - \Phi\Bigl(\frac{\epsilon}{s_{k}(n)}\Bigr) 
    \leq \frac{\phi(\epsilon/s_{k}(n))}{\epsilon/s_k(n)}
    ,
\end{equation}
and hence
\begin{equation}
    \begin{split}
    \sum_{k=1}^{n} s_{k}^2(n) \int_{|x| \geq \epsilon/s_{k}(n)} |x \Psi_k(x)| \, \mathrm{d} x &\leq 2 \sum_{k=1}^{n} \epsilon^{-1} s_{k}^3(n) \phi(\epsilon/s_{k}(n)) \\
    &\leq
    2 n \epsilon^{-1} \bigl(\max_{k} s_{k}^3(n)  \phi(\epsilon/s_{k}(n))\bigr) \\
    &\precsim
    \epsilon^{-1} \bigl(\max_{k} s_{k}^3(n)\bigr) \times \bigl(\max_{k} e^{\log(n) -\epsilon^2/(2s_k^2(n))}\bigr)
    .
    \end{split}
\end{equation}
As $\epsilon^2/s_{k}^2(n) \gtrsim r \lambda_r^{2} (n \rho_n^{-2}) \epsilon^2 $ and $\epsilon > 0$ is fixed but arbitrary, we have
\[
    \log(n) - \epsilon^2/(2s_k^{2}(n))
    \leq
    0
\]
as $n$ increases (see \cref{eq:select_r_quad1}. We thus have
\[
    \sum_{k=1}^{n} s_{k}^2(n) \int_{|x| \geq \epsilon/s_{k}(n)} |x \Psi(x)| \, \mathrm{d} x 
    \precsim
    \epsilon^{-1} \times \max_{k} s_{k}^3(n)
    \precsim
    \epsilon^{-1} (r^{-1} \lambda_r^{-1} n^{1/2} \rho_n)^{3/2}
    \rightarrow
    0
\]
as $n \rightarrow \infty$. \cref{eq:technical_quadratic_3} is therefore satisfied. 

For \cref{eq:technical_quadratic_2}, we have
\[
    \begin{split} \max_{k \ell} m_{k \ell}
    &\leq
    \frac{1}{\|M^{(0)}\|_{F}} \times \|M^*\|_{\max} \\ 
    &\leq
    \frac{1}{\|M^{(0)}\|_{F}} \times \|\mathcal{D}\|^2 \times \|U\|_{2 \to \infty}^2 \lesssim \frac{1}{\rho_n r^{1/2}} \times \rho_n \times \frac{n \rho_n^2}{\lambda_r^2} \lesssim \frac{n \rho_n^2}{r^{1/2}\lambda_r^2}
    .
    \end{split}
\]
Now let $c_{k \ell} = (|\epsilon/m_{k \ell}|)^{1/2}$, and write $h(x,y) = |xy\Psi_{k}(x) \Psi_{\ell}(y)|$. Define the regions
\begin{gather*}
    S_1
    =
    \{(x,y) \colon |x| \geq c_{k \ell}, |y| \leq c_{k \ell}\},
    \quad
    S_2
    =
    \{(x,y) \colon |x| \leq c_{k \ell}, |y| \geq c_{k \ell}\},
    \quad
    S_3
    =
    \{|x| \geq c_{k \ell}, |y| \geq c_{k \ell}\}
    ,
\end{gather*}
and note that $S \coloneqq \{(x,y) \colon |xy| \geq |\epsilon/m_{k \ell}|\} \subset S_1 \cup S_2 \cup S_3$. We then have
\begin{equation*}
\begin{split}
    \iint_{S} h(x,y) \mathrm{d} x \, \mathrm{d} y
    &\leq
    \iint_{S_1} h(x,y) \, \mathrm{d} y \, \mathrm{d} x + \int_{S_2} h(x,y) \mathrm{d} y \, \mathrm{d} x + \iint_{S_3} h(x,y) \mathrm{d} y \, \mathrm{d} x \\
    &\leq
    \int_{|x| \leq c_{k \ell}} |x \Psi_k(x)| \, \mathrm{d} x \int_{|y| \geq c_{k \ell}} |y \Psi_{\ell}(y)| \, \mathrm{d} y + \int_{|x| \geq c_{k \ell}} |x \Psi_k(x)| \, \mathrm{d} x \int_{|y| \leq c_{k \ell}} |y \Psi_{\ell}(y)| \, \mathrm{d} y \\
    &+
    \int_{|x| \geq c_{k \ell}} |x \Psi_k(x)| \, \mathrm{d}x \int_{|y| \geq c_{k \ell}} |y \Psi_{\ell}(y)| \, \mathrm{d}y
    .
\end{split}
\end{equation*}
Now, $c_{k \ell} \succsim \epsilon^{1/2} r^{1/4} |\lambda_r|/(n^{1/2} \rho_n) = \omega(\rho_n^{-1/2})$ {\em uniformly} for all $\{k, \ell\}$ provided that \cref{eq:select_r_quad1} is satisfied. Thus, by a similar argument to that for deriving \cref{eq:derive_technical1}, we have
\[
    \int_{|x| \geq c_{k \ell}} |x \Psi_k(x)| \mathrm{d} x \leq \frac{2}{c_{k \ell}} \phi(c_{k \ell})
\]
for all $\{k, \ell\}$, and similarly for $\int_{|y| \geq c_{k \ell}} |y \Psi_{\ell}(y)| \mathrm{d} y$. 
We therefore have
\begin{equation*}
    \begin{split}
    \iint_{S} h(x,y) \, \mathrm{d} x \, \mathrm{d} y 
    &\leq
    \frac{2}{c_{k \ell}} \phi(c_{k \ell}) \Bigl(\int_{|x| \leq c_{k \ell}} |x \Psi_k(x)| \, \mathrm{d} x + \int_{|y| \leq c_{k \ell}} |y \Psi_{\ell}(y)| \, \mathrm{d} y\Bigr) + \frac{4}{c_{k \ell}^2}\phi^2(c_{k \ell}) \\
    &\leq
    2 c_{k \ell} \phi(c_{k \ell}) + \frac{4}{c_{k \ell}^2}\phi^2(c_{k \ell})
    .
\end{split}
\end{equation*}
Finally, as $m_{k \ell}^2 \lesssim n^2 \rho_n^4/(r \lambda_r^4)$ for all $\{k, \ell\}$, we have
\begin{equation*}
    \begin{split}
    \sum_{k=1}^{n} \sum_{j = k+1}^{n} m_{k \ell}^2 \int_{|xy| \geq |\epsilon/m_{k \ell}} h(x,y) \, \mathrm{d} x \, \mathrm{d} y
    &\precsim
    \frac{n^4 \rho_n^4}{r \lambda_r^4} \max_{k \ell} \Bigl(c_{k \ell} \phi(c_{k \ell}) + \frac{1}{c_{k \ell}^2}\phi^2(c_{k \ell})\Bigr) 
    \rightarrow
    0
\end{split}
\end{equation*}
as $n \rightarrow \infty$. The condition in \cref{eq:technical_quadratic_2} is therefore satisfied. 

Now, $\zeta_k^2 = d_{k}^{-2} Y_k$, where the $\{Y_k\}$ are independent Bernoulli random variables with success probabilities $\{d_{k}^2\}$. We thus have
\[
    \mathbb{E}[\zeta_k^2]
    =
    1,
    \quad
    \mathrm{Var}[\zeta_{kk}^2]
    =
    \frac{d_{k}^2(1 - d_{k}^2)}{d_{k}^4} = \frac{1 - d_{k}^2}{d_{k}^2}
    .
\]
Therefore, by Bernstein's inequality, we have
\[
    \Bigl|\sum_{k=1}^{n} (\zeta_{k}^2 - 1) m_{kk}^* \Bigr|
    \precsim
    \Bigl( \sum_{k=1}^{n} \frac{1- d_{k}^2}{d_{k}^2} (m_{kk}^*)^2\Bigr)^{1/2} \times \log^{1/2}{n}
    \precsim
    \frac{n^{1/2} \rho_n^{3/2} \log^{1/2}{n}}{|\lambda_r|}
\]
with high probability, and hence 
\[
    \frac{1}{\|M_0\|_{F}} \Bigl|\sum_{k=1}^{n} (\zeta_{k}^2 - 1) m_{kk}^* \Bigr|
    \precsim
    \frac{(n \rho_n \log n)^{1/2}}{r^{1/2} |\lambda_r|}
    \rightarrow
    0
\]
almost surely. We therefore have
\[
    \frac{\zeta^{\top} M^* \zeta - \sum_{k} m_{kk}^*}{\|M_0\|_{F}}
    =
    \zeta^{\top} M \zeta + \frac{1}{\|M_0\|_{F}} \sum_{k=1}^{n} (\zeta_k^2 - 1) m_{kk}^*
    =
    \zeta^{\top} M \zeta + o_P(1)
    ,
\]
and hence, by applying \cref{thm:clt_quadratic}, we have
\[
    \frac{\zeta^{\top} M^* \zeta - \sum_{k} m_{kk}^*}{\|M_0\|_{F}}
    =
    Z^{\top} M Z + o_{P}(1)
    =
    \sum_{s \geq 1} Z_s^2 \lambda_s(M) + o_{P}(1)
    ,
\]
where $Z = (Z_1, Z_2, \dots)$ is a vector of independent $N(0,1)$ random variables and $\lambda_s(M)$ are the eigenvalues of the matrix $M$ ordered in decreasing magnitudes. As the diagonal entries of $M$ are all zeroes and $\|M\|_{F}^2 = 1$, we have
\begin{gather*}
    \mathbb{E}\left[\sum_{s \geq 1} Z_s^2 \lambda_s(M)\right]
    =
    \sum_{s \geq 1} \mathbb{E}[Z_s^2] \lambda_s(M) = \sum_{s \geq 1} \lambda_s(M) = 0, \\ \mathrm{Var}\left[\sum_{s \geq 1} Z_s^2 \lambda_s(M)\right]
    =
    \sum_{s \geq 1} \mathrm{Var}[Z_s^2] \times \lambda_s^2(M) = 2 \|M\|_{F}^2 = 2
    .
\end{gather*}
Next, by Weyl's inequality, we have
\begin{equation*}
    \begin{split}
    \max_{s} \Bigl|\lambda_s(M) - \frac{1}{\|M_0\|_{F}} \lambda_s(M^*)\Bigr|
    &\leq
    \frac{1}{\|M_0\|_{F}}\max_{k} m_{kk}^* \\
    &\lesssim
    \frac{1}{\rho_n r^{1/2}} \times \frac{\rho_n^2}{|\lambda_r|} \lesssim \frac{\rho_n}{r^{1/2} |\lambda_r|}
    .
  \end{split}
\end{equation*}
Note that $M^* = \mathcal{D} U U^{\top} \mathcal{D}$ is of rank $r$ and the non-zero eigenvalues of $M^*$ are the same as those of $U^{\top} \mathcal{D}^2 U$ and hence of magnitude $\rho_n$. We thus have
\[
    |\lambda_s(M)|
    \asymp
    r^{-1/2}
    \quad
    \text{for $1 \leq s \leq r$},
    \quad
    |\lambda_s(M)|
    \precsim
    \frac{\rho_n}{r^{1/2} |\lambda_r|},
    \qquad
    \text{for $s \geq r+1$}
    .
\]
Next, define $Y^{(r)} = \tfrac{1}{\|M_0\|_F}\sum_{s=1}^r (Z_s^2 - 1) \lambda_s(M^*)$. Then, $\mathbb{E}[Y^{(r)}] = 0 = \mathbb{E}[\sum_{s \geq 1} Z_s^2 \lambda_s(M)]$. Furthermore, we also have
\begin{equation}
    \begin{split}
    \mathbb{E}\Bigl[Y^{(r)} - \sum_{s\geq 1} Z_s^2 \lambda_s(M)\Bigr]^2
    &=
    \mathbb{E}\Bigl[\sum_{s=1}^{r} Z_s^2 \Bigl(\lambda_s(M) - \frac{\lambda_s(M^*)}{\|M_0\|_{F}}\Bigr) + \sum_{s \geq r+1} Z_s^2 \lambda_s(M)\Bigr]^2 \\
    &=
    2\Bigl(\sum_{s=1}^{r} \Bigl(\lambda_s(M) - \frac{\lambda_s(M^*)}{\|M_0\|_{F}}\Bigr)^2 + \sum_{s \geq r+1}  \lambda_s^2(M)\Bigr) \\
    &\lesssim
    \frac{n \rho_n^2}{r \lambda_r^2}
    .
\end{split}
\end{equation}
Therefore, as $n \rightarrow \infty$, we have
\[
    Y^{(r)} - \sum_{s \geq 1} Z_s^2 \lambda_s(M) 
    \rightarrow
    0
\]
in probability. 
In summary, we have under the null hypothesis of $X_i = X_j$ that
\begin{equation}
    \label{eq:two_sample_test_distribution}
    \frac{\|(A \hat{U})_{i} - (A \hat{U})_{j}\|^2 - \mathrm{tr} \, M^{*}}{\|M_0\|_{F}} 
    \rightsquigarrow
    \frac{1}{\|M_0\|_F} \sum_{s=1}^{r} (Z_s^2 - 1) \lambda_s(M^{*})
    .
\end{equation}
Recalling \cref{eq:proof_thm_test3}, we have by Slutsky's theorem that
\begin{equation}
    \label{eq:two_sample_test_distribution_Slutsky}
    \frac{\|(A \hat{U})_{i} - (A \hat{U})_{j}\|^2 - \mathrm{tr} \, M^{*}}{\|M^*\|_{F}} 
    \rightsquigarrow
    \frac{1}{\|M^*\|_F} \sum_{s=1}^{r} (Z_s^2 - 1) \lambda_s(M^{*})
    .
\end{equation}
Finally, if $\kappa$ has infinite rank, then we can choose $r \rightarrow \infty$ as $n \rightarrow \infty$ such that, by the Lindeberg--Feller central limit theorem (see Theorem~27.2 and Problem 27.6 in \cite{billingsley}), we have
\[
    \frac{1}{\|M^*\|_F} \sum_{s=1}^{r} (Z_s^2 - 1) \lambda_s(M^*)
    \rightsquigarrow
    N(0,2)
    .
\]
This concludes the proof of \cref{thm:quadratic_test}

\subsection{Proof of \cref{lem:estimate}}
First, consider the expansion
\[
    \hat{\theta} - \mathrm{tr} M^*
    =
    \mathrm{tr} \, (\hat{U}^{\top} \hat{\mathcal{D}}^2 \hat{U} - U^{\top} \hat{\mathcal{D}}^2 U)
    +
    \mathrm{tr} \, (U^{\top} (\hat{\mathcal{D}}^2 - \mathcal{D}^2) U)
    .
\]
Recall that $\hat{\mathcal{D}}$ is a diagonal matrix with $\hat{d}_{k} \in \{0,1\}$ for all $k$. Now, let $S = \{k \colon \hat{d}_{k} = 1\}$ and $\zeta_{2,\infty} = \|\hat{U} W - U\|_{2 \to \infty}$ where $W = W^{(n)}$ is the orthogonal matrix appearing in \cref{thm:generalU_ULambda}. If we select $r$ according to the criterion in \cref{eq:select_r_lemma_test}
then
\[ \zeta_{2, \infty} \lesssim \frac{\rho_n^{1/2} (r^{1/2} + \log^{1/2}{n})}{|\lambda_r|}\]
with high probability (see \cref{eq:EUlambda_indefiniteU})
 Also, $\|U\|_{2 \to \infty} \leq |\lambda_r|^{-1} n^{1/2} \rho_n$ (see \cref{eq:epsilon_bounds}). 
 
 We therefore have
\begin{equation}
    \label{eq:bound_estimate1}
    \begin{split}
    \Bigl|\mathrm{tr} \, (\hat{U}^{\top} \hat{\mathcal{D}}^2 \hat{U} - U^{\top} \hat{\mathcal{D}}^2 U)\Bigr|
    &=
    \Bigl|\sum_{k \in S} (\|\hat{U}_k\|^2 - \|U_k\|^2)\Bigr| \\
    &\leq
    \bigl(2 \zeta_{2,\infty}\|U\|_{2 \to \infty} + \zeta_{2,\infty}^2\bigr) \times |S| \\
    &\lesssim
    \frac{n^{1/2} \rho_n^{3/2} (r^{1/2} + \log^{1/2}{n})}{\lambda_r^2} \times |S| \\
    &\precsim
    \frac{n^{3/2} \rho_n^{5/2} (r^{1/2} + \log^{1/2}{n})}{\lambda_r^2}
    \end{split}
\end{equation}
with high probability, and final inequality follows from Chernoff bound and the fact that $\hat{d}_{k}$ are independent Bernoulli random variables with success probabilities $d_k^2 = 2p_{ik}(1 - p_{ik}) \lesssim \rho_n$. For the term $U^{\top}(\hat{\mathcal{D}}^2 - \mathcal{D}^2)U$ we have
\[
    \mathrm{tr} \, (U^{\top}(\hat{\mathcal{D}}^2 - \mathcal{D}^2) U)
    =
    \sum_{k} \|U_k\|^2 \times \bigl((a_{ik} - a_{jk})^2 - d_{k}^2\bigr)
    .
\]
Now, $\mathbb{E}[(a_{ik} - a_{jk})^2] = d_{k}^2$ and $\mathrm{Var}[(a_{ik} - a_{jk})^2] = d_{k}^2 (1 - d_{k}^2) \precsim \rho_n$. Therefore, by Bernstein's inequality, we have
\begin{equation}
    \label{eq:bound_estimate2}
    \sum_{k} \|U_k\|^2 \bigl((a_{ik} - a_{jk})^2 - d_{k}^2\bigr) \precsim (n \rho_n)^{1/2} \|U\|_{2 \to \infty}^2 \log^{1/2}{n}
    \lesssim
    \frac{n^{3/2}\rho_n^{5/2} \log^{1/2}{n}}{\lambda_r^2}
\end{equation}
with high probability. Combining \cref{eq:proof_thm_test1,eq:bound_estimate1,eq:bound_estimate2} we obtain
\[
    \frac{\hat{\theta} - \mathrm{tr} \, M^*}{\|M^*\|_{F}}
    \lesssim
    \frac{(n \rho_n)^{3/2} (r^{1/2} + \log^{1/2}{n})}{r^{1/2} \lambda_r^2}
    \rightarrow
    0
\]
as claimed. For the term $\hat{\sigma}$, first note that
\begin{equation*}
    \begin{split}
    \bigl|\hat{\sigma} - \|M^*\|_{F}\bigr|
    &=
    \bigl| \|\hat{U}^{\top} \hat{\mathcal{D}}^2 \hat{U}\|_{F} - \|U^{\top} \mathcal{D}^2 U\|_{F}\bigr| \\
    &=
    \bigl|\|W^{\top} \hat{U}^{\top} \hat{\mathcal{D}}^2 \hat{U} W\|_{F} - \|U^{\top} \mathcal{D}^2 U\|_{F}\bigr| \\
    &\leq
    \|W^{\top} \hat{U}^{\top} \hat{\mathcal{D}}^2 \hat{U} W - U^{\top} \mathcal{D}^2 U\|_{F} \\
    &\leq 
    \|W^{\top} \hat{U}^{\top} \hat{\mathcal{D}}^2 \hat{U} W - U^{\top} \hat{\mathcal{D}}^2 U\|_{F} + \|U^{\top} (\hat{\mathcal{D}}^2 - \mathcal{D}^2) U\|_{F} \\
    &\leq
    \| \hat{\mathcal{D}} (\hat{U} W - U)\|_{F}^2 + 2 \|\hat{\mathcal{D}} (\hat{U} W - U)\|_{F} \cdot \|\hat{\mathcal{D}} U\|_{F} + \|U^{\top} (\hat{\mathcal{D}}^2 - \mathcal{D}^2) U\|_{F}
    .
    \end{split}
\end{equation*}
Following a similar argument to that for \cref{eq:bound_estimate1}, we have
\begin{equation}
   \label{eq:bound_estimate_3a}
   \begin{split}
   \|\hat{\mathcal{D}} (\hat{U} W - U)\|_{F}
   &\leq
   |S|^{1/2} \times \|\hat{U} W - U\|_{2 \to \infty} \\
   &\lesssim
   \frac{\rho_n^{1/2} (r^{1/2} + \log^{1/2}{n})}{|\lambda_r|} \times |S|^{1/2} \\
   &\lesssim
   \frac{n^{1/2} \rho_n (r^{1/2} + \log^{1/2}{n})}{|\lambda_r|}
   \end{split}
\end{equation}
with high probability. Furthermore, we also have
\[
    \|\hat{\mathcal{D}} U\|_{F}
    \leq
    |S|^{1/2} \times \|U\|_{2 \to \infty}
    \leq
    |S|^{1/2} \times n^{1/2} \rho_n |\lambda_r|^{-1} \lesssim n \rho_n^{3/2} |\lambda_r|^{-1}
\]
   with high probability. 
Combining the above bounds gives
\begin{equation}
    \label{eq:bound_estimate2b0}
    \begin{split}
    \|W^{\top} \hat{U}^{\top} \hat{\mathcal{D}}^2 \hat{U} W - U^{\top} \hat{\mathcal{D}}^2 U\|_{F}
    &\leq
    \| \hat{\mathcal{D}} (\hat{U} W - U)\|_{F}^2 + 2 \|\hat{\mathcal{D}} (\hat{U} W - U)\|_{F} \cdot \|\hat{\mathcal{D}} U\|_{F} \\
    &\lesssim
    \frac{n \rho_n^2 (r + \log n) + n^{3/2} \rho_n^{5/2} (r^{1/2} + \log^{1/2}{n})}{\lambda_r^2}
    \end{split}
\end{equation}
with high probability.

Next, we note that $\|U^{\top} (\hat{\mathcal{D}}^2 - \mathcal{D}^2) U\|_{F} \leq r \|U^{\top} (\hat{\mathcal{D}}^2 - \mathcal{D}^2) U\|_{\max}$. Furthermore, the $rs$-th entry of $U^{\top} (\hat{\mathcal{D}}^2 - \mathcal{D}^2) U$ can be written as
\[
    \sum_{k} u_{rk} u_{sk} [(a_{ik} - a_{jk})^2 - d_k^2]
    ,
\]
where $u_{rk}$ is the $rk$-th entry of $U$. The above is, conditioned on $P$, also a sum of independent mean zero random variables. Hence, by another application of Bernstein's inequality, we have
\begin{equation}
    \label{eq:bound_estimate2b1}
    \begin{split}
    \sum_{k} u_{rk} u_{sk} [(a_{ik} - a_{jk})^2 - d_k^2] &\lesssim (n \rho_n)^{1/2} \times \|U\|_{2 \to \infty}^2 \times \log^{1/2}{n} \\
    &\lesssim
    \frac{n^{3/2} \rho_n^{5/2} \log^{1/2}{n}}{\lambda_r^2} 
    \end{split}
\end{equation}
with high probability, and thus
\begin{equation}
    \label{eq:bound_estimate2b2}
    \begin{split}
    \|\hat{U}^{\top} \hat{\mathcal{D}}^2 \hat{U} - U^{\top} \mathcal{D}^2 U\|_{F}
    \lesssim
    \frac{n^{3/2} \rho_n^{5/2} r \log^{1/2}{n}}{\lambda_r^2}
    \end{split}
\end{equation}
with high probability. Combining \cref{eq:bound_estimate2b0,eq:bound_estimate2b2} we obtain
\begin{equation}
    \label{eq:bound_estimate2b3}
    \|\hat{U}^{\top} \hat{\mathcal{D}}^2 \hat{U} - U^{\top} \mathcal{D}^2 U\|_{F}
    \lesssim
    \frac{n^{3/2} \rho_n^{5/2} r \log^{1/2}{n}}{\lambda_r^2}
    .
\end{equation}
\cref{eq:bound_estimate2b3} then implies
\begin{equation}
     \label{eq:convergence_sigma_1}
     \begin{split}
     \frac{ \|W^{\top} \hat{U}^{\top} \hat{\mathcal{D}}^2 \hat{U} W - U^{\top} \mathcal{D}^2 U\|_{F}}{\|M_*\|_{F}}
     &\lesssim
     \frac{n^{3/2} \rho_n^{5/2} r \log^{1/2}{n}}{\lambda_r^2} \times \frac{1}{\rho_n \sqrt{r}} \\
     &\lesssim
     \frac{(n \rho_n)^{3/2} (r \log n)^{1/2}}{\lambda_r^2}
     \overset{\mathrm{p}}{\longrightarrow}
     0
     \end{split}
\end{equation}
as $n \rightarrow \infty$, provided that $r$ is chosen according to the criteria in \cref{thm:quadratic_test}. \cref{eq:convergence_sigma_1} also guarantees
$\hat{\sigma}/\|M_*\|_{F} \rightarrow 1$ in
probability as $n \rightarrow \infty$.

\section{Technical lemmas}
\label{sec:technical_lemmas}

We first recall a version of Bernstein's inequality for bounded random variables.
\begin{lemma}[Theorem~2.8.4 in \cite{vershynin2018high}]
    \label{lem:Bernstein}
    Let $X_1, X_2, \dots, X_m$ be independent mean zero random variables. Suppose there exists a constant $M > 0$ such that $|X_i| \leq M$ almost surely for all $1 \leq i \leq m$. Let $\sigma^2 = \sum_{i=1}^{m} \mathbb{E}[X_i^2]$. Then, for every $t > 0$, it holds that
    \begin{equation}
        \label{eq:bernstein}
        \mathbb{P}\Bigl(\bigl|\sum_{i=1}^{m} X_i \bigr| \geq t\Bigr)
        \leq
        2\exp\left(\frac{-t^2/2}{\sigma^2 + Mt/3}\right)
        .
    \end{equation}
    In particular, for any $\nu > 0$,
    \begin{equation}
        \label{eq:bernstein_restatement}
        \bigl|\sum_{i=1}^{m} X_i \bigr|
        \leq
        \sqrt{2 \nu} \sigma (\log n)^{1/2} + \frac{2}{3} \nu M \log n
    \end{equation}
    with probability at least $1 - 2n^{-\nu}$. 
\end{lemma}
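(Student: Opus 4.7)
The statement being recalled is the classical Bernstein inequality for bounded independent random variables, which appears as Theorem~2.8.4 in Vershynin's textbook. Accordingly, the plan is to treat~\eqref{eq:bernstein} as a known result whose proof we merely sketch, and then to obtain~\eqref{eq:bernstein_restatement} from~\eqref{eq:bernstein} by an elementary substitution.

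For~\eqref{eq:bernstein}, the standard route is the Cram\'er--Chernoff method. First I would bound the moment generating function of each $X_i$: using $|X_i| \le M$ and $\mathbb{E}[X_i]=0$, one shows via the Taylor expansion of $e^{\lambda X_i}$ and the inequality $e^x - 1 - x \le x^2 (e^{|x|}-1-|x|)/|x|^2$ that $\mathbb{E}[e^{\lambda X_i}] \le \exp\bigl(\lambda^2 \mathbb{E}[X_i^2]/(2(1 - \lambda M/3))\bigr)$ for $0 < \lambda < 3/M$. Multiplying across $i$ and applying Markov's inequality to $\exp(\lambda \sum X_i)$ yields, for any $t>0$, the one-sided tail bound $\exp\bigl(-\lambda t + \lambda^2 \sigma^2/(2(1-\lambda M/3))\bigr)$. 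Optimizing over $\lambda$ (the optimum being $\lambda = t/(\sigma^2 + Mt/3)$) gives the bound $\exp(-t^2/(2(\sigma^2 + Mt/3)))$. A union bound over the two-sided deviation produces the factor of $2$ in~\eqref{eq:bernstein}. Since this derivation is completely standard, I would simply cite \cite[Theorem~2.8.4]{vershynin2018high} rather than reproduce it.

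For the restatement~\eqref{eq:bernstein_restatement}, set $t = a + b$ with $a = \sqrt{2\nu\sigma^2\log n}$ and $b = \tfrac{2}{3}\nu M\log n$, and substitute into~\eqref{eq:bernstein}. It suffices to check that the exponent is at most $-\nu\log n$, i.e., $t^2 \ge 2\nu\log n\,(\sigma^2 + Mt/3)$. Expanding,
\begin{equation*}
t^2 = a^2 + 2ab + b^2, \qquad 2\nu\log n\,\sigma^2 = a^2, \qquad \tfrac{2}{3}\nu M\log n \cdot t = b(a+b) = ab+b^2,
\end{equation*}
so the required inequality reduces to $2ab \ge ab$, which is trivial since $a,b\ge 0$. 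Hence~\eqref{eq:bernstein} with this choice of $t$ gives probability at most $2n^{-\nu}$, yielding~\eqref{eq:bernstein_restatement}.

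There is essentially no obstacle here: the content of~\eqref{eq:bernstein} is a black-box citation, and the passage to~\eqref{eq:bernstein_restatement} is a one-line algebraic verification. The only minor subtlety worth flagging is ensuring the substitution is stated as an ``at most'' inequality (rather than equality in $t$), so that the reader sees that any $t$ at least as large as $\sqrt{2\nu\sigma^2\log n} + \tfrac{2}{3}\nu M\log n$ works.
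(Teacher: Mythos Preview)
Your proposal is correct. The paper itself supplies no proof for this lemma at all: it is stated purely as a citation to \cite[Theorem~2.8.4]{vershynin2018high}, with the restatement~\eqref{eq:bernstein_restatement} left to the reader. Your approach---citing the standard Cram\'er--Chernoff derivation for~\eqref{eq:bernstein} and then verifying~\eqref{eq:bernstein_restatement} by the substitution $t = \sqrt{2\nu\sigma^2\log n} + \tfrac{2}{3}\nu M\log n$---is exactly what is needed, and your algebraic check that $t^2 \ge 2\nu\log n\,(\sigma^2 + Mt/3)$ reduces to $2ab \ge ab$ is clean and correct.
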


The next lemma provides a high-probability bound for the spectral norm of $E$ and follows by adapting the results in \cite{bandeira2016sharp} to the setting of the current paper. 
\begin{lemma}
    \label{lem:bandeira_vanhandel}
    Assume the setting in \cref{thm:psd} or \cref{thm:general}. Then, for any $\alpha \geq 3$ and $t \geq 0$,
    \begin{equation}
        \label{eq:bandeira_explicit}
        \mathbb{P}\bigl(\|E\| \geq e^{2/\alpha}(2 \sqrt{2} \sigma + 14 \alpha \sqrt{\log n}) + t\bigr)
        \leq
        \exp(-t^2/2)
        ,
    \end{equation}
    where $\sigma^2 = \max_{i} \sum_{j} p_{ij}(1 - p_{ij})$. 
\end{lemma}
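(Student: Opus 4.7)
\textbf{Proof proposal for \cref{lem:bandeira_vanhandel}.} The plan is to combine a sharp bound for $\mathbb{E}\|E\|$ with a sub-Gaussian concentration inequality for $\|E\|$ around its mean. Observe that $E = A - P$ is symmetric with independent (modulo the symmetry constraint) upper-triangular entries satisfying $\mathbb{E}[E_{ij}] = 0$ and $|E_{ij}| \le 1$ almost surely, so that the two relevant variance parameters are
\begin{equation*}
    \sigma^2 = \max_{i} \sum_{j} \mathbb{E}[E_{ij}^2] = \max_{i} \sum_{j} p_{ij}(1-p_{ij}), \qquad \sigma_{*} := \max_{i,j} \|E_{ij}\|_{\infty} \le 1.
\end{equation*}

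First, I would invoke the noncommutative Khintchine-type bound of Bandeira and van Handel (Corollary~3.12 / Theorem~1.1 of \cite{bandeira2016sharp}), which asserts that for any $0 < \varepsilon \le 1/2$ and any symmetric random matrix $X$ with independent, centered, bounded entries,
\begin{equation*}
    \mathbb{E}\|X\| \le (1+\varepsilon)\, 2\sqrt{2}\,\sigma + \tilde{c}(\varepsilon)\,\sigma_{*}\sqrt{\log n},
\end{equation*}
where $\tilde{c}(\varepsilon)$ grows like $e^{c/\varepsilon}$. Applied to $E$ with the parametrization $\varepsilon$ chosen so that $(1+\varepsilon) = e^{2/\alpha}$ (valid for $\alpha \ge 3$), and using $\sigma_{*} \le 1$, this yields exactly
\begin{equation*}
    \mathbb{E}\|E\| \le e^{2/\alpha}\bigl(2\sqrt{2}\,\sigma + 14\alpha \sqrt{\log n}\bigr),
\end{equation*}
after verifying that the Bandeira--van Handel constants in $\tilde{c}(\varepsilon)$ absorb into the $14\alpha$ factor under the chosen parametrization.

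Second, I would concentrate $\|E\|$ around its mean. Viewing $\|\cdot\|$ as a function of the independent upper-triangular entries $\{E_{ij}\}_{i \le j} \subset [-1,1]$, it is convex and $\sqrt{2}$-Lipschitz in the Euclidean norm (one entry change shifts $E$ by a rank-$2$ perturbation of Frobenius norm $\sqrt{2}$). By Talagrand's convex-Lipschitz concentration inequality for bounded independent random variables (see, e.g., Corollary~4.10 of Ledoux's monograph on concentration of measure), one obtains a Gaussian tail of the form $\mathbb{P}(\|E\| - \mathbb{E}\|E\| \ge t) \le \exp(-t^2/2)$ after rescaling. Combining this with the mean bound above yields the stated inequality.

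The main obstacle is purely bookkeeping: verifying that the Bandeira--van Handel constants, when the internal parameter is set to $\varepsilon = e^{2/\alpha} - 1$, produce exactly the prefactor $e^{2/\alpha}$ and the $14\alpha$ coefficient on $\sqrt{\log n}$. This requires tracking $\tilde{c}(\varepsilon)$ explicitly through their proof; the choice $\alpha \ge 3$ keeps $\varepsilon \le e^{2/3}-1$ within the admissible range of their theorem. Aside from this constant-chasing, both ingredients are off-the-shelf and require no further probabilistic input beyond the independence structure already guaranteed by \cref{def:lpg}.
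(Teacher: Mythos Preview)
Your two-step strategy---bound $\mathbb{E}\|E\|$ via Bandeira--van Handel, then concentrate around the mean---is exactly the paper's approach. The difference is in how the mean bound is obtained, and this is where your proposal has a gap.

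The version of the Bandeira--van Handel bound with the explicit constants $e^{2/\alpha}(2\tilde\sigma + 14\alpha\sqrt{\log n})$ (their Corollary~3.6) applies to matrices whose entries have \emph{symmetric} distributions, not merely centered bounded ones. The entries $E_{ij} = A_{ij} - p_{ij}$ are centered Bernoulli and are \emph{not} symmetric unless $p_{ij} = 1/2$, so you cannot apply that corollary directly to $E$; Theorem~1.1, which you also cite, is for Gaussian entries. The paper handles this by a symmetrization step: take an independent copy $E'$ of $E$, use Jensen to get $\mathbb{E}\|E\| \le \mathbb{E}\|E - E'\|$, observe that $E - E'$ has independent symmetric entries bounded by $1$, and then apply Corollary~3.6 to $E - E'$. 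The variance of the symmetrized matrix satisfies $\tilde\sigma^2 \le 2\sigma^2$, and this is precisely where the $2\sqrt{2} = 2 \cdot \sqrt{2}$ in the statement comes from. Your ``constant-chasing'' paragraph would not recover this factor without the symmetrization.

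For the concentration step, the paper uses \cite[Theorem~6.10]{boucheron13:_concen_inequal} for separately convex functions of independent variables each ranging in an interval of length $1$, which gives exactly $\exp(-t^2/2)$ with no fuss about Lipschitz constants. Your Talagrand route is morally equivalent but, as you set it up, the $\sqrt{2}$-Lipschitz claim would feed into the exponent and you would have to argue more carefully to land on $\exp(-t^2/2)$ rather than $\exp(-t^2/4)$.
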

\begin{proof}[Proof of \cref{lem:bandeira_vanhandel}]
    First, note that the spectral norm of a matrix is a separately convex function of its entries. As the entries of $E$ are independent random variables bounded in magnitude $1$, \cite[Theorem~6.10]{boucheron13:_concen_inequal} yields
    \[
        \mathbb{P}( \|E\| \geq \mathbb{E}[\|E\|] + t) 
        \leq
        \exp(-t^2/2)
        .
    \]
    Next, let $A'$ be an independent copy of $A$, so that $E' = A' - P$ is an independent copy of $E$. Then, $\mathbb{E}[\|E\|] \leq \mathbb{E}[\|E - E'\|]$ by Jensen's inequality, where we had used the fact that $\mathbb{E}[E'] = 0$. Now, the upper triangular entries of $E - E'$ are independent, {\em symmetric} random variables (i.e., $E_{ij} - E_{ij}'$ has the same distribution as $E_{ij}' - E_{ij}$) bounded in absolute value by $1$. Therefore, by \cite[Corollary~3.6]{bandeira2016sharp}, for any $\alpha \geq 3$,
    \[
        \mathbb{E}[\|E - E'\|]
        \leq
        e^{2/\alpha}(2 \tilde{\sigma} + 14 \alpha \sqrt{\log n})
        ,
    \]
    where $\tilde{\sigma}^2 = \max_{i} \sum_{j} \mathrm{Var}[(E_{ij} - E_{ij}')]$. Straightforward calculations yield
    \[
        \tilde{\sigma}^2
        =
        \max_{i} \sum_{j} 2p_{ij}(1 - p_{ij})(1 - 2p_{ij}(1 - p_{ij}))
        \leq
        2 \max_{i} \sum_{j} p_{ij}(1 - p_{ij})
        .
    \]
    \cref{eq:bandeira_explicit} follows directly from combining the above bounds. This completes the proof of \cref{lem:bandeira_vanhandel}.
\end{proof}

Taking $\alpha = 4$ in \cref{lem:bandeira_vanhandel} and applying $\sigma^2 \leq n \rho_n$, it holds for any $\nu \geq 0$ that
\begin{equation}
    \label{eq:norm_E}
    \mathbb{P}(\|E\| \geq 2 \sqrt{2e n \rho_n} + (56 \sqrt{e} + \sqrt{2 \nu}) \sqrt{\log n})
    \leq
    n^{-\nu}
    .
\end{equation}
Next, we present a collection of lemmas for bounding the quantities $\psi_1, \psi_2$ and $\psi_3^{(k)}$ in the proofs of \cref{thm:psd} through \cref{thm:generalU_ULambda}.

\begin{lemma}
    \label{lem:technical2a}
    Assume the setting in \cref{thm:psd} or \cref{thm:general}. Let $U$ be a $n \times r$ matrix not depending on $E$ with $U^{\top} U = I$. Then,
    \begin{gather}
        \|U^{\top} E U\|
        \leq
        4 \sqrt{\rho_n \vartheta(\nu,r,n)} + \frac{8}{3} \|U\|_{2 \to \infty}^2 \vartheta(\nu,r,n), \\
        \|U^{\top} E \hat{U} \|
        \leq
        4 \sqrt{\rho_n \vartheta(\nu,r,n)} + \frac{8}{3} \|U\|_{2 \to \infty}^2 \vartheta(\nu,r,n) + \frac{2\|E\|^2}{\delta_r}
        ,
    \end{gather}
    with probability at least $1 - 2n^{-\nu}$, where $\vartheta(\nu,r,n) = \nu \log n + r \log 9$.  
\end{lemma}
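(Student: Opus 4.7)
The plan is to handle the two bounds separately. For $\|U^{\top} E U\|$, I will use a standard $\varepsilon$-net argument on the unit sphere $S^{r-1}$ combined with the Bernstein inequality (\cref{lem:Bernstein}). Because $U^{\top} E U$ is an $r \times r$ symmetric matrix, I can exploit the identity $\|M\| = \sup_{x \in S^{r-1}} |x^{\top} M x|$. Fixing a $1/4$-net $\mathcal{N} \subset S^{r-1}$ with $|\mathcal{N}| \leq 9^{r}$, a routine argument shows $\|U^{\top} E U\| \leq 2 \max_{x \in \mathcal{N}} |x^{\top} U^{\top} E U x|$.

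For a fixed $x \in S^{r-1}$, writing $v = Ux \in \mathbb{R}^{n}$ and using symmetry of $E$, I expand
\begin{equation*}
x^{\top} U^{\top} E U x = \sum_{i \leq j} \alpha_{ij}(x) E_{ij}, \qquad \alpha_{ii}(x) = v_{i}^{2}, \quad \alpha_{ij}(x) = 2 v_{i} v_{j} \text{ for } i < j.
\end{equation*}
The entries $\{E_{ij}\}_{i \leq j}$ are independent, mean-zero, bounded in absolute value by $1$, with $\mathrm{Var}(E_{ij}) \leq \rho_{n}$. Straightforward inequalities give the variance proxy
\begin{equation*}
\sigma^{2} \leq \rho_{n} \sum_{i \leq j} \alpha_{ij}(x)^{2} \leq \rho_{n}\left(2 \sum_{i < j} v_{i}^{2} v_{j}^{2} + \sum_{i} v_{i}^{4}\right) \leq 2 \rho_{n} \|v\|^{4} = 2 \rho_{n},
\end{equation*}
and the uniform bound $\max_{i \leq j} |\alpha_{ij}(x)| \leq 2 \|v\|_{\infty}^{2} \leq 2 \|U\|_{2 \to \infty}^{2}$. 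Plugging $\sigma^{2} \leq 2 \rho_{n}$ and $M \leq 2 \|U\|_{2 \to \infty}^{2}$ into \cref{eq:bernstein} and choosing the deviation $t$ so that the tail probability is at most $2 n^{-\nu} / 9^{r} = 2 \exp(-\vartheta(\nu,r,n))$ gives, for each $x \in \mathcal{N}$,
\begin{equation*}
|x^{\top} U^{\top} E U x| \leq 2 \sqrt{\rho_{n} \vartheta(\nu,r,n)} + \tfrac{4}{3} \|U\|_{2 \to \infty}^{2} \vartheta(\nu,r,n).
\end{equation*}
A union bound over $|\mathcal{N}| \leq 9^{r}$ points together with the doubling factor from the net argument yields the first claim with probability at least $1 - 2 n^{-\nu}$.

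For $\|U^{\top} E \hat{U}\|$, I will decompose $U^{\top} E \hat{U} = (U^{\top} E U)(U^{\top} \hat{U}) + U^{\top} E (I - U U^{\top}) \hat{U}$. The first summand has spectral norm at most $\|U^{\top} E U\|$, which has just been bounded. For the second, submultiplicativity gives $\|U^{\top} E (I - UU^{\top}) \hat{U}\| \leq \|E\| \cdot \|(I - UU^{\top}) \hat{U}\|$, and the Davis--Kahan $\sin\Theta$ theorem supplies $\|(I - UU^{\top}) \hat{U}\| \leq 2 \|E\|/\delta_{r}$, so this term is at most $2 \|E\|^{2}/\delta_{r}$. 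Adding the two contributions yields the second claim, which holds on the same high-probability event as the first.

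The only subtle step is the accounting in the Bernstein-plus-net argument: the exponent $\vartheta(\nu,r,n) = \nu \log n + r \log 9$ arises precisely from trading the union bound over the $9^{r}$ net points against the target failure probability $2 n^{-\nu}$, and the constants $4$ and $8/3$ in the final bound are determined by the factor-of-$2$ loss from the net together with the variance proxy $\sigma^{2} \leq 2 \rho_{n}$ and uniform bound $M \leq 2 \|U\|_{2 \to \infty}^{2}$. Everything else is bookkeeping and a direct appeal to Davis--Kahan.
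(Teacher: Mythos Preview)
Your proposal is correct and follows essentially the same approach as the paper: a $1/4$-net on $S^{r-1}$ combined with Bernstein's inequality for the quadratic form $x^{\top}U^{\top}EUx$, then the decomposition $U^{\top}E\hat U = (U^{\top}EU)(U^{\top}\hat U) + U^{\top}E(I-UU^{\top})\hat U$ with Davis--Kahan for the second summand. One small arithmetic slip: in your variance chain the middle expression should read $\rho_n\bigl(4\sum_{i<j}v_i^2v_j^2 + \sum_i v_i^4\bigr)$ (since $\alpha_{ij}^2 = 4v_i^2v_j^2$ for $i<j$), but the final bound $\sigma^2 \le 2\rho_n$ is unaffected because $4\sum_{i<j}v_i^2v_j^2 + \sum_i v_i^4 \le 2\|v\|^4$.
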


\begin{proof}[Proof of \cref{lem:technical2a}]
    First, we shall bound $\|U^{\top} E U\|$ using an $\epsilon$-net argument. By the definition of the operator norm,
    \[
        \|U^{\top} E U\|
        =
        \max_{x \in \mathbb{R}^{p} \colon \|x\| = 1 } |x^{\top} U^{\top} E U x|
        .
    \]
    Now, fix $x \in \mathbb{R}^{r}$, $\|x\|=1$, and let $\xi = U x$, so
    \[
        x^{\top} U^{\top} E U x
        =
        2 \sum_{i < j} e_{ij} \xi_i \xi_j + \sum_{i} e_{ii} \xi_i^2
        .
    \]
    Letting $u_i$ denote the $i$-th row of $U$, we have $\max_{i} |\xi_i| \leq \max_{i} \|u_i\| \times \|x\| \leq \|U\|_{2 \to \infty}$. In particular, $\sum_{i < j} e_{ij} \xi_i \xi_j$ is a sum of independent mean zero random variables satisfying
    \[
        2 \max_{i,j} |e_{ij} \xi_i \xi_j|
        \leq
        2
        \|U\|_{2 \to \infty}^2
        .
    \]
    Define $\sigma^2 = \sum_{i < j} \mathrm{Var}[2 e_{ij}  \xi_i \xi_j] + \sum_{i} \mathrm{Var}[e_{ii} \xi_i^2]$. Since $\|\xi\| = \|U x\| = \|x\| = 1$, we have
    \begin{equation*}
        \begin{split}
        \sigma^2
        &=
        4
        \sum_{i < j} p_{ij}(1 - p_{ij}) \xi_i^2 \xi_j^{2} + \sum_{i} p_{ii} (1 - p_{ii}) \xi_i^4
        \leq
        2 \sum_{i} \sum_{j} p_{ij} (1 - p_{ij}) \xi_i^2 \xi_j^2 \leq 2 \rho_n
        .
    \end{split}
    \end{equation*}
    Applying Bernstein's inequality therefore yields
    \[
        \mathbb{P}\Bigl(\bigl|x^{\top} U^{\top} E U x\bigr| \geq t\Bigr)
        \leq
        2 \exp\Bigl(\frac{-t^2}{2 \rho_n + 2 \|U\|_{2 \to \infty}^2 t/3}\Bigr)
        .
    \] 
    Now, let $\mathcal{S}_{1/4}$ be a minimal $1/4$-net for the unit ball in $\mathbb{R}^{r}$ with $\epsilon \in (0,1)$. Thus $|\mathcal{S}_{\epsilon}| \leq 9^{r}$, for example, by \cite[Corollary~4.2.13]{vershynin2018high}. Next, let $x_*$ satisfy $|x_*^{\top} U^{\top} E U x_*| = \|U^{\top} E U\|$ and $\|x_*\| = 1$, so there exists a vector $z \in \mathcal{S}_{1/4}$ such that $\|z - x_*\| \leq 1/4$. Furthermore,
    \begin{equation*}
        \begin{split}
        |x_*^{\top} U^{\top} E U x_*|
        &=
        |(x_* - z)^{\top} U^{\top} E U x_* + z^{\top} U^{\top} E U (x_* - z) + z^{\top} U^{\top} E U z| \\
        &\leq
        |z^{\top} U^{\top} E U z| + \|z - x_*\| \times (\|U^{\top} E U x_*\| + \|U^{\top} E U z\|) \\
        &\leq
        |z^{\top} U^{\top} E U z| + \frac{1}{2}\|U^{\top} E U\|
        .
        \end{split}
    \end{equation*}
    Hence, for all $t > 0$,
    \begin{equation*}
        \begin{split}
        \mathbb{P}[\|U^{\top} E U\| \geq 2t]
        &\leq
        \mathbb{P}\Bigl[\sup_{z \in \mathcal{S}_{1/2}} |z^{\top} U^{\top} E U z| \geq t \Bigr] \\
        &\leq
        2 |\mathcal{S}_{1/4}| \exp\Bigl(\frac{-t^2}{4 \rho_n + 4 \|U\|_{2 \to \infty}^2 t/3}\Bigr) \\
        &\leq
        2 \exp\Bigl(r \log (9) - \frac{t^2}{4 \rho_n + 4 \|U\|_{2 \to \infty}^2 t/3}\Bigr)
        .
        \end{split}
    \end{equation*}
    

    Define $\vartheta(\nu,r,n) = \nu \log n + r \log 9$, and let
    \begin{equation}
        \label{eq:t_*}
        t_*
        =
        2 \sqrt{\rho_n \vartheta(\nu,r,n)} + \frac{4}{3} \|U\|_{2 \to \infty}^2 \vartheta(\nu,r,n)
        .
    \end{equation}
    We then have
    \begin{equation*}
        \begin{split}
        \mathbb{P}\bigl(\|U^{\top} E U \| \geq 2 t_*\bigr)
        \leq
        2n^{-\nu}
        ,
        \end{split}
    \end{equation*}
    which yields the stated bound for $\|U^{\top} E U \|$ as desired.
    
    The stated bound for $U^{\top} E \hat{U}$ follows directly from the triangle inequality and the Davis--Kahan theorem, namely
    \begin{equation*}
        \begin{split}
        \|U^{\top} E \hat{U} \|
        &\leq
        \|U^{\top} E U U^{\top} \hat{U} \|
        +
        \|U^{\top} E (I - U U^{\top}) \hat{U} \| \\
        &\leq
        \|U^{\top} E U \| + \|E\| \times \|(I - UU^{\top}) \hat{U}\| \\
        &\leq
        \|U^{\top} E U \| + \frac{2 \|E\|^2}{\delta_r}
        .
        \end{split}
    \end{equation*}
    This completes the proof of \cref{lem:technical2a}.
\end{proof}


\begin{lemma}
    \label{lem:approximate_commute}
    Assume the setting in \cref{thm:general}. Let $T = |\Lambda|^{1/2} J U^{\top} \hat{U} - U^{\top} \hat{U} |\hat{\Lambda}|^{1/2} J$.
    Then,
    \begin{gather}
        \label{eq:approximate_commute}
        \|T\|
        \leq
        |\lambda_r|^{-1/2}\Bigl(8 \sqrt{\rho_n \vartheta(\nu,r,n)} + \frac{16}{3} \|U\|_{2 \to \infty}^2 \vartheta(\nu,r,n) + \frac{ 4 \|E\|^2}{\delta_r}\Bigr)
    \end{gather}
    with probability at least $1 - 2n^{-\nu}$. Furthermore, under the setting in \cref{thm:psd} where $\lambda_1 \geq \lambda_2 \geq \dots \geq \lambda_r > 0$ then, for $T = \Lambda^{1/2} U^{\top} \hat{U} - U^{\top} \hat{U} \hat{\Lambda}^{1/2}$, \cref{eq:approximate_commute} can be improved to
    \begin{gather}
        \label{eq:approximate_commute_psd}
        \|T\|
        \leq
        \lambda_r^{-1/2}\Bigl(4 \sqrt{\rho_n \vartheta(\nu,r,n)} + \frac{8}{3} \|U\|_{2 \to \infty}^2 \vartheta(\nu,r,n) + \frac{ 2 \|E\|^2}{\delta_r}\Bigr)
    \end{gather}
    with probability at least $1 - 2n^{-\nu}$.
\end{lemma}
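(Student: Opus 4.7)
The plan is to recognize $T$ as the solution to a Sylvester-type matrix equation whose source is linear in $E$, and then apply the standard operator-norm Sylvester estimate. For the positive semidefinite case, the eigenvalue identities $PU = U\Lambda$ and $A\hat{U} = \hat{U}\hat{\Lambda}$ immediately yield
\[
    \Lambda U^\top \hat{U} - U^\top \hat{U}\hat{\Lambda} = U^\top P \hat{U} - U^\top A\hat{U} = -U^\top E \hat{U},
\]
which is exactly $\Lambda^{1/2} T + T \hat{\Lambda}^{1/2}$ for $T = \Lambda^{1/2} U^\top \hat{U} - U^\top \hat{U}\hat{\Lambda}^{1/2}$. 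Since $\Lambda^{1/2}$ and $\hat{\Lambda}^{1/2}$ are both PSD with smallest eigenvalues $\lambda_r^{1/2}$ and $\hat{\lambda}_r^{1/2}$, the standard operator-norm Sylvester estimate gives $\|T\| \leq \|U^\top E \hat{U}\|/(\lambda_r^{1/2} + \hat{\lambda}_r^{1/2})$; under $\mathcal{E}_0$, Weyl's inequality gives $\hat{\lambda}_r \geq \lambda_r/2$, so the denominator is at least $\lambda_r^{1/2}$. It then suffices to decompose $\|U^\top E \hat{U}\| \leq \|U^\top E U\| + \|E\| \cdot \|(I-UU^\top)\hat{U}\| \leq \psi_1 + 2\|E\|^2/\delta_r$, bounding the first piece via \cref{lem:technical2a} and the second via Davis--Kahan.

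For the indefinite case, the analogous computation produces
\[
    |\Lambda|^{1/2} T + T |\hat{\Lambda}|^{1/2} = -U^\top E \hat{U} + |\Lambda|^{1/2}[J, U^\top \hat{U}]|\hat{\Lambda}|^{1/2},
\]
where the commutator $[J, U^\top \hat{U}] = J U^\top \hat{U} - U^\top \hat{U} J$ arises because $J$ commutes with the diagonal matrices $|\Lambda|^{1/2}$ and $|\hat{\Lambda}|^{1/2}$ but not with $U^\top \hat{U}$. The commutator is block anti-diagonal along the sign decomposition of $\Lambda$, with blocks $\pm 2 U_\pm^\top \hat{U}_\mp$. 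To avoid accumulating a condition-number factor, I would decompose $T$ itself into four blocks along the positive/negative sign decomposition of $\Lambda$ and $\hat{\Lambda}$: the $(+,+)$ and $(-,-)$ diagonal blocks reduce to PSD Sylvester equations handled as above, producing a bound of order $|\lambda_r|^{-1/2}\|U^\top E \hat{U}\|$. For the $(+,-)$ off-diagonal block, which has the form $|\Lambda_+|^{1/2} U_+^\top \hat{U}_- + U_+^\top \hat{U}_- |\hat{\Lambda}_-|^{1/2}$, I would rewrite it as $|\Lambda_+|^{-1/2}\tilde{Z} + \tilde{Z}|\hat{\Lambda}_-|^{-1/2}$ with $\tilde{Z} := |\Lambda_+|^{1/2} U_+^\top \hat{U}_- |\hat{\Lambda}_-|^{1/2}$, and then apply a second Sylvester step to the equation $\Lambda_+ \tilde{Z} + \tilde{Z}|\hat{\Lambda}_-| = -|\Lambda_+|^{1/2} U_+^\top E \hat{U}_-|\hat{\Lambda}_-|^{1/2}$; the $(-,+)$ block is symmetric. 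Assembling the four block contributions yields the stated $|\lambda_r|^{-1/2}$ scaling, with the factor of $2$ relative to the PSD bound reflecting the extra off-diagonal contributions.

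The main obstacle is precisely the commutator term in the indefinite case: a naive submultiplicative estimate $\||\Lambda|^{1/2}[J,U^\top \hat{U}]|\hat{\Lambda}|^{1/2}\| \leq |\lambda_1|^{1/2}|\hat{\lambda}_1|^{1/2}\|[J,U^\top\hat{U}]\|$ would introduce an unwanted $|\lambda_1|/|\lambda_r|$ ratio and spoil the bound. The resolution is to work blockwise along the sign decomposition, exploiting that the off-diagonal blocks of $T$ factor through the auxiliary quantity $\tilde{Z}$, which itself solves a genuinely PSD Sylvester equation whose right-hand side, after reabsorbing the symmetric $|\Lambda_+|^{1/2}$ and $|\hat{\Lambda}_-|^{1/2}$ factors back into the definition of $\tilde{Z}$, remains controlled by $\|U^\top E \hat{U}\|$ up to $O(1)$ multiplicative constants.
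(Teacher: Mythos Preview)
Your positive semidefinite case is correct and is in fact equivalent to the paper's argument: the Sylvester estimate $\|X\| \leq \|C\|/(\lambda_{\min}(A)+\lambda_{\min}(B))$ for $AX+XB=C$ with $A,B$ PSD is precisely the Cauchy-matrix Hadamard bound the paper invokes, just phrased in integral form rather than entrywise.

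The indefinite case has a genuine gap. Your second Sylvester step on $\tilde{Z}$ gives
\[
    \|\tilde{Z}\| \leq \frac{\bigl\||\Lambda_+|^{1/2}U_+^\top E\hat{U}_-|\hat{\Lambda}_-|^{1/2}\bigr\|}{\lambda_{\min}(|\Lambda_+|)+\lambda_{\min}(|\hat{\Lambda}_-|)} \leq \frac{|\lambda_1|}{|\lambda_r|}\,\|U_+^\top E\hat{U}_-\|,
\]
and then $\|T_{+-}\| \leq 2|\lambda_r|^{-1/2}\|\tilde{Z}\|$ carries the condition number $|\lambda_1|/|\lambda_r|$ forward rather than removing it. The ``reabsorbing'' you allude to does not happen: the $|\Lambda_+|^{1/2}$ and $|\hat{\Lambda}_-|^{1/2}$ factors in the Sylvester source for $\tilde Z$ are genuinely on the wrong side (they scale the largest eigenvalues, not the smallest), and no purely Sylvester manipulation cancels them.

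The paper's resolution is different. It computes the entries of $T$ explicitly and observes that $T = (JU^\top E\hat{U}J)\circ S + \mathcal{P}^{(1)}\circ S\circ H$, where $S_{ij}=1/(|\lambda_i|^{1/2}+|\hat{\lambda}_j|^{1/2})$ is a Cauchy kernel and $H_{ij}=2|\lambda_i|^{1/2}|\hat{\lambda}_j|^{1/2}/(|\lambda_i|+|\hat{\lambda}_j|)$. The crucial point is that $H$ is (the principal submatrix of) a positive semidefinite matrix with diagonal entries equal to $1$, so by the Schur product bound $\|M\circ H\|\leq \|M\|\max_i H_{ii}=\|M\|$ the extra $H$ factor costs nothing. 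This is exactly what absorbs your would-be condition number: in entrywise language, the off-diagonal multiplier satisfies $(a+b)/(a^2+b^2) = (1+2ab/(a^2+b^2))/(a+b)$ with the parenthetical factor in $[1,2]$, and the PSD structure of $H$ upgrades this entrywise inequality to an operator-norm inequality. To fix your argument you would need to invoke this Hadamard/Schur mechanism (or an equivalent device) for the $(+,-)$ and $(-,+)$ blocks.
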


\begin{proof}[Proof of \cref{lem:approximate_commute}]
    First, we collect several observations. Suppose that $t_{ij}$, the $ij$-th entry of $T$, is associated with $\lambda_i \geq 0$ and $\hat{\lambda}_j \geq 0$. Then,
    \begin{equation*}
        t_{ij}
        =
        u_{i}^{\top} \hat{u}_j (\lambda_i^{1/2} - \hat{\lambda}_j^{1/2})
        =
        \frac{u_i^{\top} \hat{u}_j (\lambda_i - \hat{\lambda}_j)}{\hat{\lambda}_i^{1/2} + \hat{\lambda}_j^{1/2}} = \frac{u_i^{\top}(A - P) \hat{u}_j}{\hat{\lambda}_i^{1/2} + \hat{\lambda}_j^{1/2}}
        =
        \frac{u_i^{\top}(A - P) \hat{u}_j}{|\lambda_i|^{1/2} + |\hat{\lambda}_j|^{1/2}}
        .
    \end{equation*}
    Similarly, if $t_{ij}$ is associated with $\lambda_i \leq 0$ and $\hat{\lambda}_j \leq 0$. then
    \begin{equation*}
        t_{ij}
        =
        u_{i}^{\top} \hat{u}_j (|\hat{\lambda}_j|^{1/2} - |\lambda_i|^{1/2})
        =
        \frac{u_i^{\top} \hat{u}_j (|\hat{\lambda}_j| - |\lambda_i|)}{|\lambda_i|^{1/2} + |\hat{\lambda}_j|^{1/2}}
        =
        \frac{u_i^{\top}(A - P) \hat{u}_j}{|\lambda_i|^{1/2} + |\hat{\lambda}_j|^{1/2}}
        .
    \end{equation*}
    Next, if $t_{ij}$ is associated with $\lambda_i \geq 0$ and $\hat{\lambda}_j \leq 0$, then
    \begin{equation*}
        \begin{split}
        t_{ij}
        &=
        u_{i}^{\top} \hat{u}_j (\lambda_i^{1/2} + |\hat{\lambda}_j|^{1/2}) \\
        &=
        u_i^{\top} \hat{u}_j \Bigl(\frac{|\lambda_i| + |\hat{\lambda}_j| + 2 |\lambda_i|^{1/2} \cdot |\hat{\lambda}_j|^{1/2}}{ |\lambda_i|^{1/2} + |\hat{\lambda}_j|^{1/2}} \Bigr) \\
        &=
        \frac{-u_i^{\top}(A - P) \hat{u}_j}{|\lambda_i|^{1/2} + |\hat{\lambda}_j|^{1/2}} \Bigl(1 + \frac{2 |\lambda_i|^{1/2} \cdot |\hat{\lambda}_j|^{1/2}}{|\lambda_i| + |\hat{\lambda}_j|}\Bigr)
        .
        \end{split}
    \end{equation*}
    Finally, if $t_{ij}$ is associated with $\lambda_i \leq 0$ and $\hat{\lambda}_j \geq 0$, then
    \begin{equation*}
        \begin{split}
        t_{ij}
        &=
        u_{i}^{\top} \hat{u}_j (-|\lambda_i|^{1/2} - \hat{\lambda}_j^{1/2}) \\
        &=
        -u_i^{\top} \hat{u}_j \Bigl(\frac{|\lambda_i| + |\hat{\lambda}_j| + 2 |\lambda_i|^{1/2} \cdot |\hat{\lambda}_j|^{1/2}}{ |\lambda_i|^{1/2} + |\hat{\lambda}_j|^{1/2}} \Bigr) \\
        &=
        \frac{u_i^{\top}(A - P) \hat{u}_j}{|\lambda_i|^{1/2} + |\hat{\lambda}_j|^{1/2}} \Bigl(1 + \frac{2 |\lambda_i|^{1/2} \cdot |\hat{\lambda}_j|^{1/2}}{|\lambda_i| + |\hat{\lambda}_j|}\Bigr)
        .
        \end{split}
    \end{equation*}
    Now, let $S$ denote the $r \times r$ matrix defined element-wise as $s_{ij} = 1/(|\lambda_i|^{1/2} + |\hat{\lambda}_j|^{1/2})$, and let $H$ denote the $r \times r$ matrix with entries $h_{ij} = 2 (|\lambda_i|^{1/2} \cdot |\hat{\lambda}_j|^{1/2})/(|\lambda_i| + |\hat{\lambda}_j|)$. 
    As before, we write $E = (A - P)$.
    Combining the above identities for $T$ yields
    \begin{gather*}
        T
        =
        (J U^{\top}E \hat{U} J) \circ S + \mathcal{P}^{(1)} \circ S \circ H,
        \qquad
        \text{where}
        \quad
        \mathcal{P}^{(1)}
        =
        \begin{bmatrix}
            0 & -U_{+}^{\top} E \hat{U}_{-} \\ -U_{-}^{\top}E \hat{U}_{+} & 0
        \end{bmatrix}
        ,
    \end{gather*}
    and $\circ$ denote the Hadamard (elementwise) matrix product. Next, define
    \[
        \mathcal{P}^{(2)}
        =
        \begin{bmatrix}
            0 & \,\, & \mathcal{P}^{(1)} \\
            0 & \,\, & 0
        \end{bmatrix}
        ,
    \]
    and let $\gamma = (|\lambda_1^{\downarrow}|, |\lambda_2^{\downarrow}|, \dots, |\lambda_r^{\downarrow}|, |\hat{\lambda}_1^{\downarrow}|, |\hat{\lambda}_2^{\downarrow}|, \dots,|\hat{\lambda}_r^{\downarrow}|)$. Here, $\lambda_i^{\downarrow}$ is a rearrangement of $\lambda_1, \lambda_2, \dots \lambda_r$ in decreasing value (recall that the $\lambda_i$ are ordered in decreasing modulus), and similarly for $\hat{\lambda}_i^{\downarrow}$.
    Letting $\tilde{S}$ be the $2r \times 2r$ matrix with entries $\tilde{s}_{ij} = 1/(\gamma_i + \gamma_j)$ and $\tilde{H}$ be the $2r \times 2r$ matrix with entries $\tilde{h}_{ij} = 2 \gamma_i^{1/2} \gamma_j^{1/2}/(\gamma_i + \gamma_j)$, we have
    \[
        \|\mathcal{P}^{(1)} \circ S \circ H\|
        =
        \|\mathcal{P}^{(2)} \circ \tilde{S} \circ \tilde{H}\|
        .
    \]
    Here, $\tilde{H}$ is the Hadamard product of a positive semidefinite matrix with entries $2 \gamma_i^{1/2} \gamma_j^{1/2}$ and a {\em Cauchy matrix} with entries $1/(\gamma_i + \gamma_j)$. Hence, by the Schur product formula \cite[Theorem~5.2.1]{horn85:_toppics_matrix_analy}), $\tilde{H}$ is positive semidefinite. It follows from \cite[Theorem~5.5.18]{horn85:_toppics_matrix_analy} that
    \[
        \|\mathcal{P}^{(2)} \circ \tilde{S} \circ \tilde{H}\|
        \leq
        \|\mathcal{P}^{(2)} \circ \tilde{S}\| \times \max_{1 \leq i \leq 2r} \tilde{h}_{ii}
        =
        \|\mathcal{P}^{(2)} \circ \tilde{S}\|
        =
        \|\mathcal{P}^{(1)} \circ S \|
        .
    \]
    Here, $S$ is also a Cauchy matrix and hence, by \cite[Eq.~(17)]{horn_norm_bounds}, we have 
    \begin{gather*}
        \|\mathcal{P}^{(1)} \circ S\|
        \leq
        \frac{\|\mathcal{P}^{(1)}\|}{\min_{i} |\lambda_i|^{1/2} + \min_{j} |\hat{\lambda}_j|^{1/2}} \leq |\lambda_r|^{-1/2} \|\mathcal{P}^{(1)}\| \leq |\lambda_r|^{-1/2} \|U^{\top} E \hat{U}\|, \\
        \|(J U^{\top}E \hat{U} J) \circ S\|
        \leq
        \frac{\|U^{\top}E \hat{U}\|}{\min_{i} |\lambda_i|^{1/2} + \min_{j} |\hat{\lambda}_j|^{1/2}}
        \leq
        |\lambda_r|^{-1/2} \|U^{\top}E \hat{U}\|
        .
    \end{gather*}
    In summary, we have $\|T\| \leq 2|\lambda_r|^{-1/2} \|U^{\top} E \hat{U}\|$. From \cref{lem:technical2a},
    \[
        \|U^{\top} E U\| 
        \leq
        4 \sqrt{\rho_n \vartheta(\nu,r,n)} + \frac{8}{3} \|U\|_{2 \to \infty}^2 \vartheta(\nu,r,n)
    \]
    with probability at least $1 - 2n^{-\nu}$, where $\vartheta(\nu,r,n) = \nu \log n + r \log 9$. 
    Meanwhile, by the Davis--Kahan theorem,
    \[
        \|U^{\top} E (I - U U^{\top}) \hat{U}\|
        \leq
        \|E\| \times \|(I - U U^{\top} \hat{U}\|
        \leq
        \frac{2 \|E\|^{2}}{\delta_r}
        .
    \]
    Combining the above bounds yields
    \begin{equation}
        \begin{split}
        \label{eq:T_spectral}
        \|T\| 
        \leq
        |\lambda_r|^{-1/2}\Bigl(8 \sqrt{\rho_n \vartheta(\nu,r,n)} + \frac{16}{3} \|U\|_{2 \to \infty}^2 \vartheta(\nu,r,n)
        +
        \frac{ 4 \|E\|^2}{\delta_r}\Bigr)
    \end{split}
    \end{equation}
    with probability at least $1 - 2n^{-\nu}$. Finally, if $\lambda_1 \geq \lambda_2 \geq \dots \geq \lambda_{r} \geq 0$ then $r_{-} = 0, U = U_{+}, \mathcal{P}^{(1)} = 0$, and \cref{eq:T_spectral} can be improved to
    \begin{equation}
        \label{eq:term2_variant}
        \|T\|
        \leq
        \lambda_r^{-1/2}\Bigl(4 \sqrt{\rho_n \vartheta(\nu,r,n)} + \frac{8}{3} \|U\|_{2 \to \infty}^2 \vartheta(\nu,r,n)
        +
        \frac{ 2 \|E\|^2}{\delta_r}\Bigr)
    \end{equation}
    with probability at least $1 - 2n^{-\nu}$. This completes the proof of \cref{lem:approximate_commute}.
\end{proof}

\begin{lemma}
    \label{lem:technical4_new}
    Assume the setting in \cref{thm:psd} or \cref{thm:general}. Then for any $k \in [n]$,
    \[
        \begin{split}
        \|(I - UU^{\top}) P^{k} E U\|_{2 \to \infty}
        &=
        \|U_{\perp} \Lambda_{\perp}^{k} U_{\perp}^{\top} E U \|_{2 \to \infty} \\
        &\leq
        \|U_{\perp} \Lambda_{\perp}^{k}\|_{2 \to \infty}\Bigl(\sqrt{8 \rho_n \vartheta(\nu,r,n)} + \frac{8}{3} \|U\|_{2 \to \infty} \vartheta(\nu,r,n)\Bigr)
        \end{split} 
    \]
    with probability at least $1 - n^{-(\nu-1)}$, where $\vartheta(\nu,r,n) = \nu \log n + r \log 5$.  
\end{lemma}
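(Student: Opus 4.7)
The plan is to reduce the problem to a row-wise Bernstein bound for a bilinear form in $E$, after first exploiting the spectral decomposition of $P$. The identity $U_\perp \Lambda_\perp^k U_\perp^\top = (I - UU^\top)P^k$ is immediate from $P = U\Lambda U^\top + U_\perp \Lambda_\perp U_\perp^\top$ together with $U^\top U_\perp = 0$, so I would only need to control $\max_{i\in[n]} \|e_i^\top U_\perp \Lambda_\perp^k U_\perp^\top E U\|_2$. A union bound over $i$ at the end will convert a per-row failure probability of order $n^{-\nu}$ into the claimed $1 - n^{-(\nu-1)}$.

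For each fixed index $i$ I would set $v_i := U_\perp \Lambda_\perp^k U_\perp^\top e_i$. Using $U_\perp^\top U_\perp = I$, one verifies $\|v_i\|_2^2 = e_i^\top U_\perp \Lambda_\perp^{2k} U_\perp^\top e_i = \|e_i^\top U_\perp \Lambda_\perp^k\|_2^2 \leq \|U_\perp \Lambda_\perp^k\|_{2\to\infty}^2$, which is what produces the leading multiplicative factor in the stated bound. Conditional on $P$, the matrix $v_i$ is deterministic, and the task becomes proving the uniform Bernstein-type inequality
\begin{equation*}
\|v_i^\top E U\|_2 \leq \|v_i\|_2\Bigl(\sqrt{8\rho_n \vartheta(\nu,r,n)} + \tfrac{8}{3}\|U\|_{2\to\infty}\vartheta(\nu,r,n)\Bigr)
\end{equation*}
with failure probability at most $O(n^{-\nu})$. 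To do this, I would fix $x \in \mathbb{R}^r$ with $\|x\|_2=1$, set $w := Ux$ so that $\|w\|_2 = 1$ and $\|w\|_\infty \leq \|U\|_{2\to\infty}$, and expand via the symmetry of $E$,
\begin{equation*}
v_i^\top E w = \sum_{j<k} E_{jk}\bigl(v_{i,j} w_k + v_{i,k} w_j\bigr) + \sum_j E_{jj}\, v_{i,j} w_j,
\end{equation*}
which is a sum of independent centered bounded random variables. A short calculation using $(a+b)^2 \leq 2(a^2+b^2)$ shows the variance is at most $2\rho_n\|v_i\|_2^2$, and each summand is almost surely bounded by $2\|v_i\|_2\|U\|_{2\to\infty}$, since the $\ell_\infty$ bound is invoked only on $w$ while $v_i$ enters through $\|v_i\|_\infty \leq \|v_i\|_2$.

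Bernstein's inequality (\cref{lem:Bernstein}) then gives a pointwise tail bound. I would promote this to a uniform bound over the unit sphere of $\mathbb{R}^r$ by the standard $\tfrac{1}{2}$-net argument: such a net has cardinality at most $5^r$, which is exactly what produces the $r\log 5$ term inside $\vartheta(\nu,r,n)$, and the factor $2$ lost in the sup-to-net approximation propagates through Bernstein to produce the coefficients $\sqrt{8}$ and $\tfrac{8}{3}$ shown in the statement, following the template of \cref{lem:technical2a}. A final union bound over the $n$ choices of $i$ consumes one factor of $n$ to yield the announced probability $1 - n^{-(\nu-1)}$.

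The main obstacle, and the only place that requires genuine care, is the variance/boundedness computation for the bilinear form $v_i^\top E w$. The factorization $\|U_\perp \Lambda_\perp^k\|_{2\to\infty} \cdot (\cdots)$ hinges on treating $v_i$ only via $\|v_i\|_2$ and $w$ via both $\|w\|_2$ and $\|w\|_\infty$; in particular, the analysis must avoid any appearance of $\|U_\perp\|_{2\to\infty}$, which could be uncontrollably large in the indefinite infinite-rank regime of interest and would be fatal in every downstream application of this lemma. Everything else is routine bookkeeping.
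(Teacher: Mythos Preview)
Your proposal is correct and mirrors the paper's proof essentially step for step: fix a row $\xi_i$ (your $v_i$) of $U_\perp\Lambda_\perp^k U_\perp^\top$, apply Bernstein to the bilinear form $\xi_i^\top E U w$ for fixed unit $w$, cover the sphere in $\mathbb{R}^r$ by a $1/2$-net of size $5^r$, and union-bound over $i\in[n]$. If anything you are slightly more explicit than the paper about expanding the bilinear form over the upper triangle of $E$ to respect symmetry, and you reach $\|U_\perp\Lambda_\perp^k\|_{2\to\infty}$ directly via $\|v_i\|_2 = \|e_i^\top U_\perp\Lambda_\perp^k\|_2$ rather than passing through $\|U_\perp\Lambda_\perp^k U_\perp^\top\|_{2\to\infty}$ and invoking submultiplicativity as the paper does.
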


\begin{proof}[Proof of \cref{lem:technical4_new}]
    Let $e_i$ denote the $i$-th elementary basis vector in $\mathbb{R}^{n}$, and let $\xi_i$ denote the $i$-th row of $U_{\perp} \Lambda_{\perp}^{k} U_{\perp}^{\top}$. We have
    \[
        \|U_{\perp} \Lambda_{\perp}^{k} U_{\perp}^{\top} E U\|_{2 \to \infty}
        =
        \max_{i} \|e_i^{\top} U_{\perp} \Lambda_{\perp}^{k} U_{\perp}^{\top}  E U\|
        =
        \max_{i} \|\xi_i^{\top} E U\|
        .
    \]
    We shall bound $\|\xi_i^{\top} E U\|$ using a standard $\epsilon$-net argument. To that end, for any $i$,
    \[
        \|\xi_i^{\top} E U\|
        =
        \sup_{v \in \mathcal{S}} \xi_i^{\top} E U v
        ,
    \]
    where $\mathcal{S} = \{v \in \mathbb{R}^{r} \colon \|v\| = 1\}$. Let $\mathcal{S}_{1/2}$ be a $1/2$-cover of $\mathcal{S}$ with respect to the $\ell_2$ norm. Then, for any $v \in \mathcal{S}$ and $w \in \mathcal{S}_{1/2}$ with $\|v - w\| \leq 1/2$, we have
    \[
        |\xi_i^{\top} E U (v - w)|
        \leq
        \|\xi_i^{\top} E U\| \cdot \|v - w\|
        \leq
        \frac{1}{2} \|\xi_i^{\top} E U \|,
    \]
    and hence
    \[
        \|\xi_i^{\top} E U\|
        =
        \sup_{v \in \mathcal{S}} \xi_i^{\top} E U v
        \leq
        \sup_{w \in \mathcal{S}_{1/2}} \xi_i^{\top} E U w + \frac{1}{2}  \|\xi_i^{\top} E U\|
        .
    \]
    Rearranging the above inequality gives
    \[
        \|\xi_i^{\top} E U \|
        \leq
        2 \sup_{v \in \mathcal{S}_{1/2}} \xi_i^{\top} E U v
        .
    \]
    Here, for any $w \in \mathcal{S}_{1/2}$, Bernstein's inequality guarantees
    \begin{equation}
        \label{eq:bernstein_lemm1}
        \mathbb{P}(\xi_i^{\top} E U w \geq t)
        \leq
        \exp\Bigl(\frac{-t^2}{2\sigma^2 + 2Mt/3}\Bigr)
        ,
    \end{equation}
    for $M = \|Uw\|_{\infty} \cdot \|\xi_i\|_{\infty}$ and $\sigma^2 \leq 2 \rho_n \|\xi_i\|^2 \cdot \|Uw\|^2 = 2 \rho_n \|\xi_i\|^2$. 

    Observe that $\|Uw\|_{\infty} \leq \|U\|_{2 \to \infty}$ since $\|w\| = 1$. Let $\vartheta(\nu,r,n) = \nu \log n + r \log 5$ and
    \[
        t_*
        =
        \frac{4M}{3} \vartheta(\nu,r,n) + \sqrt{2} \rho_n^{1/2} \|\xi_i\| \sqrt{\vartheta(\nu,r,n)}
        .
    \]
    Consequently,
    \[
        \frac{t_*^2}{2 \sigma^2 + 2Mt_*/3}
        \geq
        \vartheta(\nu,r,n)
        ,
    \]
    and hence
    \[
        P(\xi_i^{\top} E U w \geq t_*)
        \leq
        \exp(-\vartheta(\nu,r,n)) = n^{-\nu} \times 5^{-r}
        .
    \]
    In the present setting, $|\mathcal{S}_{1/2}| \leq 5^{r}$ holds by a volumetric argument (e.g., see \cite[Corollary~4.2.13]{vershynin2018high}). By taking a union bound over all $w \in \mathcal{S}_{1/2}$, we therefore obtain
    \[
        P(\sup_{w \in \mathcal{S}_{1/2}} \xi_i^{\top} E U w \geq t_*)
        \leq
        n^{-\nu}
        .
    \]
    In summary, we have $\|\xi_i^{\top} E U\| \leq 2t_*$ with probability at least $1 - n^{-\nu}$. By taking another union bound over all $i \in [n]$, we obtain
    \[
        \begin{split}
        \|U_{\perp} \Lambda_{\perp}^{k} U_{\perp}^{\top} E U\|_{2 \to \infty}
        &\leq
        \|U_{\perp} \Lambda_{\perp}^{k} U_{\perp}^{\top} \|_{2 \to \infty}\Bigl(\frac{8}{3} \|U\|_{2 \to \infty} \vartheta(\nu,r,n) + \sqrt{8 \rho_n\vartheta(\nu,r,n)}\Bigr) \\
        &\leq
        \|U_{\perp} \Lambda_{\perp}^{k}\|_{2 \to \infty}\Bigl(\sqrt{8 \rho_n \vartheta(\nu,r,n)} + \frac{8}{3} \|U\|_{2 \to \infty} \vartheta(\nu,r,n)\Bigr)
        \end{split}
    \]
    with probability at least $1 - n^{-(\nu-1)}$, where the second inequality follows from \cref{eq:2toinf_submultiplicative}. This completes the proof of \cref{lem:technical4_new}.
\end{proof}

Lastly, we present a technical lemma for bounding $\|EU\|_{2 \to \infty}$. The result here is a slight improvement compared to a direct application of matrix Bernstein's inequality. See also
\cite[Lemma~3.3]{lei2019unified}. 
\begin{lemma}
    \label{lem:ex_2inf}
    Consider the setting in \cref{thm:psd} through \cref{thm:generalU_ULambda}. Let $V$ be a $n \times r$ matrix independent of $E$. Denote by $e_i$ the $i$-th elementary basis vector in $\mathbb{R}^{n}$. Then, for any $i \in [n]$,
    \begin{equation}
        \label{eq:ex_2infi}
        \|e_i^{\top} E V\| \leq 2 \sqrt{2 \rho_n \vartheta(\nu,r,n)} \|V\| + 
        \tfrac{8}{3} \|V\|_{2 \to \infty} \vartheta(\nu,r,n)
    \end{equation}
    with probability at least $1 - n^{-\nu}$, where $\vartheta(\nu,r,n) = \nu \log n + r \log 5$. \cref{eq:ex_2infi} then implies
    \begin{equation}
        \label{eq:ex_2inf}
        \|E V\|_{2 \to \infty} \leq 2\sqrt{2 \rho_n \vartheta(\nu,r,n)} \|V\| + 
        \tfrac{8}{3} \|V\|_{2 \to \infty} \vartheta(\nu,r,n)
    \end{equation} 
    with probability at least $1 - n^{-(\nu - 1)}$. 
\end{lemma}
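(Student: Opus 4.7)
The plan is a direct $\epsilon$-net argument combined with the scalar Bernstein inequality (\cref{lem:Bernstein}), following the same blueprint used in \cref{lem:technical4_new} but now reading off a row of $EV$. The whole point of stating the bound in terms of $\|V\|$ (rather than $\|V\|_F$, which would come out of a naive matrix Bernstein) is to exploit that, for any unit vector $w\in\mathbb{R}^r$, $\|Vw\|\le\|V\|$; the $\epsilon$-net reduction lets us fix $w$ and do a one-dimensional Bernstein estimate where this inequality can be invoked.

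First, I would fix $i\in[n]$ and write $\|e_i^\top EV\|=\sup_{\|w\|=1} e_i^\top EV w$. Let $\mathcal{S}_{1/2}$ be a $1/2$-net of the unit sphere in $\mathbb{R}^r$ with $|\mathcal{S}_{1/2}|\le 5^r$ (by the standard volumetric bound, e.g.\ \cite[Corollary~4.2.13]{vershynin2018high}). The usual net argument then gives
\[
\|e_i^\top EV\|\le 2\sup_{w\in\mathcal{S}_{1/2}} e_i^\top EV w.
\]
For a \emph{fixed} $w\in\mathcal{S}_{1/2}$, condition on $V$ (or simply use that $V$ is independent of $E$) and write
\[
e_i^\top EV w = \sum_{j} E_{ij}(Vw)_j,
\]
a sum of independent mean-zero random variables satisfying
\[
|E_{ij}(Vw)_j|\le \|Vw\|_\infty\le\|V\|_{2\to\infty},\qquad
\sum_{j}\mathrm{Var}\bigl(E_{ij}(Vw)_j\bigr)\le \rho_n\|Vw\|^2\le\rho_n\|V\|^2,
\]
where in the variance step I use $p_{ij}(1-p_{ij})\le\rho_n$ and $\|Vw\|\le\|V\|$, which is the crucial spectral-norm vs.\ Frobenius-norm saving.

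Second, apply the one-sided Bernstein inequality with the choice
\[
t_*=\sqrt{2\rho_n\vartheta(\nu,r,n)}\,\|V\|+\tfrac{4}{3}\|V\|_{2\to\infty}\,\vartheta(\nu,r,n),
\]
and verify (by the routine algebraic check $t_*^2\ge\vartheta(2\sigma^2+\tfrac{2}{3}M t_*)$) that
\[
\mathbb{P}\bigl(e_i^\top EVw\ge t_*\bigr)\le \exp\!\bigl(-\vartheta(\nu,r,n)\bigr) = 5^{-r}\,n^{-\nu}.
\]
A union bound over the $|\mathcal{S}_{1/2}|\le 5^r$ points in the net absorbs the $5^r$ factor, yielding
\[
\sup_{w\in\mathcal{S}_{1/2}} e_i^\top EVw\le t_*\quad\text{with probability at least }1-n^{-\nu}.
\]
Doubling via the net inequality then delivers
\[
\|e_i^\top EV\|\le 2t_* = 2\sqrt{2\rho_n\vartheta(\nu,r,n)}\,\|V\|+\tfrac{8}{3}\|V\|_{2\to\infty}\,\vartheta(\nu,r,n),
\]
which is exactly \cref{eq:ex_2infi}.

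Finally, \cref{eq:ex_2inf} follows by a union bound over $i\in[n]$: the per-row failure probability $n^{-\nu}$ blows up by a factor of $n$, giving $1-n^{-(\nu-1)}$ as claimed. I do not foresee a genuine obstacle here; the only point requiring care is tuning $t_*$ so that the exponent of Bernstein exactly matches $\vartheta(\nu,r,n)=\nu\log n+r\log 5$, ensuring that the $5^r$ cardinality of the net is neutralized while leaving the prefactor $n^{-\nu}$ intact. Once that bookkeeping is done, the constants $2\sqrt{2}$ and $8/3$ drop out directly from the factor-of-two loss in the $1/2$-net reduction applied to the choice of $t_*$ above.
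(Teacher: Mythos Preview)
Your proposal is correct and follows essentially the same approach as the paper's proof: a $1/2$-net on the unit sphere in $\mathbb{R}^r$ of cardinality at most $5^r$, the standard doubling reduction, scalar Bernstein for each fixed $w$ with $M\le\|V\|_{2\to\infty}$ and $\sigma^2\le\rho_n\|V\|^2$, the same choice of $t_*$, and then union bounds over the net and over $i\in[n]$. Your variance bound $\sigma^2\le\rho_n\|V\|^2$ is in fact slightly tighter than the $2\rho_n\|V\|^2$ written in the paper, but both lead to the stated $t_*$ and the argument is otherwise identical.
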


\begin{proof}[Proof of \cref{lem:ex_2inf}]
    We follow the same $\epsilon$-net argument as in the proof of \cref{lem:technical4_new}. More specifically, let $\mathcal{S} = \{v \in \mathbb{R}^{r} \colon \|v\| = 1\}$ and $\mathcal{S}_{1/2}$ be a {\em minimal} $1/2$ cover of $\mathcal{S}$. For any $i \in [n]$, we have
    \[
        \|e_i^{\top} E V\|
        \leq
        2 \max_{i} \sup_{w \in \mathcal{S}_{1/2}}e_i^{\top} E U w
        .
    \]
    Next, by Bernstein's inequality, for any fixed but arbitrary $w \in \mathcal{S}_{1/2}$, we have
    \[
        \mathbb{P}(e_i^{\top} E U w \geq t)
        \leq
        \exp\Bigl(\frac{-t^2}{2 \sigma^2 + 2Mt/3}\Bigr)
        ,
    \]
    where $M = \|Vw\|_{\infty} \leq \|V\|_{2 \to \infty}$ and $\sigma^2 \leq 2 \rho_n \|Vw\|^2 \leq 2 \rho_n \|V\|^2$. Now, define
    \[
        t_*
        =
        \tfrac{4}{3}\|V\|_{2 \to \infty} \vartheta(\nu,r,n) + \sqrt{2 \rho_n \vartheta(\nu,r,n)} \|V\|
        .
    \]
    Hence,
    \[
        \mathbb{P}(e_i^{\top} E V w \geq t_*) \leq n^{-\nu} 5^{-r}
        .
    \]
    Now, choose some $i \in [n]$. As $|\mathcal{S}_{1/2}| \leq 5^{r}$, taking a union bound over $\mathcal{S}_{1/2}$ we obtain
    \[
        \mathbb{P}(\|e_i^{\top} E V\| \geq 2 t_*)
        \leq
        n^{-\nu}
    \]
    as claimed in \cref{eq:ex_2infi}.
    Finally, taking a union over all $i \in [n]$, we obtain
    \[
        P(\|EV\|_{2 \to \infty} \geq 2t_*) = P(\max_{i \in [n]} \|e_i^{\top} E V\|)
        \leq
        n^{-(\nu-1)}
        ,
    \]
    as claimed in \cref{eq:ex_2inf}. This completes the proof of \cref{lem:ex_2inf}.
\end{proof}

\begin{lemma}
\label{lem:eta_bound}
Consider the setting in \cref{thm:psd} through \cref{thm:generalU_ULambda}. Define
\[ \sigma_*^2 = \max\Bigl\{ n \rho_n \|U_{\perp} \Lambda_{\perp}\|_{2 \to \infty}^2, n \rho_n \|U\|_{2 \to \infty}^2 \lambda_{r+1}^2 \Bigr\}\]
We then have
\begin{equation}
\label{eq:eta_bound}
\|\Lambda_{\perp} U_{\perp}^{\top} E U \| \leq  2 \sigma_* \sqrt{2 \log n} + \frac{4}{3} \|U\|_{2 \to \infty} \cdot \|U_{\perp} \Lambda_{\perp}\|_{2 \to \infty} \cdot \log n
\end{equation}
with probability at least $1 - n^{-(\nu - 1)}$.

Similarly, if $P$ is positive semidefinite then
\begin{equation}
\label{eq:eta_bound_psd}
\|\Lambda_{\perp}^{1/2} U_{\perp}^{\top} E U \| \leq 2 \tilde{\sigma}_* \sqrt{2 \log n} + \frac{4}{3} \|U\|_{2 \to \infty} \cdot \|U_{\perp} \Lambda_{\perp}^{1/2}\|_{2 \to \infty} \cdot \log n
\end{equation}
with probability at least $1 - n^{-(\nu - 1)}$, where now
\[ \tilde{\sigma}_*^2 = \max \Bigl\{ n \rho_n \|U_{\perp} \Lambda_{\perp}^{1/2}\|_{2 \to \infty}^2, n \rho_n \|U\|_{2 \to \infty}^2 \lambda_{r+1}\Bigr\} \]
\end{lemma}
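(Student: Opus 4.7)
The plan is to apply the matrix Bernstein inequality (Tropp, Theorem~1.6) after writing $\Lambda_{\perp} U_{\perp}^{\top} E U$ as a sum of independent, mean-zero random matrices indexed by the upper-triangular pairs of $E$. Let $a_i \in \mathbb{R}^{n-r}$ denote the $i$-th row of $U_{\perp} \Lambda_{\perp}$ and $b_i \in \mathbb{R}^{r}$ denote the $i$-th row of $U$, so that $\|a_i\| \leq \|U_{\perp} \Lambda_{\perp}\|_{2 \to \infty}$ and $\|b_i\| \leq \|U\|_{2 \to \infty}$. Using symmetry of $E$, I would write
\begin{equation*}
\Lambda_{\perp} U_{\perp}^{\top} E U
=
\sum_{i < j} E_{ij}(a_i b_j^{\top} + a_j b_i^{\top}) + \sum_{i=1}^{n} E_{ii} a_i b_i^{\top}
=:
\sum_{i \leq j} Y_{ij}.
\end{equation*}
Each summand satisfies $\|Y_{ij}\| \leq 2 \|U_{\perp} \Lambda_{\perp}\|_{2 \to \infty} \cdot \|U\|_{2 \to \infty}$ almost surely, which yields the Bernstein size parameter $L$.

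Next I would bound the two variance proxies $\|\sum_{ij} \mathbb{E}[Y_{ij} Y_{ij}^{\top}]\|$ and $\|\sum_{ij} \mathbb{E}[Y_{ij}^{\top} Y_{ij}]\|$. Expanding the products, the diagonal contribution to $\sum \mathbb{E}[Y_{ij} Y_{ij}^{\top}]$ reduces (after the standard $i \leftrightarrow j$ symmetrization) to $\sum_{ij} p_{ij}(1-p_{ij}) \|b_j\|^2 a_i a_i^{\top} \preceq \rho_n \|U\|_F^2 \cdot \sum_i a_i a_i^{\top} = \rho_n r \Lambda_{\perp}^2$, whose operator norm is at most $\rho_n r \lambda_{r+1}^2 \leq n \rho_n \|U\|_{2 \to \infty}^2 \lambda_{r+1}^2$ using $r \leq n \|U\|_{2 \to \infty}^2$. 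The cross term $\sum_{i \neq j} p_{ij}(1-p_{ij})(b_i^{\top} b_j) a_i a_j^{\top}$ can be written as $\mathcal{A}^{\top} D \mathcal{A}$ with $\mathcal{A} = U_{\perp} \Lambda_{\perp}$, $D_{ij} = p_{ij}(1-p_{ij})(UU^{\top})_{ij}$; since $UU^{\top}$ is positive semidefinite with diagonal bounded by $\|U\|_{2 \to \infty}^2$, the Schur product theorem gives $\|D\| \leq \|U\|_{2 \to \infty}^2 \cdot \|P \circ (I - P)\|_{\mathrm{op}} \leq n \rho_n \|U\|_{2 \to \infty}^2$, so the cross contribution is also $\lesssim n \rho_n \|U\|_{2 \to \infty}^2 \lambda_{r+1}^2$. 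By a completely parallel calculation with the roles of $a_i$ and $b_i$ exchanged, the second proxy satisfies $\|\sum_{ij} \mathbb{E}[Y_{ij}^{\top} Y_{ij}]\| \lesssim n \rho_n \|U_{\perp} \Lambda_{\perp}\|_{2 \to \infty}^2$, so the maximum of the two is $\sigma_*^2$ as defined in the statement.

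Applying the matrix Bernstein inequality with dimension factor $(n-r) + r = n$ then yields
\begin{equation*}
\mathbb{P}\bigl( \|\Lambda_{\perp} U_{\perp}^{\top} E U\| \geq t \bigr)
\leq
n \exp\Bigl( \frac{-t^2/2}{\sigma_*^2 + L t/3} \Bigr),
\end{equation*}
and setting $t = 2 \sigma_* \sqrt{2 \log n} + \tfrac{4}{3} \|U\|_{2 \to \infty} \|U_{\perp} \Lambda_{\perp}\|_{2 \to \infty} \log n$ delivers \cref{eq:eta_bound} with probability at least $1 - n^{-(\nu-1)}$ after absorbing the dimension factor into the $\log n$ term. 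For the positive semidefinite case, the argument is identical with $\Lambda_{\perp}^{1/2}$ in place of $\Lambda_{\perp}$ (well defined since $\Lambda_{\perp} \succeq 0$); the only change is that $\sum_i a_i a_i^{\top} = \Lambda_{\perp}$ rather than $\Lambda_{\perp}^2$, which converts $\lambda_{r+1}^2$ into $\lambda_{r+1}$ in the appropriate variance term and produces $\tilde{\sigma}_*^2$ in \cref{eq:eta_bound_psd}.

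The main obstacle is the careful bookkeeping of the cross terms of the form $\sum_{i \neq j} p_{ij}(1-p_{ij})(b_i^{\top} b_j) a_i a_j^{\top}$: they are the only place where one needs a non-trivial Schur-product estimate rather than a direct triangle-inequality bound, and they are crucial for getting the variance proxy to depend only on the two-to-infinity norms of $U$ and $U_{\perp} \Lambda_{\perp}$ rather than on their Frobenius or operator norms. Keeping the explicit constants in the final inequality also requires absorbing the $\sqrt{r \log 9}$ dependence (which would normally arise from $\epsilon$-netting) into the matrix Bernstein dimension factor of $n$, which accounts for the $\sqrt{2 \log n}$ (rather than $\sqrt{\vartheta(\nu,r,n)}$) appearing in the stated bound.
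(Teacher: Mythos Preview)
Your approach is essentially the paper's: the same decomposition of $\Lambda_{\perp} U_{\perp}^{\top} E U$ into independent rank-one pieces indexed by upper-triangular pairs, the same size parameter $L = 2\|U\|_{2\to\infty}\|U_{\perp}\Lambda_{\perp}\|_{2\to\infty}$, and the same matrix Bernstein conclusion. The only substantive difference is how you bound the two variance proxies. You expand $Y_{ij}Y_{ij}^{\top}$ fully, separate the ``diagonal'' piece $\sum_{ij}p_{ij}(1-p_{ij})\|b_j\|^2 a_i a_i^{\top}$ from the cross piece $\sum_{i\neq j}p_{ij}(1-p_{ij})(b_i^{\top}b_j)a_i a_j^{\top}$, and then control the latter via a Schur-product estimate on $D = (UU^{\top})\circ(P\circ(I-P))$. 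That works, but the paper bypasses all of this with a one-line Loewner inequality: for $X_{ij}=e_{ij}(v_iu_j^{\top}+v_ju_i^{\top})$ it simply uses $(M+N)^{\top}(M+N)\preceq 2M^{\top}M+2N^{\top}N$ to obtain
\[
\sum_{i,j}\mathbb{E}[X_{ij}^{\top}X_{ij}]\preceq 2\rho_n\sum_{i,j}\bigl(\|v_i\|^2 u_ju_j^{\top}+\|v_j\|^2 u_iu_i^{\top}\bigr)\preceq 4n\rho_n\|U_{\perp}\Lambda_{\perp}\|_{2\to\infty}^2\, I_r,
\]
and the companion bound $\sum_{i,j}\mathbb{E}[X_{ij}X_{ij}^{\top}]\preceq 4n\rho_n\|U\|_{2\to\infty}^2\,\Lambda_{\perp}^2$ follows symmetrically. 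This delivers exactly the $\sigma_*^2$ in the statement without touching any cross terms or invoking Schur products, so your ``main obstacle'' (the careful bookkeeping of $\sum_{i\neq j}p_{ij}(1-p_{ij})(b_i^{\top}b_j)a_ia_j^{\top}$) turns out to be an obstacle you created for yourself; the paper never needs it.
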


\begin{proof}
    Let $v_{i}$ denote the $i$-th column of $\Lambda_{\perp} U_{\perp}^{\top}$ and $u_i$ denote the $i$-th column of $U^{\top}$. Then
    \[ \Lambda_{\perp} U_{\perp}^{\top} E U = \sum_{i=1}^{n} \sum_{j=1}^{n} e_{ij} v_{i} u_j^{\top} = \sum_{i=1}^{n} e_{ii} v_i u_i^{\top} + \sum_{i \not = j}
    e_{ij} (v_i u_j^{\top} + v_{j} u_i^{\top})
    \]
    which is a sum of independent mean $0$ random matrices. 

    Let $X_{ii} = e_{ij} u_i v_i^{\top}$ and $X_{ij} = e_{ij} (v_{i} u_{j}^{\top} + v_{j} u_{i}^{\top})$. 
    Then \[\|X_{ij}\| \leq L:= 2 \max_{ij} \|u_i\| \cdot \|v_j\| \leq 2 \|U_{\perp} \Lambda_{\perp}\|_{2 \to \infty} \times \|U\|_{2 \to \infty}\]
    almost surely. 
    Furthermore, for any matrices $M$ and $N$ we have $(M + N)^{\top}(M + N) \preceq 2 M^{\top} M + 2 N^{\top} N$ where $\preceq$ denote the Lowner ordering for symmetric matrices. Hence
    \[ \begin{split} \sum_{i} \mathbb{E}[X_{ii}^{\top} X_{ii}] + \sum_{j \not = i} \mathbb{E}[X_{ij}^{\top} X_{ij}] &\preceq \sum_{i} \sum_{j} 2 \mathrm{Var}[e_{ij}] (u_{j} v_i^{\top} v_i u_{j}^{\top} + u_{i} v_{j}^{\top} v_{j} u_{i}^{\top}) \\ & \preceq 2 \rho_n \sum_{i} \sum_{j} u_{i} u_{i}^{\top} \|v_j\|^2 + u_{j} u_{j}^{\top} \|v_i\|^2 \\ 
    & \preceq 4 n \rho_n \|U_{\perp} \Lambda_{\perp}\|_{2 \to \infty}^2 \sum_{i} u_{i} u_i^{\top} \preceq 4 n \rho_n \|U_{\perp} \Lambda_{\perp}\|_{2 \to \infty}^2 \times I \end{split} \]
    Similarly, we also have
    \[ \begin{split} \sum_{i} \mathbb{E}[X_{ii} X_{ii}^{\top}] + \sum_{j \not = i} \mathbb{E}[X_{ij} X_{ij}^{\top}] &\preceq \sum_{i} \sum_{j} 2 \mathrm{Var}[e_{ij}] (v_{i} u_{j}^{\top} u_{j} v_i^{\top} + v_j u_{i}^{\top} u_{i} v_{j}^{\top}) \\ & \preceq 2 \rho_n \sum_{i} \sum_{j} v_{i} v_{i}^{\top} \|u_j\|^2 + v_{j} v_{j}^{\top} \|u_i\|^2 \\ 
    & \preceq 4 n \rho_n \|U\|_{2 \to \infty}^2 \sum_{i} v_j v_j^{\top} \preceq 4 n \rho_n \|U\|_{2 \to \infty}^2 \times \Lambda_{\perp}^2 \end{split} \]
    Define
    \[ \sigma_*^2 = \max\{ n \rho_n \|U_{\perp} \Lambda_{\perp}\|_{2 \to \infty}^2, n \rho_n \|U\|_{2 \to \infty}^2 \lambda_{r+1}^2\}. \]
    Then by a matrix Bernstein's inequality \cite[Theorem~1.6]{tropp} we have
    \[ \mathbb{P}( \|\Lambda_{\perp} U_{\perp}^{\top} E U\| \geq t) \leq n \exp\Bigl(-\frac{t^2}{8 \sigma_*^2 + 2 Lt/3}\Bigr). \]
    Taking $t = 2 \sqrt{2} \sigma_{*} \sqrt{\log n} + \frac{2}{3} L \log n$, we obtain the bound in \cref{eq:eta_bound}. Derivation of \cref{eq:eta_bound_psd} follows the same argument and is thus omitted. 
\end{proof}

\end{document}